\pgfplotsset{compat=newest}
\pgfplotsset{plot coordinates/math parser=false}
\newlength\figureheight
\newlength\figurewidth
\newif\ifprint
\pgfplotsset{compat=newest}
\pgfplotsset{plot coordinates/math parser=false}
\pgfplotsset{
    table/search path={img/nlpdhgm_block/},
}
\pgfplotsset{every tick label/.append style={font=\scriptsize}}
\crefname{Algorithm}{Algorithm}{Algorithms}
\theoremstyle{definition}
\newtheorem{assumption}[definition]{Assumption}
\crefname{assumption}{Assumption}{Assumptions}
\newcommand{\term}{\emph}
\newcommand{\field}[1]{\mathbb{#1}}
\newcommand{\N}{\mathbb{N}}
\newcommand{\R}{\field{R}}
\newcommand{\extR}{\overline \R}
\newcommand{\B}{B}
\newcommand{\norm}[1]{\|#1\|}
\newcommand{\abs}[1]{|#1|}
\newcommand{\inv}[1]{#1^{-1}}
\newcommand{\grad}{\nabla}
\newcommand{\freevar}{\,\boldsymbol\cdot\,}
\newcommand{\Union}\bigcup
\newcommand{\Isect}\bigcap
\newcommand{\union}\cup
\newcommand{\isect}\cap
\newcommand{\bigunion}\bigcup
\newcommand{\bigisect}\bigcap
\newcommand{\powerset}{\mathcal{P}}
\newcommand{\defeq}{:=}
\newcommand{\downto}{\searrow}
\newcommand{\upto}{\nearrow}
\newcommand{\subdiff}{\partial}
\newcommand{\MIN}[1]{{\underline {#1}}}
\DeclareMathOperator{\Dom}{dom}
\def \uminus@sym{\setbox0=\hbox{$\cup$}\rlap{\hbox 
        to\wd0{\hss\raise0.5ex\hbox{$\scriptscriptstyle{-}$}\hss}}\box0}
    \def \uminus    {\mathrel{\uminus@sym}}
\newcommand{\mathvar}[1]{\textup{#1}}
\newcommand{\iprod}[2]{\langle #1,#2\rangle}
\newcommand{\adaptiprod}[2]{\left\langle #1,#2\right\rangle}
\def \weaktostar@sym{\setbox0=\hbox{$\rightharpoonup$}\rlap{\hbox 
        to\wd0{\hss\raise1ex\hbox{$\scriptscriptstyle{*\,}$}\hss}}\box0}
    \def \weaktostar    {\mathrel{\weaktostar@sym}}
\renewcommand{\B}{\vmathbb{B}}
\def\tilde{\widetilde}
\def\NL{\mathvar{NL}}
\def\LIN{\mathvar{L}}
\def\Ynl{Y_{\NL}}
\def\Ylin{Y_{\LIN}}
\def\Pnl{P_{\NL}}
\newcommand{\setto}{\rightrightarrows}
\def\extR{\overline \R}
\def\linear{\vmathbb{L}}
\newcommand{\linearLArrow}[1][]{\linear_{\triangleleft\ifx&#1&\else,\,#1\fi}}
\newcommand{\linearLArrowSpecial}[1][]{\linear^{\star}_{\triangleleft\ifx&#1&\else,\,#1\fi}}
\def\realopt#1{\widehat #1}
\def\this#1{#1^i}
\def\nexxt#1{#1^{i+1}}
\def\overnext#1{\bar #1^{i+1}}
\def\realoptu{{\realopt{u}}}
\def\realoptx{{\realopt{x}}}
\def\realopty{{\realopt{y}}}
\def\nextu{\nexxt{u}}
\def\nextx{\nexxt{x}}
\def\nexty{\nexxt{y}}
\def\thisu{\this{u}}
\def\thisx{\this{x}}
\def\thisy{\this{y}}
\def\overnextx{\overnext{x}}
\def\E{\mathbb{E}}
\def\P{\mathbb{P}}
\def\Tau{T}
\def\TauTest{\Phi}
\def\tauTest{\phi}
\def\SigmaTest{\Psi}
\def\sigmaTest{\psi}
\def\Neigh{\mathcal{V}}
\def\SAlg{\mathcal{O}}
\def\iset#1{\mathring{#1}}
\def\dset#1{\breve{#1}}
\def\Space{U}
\newcommand{\Test}{Z}
\newcommand{\Precond}{M}
\newcommand{\Step}{W}
\newcommand{\Random}{\mathcal{R}}
\def\MIN{\underline}
\DeclareFontFamily{U}{mathx}{\hyphenchar\font45}
\DeclareFontShape{U}{mathx}{m}{n}{<-> mathx10}{}
\DeclareSymbolFont{mathx}{U}{mathx}{m}{n}
\DeclareMathAccent{\widebar}{0}{mathx}{"73}
\def\kgrad#1{\grad K(#1)}
\def\kgradconj#1{[\grad K(#1)]^*}
\def\dir#1{\Delta #1}
\def\realoptw{\realopt{w}}
\def\neighu{\mathcal{U}}
\def\neighx{\mathcal{X}}
\def\neighy{\mathcal{Y}}
\def\bar{\widebar}
\newcommand{\localRhoX}[1][]{\if\relax\detokenize{#1}\relax r_x\else r_{x,#1}\fi}
\newcommand{\localRhoY}[1][]{\if\relax\detokenize{#1}\relax r_y\else r_{y,#1}\fi}
\newcommand{\prox}{\mathrm{prox}}
\DeclareMathOperator{\Sym}{Sym}
\def\symD{\mathcal{E}}
\def\adaptNorm#1{\left\|#1\right\|}
\def\BiggNorm#1{\Biggl\|#1\Biggr\|}
\renewrobustcmd{\downto}{{{\mathchoice%
            {\rotatebox[origin=c]{-20}{$\to$}}% display
            {\rotatebox[origin=c]{-20}{$\to$}}% text
            {\rotatebox[origin=c]{-20}{\scalebox{0.75}{$\to$}}}% subscript
            {\rotatebox[origin=c]{-20}{\scalebox{0.6}{$\to$}}}% subsubscript
}}}
\renewrobustcmd{\upto}{{{\mathchoice%
            {\rotatebox[origin=c]{20}{$\to$}}% display
            {\rotatebox[origin=c]{20}{$\to$}}% text
            {\rotatebox[origin=c]{20}{\scalebox{0.75}{$\to$}}}% subscript
            {\rotatebox[origin=c]{20}{\scalebox{0.6}{$\to$}}}% subsubscript
}}}
\def\kgradconj#1{\grad K(#1)^*}
\title{Primal-dual block-proximal splitting for a class of non-convex problems}
\shorttitle{Non-convex primal-dual block-proximal splitting}
\date{2019-11-14 (revised 2020-04-22)}
\author{%
    Stanislav Mazurenko\thanks{Loschmidt Laboratories, Masaryk University, Brno, Czechia (\email{stan.mazurenko@gmail.com}, \orcid{0000-0003-3659-4819})}
    \and
    Jyrki Jauhiainen\thanks{University of Eastern Finland, Kuopio, Finland (\email{jyrki.jauhiainen@uef.fi}, \orcid{0000-0001-6711-6997})}
	\and
    Tuomo Valkonen\thanks{ModeMat, Escuela Politécnica Nacional, Quito, Ecuador \emph{and} Department of Mathematics and Statistics, University of Helsinki, Finland (\email{tuomo.valkonen@iki.fi}, \orcid{0000-0001-6683-3572})}
    }
\begin{document}
%%%%%%%%%%%%%%%%%%%%%%%%%%%%%%%%%%%%%%%%%%%%%%%%%%%%%%%%%%%%%%%%%%%%%%%

\maketitle

%%%%%%%%%%%%%%%%%%%%%%%%%%%%%%%%%%%%%%%%%%%%%%%%%%%%%%%%%%%%%%%%%%%%%%%
\begin{abstract}
    We develop block structure adapted primal-dual algorithms for non-convex non-smooth optimisation problems whose objectives can be written as compositions $G(x)+F(K(x))$ of non-smooth block-separable convex functions $G$ and $F$ with a non-linear Lipschitz-differentiable operator $K$. Our methods are refinements of the non-linear primal-dual proximal splitting method for such problems without the block structure, which itself is based on the primal-dual proximal splitting method of Chambolle and Pock for convex problems. We propose individual step length parameters and acceleration rules for each of the primal and dual blocks of the problem. This allows them to convergence faster by adapting to the structure of the problem. For the squared distance of the iterates to a critical point, we show local $O(1/N)$, $O(1/N^2)$ and linear rates under varying conditions and choices of the step lengths parameters.
    Finally, we demonstrate the performance of the methods on practical inverse problems: diffusion tensor imaging and electrical impedance tomography.
\end{abstract}
%%%%%%%%%%%%%%%%%%%%%%%%%%%%%%%%%%%%%%%%%%%%%%%%%%%%%%%%%%%%%%%%%%%%%%%

%%%%%%%%%%%%%%%%%%%%%%%%%%%%%%%%%%%%%%%%%%%%%%%%%%%%%%%%%%%%%%%%%%%%%%%
\section{Introduction}
%%%%%%%%%%%%%%%%%%%%%%%%%%%%%%%%%%%%%%%%%%%%%%%%%%%%%%%%%%%%%%%%%%%%%%%

We want to solve in Hilbert spaces $X$ and $Y$ the problem
\begin{equation}
    \label{eq:generic-problem}
    \tag{P$_0$}
	\min_{x\in X}~ G(x)+F(K(x)),
\end{equation}
where $G: X \to \extR$ and $F: Y \to \extR$ are convex, proper, and lower semicontinuous, but $K \in C^1(X; Y)$ is possibly non-linear. The linear case has been considered frequently in the literature, while in our earlier work \cite{tuomov-nlpdhgm,tuomov-pdex2nlpdhgm,tuomov-nlpdhgm-redo} we have developed first-order primal-dual methods for the generally non-convex problem with a non-linear $K$.
We refer to \cite{tuomov-firstorder} for a simplified overview of such methods.
In the present work, still with a non-linear $K$, we consider problems of the more specific form
\begin{equation}
    \label{eq:main-problem-primalonly}
    \tag{P}
    \min_{x \in X}~ \sum_{j=1}^m G_j(P_j x)+ \sum_{\ell=1}^n F_\ell(Q_\ell K(x)),
\end{equation}
where for all $j=1,\ldots,m$ and $\ell=1,\ldots,n$, the functions $G_j: X \to \extR$ and $F_\ell: Y \to \extR$ are convex, proper, and lower semicontinuous, and $P_1,\ldots,P_m \in \linear(X; X)$ as well as $Q_1,\ldots,Q_n \in \linear(Y; Y)$ are mutually orthogonal families of linear projection operators.
In other words, $G$ and $F$ are block-separable.
More specifically, we develop spatially adaptive and block-stochastic optimisation methods for the solution of \eqref{eq:main-problem-primalonly}.

As observed in \cite{tuomov-blockcp} for linear $K$, the adaptation of step lengths to individual blocks $j$ and $\ell$ can speed up the convergence of optimisation methods due to blockwise Lipschitz or strong convexity factors being better than the global factor.
Moreover, as now extensively studied, randomly sampling the blocks to be updated on each step can also improve convergence on very large-scale problems, in part due to the spatial adaptation, and in part due to being able to avoid communication in a cluster implementation of the algorithm. For more on stochastic block coordinate descent type methods, we refer to the review \cite{wright2015coordinate} and, among others, the original articles
\cite{nesterov2012efficiency,
      richtarik2012parallel,
      fercoq2015approx,
      richtarik2013distributed,
      zhao2014stochastic,
      shalev-shwartz2014accelerated,
      csiba2015stochastic,
      peng2016arock,
      bertsekas2015incremental}
on forward--backward type methods,
\mbox{%
\cite{zeng2014imagesegm,
      suzuki2014stochastic,
      zhang2015stochastic,
      combettes2016stochastic,
      bianchi2014stochastic,
      chambolle2018stochastic,
      fercoq2019coordinate,
      tuomov-blockcp}
}%
on primal-dual methods, and
\cite{qu2015sdna,
      pilanci2016iterative}
on second-order methods, all in the convex case.
For the non-convex case we point to \cite{xu2015block,xu2017globally}.
Compared to the latter, we work in the primal-dual setting and aim for spatial adaptation also in the deterministic setting. We also aim to prove convergence rates.

Several works consider, instead of a random selection of blocks, a random selection of terms of a sum of functions. In the non-convex case, recent mathematical works in this area include \cite{davis2019stochastic,milzarek2018stochastic}, aside from more applied works in the area of neural networks. In our block-stochastic approach, for non-convex $C^1$ functions $J_\ell$, ($\ell=1,\ldots,n$), we can with $K(x) \defeq (J_1(x), \ldots, J_n(x))$ and $F(z) \defeq \sum_{\ell=1}^n z_\ell$ write
\begin{equation}
    \label{eq:fb-problem}
    \min_x G(x)+\sum_{\ell=1}^n J_\ell(x)
    =\min_x G(x)+F(K(x)).
\end{equation}

To start describing our approach, using the conjugates $F_\ell^*$ of the convex, proper, lower semicontinuous functions $F_\ell$, we reformulate \eqref{eq:main-problem-primalonly} as the minmax problem
\begin{equation}
	\label{eq:main-problem}
    \tag{S}
	\min_{x\in X}\max_{y\in Y}~ \sum_{j=1}^m G_j(P_j x)+\iprod{K(x)}{y}-\sum_{\ell=1}^n F^*_\ell(Q_\ell y).
\end{equation}
If $K$ is linear, and the number of blocks $n=m=1$, a popular algorithm for solving this formulation is the primal-dual proximal splitting (PDPS) of Chambolle and Pock \cite{chambolle2010first}. It consists of alternating proximal steps with respect to the dual and primal variables, with the other variable fixed, and an over-relaxation step that ensures convergence. Its extension to non-linear $K$ (but still without blockwise structure) iterates \cite{tuomov-nlpdhgm,tuomov-nlpdhgm-redo}
\begin{equation*}
    \left\{\begin{aligned}
    \nextx & := \prox_{\tau_i G}(\thisx - \tau_i\kgradconj{\thisx}\thisy),\\
    \overnextx & :=  \nextx+\omega_i(\nextx-\thisx),\\
    \nexty & :=  \prox_{\sigma_{i+1} F^*}(\thisy + \sigma_{i+1} K(\overnextx))
    \end{aligned}\right.
\end{equation*}
for some step length and over-relaxation parameters $\tau_i$, $\sigma_{i+1}$, $\omega_i$ and $\prox_{\tau_i G}(x) \defeq (I + \tau_i \subdiff G)^{-1}(x)$.
Our purpose in this work is to randomise and adapt the method to the multi-block structure of \eqref{eq:main-problem}: \emph{firstly}, on each step we will only update random subsets of either or both primal and dual blocks, and, \emph{secondly}, even when we deterministically update every block on each step, we adapt the step lengths to the local structure of the problem in each block.

We organise our work as follows:
first, in \cref{sec:testing}, we introduce general notations, concepts, and the rough structure of the algorithm.
In \cref{sec:estimates} we start the convergence proof by deriving several technical estimates.
In \cref{sec:fulldual} we then use these estimates to derive convergence rates of more specific algorithms when only the primal updates are randomised. Likewise, in \cref{sec:fullprimal} we study the case when only the dual updates are randomised.
We finish our work in \cref{sec:numerical} with numerical experience in diffusion tensor imaging (DTI) and electrical impedance tomography (EIT).

%%%%%%%%%%%%%%%%%%%%%%%%%%%%%%%%%%%%%%%%%
\section{Notations, rough algorithm, and its testing}
\label{sec:testing}
%%%%%%%%%%%%%%%%%%%%%%%%%%%%%%%%%%%%%%%%%

Throughout this paper, we write $\linear(X; Y)$ for the space of bounded linear operators between Hilbert spaces $X$ and $Y$; $I$ is the identity operator; and $\iprod{x}{x'}$ is the inner product in the corresponding space. We write with $\powerset A$ for the power set of a set $A$ and $\chi_{A}(a)$ for the indicator function that equals $1$ if $a \in A$ and $0$ otherwise.
We set $\iprod{x}{x'}_T\defeq\iprod{Tx}{x'}$, and $\norm{x}_T\defeq\sqrt{\iprod{x}{x}_T}$, where in the latter we require $T \ge 0$.
For $T,S\in\linear(X; Y)$, the inequality $T\ge S$ means that $T-S$ is positive semidefinite.
If $H$ is a set-valued operator $X\setto X$, inequalities such as $\iprod{H(x)}{x'}\ge 0$ mean that $\iprod{w}{x'}\ge 0$ for every $w\in H(x)$.

We write $(\Omega,\SAlg, \P)$ for the probability space consisting of a sample set $\Omega$, a $\sigma$-algebra $\SAlg$ on $\Omega$, and a probability measure $\P$. We write $\Random(\SAlg; V)$ for the space of $V$-valued $\SAlg$-measurable random variables.
$\Random(\SAlg; \Space \setto \Space)$ is therefore the space of $\SAlg$-measurable random variables whose values are set-valued operators $\Space \setto \Space$.
Due to the iterative nature of optimisation algorithms, we introduce a sequence of $\sigma$-algebras $\{\SAlg_i\}_{i\in\N}$ such that $\SAlg_i\subseteq \SAlg_{i+1}$ and $\SAlg_i\subseteq \SAlg$ for any $i\in\N$.
We use $\SAlg_i$ to collect all the information available before the $(i+1)$:th iteration.
We write $\E_i[\freevar]\defeq\E[\freevar \mid \SAlg_i]$ for the corresponding conditional expectation.

Many conditions that we impose in the following sections only apply to the subspace on which the operator $K$ from the introduction acts non-linearly. Correspondingly, we introduce
\[
    \Ylin \defeq \{ y \in Y \mid \text{the map } x \mapsto \iprod{y}{K(x)} \text{ is linear} \}
    \quad\text{and}\quad
    \Ynl \defeq \Ylin^\perp,
\]
as well as the orthogonal projection $\Pnl$ to $\Ynl$.
See \cref{sec:numerical} for how such subspaces practically come about in applications.
We also use the short-hand notations
\[
    x_j \defeq P_j x
    \quad\text{and}\quad
    y_\ell \defeq Q_\ell y.
\]

%%%%%%%%%%%%%%%%%%%%%%%%%%%%%%%%%%%%%%%%%
\subsection{Abstract structure of the algorithm}
%%%%%%%%%%%%%%%%%%%%%%%%%%%%%%%%%%%%%%%%%

We generally use the symbol $x$ for primal variables (elements of $X$), and symbol $y$ for dual variables (elements of $Y$). We group these variables together into $u=(x, y) \in X \times Y$. This applies to indexed variables, $u^i :=(x^i,y^i)$, critical points $\realoptu=(\realoptx, \realopty)$, etc., without explicit introduction of the primal and dual components in each case.
We define the set-valued operator $H: X \times Y \setto X \times Y$ for $u=(x, y)$ as
\begin{equation}
	\label{eq:h}
	H(u) \defeq
	\begin{pmatrix}
		\subdiff G(x) + \kgradconj{x} y \\
		\subdiff F^*(y) - K(x)
	\end{pmatrix}
	\quad\text{with}\quad
	G(x) \defeq \sum_{j=1}^m G_j(P_j x)
	\quad\text{and}\quad
	F^*(y) \defeq \sum_{\ell=1}^n F^*_\ell(Q_\ell y).
\end{equation}
Then $0 \in H(\realoptu)$ encodes the critical point conditions for \eqref{eq:main-problem}.
These will also become the first-order necessary optimality conditions under a constraint qualification, e.g., when $G$ is $C^1$ and either the null space of $\kgradconj{x}$ is trivial or $\Dom F=X$ \cite[Example 10.8]{rockafellar-wets-va}.

Following the ``testing'' approach to convergence analysis from \cite{tuomov-proxtest}, we introduce the primal-dual \term{step length}, \term{testing}, and \term{preconditioning} operators
\begin{equation}
    \label{eq:steptestprecond}
	\Step_{i+1} \defeq
	\begin{pmatrix}
	\Tau_i & 0 \\
	0 & \Sigma_{i+1}
    \end{pmatrix},
    \quad
	\Test_{i+1} \defeq
	\begin{pmatrix}
	\TauTest_i & 0 \\
	0 & \SigmaTest_{i+1}
    \end{pmatrix},
    \quad\text{and}\quad
	\Precond_{i+1}\defeq
	\begin{pmatrix}
	I & -\inv\TauTest_i\Lambda_i^* \\
	-\inv\SigmaTest_{i+1}\Lambda_i & I
	\end{pmatrix}.
\end{equation}
Here $\Tau_i, \TauTest_i $ and $\Sigma_{i+1}, \SigmaTest_{i+1}$ are the respective primal and dual step length and testing operators, and $\Lambda_i$ is a term that we will develop to suitably decouple the updates of the primal and dual variables.
In the deterministic case, $\Tau_i, \TauTest_i \in \linear(X; X)$ and $\Sigma_{i+1}, \SigmaTest_{i+1} \in \linear(Y; Y)$ as well as $\Lambda_i \in \linear(X; Y)$.
Clearly, $\Test_{i+1}\Precond_{i+1}$ is self-adjoint.
For the stochastic setting we will impose our formal assumptions later in \eqref{eq:conditionality-assumptions}.
We will in particular require the tests $\TauTest_i$ and $\SigmaTest_{i+1}$ to already be known before the start of the $i$:th iteration (calculating $\thisu$), whereas the step lengths themselves will have to be known before the $(i+1)$:th iteration (calculating $\nextu$).

Finally, we write our proposed algorithm in the implicit form
%of a preconditioned proximal point method
\begin{equation}
	\label{eq:ppext}
	\tag{PP}
	0 \in \Step_{i+1}\tilde H_{i+1}(\nextu)+\Precond_{i+1}(\nextu-\thisu)
\end{equation}
for
\begin{equation}
    \label{eq:bregfun-def}
	\tilde H_{i+1}(\nextu)\defeq H(\nextu) +
	\begin{pmatrix}
		[\kgrad{\thisx}-\kgrad{\nextx}]^*{\nexty}  \\
		K(\nextx)-K(\nextx+\Omega^i(\nextx-\thisx))+\kgrad{\thisx}\Omega^i(\nextx-\thisx)
	\end{pmatrix}
\end{equation}
and some over-relaxation operator $\Omega_i$, which in the deterministic setting is in $\linear(X; X)$.
Here $\tilde H_{i+1}(u)$ is a partial linearization of $H(u)$ similar to \cite{tuomov-nlpdhgm}. It simplifies to $H(u)$ for a linear $K$.
In the following, by specifying the testing, step length, preconditioning, and over-relaxation operator, we develop more explicit methods from this implicit formulation, which itself is more amenable to convergence analysis.

%%%%%%%%%%%%%%%%%%%%%%%%%%%%%%%%%%%%%%%%%
\subsection{Testing for convergence}
%%%%%%%%%%%%%%%%%%%%%%%%%%%%%%%%%%%%%%%%%

The proximal point method iteratively solves $\nextu$ from
\begin{equation}
    \label{eq:ppexample}
    0 \in H(\nextu) + \inv\tau(\nextu-\thisu)
\end{equation}
given a step length parameter $\tau>0$.
If $H$ is a $\gamma$-strongly monotone operator and $\realoptu \in \inv H(0)$.
Then $\iprod{H(\nextu)}{\nextu-\realoptu} \ge \gamma\norm{\nextu-\realoptu}^2$.
This suggest ``testing'' \eqref{eq:ppexample} by the application of $\iprod{\freevar}{\nextu-\realoptu}$.
Subsequently to this testing, the strong monotonicity and Pythagoras' identity
\begin{equation*}
    \iprod{\nextu-\thisu}{\nextu-\realoptu}
    = \frac{1}{2}\norm{\nextu-\thisu}^2
    - \frac{1}{2}\norm{\thisu-\realoptu}^2
    + \frac{1}{2}\norm{\nextu-\realoptu}^2,
\end{equation*}
applied to $0 \in \iprod{H(\nextu) + \tau^{-1}(\nextu-\thisu)}{\nextu-\realoptu}$ yield
\[
    \frac{1+2\gamma\tau}{2}\norm{\nextu-\realoptu}^2
    +\frac{1}{2}\norm{\nextu-\thisu}^2
    \le
    \frac{1}{2}\norm{\thisu-\realoptu}^2.
\]
Telescoping this inequality, it is clear that $u^N \to \realoptu$ at the linear rate $O(1/(1+2\gamma\tau)^N)$.
The next theorem from \cite{tuomov-proxtest} generalises these simple arguments to the more general algorithm \eqref{eq:ppext} in the stochastic setting. 

\begin{theorem}[{\cite[Corollary 3.1]{tuomov-proxtest}}]
    \label{thm:convergence-result-main-h-stoch}
    On a Hilbert space $\Space$ and a probability space $(\Omega, \SAlg)$, let $\tilde H_{i+1}: \Random(\SAlg; \Space \setto \Space)$, and $\Precond_{i+1}, \Test_{i+1} \in \Random(\SAlg; \linear(\Space; \Space))$ for $i \in \N$.
    Suppose \eqref{eq:ppext} is solvable for $\{\nextu\}_{i \in \N} \subset \Random(\SAlg; \Space)$.
    If for all $i \in \N$ and almost all random events $\omega \in \Omega$, $(\Test_{i+1}\Precond_{i+1})(\omega)$ is self-adjoint, and the \term{expected fundamental condition}
    \begin{multline}
        \label{eq:convergence-fundamental-condition-iter-h-stoch}
        \E[\iprod{\Step_{i+1}\tilde H_{i+1}(\nextu)}{\nextu-\realoptu}_{\Test_{i+1}}]
        \ge
        \E\left[\frac{1}{2}\norm{\nextu-\realoptu}_{\Test_{i+2}\Precond_{i+2}-\Test_{i+1}\Precond_{i+1}}^2
        - \frac{1}{2}\norm{\nextu-\thisu}_{\Test_{i+1} \Precond_{i+1}}^2\right]
    \end{multline}
    holds, then so does the \term{expected descent inequality}
    \begin{equation}
        \label{eq:convergence-result-main-h-stoch}
        \E\left[\frac{1}{2}\norm{u^N-\realoptu}^2_{\Test_{N+1}\Precond_{N+1}}\right]
        \le
        \E\left[\frac{1}{2}\norm{u^0-\realoptu}^2_{\Test_{1}\Precond_{1}}\right]
        \quad
        (N \ge 1).
    \end{equation}
\end{theorem}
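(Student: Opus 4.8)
The statement is \cite[Corollary 3.1]{tuomov-proxtest}, and the plan is to mimic, in the weighted and stochastic setting, the proximal-point computation sketched just before it. First I would use solvability of \eqref{eq:ppext}: for each $i$ fix a (measurable) selection $w_{i+1}\in\tilde H_{i+1}(\nextu)$ realising the inclusion, i.e.\ with $0=\Step_{i+1}w_{i+1}+\Precond_{i+1}(\nextu-\thisu)$ almost surely, and ``test'' this identity by taking the $\Test_{i+1}$-weighted inner product with $\nextu-\realoptu$. Using $\iprod{a}{b}_{\Test_{i+1}}=\iprod{\Test_{i+1}a}{b}$ this gives
\begin{equation*}
    \iprod{\Step_{i+1}w_{i+1}}{\nextu-\realoptu}_{\Test_{i+1}}
    = -\iprod{\Test_{i+1}\Precond_{i+1}(\nextu-\thisu)}{\nextu-\realoptu}.
\end{equation*}

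Next, since $(\Test_{i+1}\Precond_{i+1})(\omega)$ is self-adjoint for a.e.\ $\omega$, I would apply the polarisation identity $\iprod{M(a-b)}{a-c}=\tfrac12\norm{a-b}_M^2+\tfrac12\norm{a-c}_M^2-\tfrac12\norm{b-c}_M^2$ — valid for any self-adjoint $M$, with $\norm{z}_M^2\defeq\iprod{Mz}{z}$ understood merely as a (possibly indefinite) quadratic form — with $M=\Test_{i+1}\Precond_{i+1}$, $a=\nextu$, $b=\thisu$, $c=\realoptu$, obtaining
\begin{equation*}
    \iprod{\Step_{i+1}w_{i+1}}{\nextu-\realoptu}_{\Test_{i+1}}
    = \tfrac12\norm{\thisu-\realoptu}_{\Test_{i+1}\Precond_{i+1}}^2
      -\tfrac12\norm{\nextu-\realoptu}_{\Test_{i+1}\Precond_{i+1}}^2
      -\tfrac12\norm{\nextu-\thisu}_{\Test_{i+1}\Precond_{i+1}}^2.
\end{equation*}
Then I would take $\E[\freevar]$ and bound the left-hand side below by the expected fundamental condition \eqref{eq:convergence-fundamental-condition-iter-h-stoch}; the $\E[\tfrac12\norm{\nextu-\thisu}^2_{\Test_{i+1}\Precond_{i+1}}]$ terms appear on both sides of the resulting inequality and cancel, and by additivity of $M\mapsto\norm{z}_M^2$ the surviving terms recombine as $\norm{\nextu-\realoptu}^2_{\Test_{i+1}\Precond_{i+1}}+\norm{\nextu-\realoptu}^2_{\Test_{i+2}\Precond_{i+2}-\Test_{i+1}\Precond_{i+1}}=\norm{\nextu-\realoptu}^2_{\Test_{i+2}\Precond_{i+2}}$, leaving the one-step estimate
\begin{equation*}
    \E\left[\tfrac12\norm{\nextu-\realoptu}^2_{\Test_{i+2}\Precond_{i+2}}\right]
    \le \E\left[\tfrac12\norm{\thisu-\realoptu}^2_{\Test_{i+1}\Precond_{i+1}}\right].
\end{equation*}
Finally, chaining this for $i=0,1,\ldots,N-1$ telescopes to \eqref{eq:convergence-result-main-h-stoch}.

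Most of this is bookkeeping rather than a genuine obstacle. The two points that need care are: (i) the set-valued $\tilde H_{i+1}$ must be handled through the fixed selection $w_{i+1}$ supplied by solvability of \eqref{eq:ppext}, relying on the convention that the fundamental condition is imposed on every selection and hence on this one (with the attendant measurability/integrability kept implicit); and (ii) no positive semidefiniteness of $\Test_{i+1}\Precond_{i+1}$ is used anywhere — every manipulation is purely algebraic in the bilinear forms $\iprod{M\freevar}{\freevar}$ — so the abstract descent inequality becomes a genuine distance bound only once $\Test_{i+1}\Precond_{i+1}\ge 0$ is separately arranged in the concrete algorithms of later sections. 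The only mildly delicate structural observation is that the combination $\Test_{i+2}\Precond_{i+2}-\Test_{i+1}\Precond_{i+1}$ appearing in \eqref{eq:convergence-fundamental-condition-iter-h-stoch} is exactly what is needed to make the weighted quadratic forms telescope.
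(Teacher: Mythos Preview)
Your proposal is correct and follows precisely the approach the paper sketches in the paragraph immediately preceding the theorem (the proximal-point ``testing'' computation with the Pythagoras/three-point identity, generalised to the operator-weighted stochastic setting). Since the paper itself does not prove this theorem but cites it from \cite{tuomov-proxtest}, your argument is exactly the standard one: test \eqref{eq:ppext} against $\nextu-\realoptu$ in the $\Test_{i+1}$-weighted inner product, apply the self-adjoint three-point identity, take expectations, invoke \eqref{eq:convergence-fundamental-condition-iter-h-stoch}, and telescope.
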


The condition \eqref{eq:convergence-fundamental-condition-iter-h-stoch} is simply a relaxation of the strong monotonicity we assumed above. It also includes the term $\frac{1}{2}\norm{\nextu-\thisu}_{\Test_{i+1} \Precond_{i+1}}$ intended to be used with forward steps.
	In application to \eqref{eq:ppexample}, we have $\Precond_{i+1} = I$, and we can take as the testing operator $\Test_{i+1} = \tauTest_{i} I$ with $\tauTest_{i+1}=(1+2\gamma\tau)\tauTest_i$ and $\tauTest_0=1$. Thus $\Test_{N+1}\Precond_{N+1}$ in \eqref{eq:convergence-result-main-h-stoch} forms a local metric that measures rates of convergence.
If we can ensure $\Test_{i+1}\Precond_{i+1}\ge\mu_i I$ for some deterministic $\mu_i \upto \infty$, then \eqref{eq:convergence-result-main-h-stoch} shows $\E[\norm{u^N-\realoptu}^2]$ to converge to zero at the rate $O(1/\mu_N)$.
We will in \cref{sec:estimates} develop lower bounds of this kind.

%%%%%%%%%%%%%%%%%%%%%%%%%%%%%%%%%%%%%%%%%
\subsection{Blockwise algorithm structure}
%%%%%%%%%%%%%%%%%%%%%%%%%%%%%%%%%%%%%%%%%

We now develop a more blockwise-refined structure of our proposed algorithm.
Inserting \eqref{eq:steptestprecond}, we can expand \eqref{eq:ppext} as the pair of implicit updates (compare \cite[§2.3]{tuomov-blockcp})
\begin{equation}
	\label{eq:lambda-algorithm}
	\left\{\begin{aligned}
		\nextx&=(I+\Tau_i \subdiff G)^{-1}
		(\thisx+\TauTest_i^{-1}[\Lambda_i^*-\TauTest_i\Tau_i\kgradconj{\thisx}](\nexty-\thisy)-\Tau_i\kgradconj{\thisx}\thisy),
		\\
		\nexty&=(I+\Sigma_{i+1} \subdiff F^*)^{-1}
		(\thisy+\SigmaTest_{i+1}^{-1}[\Lambda_i-\SigmaTest_{i+1}\Sigma_{i+1}\kgrad{\thisx}\Omega^i](\nextx-\thisx)
		\\
		&\qquad\qquad\qquad\qquad
		+\Sigma_{i+1}K(\nextx+\Omega^i(\nextx-\thisx))).
	\end{aligned}\right.
\end{equation}

Due to the block-separable structure of $G$ and $F^*$ in \eqref{eq:h}, we take for all $i \in \N$,
\begin{subequations}%
\label{eq:step-length-structure}%
\begin{align}%
	\Tau_i&\defeq\sum_{j\in S(i)} \tau_j^{i} P_j,
	&\Sigma_{i+1}&\defeq\sum_{\ell \in V(i+1)} \sigma_\ell^{i+1} Q_\ell,
	&\Omega_i&\defeq\sum_{j\in S(i)} \omega_j^i P_j,
	\\
	\TauTest_i&\defeq\sum_{j=1}^{m} \tauTest_j^{i} P_j,
	&\SigmaTest_{i+1}&\defeq\sum_{\ell=1}^{n} \sigmaTest_\ell^{i+1} Q_\ell,\quad\text{and}
	&\Lambda_i&\defeq\sum_{j=1}^{m} \sum_{\ell=1}^{n} \lambda_{\ell,j}^{i} Q_\ell \kgrad{\thisx} P_j,
\end{align}%
\end{subequations}%
for some (random) subsets of indices $S(i)\subseteq\{1,\dots,m\}$ and $V(i+1)\subseteq\{1,\dots,n\}$ and (random) parameters $\tau_j^i,\tauTest_j^i,\sigma_\ell^{i+1},\sigmaTest_\ell^{i+1}>0$, and $\omega_j^i, \lambda_{j,\ell}^i \in \R$. We wait until \eqref{eq:conditionality-assumptions} to specify the exact probabilistic setup, which we do not need before that.
Due to the block-separable structures of $G$ and $F^*$, the operators $(I+\Tau_i \subdiff G)^{-1}$ and $(I+\Sigma_{i+1} \subdiff F^*)^{-1}$ are also block-separable.

%%%%%%%%%%%%%%%%%%%%%%%%%%%%%%%%%%%%%%%%%
%\subsection{Splitting of the updates}
%\label{sec:splitting-updates}
%%%%%%%%%%%%%%%%%%%%%%%%%%%%%%%%%%%%%%%%%

We also pick further subsets of indices $\iset{S}(i)\subseteq S(i)$ and $\iset V(i+1) \subset V(i+1)$; the rough idea is that $\nextx_j$ for $j \in \iset S(i)$ is updated within each step of the algorithm independently of $\nexty$. In the linear-$K$ case of \cite{tuomov-blockcp} also $\nexty_\ell$ for $\ell \in \iset V(i+1)$ would be updated independently of $\nextx$, but presently we are not able to ensure that.
However, we show at the end of this subsection that the primal blocks $\nextx_j$ for $j \in S(i)\setminus \iset S(i)$ still depend on $\nexty_\ell$ only for $\ell\in\iset V(i+1)$, as is the case for a linear $K$ in \cite{tuomov-blockcp}.
Moreover we require the ``nesting conditions''
\begin{subequations}%
\label{eq:chilo-definition}%
\begin{align}
	\chi_{\iset S(i)}(j)(1-\chi_{V(i+1)}(\ell))
    & =0,
    &
	(1-\chi_{S(i)}(j))\chi_{\iset V(i+1)}(\ell)
    & =0,
    \\
	\chi_{\iset S(i)}(j)\chi_{\iset V(i+1)}(\ell) & =0,
    \quad\text{and}
    &
	\chi_{S(i)\setminus\iset S(i)}(j)\chi_{V(i+1)\setminus\iset V(i+1)}(\ell) & =0
\end{align}
when
\begin{equation}
    \label{eq:neigh-def}
    \ell \in \this\Neigh_j \defeq \{ \ell \in \{1,\ldots,n\} \mid Q_\ell \grad K(\thisx) P_j \ne 0 \}.
\end{equation}%
\end{subequations}
These conditions force those dual blocks that are ``connected'' by $K$ to the ``independently updated'' primal blocks $\iset S(i)$ to also be (``dependently'') updated, and vice versa. They also disallow connections between independently updated blocks and dependently updated blocks.
Note that the last three equations in \eqref{eq:chilo-definition} are tantamount to the single equality $\chi_{V(i+1)}(\ell)\chi_{S(i)/\iset{S}(i)}(j)=\chi_{\iset{V}(i+1)}(\ell)$: they follow by multiplying the latter by $1-\chi_{S(i)}$, $\chi_{\iset{S}(i)}(j)$, and $\chi_{S(i)/\iset{S}(i)}(j)$, respectively; and vice versa
$\chi_{\iset{V}(i+1)}(\ell)=\chi_{\iset{V}(i+1)}(\ell)\chi_{S(i)}(j)=\chi_{\iset{V}(i+1)}(\ell)\chi_{S(i)/\iset{S}(i)}(j)=\chi_{V(i+1)}(\ell)\chi_{S(i)/\iset{S}(i)}(j)$.

\begin{example}
    We can trivially satisfy \eqref{eq:chilo-definition} by taking either $V(i+1)=\{1,\ldots,n\}$, $\iset V(i+1)=\emptyset$, and $\iset S(i)= S(i)$ or $S(i)=\{1,\ldots,m\}$, $\iset S(i)=\emptyset$, and $\iset V(i+1)=V(i+1)$. We will consider these two cases in the respective \cref{sec:fulldual} (full dual update methods) and \cref{sec:fullprimal} (full primal update methods). We may also alternate iterations between these two choices.
\end{example}

Following the notations for the subsets and their complements, we also write \[
	\iset{P}_i \defeq \sum_{j\in \iset{S}(i)}P_j,
	\quad
	\dset{P}_i \defeq \sum_{j\in S(i) \setminus \iset{S}(i)}P_j,
	\quad
	\iset{Q}_{i+1} \defeq \sum_{\ell\in \iset V(i+1)}Q_\ell,
	\quad\text{and}\quad
	\dset{Q}_{i+1} \defeq \sum_{\ell\in V(i+1) \setminus \iset V(i+1)}Q_\ell.
\]
In \eqref{eq:lambda-algorithm}, for the subsets $S(i)$ and $V(i+1)$ to have the intended meaning that only the corresponding blocks are updated, we need to ensure that $P_j\nextx=P_j\thisx$ for $j\not\in S(i)$ and $Q_\ell\nexty=Q_\ell\thisy$ for $\ell \not\in V(i+1)$.
This holds if $P_j\Lambda_i^*Q_\ell=0$ whenever $j\notin S(i)$, $\ell \in V(i+1)$ or $j \in S(i)$, $\ell \notin V(i+1)$ or $j\notin S(i)$, $\ell \notin V(i+1)$.
Similarly, for $\iset S(i)$ to have the intended meaning that $\nextx_j$ for $j \in \iset S(i)$ does not depend on $\nexty$, studying \eqref{eq:lambda-algorithm}, we are also led to require
\[
    \iset{P}_i[\Lambda_i^*-\TauTest_i\Tau_i\kgradconj{\thisx}]Q_\ell=0
    \quad \text{for any $\ell \in V(i+1)$}.
\]
Finally, since $\dset{P}_i\nextx$ may in \eqref{eq:lambda-algorithm} depend on $\nexty$, we require $\nexty$ to not depend on $\dset{P}_i\nextx$:
\[
    [\Lambda_i-\SigmaTest_{i+1}\Sigma_{i+1}\kgrad{\thisx}\Omega^i]\dset{P}_i=0
    \quad\text{and}\quad
    [I+\Omega^i]\dset{P}_i=0.
\]
Combining the above conditions on $\Lambda_i$ and $\Omega_i$, we arrive at
\begin{equation}
    \label{eq:lambda-conditions}
    \left\{\begin{aligned}
        &P_j\Lambda_i^*Q_\ell=0
        &
        &\text{whenever either $j\not\in S(i)$ or $\ell \not\in V(i+1)$ or both},
        \\
        &\iset{P}_i[\Lambda_i^*-\TauTest_i\Tau_i\kgradconj{\thisx}]Q_\ell=0
        &
        &\text{for } \ell \in V(i+1),
        \\
        &[\Lambda_i+\SigmaTest_{i+1}\Sigma_{i+1}\kgrad{\thisx}]\dset{P}_i=0,
        &
        &\text{and}\quad (\Omega^i+I)\dset{P}_i=0.
    \end{aligned}\right.
\end{equation}

Substituting \eqref{eq:lambda-conditions} into the identity
\[
    \Lambda_i=
    \sum_{\ell \in V(i+1)}Q_\ell\Lambda_i\iset{P}_i
    +\sum_{\ell \not\in V(i+1)}Q_\ell\Lambda_i\iset{P}_i
    +\Lambda_i\dset{P}_i
    +\sum_{j\not\in S(i)}\sum_{\ell=1}^n Q_\ell\Lambda_iP_j,
\]
we are led to take
\begin{equation}
    \label{eq:def-lambdai}
    \Lambda_i\defeq
    \sum_{\ell \in V(i+1)}Q_\ell\kgrad{\thisx}\Tau_i^*\TauTest_i^*\iset{P}_i
    -\SigmaTest_{i+1}\Sigma_{i+1}\kgrad{\thisx}\dset{P}_i,
\end{equation}
which in terms of the components $\this\lambda_{\ell,j}$ reads
\begin{equation}
    \label{eq:def-lambdacomponent}
    \lambda_{\ell,j}^i\defeq \left\{\begin{array}{ll}
    \tau^i_j\tauTest^i_j&\ell \in V(i+1), j\in \iset{S}(i),\\
    -\sigma^{i+1}_\ell\sigmaTest^{i+1}_\ell&\ell \in V(i+1), j\in S(i) \setminus \iset{S}(i),\\
    0&\text{otherwise}.
    \end{array}
    \right.
\end{equation}

Using the coupling conditions \eqref{eq:chilo-definition} between $\iset S(i)$ and $\iset V(i+1)$ in \eqref{eq:def-lambdai}, we deduce
\[
    \Lambda_i=\kgrad{\thisx}\Tau_i^*\TauTest_i^*\iset{P}_i-\iset{Q}_{i+1}\SigmaTest_{i+1}\Sigma_{i+1}\kgrad{\thisx}.
\]
Plugging $\Lambda_i$ into \eqref{eq:lambda-algorithm}, we get two cases for the primal variable. If $j\in \iset S(i)$, we have
\[
	\iset P_i \nextx = (I+\iset\Tau_i \subdiff G)^{-1}(\iset P_i\thisx-\iset{\Tau}_i\kgradconj{\thisx}\thisy),
	\quad\text{where}\quad
	\iset\Tau_i\defeq \iset P_i\Tau_i.
\]
If $j\in S(i)\setminus \iset S(i)$, given that $\Omega^i\dset{P}_i=-\dset{P}_i$ due to the last equality of \eqref{eq:lambda-conditions}, taking $\dset{\Tau}_i\defeq \dset P_i\Tau_i$, we have
\[
	\dset P_i\nextx = (I+\dset{\Tau}_i \subdiff G)^{-1}
	(\dset P_i\thisx-\dset\Tau_i\kgradconj{\thisx}\iset Q_{i+1}\nexty-\dset P_i\TauTest_i^{-1}\kgradconj{\thisx}\Sigma_{i+1}^*\SigmaTest_{i+1}^*\iset Q_{i+1}(\nexty-\thisy)).
\]
Also $\nextx = \iset P_i \nextx + \dset P_i \nextx + (I-\iset P_i -\dset P_i) \nextx$, therefore, for $\overnextx = \nextx +\Omega^i(\nextx-\thisx)$ we can expand
$\nextx  = \iset P_i\nextx -\Omega^i\dset{P}_i\nextx +(I-\iset P_i-\dset P_i)\thisx$
$= \iset P_i\nextx + (I-\iset P_i)\thisx -\Omega^i\dset{P}_i(\nextx-\thisx)$.
Consequently, the implicitly defined algorithm in \eqref{eq:lambda-algorithm} expands into the explicit successive updates for each of the involved projections :
\begin{equation}
    \label{eq:alg-ordered}
    \left\{
    \begin{aligned}
        \iset P_i \nextx & \defeq
        (I+\iset\Tau_i \subdiff G)^{-1}(\iset P_i\thisx-\iset{\Tau}_i\kgradconj{\thisx}\thisy),
        \\
        \overnextx& \defeq
        (I-\iset P_i)\thisx + \iset P_i\nextx+\iset P_i\Omega_i\iset P_i(\nextx-\thisx),
        \\
        \nexty& \defeq
        (I+\Sigma_{i+1} \subdiff F^*)^{-1} \Bigl(\thisy+\Sigma_{i+1}K(\overnextx)
        \\
        \MoveEqLeft[-2]
        	+\dset Q_{i+1}\SigmaTest_{i+1}^{-1}[\kgrad{\thisx}\Tau_i^*\TauTest_i^*-\SigmaTest_{i+1}\Sigma_{i+1}\kgrad{\thisx}\Omega_i]\iset P_i(\nextx-\thisx)\Bigr),
        \\
        \dset P_i\nextx& \defeq (I+\dset{\Tau}_i \subdiff G)^{-1} \Bigl(\dset P_i\thisx -\dset\Tau_i\kgradconj{\thisx}\iset Q_{i+1}\nexty
        \\
        \MoveEqLeft[-7]
        	-\dset P_i\TauTest_i^{-1}\kgradconj{\thisx}\Sigma_{i+1}^*\SigmaTest_{i+1}^*\iset Q_{i+1}(\nexty-\thisy)\Bigr),
		\\
		P_j\nextx & \defeq P_j\thisx \quad \text{for } j \notin S(i).
    \end{aligned}
    \right.
\end{equation}
In the following sections we will further develop and simplify this algorithm by imposing additional conditions on the step length and testing parameters through convergence analysis.

%%%%%%%%%%%%%%%%%%%%%%%%%%%
\section{General estimates}
\label{sec:estimates}
%%%%%%%%%%%%%%%%%%%%%%%%%%%

With the estimate \eqref{eq:convergence-result-main-h-stoch} in mind, our main task in this section is to prove \eqref{eq:convergence-fundamental-condition-iter-h-stoch}.
After introducing the assumptions we need for this work in \cref{sec:assumptions}, and bounding $\Test_{i+1}\Precond_{i+1}$ from below in \cref{sec:zimi-estim}, we do the first stage of this estimation in \cref{sec:penalty0} still deterministically.
Then in \cref{sec:penalty1} we refine these estimates by taking the expectation.
Finally in \cref{sec:penalty2} we combine the various estimates and state a self-contained result on the validity of \eqref{eq:convergence-result-main-h-stoch}.

\subsection{Assumptions}
\label{sec:assumptions}

We will need $K$ to be sufficiently smooth and to satisfy a somewhat technical ``three-point'' version of standard second-order growth conditions:

\begin{assumption}[Lipschitz $\kgrad{x}$]
    \label{ass:k-lipschitz}
    For some  $L\ge 0$ and a neighbourhood $\neighx_K \ni \realoptx$,
    \begin{equation}
        \label{eq:ass-k-lipschitz}
        \norm{\kgrad{x}-\kgrad{x'}}  \le L\norm{x-x'}
        \quad (x,x'\in\neighx_K).
    \end{equation}
\end{assumption}

Using the equality
\[
    K(x')=K(x)+\kgrad{x}(x'-x)+\int_{0}^{1}(\kgrad{x+s(x'-x)}-\kgrad{x})(x'-x)ds,
\]
we obtain for any $x,x' \in \neighx_K$ and $y\in \Dom F^*$ as a direct consequence of \cref{ass:k-lipschitz} that
\begin{equation}
    \label{eq:ass-k}
    \iprod{K(x')-K(x)-\kgrad{x}(x'-x)}{y} \le \frac{L}{2}\norm{x-x'}^2 \norm{y}_{\Pnl}.
\end{equation}
The norm of $y$ only needs to be evaluated within $\Ynl$ because $x \mapsto (I-\Pnl)K(x)$ is linear so the corresponding inner product with the integral term is zero.

\begin{assumption}[three-point condition on $K$]
    \label{ass:k-nonlinear}
    For a neighbourhood $\neighx_K$ of $\realoptx$, some $\Gamma_K=\textstyle\sum_{j=1}^{m}\gamma_{K,j} P_j \in \linear(X; X)$ with $\gamma_{K,j}\in \R$, $L_3 \ge 0$, and $p \in [1,2]$, for any $A = \sum_{j=1}^m a_j P_j \ge 0$ and some $\theta_A \ge 0$ the following holds
    \begin{multline}
        \label{eq:ass-k-nonlinear}
        \iprod{[\kgrad{x}-\kgrad{\realoptx}]^*\realopty}{x'-\realoptx}_A
        \\
        \ge
        \norm{x'-\realoptx}^2_{A\Gamma_K}
        +\theta_A\norm{K(\realoptx)-K(x)-\kgrad{x}(\realoptx-x)}^p-\frac{L_3}{2}\norm{x'-x}_A^2,
        \quad (x,x'\in\neighx_K).
    \end{multline}
\end{assumption}
	This assumption is trivially satisfied for $\gamma_{K,j}=L_3=0$ and any $\theta_A>0$ whenever $x \mapsto \iprod{K(x)}{\realopty}$ is linear.
	In Appendix \ref{sec:kcond} we also provide the constants ensuring this assumption, e.g., whenever the latter is block-separable and strongly-convex.
	For a less straight-forward example in the single-block case, we refer to \cite{tuomov-nlpdhgm-redo}. There we verified the assumption for the reconstruction of the phase and amplitude of a complex number from a noisy measurements.
	That example evidently applies to the present setting in the single-block case or as a separable block of $x \mapsto \iprod{K(x)}{\realopty}$.

We also need pointwise monotonicity of $\subdiff G$ and $\subdiff F^*$ at a root $\realoptu \in \inv H(0)$:

\begin{definition}
    \label{def:monot}
    Let $U$ be a Hilbert space, and $\Gamma \in \linear(U; U)$, $\Gamma\ge0$.
    We say that the set-valued map $H: U \setto U$ is \term{$\Gamma$-strongly monotone at $\realoptu$ for $\realoptw \in H(\realoptu)$} if there exists a neighbourhood $\neighu \ni \realoptu$ such that for any $u \in \neighu$ and $w\in H(u)$,
    \begin{equation}
    \label{eq:monot}
    \iprod{w-\realoptw}{u-\realoptu} \ge \norm{u-\realoptu}_{\Gamma}^2.
    \end{equation}
    If $\Gamma=0$, we say that $H$ is monotone at $\realoptu$ for $\realoptw$.
\end{definition}

\begin{assumption}
    \label{ass:gf}
    For any $\realoptw=(\realopt{\nu},\realopt{\xi}) \in H(\realoptu)$, the set-valued map $\subdiff G$ is $\sum_{j=1}^m \gamma_{G,j} P_j$-strongly monotone at $\realoptx$ for $\realopt{\nu}-\kgradconj{\realoptx}\realopty$ in the neighbourhood $\neighx_G$, and the set-valued map $\subdiff F^*$ is $\sum_{\ell=1}^n \gamma_{F^*,\ell}Q_\ell$-strongly monotone at $\realopty$ for $\realopt{\xi}+K(\realoptx)$ in the neighbourhood $\neighy_{F^*}$, where the constants $\gamma_{G,j}, \gamma_{F^*,\ell} \ge 0$ for all $j=1,\ldots,m$ and $\ell=1,\ldots,n$.
\end{assumption}

\subsection{A lower bound on the local metric}
\label{sec:zimi-estim}

To estimate $\Test_{i+1}\Precond_{i+1}$ from below, we formulate a block-adapted version of the basic step length condition $\tau\sigma\norm{K}^2 < 1$ from \cite{chambolle2010first}.
The assumptions of the following lemma replace the more abstract constructions of \cite[Definition 2.2 and Examples 2.3 and 2.4]{tuomov-blockcp}.
We recall from \eqref{eq:neigh-def} the ``set of connections'' $\this\Neigh_j$ and also introduce the set of ``simultaneous connections'', filtered by $\lambda_{k,j}^i$, as
\begin{equation}
	\label{eq:def-sim-connected-blocks}
    {\bar\Neigh_j}^i(\ell) \defeq \{k \in \{1,\ldots,n\} \mid Q_\ell \kgrad{\thisx} P_j \kgradconj{\thisx} Q_k \ne 0,\, \lambda_{k,j}^i \ne 0 \}.
\end{equation}

\begin{lemma}
	\label{lemma:sigmatest}
    Let $i \in \N$ and $0\le\delta\le\kappa<1$.
    For some factors $w_{j,\ell,k}^i=1/w_{j,k,\ell}^i>0$, ($\ell, k=1,\ldots,n$; $j=1,\ldots,m$), define
    \begin{gather}
        \label{eq:psi-bounding-bounds}
        w_{j,\ell}^i \defeq \chi_{\this\Neigh_j}(\ell) \sum_{k \in \this{\bar\Neigh_j}(\ell)} w_{j,\ell,k}^i
	\shortintertext{and suppose}
        \label{eq:sigmatest}
	    (1-\kappa) \nexxt\sigmaTest_\ell \ge  \BiggNorm{
				\sum_{j=1}^{m} \abs{\this\lambda_{\ell,j}} \sqrt{w_{j,\ell}^i/\this\tauTest_j}
				Q_\ell \kgrad{\thisx} P_j
        }^2
	    \quad (\ell=1,\ldots,n).
    \end{gather}
    Then
    \begin{equation}
        \label{eq:sigmatest-result}
        \Test_{i+1}\Precond_{i+1}\ge %\Upsilon_{i+1} \defeq
        \begin{pmatrix}
        \delta \TauTest_i & 0 \\
        0 & \frac{\kappa-\delta}{1-\delta}\SigmaTest_{i+1}
        \end{pmatrix}.
    \end{equation}
    
\end{lemma}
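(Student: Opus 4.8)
The plan is to expand $\Test_{i+1}\Precond_{i+1}$ using the block structure from \eqref{eq:steptestprecond} and \eqref{eq:step-length-structure}, and to show that the off-diagonal cross-terms coupling the primal and dual variables can be absorbed into the diagonal by a Young/Cauchy--Schwarz argument, leaving precisely the claimed residual. Writing $u=(x,y)$, from the definitions
\[
    \Test_{i+1}\Precond_{i+1}=
    \begin{pmatrix}
        \TauTest_i & -\Lambda_i^* \\
        -\Lambda_i & \SigmaTest_{i+1}
    \end{pmatrix},
\]
so that $\norm{u}_{\Test_{i+1}\Precond_{i+1}}^2 = \norm{x}_{\TauTest_i}^2 - 2\iprod{\Lambda_i x}{y} + \norm{y}_{\SigmaTest_{i+1}}^2$. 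Using $\Lambda_i=\kgrad{\thisx}\Tau_i^*\TauTest_i^*\iset P_i - \iset Q_{i+1}\SigmaTest_{i+1}\Sigma_{i+1}\kgrad{\thisx}$ from the discussion preceding \eqref{eq:alg-ordered}, and the component expression \eqref{eq:def-lambdacomponent} for $\this\lambda_{\ell,j}$, I would expand $\iprod{\Lambda_i x}{y} = \sum_{\ell,j}\this\lambda_{\ell,j}\iprod{Q_\ell\kgrad{\thisx}P_j x}{Q_\ell y}$ and bound each summand. The point is that only pairs $(\ell,j)$ with $\ell\in\this\Neigh_j$ contribute, by the definition \eqref{eq:neigh-def}.

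The key step is the term-by-term Young estimate. For each contributing pair, split $Q_\ell y = \sum_{k}Q_\ell\kgrad{\thisx}P_j\kgradconj{\thisx}Q_k(\text{something})$ — more precisely, I would estimate
\[
    \abs{\this\lambda_{\ell,j}}\,\abs{\iprod{Q_\ell\kgrad{\thisx}P_j x}{Q_\ell y}}
    \le \frac{\delta}{2}\,\this\tauTest_j^{-1}\cdot(\text{primal contribution})
      + \frac{1}{2\delta}\cdot(\text{dual contribution}),
\]
splitting the product using the weights $w_{j,\ell,k}^i$ with the symmetry $w_{j,\ell,k}^i=1/w_{j,k,\ell}^i$ so that, after summing over $j$, the primal side collects the factor $w_{j,\ell}^i$ defined in \eqref{eq:psi-bounding-bounds} and matches the operator norm appearing on the right-hand side of \eqref{eq:sigmatest}. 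Applying \eqref{eq:sigmatest} then shows the accumulated dual penalty is at most $(1-\kappa)\nexxt\sigmaTest_\ell$ on block $\ell$, so the dual quadratic form retains at least $\kappa\,\SigmaTest_{i+1}$; the primal form retains at least $\delta\,\TauTest_i$. A final bookkeeping step rewrites $\kappa$ as $\frac{\kappa-\delta}{1-\delta}+\delta\cdot\frac{1-\kappa}{1-\delta}$, i.e. redistributes so that the genuinely "free" dual residual is $\frac{\kappa-\delta}{1-\delta}\SigmaTest_{i+1}$ after the Young constant $\delta$ versus $1/\delta$ is chosen consistently across primal and dual; one checks $0\le\delta\le\kappa<1$ makes both residual coefficients nonnegative, giving \eqref{eq:sigmatest-result}.

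The main obstacle I expect is the careful combinatorial splitting with the reciprocal weights $w_{j,\ell,k}^i=1/w_{j,k,\ell}^i$: one must choose, for each pair $(\ell,j)$ and each simultaneous connection $k\in\this{\bar\Neigh_j}(\ell)$, exactly how to apportion the cross term so that summing over the "first index" produces $w_{j,\ell}^i$ while summing over the "second index" produces the reciprocal contribution, and the two must cancel to leave only the operator-norm-squared quantity in \eqref{eq:sigmatest} rather than a larger double sum. Getting the indices, the square roots $\sqrt{w_{j,\ell}^i/\this\tauTest_j}$, and the operator-norm (rather than sum-of-norms) bound to line up exactly is the delicate part; everything else is a routine block-diagonal quadratic-form manipulation and the choice of Young's constant.
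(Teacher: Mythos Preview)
Your high-level structure is right: one expands $\Test_{i+1}\Precond_{i+1}$ as the block matrix $\begin{psmallmatrix}\TauTest_i & -\Lambda_i^*\\ -\Lambda_i & \SigmaTest_{i+1}\end{psmallmatrix}$ and uses a Young/Schur-complement step. But two points in your plan are off, and the first is a real gap.

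\textbf{The weights act on dual--dual, not primal--dual, cross-terms.} You propose to apply Young's inequality to each term $\abs{\this\lambda_{\ell,j}}\,\abs{\iprod{Q_\ell\kgrad{\thisx}P_j x}{Q_\ell y}}$ and to ``split using the weights $w_{j,\ell,k}^i$''. But there is no index $k$ in that term, so there is nothing for the reciprocal weights to act on; your sentence ``split $Q_\ell y=\sum_k Q_\ell\kgrad{\thisx}P_j\kgradconj{\thisx}Q_k(\text{something})$'' does not parse. The paper instead applies a single block-level Young inequality with factor $(1-\delta)$, which reduces the task to bounding the operator $\Lambda_i\inv\TauTest_i\Lambda_i^*$. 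Expanding that operator gives, for fixed $j$, the dual--dual cross-terms $\sqrt{\zeta_{\ell,j}\zeta_{k,j}}\iprod{P_j\kgradconj{\thisx}Q_\ell y}{P_j\kgradconj{\thisx}Q_k y}$ with $\zeta_{\ell,j}\defeq(\this\tauTest_j)^{-1}(\this\lambda_{\ell,j})^2/(1-\kappa)$, and \emph{this} is where the reciprocal weights enter: by Young, $w_{j,\ell,k}^i\zeta_{\ell,j}\norm{P_j\kgradconj{\thisx}Q_\ell y}^2+w_{j,k,\ell}^i\zeta_{k,j}\norm{P_j\kgradconj{\thisx}Q_k y}^2$ dominates twice the cross-term. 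Summing over $k\in\this{\bar\Neigh_j}(\ell)$ produces exactly the factor $w_{j,\ell}^i$ of \eqref{eq:psi-bounding-bounds}, and then \eqref{eq:sigmatest} gives $(1-\kappa)\SigmaTest_{i+1}\ge\Lambda_i\inv\TauTest_i\Lambda_i^*$.

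\textbf{The constant $\tfrac{\kappa-\delta}{1-\delta}$ is not obtained by redistribution.} Your ``bookkeeping'' step, aiming first for primal residual $\delta$ and dual residual $\kappa$ and then rewriting $\kappa=\tfrac{\kappa-\delta}{1-\delta}+\delta\cdot\tfrac{1-\kappa}{1-\delta}$, cannot work: a single Young parameter cannot simultaneously leave $\delta\TauTest_i$ on the primal side and $\kappa\SigmaTest_{i+1}$ on the dual side. In the paper the factor $(1-\delta)$ in Young's inequality gives directly $\SigmaTest_{i+1}-\tfrac{1}{1-\delta}\Lambda_i\inv\TauTest_i\Lambda_i^*\ge\bigl(1-\tfrac{1-\kappa}{1-\delta}\bigr)\SigmaTest_{i+1}=\tfrac{\kappa-\delta}{1-\delta}\SigmaTest_{i+1}$, with no further manipulation needed.
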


\begin{proof}
	Setting  $\zeta_{\ell,j} \defeq \inv{(\this\tauTest_j)}(\this\lambda_{\ell,j})^2/(1-\kappa)$, we use \cref{eq:sigmatest} and the orthogonality of the projections $\{P_j\}_{j=1}^m$
	to obtain for any $y \in Y$ that
	\[
		\begin{split}
		\sum_{\ell=1}^{n}\nexxt\sigmaTest_\ell \norm{Q_\ell y}^2
		&\ge
		\sum_{\ell=1}^{n}\adaptNorm{\sum_{j=1}^{m} \sqrt{\zeta_{\ell,j} w_{j,\ell}^i}  Q_\ell \kgrad{\thisx} P_j}^2 \norm{Q_\ell y}^2
		\ge
		\sum_{\ell=1}^{n}\adaptNorm{\sum_{j=1}^{m} \sqrt{\zeta_{\ell,j} w_{j,\ell}^i}  P_j \kgradconj{\thisx} Q_\ell y}^2
		\\
		&
		=
		\sum_{\ell=1}^{n}\sum_{j=1}^{m} \zeta_{\ell,j} w_{j,\ell}^i  \norm{P_j \kgradconj{\thisx} Q_\ell y}^2
		\ge
		\sum_{j=1}^{m}  \sum_{\ell \in \this\Neigh_j} \Biggl(\sum_{k \in \this{\bar\Neigh_j}(\ell)} w_{j,\ell,k}^i\Biggr) \zeta_{\ell,j} \norm{P_j \kgradconj{\thisx} Q_\ell y}^2.
		\end{split}
	\]
    Since $w^{i}_{j,k,\ell}=1/w^{i}_{j,\ell,k}$, we continue to estimate by Young's inequality
    \[
	    \sum_{\ell=1}^{n}\nexxt\sigmaTest_\ell \norm{Q_\ell y}^2
	    \ge
	    \sum_{j=1}^{m} \sum_{k,\ell=1}^{n} \zeta_{\ell,j}^{1/2} \zeta_{k,j}^{1/2} \iprod{P_j \kgradconj{\thisx} Q_\ell y}{\kgradconj{\thisx} Q_k y}.
	\]
    Here we also used \eqref{eq:def-sim-connected-blocks} to convert the second sum to run over all $k,\ell=1,\ldots,n$.
    As $y \in Y$ was arbitrary, inserting $\zeta_{k,j}$ and the structure \eqref{eq:step-length-structure} of $\SigmaTest_{i+1}$, $\TauTest_i$, and $\Lambda_i$, we deduce $(1-\kappa)\SigmaTest_{i+1}\ge \Lambda_i\inv\TauTest_i\Lambda_i^*$.
    
    On the other hand, applying Young's inequality with the factor $(1-\delta)$ we deduce that
    \begin{equation}
	    \label{eq:test-precond-expansion-estimate}
        \Test_{i+1}\Precond_{i+1}
        =
        \begin{pmatrix}
            \TauTest_i & -\Lambda_i^* \\
            -\Lambda_i & \SigmaTest_{i+1}
        \end{pmatrix}
	    \ge
	    \begin{pmatrix}
	    \delta \TauTest_i & 0 \\
	    0 & \SigmaTest_{i+1} - \frac{1}{1-\delta}\Lambda_i\inv\TauTest_i\Lambda_i^*
    \end{pmatrix}
    \end{equation}
    Thus \eqref{eq:sigmatest-result} holds.
    
\end{proof}

The next example demonstrates a simple choice of the weights $w_{j,k,\ell}$ that is likely to work if all the dual blocks $\ell$ have similar roles in the problem. In \cref{sec:numerical} we will also consider other options when some dual blocks have different roles.

\begin{example}[Equal weighting]
    \label{ex:weight-count}
    Suppose $\this\Neigh_j \subset \Neigh_j$ and $\this{\bar\Neigh_j}(\ell) \subset {\bar\Neigh_j}(\ell)$ where $\Neigh_j$ and ${\bar\Neigh_j}(\ell)$ do not depend on the iteration. If we take $w_{j,\ell,k}^i \equiv 1$, then $w_{j,\ell}=\chi_{\Neigh_j}(\ell)\#{\bar\Neigh_j}(\ell)$ counts the dual blocks ``simultaneously connected'' with $\ell$ via the primal block $j$ as defined by \eqref{eq:def-sim-connected-blocks}.
\end{example}

To provide further intuition into the result, let $w_{j,\ell}$ be as in \cref{ex:weight-count}.
With only one primal block ($j,m=1$), and assuming full connectedness ($w_{1,\ell}=n$ for all $\ell=1,\ldots,n$), \cref{lemma:sigmatest} requires $\sigmaTest_\ell \ge \zeta_{1,\ell} n \norm{Q_\ell \grad K(\thisx)}^2$.
Let $a \defeq \frac{1}{n} \sum_{\ell=1}^n \norm{Q_\ell \grad K(\thisx)}^2=\frac{1}{n}\norm{\grad K(\thisx)}^2$. After plugging $\this\lambda_{\ell,j}$ from \eqref{eq:def-lambdacomponent} into \eqref{eq:sigmatest}, the lemma then says that the step length parameters can be proportionally larger compared to the single dual block case ($n=1$) when $\norm{Q_\ell \grad K(\thisx)}^2 < a$, and have to be proportionally smaller when $\norm{Q_\ell \grad K(\thisx)}^2 > a$.
In \cref{sec:fulldual} and \cref{sec:fullprimal}, we further transform \eqref{eq:sigmatest} to obtain explicit step-length conditions. But now, for the remainder of \cref{sec:estimates}, we assume that \eqref{eq:sigmatest-result} holds and derive sufficient conditions to be able to apply \cref{thm:convergence-result-main-h-stoch}.

\subsection{Initial non-stochastic estimates}
\label{sec:penalty0}

The next lemma starts the verification of \eqref{eq:convergence-fundamental-condition-iter-h-stoch}.

\begin{lemma}
    \label{lemma:penalty-general}
    Suppose \cref{ass:k-lipschitz,ass:gf} hold together with \eqref{eq:sigmatest-result} for some $L\ge 0$, $\gamma_{G,j},\gamma_{F^*,\ell}\ge0$ ($j=1,\ldots,m$, $\ell=1,\ldots,n$), and $0\le\delta\le\kappa<1$. Then with $\tilde H_{i+1}$ given by \eqref{eq:bregfun-def} and $\Precond_{i+1}$ given by \eqref{eq:steptestprecond}, we have
    \begin{multline}
    	\label{eq:basic-deterministic-estimate}
	    \frac{1}{2}\norm{\nextu-\thisu}_{\Test_{i+1} \Precond_{i+1}}^2
	    + \frac{1}{2}\norm{\nextu-\realoptu}_{\Test_{i+1}\Precond_{i+1}-\Test_{i+2}\Precond_{i+2}}^2
	    + \iprod{\tilde H_{i+1}(\nextu)}{\nextu-\realoptu}_{\Step_{i+1}\Test_{i+1}}
	    \\
    	\ge
    	\frac{1}{2}\norm{\nextx-\thisx}_{R_x}^2
    	+\frac{1}{2}\frac{\kappa-\delta}{1-\delta}\norm{\nexty-\thisy}_{\SigmaTest_{i+1}}^2
    	+\frac{1}{2}\norm{\nextu-\realoptu}_{R'}^2
    	+D_i^K+D_i^\Lambda,
    \end{multline}
    where for an arbitrary $\Gamma_K\defeq\textstyle\sum_{j=1}^{m}\gamma_{K,j} P_j \in \linear(X; X)$ for $\gamma_{K,j}\in \R$ we set
    \begin{subequations}%
    \label{eq:penalty-general-vars}%
    \begin{align}
        \label{eq:def-rx}
        R_x&\defeq\delta\TauTest_{i}
        -L\norm{\Omega^i+I}^2\norm{\SigmaTest_{i+1}^*\Sigma_{i+1}^*(\nexty-\realopty)}_{\Pnl}I,
        \\
        \label{eq:penalty-general-vars-rprime}
        R' & \defeq
        \begin{psmallmatrix}
            \TauTest_i-\TauTest_{i+1}+2\sum_{j\in S(i)}\tauTest_j^{i} \tau_j^{i}(\gamma_{G,j}+\gamma_{K,j})P_j & 0
            \\
            0 & \SigmaTest_{i+1}-\SigmaTest_{i+2}+2\sum_{\ell \in V(i+1)}\sigmaTest_\ell^{i+1} \sigma_\ell^{i+1}\gamma_{F^*,\ell}Q_\ell
        \end{psmallmatrix},
        \allowdisplaybreaks
        \\
        \label{eq:def-dilambda}
        D_i^\Lambda &\defeq
        \iprod{[\Lambda_{i+1}-\Lambda_i](\nextx-\realoptx)}{\nexty-\realopty}
        \\
        \nonumber
        \MoveEqLeft[-1]
        +\iprod{\kgradconj{\thisx}(\nexty-\realopty)}{\nextx-\realoptx}_{\TauTest_{i}\Tau_{i}-\Sigma_{i+1}^*\SigmaTest_{i+1}^*},
        \quad\text{and}
        \allowdisplaybreaks
        \\
        \label{eq:def-dik}
        D_i^K&\defeq
        \iprod{[\kgrad{\thisx}-\kgrad{\realoptx}]^*{\realopty}}{\nextx-\realoptx}_{\TauTest_{i}\Tau_{i}}
        -\norm{\nextx-\realoptx}_{\TauTest_{i}\Tau_{i}\Gamma_K}^2
        \\
        \nonumber
        \MoveEqLeft[-1]
        +\iprod{K(\realoptx)-K(\thisx)-\kgrad{\thisx}(\realoptx-\thisx)}{\nexty-\realopty}_{\SigmaTest_{i+1}\Sigma_{i+1}}.
    \end{align}%
    \end{subequations}
\end{lemma}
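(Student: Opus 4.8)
The plan is to prove \eqref{eq:basic-deterministic-estimate} by estimating the three summands on its left-hand side individually and matching the resulting pieces against the right-hand side; the algorithm \eqref{eq:ppext} itself is not needed, only the structural definitions \eqref{eq:steptestprecond}, \eqref{eq:bregfun-def}, the assumptions, and the lower bound \eqref{eq:sigmatest-result}. For the first summand I would simply invoke \eqref{eq:sigmatest-result}, which gives
\[
    \tfrac12\norm{\nextu-\thisu}^2_{\Test_{i+1}\Precond_{i+1}}
    \ge \tfrac12\norm{\nextx-\thisx}^2_{\delta\TauTest_i}
    + \tfrac12\tfrac{\kappa-\delta}{1-\delta}\norm{\nexty-\thisy}^2_{\SigmaTest_{i+1}},
\]
whose second summand already appears verbatim on the right of \eqref{eq:basic-deterministic-estimate}. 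For the second summand, using $\Test_{j+1}\Precond_{j+1}=\begin{psmallmatrix}\TauTest_j & -\Lambda_j^*\\ -\Lambda_j & \SigmaTest_{j+1}\end{psmallmatrix}$ (as in the proof of \cref{lemma:sigmatest}), I would expand the quadratic form blockwise, yielding the diagonal parts $\tfrac12\norm{\nextx-\realoptx}^2_{\TauTest_i-\TauTest_{i+1}}$ and $\tfrac12\norm{\nexty-\realopty}^2_{\SigmaTest_{i+1}-\SigmaTest_{i+2}}$ together with the off-diagonal part $\iprod{[\Lambda_{i+1}-\Lambda_i](\nextx-\realoptx)}{\nexty-\realopty}$, which is the first term of $D_i^\Lambda$.

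The real work is the third summand $\iprod{\tilde H_{i+1}(\nextu)}{\nextu-\realoptu}_{\Step_{i+1}\Test_{i+1}}$. Inserting \eqref{eq:bregfun-def} and \eqref{eq:h}, the nonlinear corrections collapse the primal row of $\tilde H_{i+1}(\nextu)$ to $\subdiff G(\nextx)+\kgradconj{\thisx}\nexty$ and the dual row to $\subdiff F^*(\nexty)-K(\overnextx)+\kgrad{\thisx}\Omega^i(\nextx-\thisx)$, with $\overnextx\defeq\nextx+\Omega^i(\nextx-\thisx)$ the relaxed primal iterate. Choosing $\realoptw=0\in H(\realoptu)$ in \cref{ass:gf}, so that $-\kgradconj{\realoptx}\realopty\in\subdiff G(\realoptx)$ and $K(\realoptx)\in\subdiff F^*(\realopty)$, I would bound the $\subdiff G$ and $\subdiff F^*$ terms below by their pointwise strong monotonicity; since $G$ and $F^*$ are block-separable these estimates hold blockwise and survive the $\Tau_i\TauTest_i$- and $\Sigma_{i+1}\SigmaTest_{i+1}$-weightings, producing the modulus terms $\norm{\nextx-\realoptx}^2_{\Tau_i\TauTest_i\Gamma_G}$, $\norm{\nexty-\realopty}^2_{\Sigma_{i+1}\SigmaTest_{i+1}\Gamma_{F^*}}$ (with $\Gamma_G\defeq\sum_j\gamma_{G,j}P_j$, $\Gamma_{F^*}\defeq\sum_\ell\gamma_{F^*,\ell}Q_\ell$) and the reference terms $\iprod{-\kgradconj{\realoptx}\realopty}{\nextx-\realoptx}_{\Tau_i\TauTest_i}$, $\iprod{K(\realoptx)}{\nexty-\realopty}_{\Sigma_{i+1}\SigmaTest_{i+1}}$. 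Splitting $\kgradconj{\thisx}\nexty-\kgradconj{\realoptx}\realopty=\kgradconj{\thisx}(\nexty-\realopty)+[\kgrad{\thisx}-\kgrad{\realoptx}]^*\realopty$ peels off the first term of $D_i^K$.

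It then remains to reorganize $K(\realoptx)-K(\overnextx)+\kgrad{\thisx}\Omega^i(\nextx-\thisx)$. Since $\overnextx-\thisx=(I+\Omega^i)(\nextx-\thisx)$, elementary manipulation of first-order remainders gives the identity
\[
    K(\realoptx)-K(\overnextx)+\kgrad{\thisx}\Omega^i(\nextx-\thisx)
    = \kgrad{\thisx}(\realoptx-\nextx) + r_3
    - \bigl(K(\overnextx)-K(\thisx)-\kgrad{\thisx}(\overnextx-\thisx)\bigr),
\]
where $r_3\defeq K(\realoptx)-K(\thisx)-\kgrad{\thisx}(\realoptx-\thisx)$. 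Pairing (under the dual weight $\Sigma_{i+1}\SigmaTest_{i+1}$, against $\nexty-\realopty$) the term $\kgrad{\thisx}(\realoptx-\nextx)$ with the primal $\iprod{\kgradconj{\thisx}(\nexty-\realopty)}{\nextx-\realoptx}_{\Tau_i\TauTest_i}$ yields exactly $\iprod{\kgradconj{\thisx}(\nexty-\realopty)}{\nextx-\realoptx}_{\TauTest_i\Tau_i-\Sigma_{i+1}^*\SigmaTest_{i+1}^*}$, the second term of $D_i^\Lambda$; the $r_3$-term is the second term of $D_i^K$; and, by \eqref{eq:ass-k} (a consequence of \cref{ass:k-lipschitz}), the last bracket is at most $\tfrac{L}{2}\norm{I+\Omega^i}^2\norm{\nextx-\thisx}^2\norm{\SigmaTest_{i+1}^*\Sigma_{i+1}^*(\nexty-\realopty)}_{\Pnl}$, which combines with the $\tfrac12\norm{\nextx-\thisx}^2_{\delta\TauTest_i}$ left over from the first summand to form $\tfrac12\norm{\nextx-\thisx}^2_{R_x}$. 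Finally, adding and subtracting $\norm{\nextx-\realoptx}^2_{\TauTest_i\Tau_i\Gamma_K}$ lets the two modulus terms and the diagonal parts $\tfrac12\norm{\nextx-\realoptx}^2_{\TauTest_i-\TauTest_{i+1}}$, $\tfrac12\norm{\nexty-\realopty}^2_{\SigmaTest_{i+1}-\SigmaTest_{i+2}}$ reassemble into $\tfrac12\norm{\nextu-\realoptu}^2_{R'}$, with $-\norm{\nextx-\realoptx}^2_{\TauTest_i\Tau_i\Gamma_K}$ remaining as the last term of $D_i^K$; collecting everything gives \eqref{eq:basic-deterministic-estimate}. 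I expect the only genuine difficulty to be the bookkeeping — in particular ensuring that the primal and dual quadratic coupling terms combine under the mixed weight $\TauTest_i\Tau_i-\Sigma_{i+1}^*\SigmaTest_{i+1}^*$, and that it is the factor $\norm{I+\Omega^i}^2$ (obtained by lumping the first-order remainders of $K$ at $\nextx$ and at $\overnextx$ into a single remainder at $\overnextx$), and not $\norm{\Omega^i}^2$, that enters $R_x$.
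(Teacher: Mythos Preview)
Your proposal is correct and follows essentially the same route as the paper's proof: both use \eqref{eq:sigmatest-result} for the first summand, the block expansion of $\Test_{i+1}\Precond_{i+1}-\Test_{i+2}\Precond_{i+2}$ for the second, and for the third summand combine the block-separable strong monotonicity of $\subdiff G$ and $\subdiff F^*$ at $\realoptu$ with the Lipschitz remainder bound \eqref{eq:ass-k} applied to $K(\overnextx)-K(\thisx)-\kgrad{\thisx}(\overnextx-\thisx)$, then add and subtract $\norm{\nextx-\realoptx}^2_{\TauTest_i\Tau_i\Gamma_K}$ to assemble $R'$ and $D_i^K$. Your upfront simplification of the primal row of $\tilde H_{i+1}(\nextu)$ to $\subdiff G(\nextx)+\kgradconj{\thisx}\nexty$ is a slightly cleaner bookkeeping choice than the paper's, which carries the $[\kgrad{\thisx}-\kgrad{\nextx}]^*\nexty$ correction separately before it cancels, but the substance is the same; note only that in the displayed definition \eqref{eq:def-dik} the $r_3$-term is the third entry and $-\norm{\nextx-\realoptx}^2_{\TauTest_i\Tau_i\Gamma_K}$ the second, so your ``second'' and ``last'' labels are swapped.
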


\begin{proof}
    We bound from below all the terms on the left-hand side of \cref{eq:basic-deterministic-estimate}.
    For the first term, we have from \eqref{eq:sigmatest-result} that
    \begin{equation}
		\label{eq:sigmatest-result:copy}
        \Test_{i+1}\Precond_{i+1}\ge
        \begin{pmatrix}
        \delta \TauTest_i & 0 \\
        0 & \frac{\kappa-\delta}{1-\delta}\SigmaTest_{i+1}
		\end{pmatrix}.
    \end{equation}
	For the second term we use the expansion
    \begin{equation}
    	\label{eq:metric-update}
    	\Test_{i+1}\Precond_{i+1}-\Test_{i+2}\Precond_{i+2}=
    	\begin{pmatrix}
    		\TauTest_i-\TauTest_{i+1} & \Lambda_{i+1}^*-\Lambda_i^* \\
    		\Lambda_{i+1}-\Lambda_i & \SigmaTest_{i+1}-\SigmaTest_{i+2}
    	\end{pmatrix}.
    \end{equation}

    We need to work more to estimate the third term on the left-hand side  of \eqref{eq:basic-deterministic-estimate}. Since $0 \in H(\realoptu)$, we have $\subdiff G(\realoptx) \ni z_G \defeq - \kgradconj{\realoptx}\realopty$, and $\subdiff F^*(\realopty) \ni z_{F^*} \defeq K(\realoptx)$.
    We can therefore recall the definition of $H(u)$ from \eqref{eq:h} and rewrite
    \begin{multline*}
    	\iprod{H(u)}{u-\realoptu}_{\Step_{i+1}\Test_{i+1}}
    	=\iprod{\subdiff G(x)-z_G}{x-\realoptx}_{\TauTest_i\Tau_i}
    	+\iprod{\subdiff F^*(y)-z_{F^*}}{y-\realopty}_{\SigmaTest_{i+1}\Sigma_{i+1}}
    	\\
    	+\iprod{\kgradconj{x} y-\kgradconj{\realoptx}\realopty}{x-\realoptx}_{\TauTest_i\Tau_i}
    	+\iprod{K(\realoptx)-K(x)}{y-\realopty}_{\SigmaTest_{i+1}\Sigma_{i+1}}.
    \end{multline*}
    Recalling the definition of $\tilde H_{i+1}(\nextu)$ in \eqref{eq:bregfun-def}, we therefore expand the third term of \eqref{eq:basic-deterministic-estimate} as
    \begin{align*}
    	\iprod{\tilde H_{i+1}&(\nextu)}{\nextu-\realoptu}_{\Step_{i+1}\Test_{i+1}}
    	\\
    	&=\iprod{\subdiff G(\nextx)-z_G}{\nextx-\realoptx}_{\TauTest_i\Tau_i}
    	+\iprod{\subdiff F^*(\nexty)-z_{F^*}}{\nexty-\realopty}_{\SigmaTest_{i+1}\Sigma_{i+1}}
    	\\
    	&\quad
    	+\iprod{\kgradconj{\nextx} \nexty-\kgradconj{\realoptx}\realopty}{\nextx-\realoptx}_{\TauTest_i\Tau_i}
    	+\iprod{K(\realoptx)-K(\nextx)}{\nexty-\realopty}_{\SigmaTest_{i+1}\Sigma_{i+1}}
    	\\
    	&\quad
    	+\iprod{[\kgrad{\thisx}-\kgrad{\nextx}]^*{\nexty}}{\nextx-\realoptx}_{\TauTest_{i}\Tau_{i}}
    	\\
    	&\quad
    	+\iprod{K(\nextx)-K(\nextx+\Omega^i(\nextx-\thisx))+\kgrad{\thisx}\Omega^i(\nextx-\thisx)}{\nexty-\realopty}_{\SigmaTest_{i+1}\Sigma_{i+1}}.
	\end{align*}
	Due to \cref{ass:gf} and \eqref{eq:ass-k}, we have
    \begin{align}
		\label{eq:digamma}
        D_i^\Gamma&\defeq
        \iprod{\subdiff G(\nextx)-z_G}{\nextx-\realoptx}_{\TauTest_i\Tau_i}
        +\norm{\nextx-\realoptx}_{\TauTest_{i}\Tau_{i}\Gamma_K}^2
        +\iprod{\subdiff F^*(\nexty)-z_{F^*}}{\nexty-\realopty}_{\SigmaTest_{i+1}\Sigma_{i+1}} \\
		&
		\nonumber
		\ge
        \sum_{j\in S(i)}\tauTest_j^{i} \tau_j^{i}\norm{\nextx-\realoptx}^2_{P_j\Gamma_{G}P_j}
        +\norm{\nextx-\realoptx}_{\TauTest_{i}\Tau_{i}\Gamma_K}^2
        +\sum_{\ell \in V(i+1)}\sigmaTest_\ell^{i+1} \sigma_\ell^{i+1} \norm{\nexty-\realopty}^2_{Q_\ell\Gamma_{F^*}Q_\ell},
        \allowdisplaybreaks
        \\
        \shortintertext{and}
		\label{eq:diomega}
        D_i^\Omega&\defeq
        \iprod{K(\thisx)-K(\nextx+\Omega^i(\nextx-\thisx))+\kgrad{\thisx}(\Omega^i+I)(\nextx-\thisx)}{\nexty-\realopty}_{\SigmaTest_{i+1}\Sigma_{i+1}}
		\\
		\nonumber
		&
		\ge -\frac{L}{2}\norm{\Omega^i+I}^2\norm{\SigmaTest_{i+1}^*\Sigma_{i+1}^*(\nexty-\realopty)}_{\Pnl}\norm{\nextx-\thisx}^2.
    \end{align}
    Hence, recalling $D_i^K$ from \eqref{eq:def-dik}, we deduce
    \begin{equation}
        \label{eq:penalty-general-d0}
		\begin{aligned}[t]
        \iprod{&\tilde H_{i+1}(\nextu)}{\nextu-\realoptu}_{\Step_{i+1}\Test_{i+1}}
		\\
		& =
    	\iprod{[\kgrad{\thisx}-\kgrad{\realoptx}]^*{\realopty}}{\nextx-\realoptx}_{\TauTest_{i}\Tau_{i}}
        -\norm{\nextx-\realoptx}_{\TauTest_{i}\Tau_{i}\Gamma_K}^2
    	\\
    	&\quad
    	+\iprod{K(\realoptx)-K(\thisx)-\kgrad{\thisx}(\realoptx-\thisx)}{\nexty-\realopty}_{\SigmaTest_{i+1}\Sigma_{i+1}}
    	\\
    	&\quad
    	+\iprod{\subdiff G(\nextx)-z_G}{\nextx-\realoptx}_{\TauTest_i\Tau_i}
    	+\norm{\nextx-\realoptx}_{\TauTest_{i}\Tau_{i}\Gamma_K}^2
    	+\iprod{\subdiff F^*(\nexty)-z_{F^*}}{\nexty-\realopty}_{\SigmaTest_{i+1}\Sigma_{i+1}}
    	\\
    	&\quad
    	+\iprod{K(\thisx)-K(\nextx+\Omega^i(\nextx-\thisx))+\kgrad{\thisx}(\Omega^i+I)(\nextx-\thisx)}{\nexty-\realopty}_{\SigmaTest_{i+1}\Sigma_{i+1}}
    	\\
    	&\quad
    	+\iprod{\kgradconj{\thisx}(\nexty-\realopty)}{\nextx-\realoptx}_{\TauTest_{i}\Tau_{i}}
    	-\iprod{\kgrad{\thisx}(\nextx-\realoptx)}{\nexty-\realopty}_{\SigmaTest_{i+1}\Sigma_{i+1}}
		\\
		&
		=D_i^K+D_i^\Gamma+D_i^\Omega+\iprod{\kgradconj{\thisx}(\nexty-\realopty)}{\nextx-\realoptx}_{\TauTest_{i}\Tau_{i}-\Sigma_{i+1}^*\SigmaTest_{i+1}^*}.
		\end{aligned}
    \end{equation}
	
	Inserting the lower bounds from \eqref{eq:sigmatest-result:copy}, \eqref{eq:digamma}, and \eqref{eq:diomega} into \eqref{eq:penalty-general-d0}, and using \eqref{eq:def-dik} and \eqref{eq:metric-update}, we obtain
    \begin{multline*}
    	\frac{1}{2}\norm{\nextu-\thisu}_{\Test_{i+1} \Precond_{i+1}}^2
    	+ \frac{1}{2}\norm{\nextu-\realoptu}_{\Test_{i+1}\Precond_{i+1}-\Test_{i+2}\Precond_{i+2}}^2
    	+ \iprod{\tilde H_{i+1}(\nextu)}{\nextu-\realoptu}_{\Step_{i+1}\Test_{i+1}}
    	\\
    	\ge%
		\frac{1}{2}\norm{\nextx-\thisx}_{\delta \TauTest_i}^2
		+\frac{1}{2}\frac{\kappa-\delta}{1-\delta}\norm{\nexty-\thisy}_{\SigmaTest_{i+1}}^2
    	+\frac{1}{2}\norm{\nextu-\realoptu}_{R'}^2
    	+D_i^\Lambda
    	+D_i^K
        \\
        -\frac{L}{2}\norm{\Omega^i+I}^2\norm{\SigmaTest_{i+1}^*\Sigma_{i+1}^*(\nexty-\realopty)}_{\Pnl}\norm{\nextx-\thisx}^2
    \end{multline*}
    for $D_i^\Lambda$ as in \eqref{eq:def-dilambda}.
    Finally, using the definitions of $R_x$ in \eqref{eq:penalty-general-vars}, we observe
    \[		\frac{1}{2}\norm{\nextx-\thisx}_{\delta \TauTest_i}^2
	    -L\norm{\Omega^i+I}^2\norm{\SigmaTest_{i+1}^*\Sigma_{i+1}^*(\nexty-\realopty)}_{\Pnl}\norm{\nextx-\thisx}^2
	    = \norm{\nextx-\thisx}_{R_x}^2.
    \]
    This yields the claim.
\end{proof}

\subsection{Expectation estimates}
\label{sec:penalty1}

To further estimate $D_i^K$ and $D_i^\Lambda$, we have to take the expectation with respect to $\SAlg_{i-1}$. We will use a split definition of the step lengths, writing
\[
    \this{\tau_j}=\left\{
    \begin{array}{ll}
    \this{\iset\tau_j},&j\in\iset S(i),\\
    \this{\dset\tau_j},&j\in S(i)\setminus\iset S(i),
    \end{array}
    \right.
    \quad\text{and}\quad
    \nexxt{\sigma_\ell}=\left\{
    \begin{array}{ll}
    \nexxt{\iset\sigma_\ell},&\ell\in\iset V(i+1),\\
    \nexxt{\dset\sigma_\ell},&\ell \in V(i+1)\setminus\iset V(i+1),
    \end{array}
    \right.
\]
where we make for all $i \in \N$ the conditionality assumptions
\begin{subequations}%
\label{eq:conditionality-assumptions}%
\begin{align}%
    \this\tauTest_j,\nexxt\sigmaTest_\ell & \in \Random(\SAlg_{i-1}; (0, \infty)),
    &
    \this{\iset\tau_j}, \this{\dset\tau_j}, \nexxt{\iset\sigma_\ell}, \nexxt{\dset\sigma_\ell} & \in \Random(\SAlg_{i-1}; (0, \infty)),
    \\
    \quad
    S(i), \iset S(i) & \in \Random(\SAlg_i; \powerset{\{1,\ldots,m\}}),
    \quad\text{and}
    &
    V(i+1), \iset V(i+1) & \in \Random(\SAlg_i; \powerset{\{1,\ldots,n\}}).
\end{align}%
\end{subequations}%
Thus $\this{\iset\tau_j}$ always refers to what $\this\tau_j$ would be if $j \in \iset S(i)$, and similarly for the other variables. Moreover, these step lengths are already known on iteration $i-1$, prior to their use. The only part that is not known about $\Tau_i$ and $\Sigma_{i+1}$ before commencing iteration $i$ are the subsets of blocks to be updated.
Observe that \eqref{eq:conditionality-assumptions} and \eqref{eq:alg-ordered} imply
\begin{equation}
    \label{eq:iterate-conditionality}
    \nextx \in \Random(\SAlg_i; X)
    \quad\text{and}\quad
    \nexty \in \Random(\SAlg_i; Y)
    \quad (i \in \N).
\end{equation}
Also, for brevity, we write
\begin{align*}
    \this\pi_j&\defeq\P[j\in S(i) \mid \SAlg_{i-1}],
    &
    \this{\iset\pi_j} &\defeq\P[j\in \iset{S}(i) \mid \SAlg_{i-1}],
    \\
    \nexxt\nu_\ell&\defeq\P[\ell\in V(i+1) \mid \SAlg_{i-1}],
    \quad\text{and}
    &
    \nexxt{\iset\nu_\ell}&\defeq\P[\ell\in \iset V(i+1) \mid \SAlg_{i-1}].
\end{align*}

\begin{lemma}
	\label{lemma:nonneg-penalty-dk}
	Suppose \cref{ass:k-nonlinear} and \eqref{eq:conditionality-assumptions} hold for some $L_3\ge0$, $p\in[1,2]$, and $\theta_{A}\ge0$. For some $\rho_\ell > 0$ assume
    \begin{equation}
        \label{eq:y-locality-prob-1}
        1=\P[\norm{\nexty_\ell-\realopty_\ell}_{\Pnl}\le\rho_\ell \mid \SAlg_{i-1}]
        \quad (\ell=1,\ldots,m).
    \end{equation}
	Then
	$D^K_i$ defined in \eqref{eq:def-dilambda} satisfies 
	for any $\zeta_\ell>0$ with
	$\sum_{\ell=1}^n\nu^{i+1}_\ell\sigmaTest_\ell^{i+1}\sigma_\ell^{i+1}\zeta_\ell^{1-p}\rho_\ell^{2-p}\le p^p\E_{i-1}[\theta_{\TauTest_i\Tau_i}]$%
	the lower bound
	\begin{equation}
		\label{eq:nonneg-penalty-dk}
		\begin{split}
		\E_{i-1}[D_i^K]
		&
		\ge
		-\frac{L_3}{2}\E_{i-1}[\norm{\nextx-\thisx}_{\TauTest_i\Tau_i}^2]
		\\
		\MoveEqLeft[-1]
		-\sum_{\ell=1}^n \E_{i-1}\left[\sigmaTest_\ell^{i+1}\sigma_\ell^{i+1}(p-1)\zeta_\ell\norm{y_\ell^{i+1}-\realopty_\ell}_{\Pnl}^2\right].
		\end{split}
	\end{equation}
\end{lemma}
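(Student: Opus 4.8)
The plan is to absorb the first two terms of $D_i^K$ (as given in \eqref{eq:def-dik}) using the three-point condition \cref{ass:k-nonlinear}, and to dominate the remaining cross term by a carefully calibrated Young inequality. Abbreviate $e\defeq K(\realoptx)-K(\thisx)-\kgrad{\thisx}(\realoptx-\thisx)$ and recall from the remark after \eqref{eq:ass-k} that $\Pnl e=e$, so $\norm{e}=\norm{e}_{\Pnl}$. Applying \cref{ass:k-nonlinear} pointwise with $A\defeq\TauTest_i\Tau_i\ge 0$, $x\defeq\thisx$ and $x'\defeq\nextx$ (which lie in $\neighx_K$ by the locality of the iterates), the resulting term $\norm{\nextx-\realoptx}^2_{\TauTest_i\Tau_i\Gamma_K}$ cancels the $\Gamma_K$-term of $D_i^K$ — here $\Gamma_K$ in $D_i^K$ is taken to be the one furnished by \cref{ass:k-nonlinear} — leaving
\[
    D_i^K\ge\theta_{\TauTest_i\Tau_i}\norm{e}_{\Pnl}^p-\frac{L_3}{2}\norm{\nextx-\thisx}_{\TauTest_i\Tau_i}^2+\iprod{e}{\nexty-\realopty}_{\SigmaTest_{i+1}\Sigma_{i+1}}.
\]

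Next I estimate the cross term. By \eqref{eq:step-length-structure} and orthogonality of $\{Q_\ell\}$ we have $\SigmaTest_{i+1}\Sigma_{i+1}=\sum_{\ell\in V(i+1)}\sigmaTest_\ell^{i+1}\sigma_\ell^{i+1}Q_\ell$, so using $e=\Pnl e$ and self-adjointness of $Q_\ell$ and $\Pnl$,
\[
    \iprod{e}{\nexty-\realopty}_{\SigmaTest_{i+1}\Sigma_{i+1}}\ge-\sum_{\ell\in V(i+1)}\sigmaTest_\ell^{i+1}\sigma_\ell^{i+1}\norm{e}_{\Pnl}\norm{\nexty_\ell-\realopty_\ell}_{\Pnl}.
\]
To each product I apply Young's inequality with a parameter $\epsilon_\ell>0$, producing an $\norm{e}_{\Pnl}^p$-term and an $\norm{\nexty_\ell-\realopty_\ell}_{\Pnl}^{p/(p-1)}$-term, and then on the almost sure event of \eqref{eq:y-locality-prob-1} I use $\norm{\nexty_\ell-\realopty_\ell}_{\Pnl}^{p/(p-1)}\le\rho_\ell^{(2-p)/(p-1)}\norm{\nexty_\ell-\realopty_\ell}_{\Pnl}^2$, valid since $p\in[1,2]$ (for $p=1$ one instead bounds $\norm{\nexty_\ell-\realopty_\ell}_{\Pnl}\le\rho_\ell$ directly, the second term being absent since $(p-1)\zeta_\ell=0$). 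Choosing $\epsilon_\ell$ so that the coefficient of $\norm{\nexty_\ell-\realopty_\ell}_{\Pnl}^2$ becomes exactly $\sigmaTest_\ell^{i+1}\sigma_\ell^{i+1}(p-1)\zeta_\ell$, the coefficient of $\norm{e}_{\Pnl}^p$ evaluates to $\sigmaTest_\ell^{i+1}\sigma_\ell^{i+1}p^{-p}\zeta_\ell^{1-p}\rho_\ell^{2-p}$, whence
\[
    D_i^K\ge\theta_{\TauTest_i\Tau_i}\norm{e}_{\Pnl}^p-\frac{L_3}{2}\norm{\nextx-\thisx}_{\TauTest_i\Tau_i}^2-\sum_{\ell\in V(i+1)}\sigmaTest_\ell^{i+1}\sigma_\ell^{i+1}\Bigl(p^{-p}\zeta_\ell^{1-p}\rho_\ell^{2-p}\norm{e}_{\Pnl}^p+(p-1)\zeta_\ell\norm{\nexty_\ell-\realopty_\ell}_{\Pnl}^2\Bigr).
\]

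Finally I take $\E_{i-1}$. By \eqref{eq:iterate-conditionality}, $\thisx$ — hence $e$ and $\norm{e}_{\Pnl}$ — is $\SAlg_{i-1}$-measurable, and by \eqref{eq:conditionality-assumptions} so are $\sigmaTest_\ell^{i+1},\sigma_\ell^{i+1},\zeta_\ell,\rho_\ell$ (through the split step lengths, according to whether $\ell$ is to be independently updated), whereas $\theta_{\TauTest_i\Tau_i}$ is only $\SAlg_i$-measurable, which is why the estimate was kept pointwise. Using $\P[\ell\in V(i+1)\mid\SAlg_{i-1}]=\nexxt\nu_\ell$, the $\norm{e}_{\Pnl}^p$ contributions combine to $\bigl(\E_{i-1}[\theta_{\TauTest_i\Tau_i}]-p^{-p}\sum_{\ell=1}^n\nexxt\nu_\ell\,\sigmaTest_\ell^{i+1}\sigma_\ell^{i+1}\zeta_\ell^{1-p}\rho_\ell^{2-p}\bigr)\norm{e}_{\Pnl}^p\ge 0$ by the hypothesis on the $\zeta_\ell$. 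Discarding this nonnegative term and enlarging $\sum_{\ell\in V(i+1)}$ to $\sum_{\ell=1}^n$ (all summands being nonnegative) yields \eqref{eq:nonneg-penalty-dk}.

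The main obstacle is the calibration of this Young inequality: $\epsilon_\ell$ must be chosen so that simultaneously the $\norm{\nexty_\ell-\realopty_\ell}_{\Pnl}^2$-coefficient equals the prescribed $(p-1)\zeta_\ell$ and the $\norm{e}_{\Pnl}^p$-coefficients, once summed over the randomly selected dual blocks, are controlled by $\E_{i-1}[\theta_{\TauTest_i\Tau_i}]$; this is exactly what forces the side condition $\sum_\ell\nexxt\nu_\ell\,\sigmaTest_\ell^{i+1}\sigma_\ell^{i+1}\zeta_\ell^{1-p}\rho_\ell^{2-p}\le p^p\E_{i-1}[\theta_{\TauTest_i\Tau_i}]$, explains the factors $p^p$, $\zeta_\ell^{1-p}$, $\rho_\ell^{2-p}$, and requires the boundedness hypothesis \eqref{eq:y-locality-prob-1} in order to convert the Hölder-conjugate exponent $p/(p-1)$ into the exponent $2$ demanded by the statement. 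The remaining point — more bookkeeping than obstacle — is keeping track of measurability when \cref{ass:k-nonlinear} is applied with the $\SAlg_i$-measurable (not $\SAlg_{i-1}$-measurable) block operator $A=\TauTest_i\Tau_i$.
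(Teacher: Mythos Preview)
Your proof is correct and follows essentially the same route as the paper: apply \cref{ass:k-nonlinear} with $A=\TauTest_i\Tau_i$ to absorb the first two terms of $D_i^K$, split the remaining cross term blockwise, apply a calibrated Young inequality together with the a.s.\ bound \eqref{eq:y-locality-prob-1}, and finally take $\E_{i-1}$ and invoke the $\zeta_\ell$-condition to kill the $\norm{e}^p$-coefficient. The only cosmetic difference is that the paper's Young inequality produces a factor $\norm{\nexty_\ell-\realopty_\ell}_{\Pnl}^{2-p}$ (then bounded by $\rho_\ell^{2-p}$), whereas you first get the H\"older-conjugate power $\norm{\nexty_\ell-\realopty_\ell}_{\Pnl}^{p/(p-1)}$ and then bound by $\rho_\ell^{(2-p)/(p-1)}\norm{\nexty_\ell-\realopty_\ell}_{\Pnl}^2$; both yield the same coefficient $p^{-p}\zeta_\ell^{1-p}\rho_\ell^{2-p}$.
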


\begin{proof}
    Setting $A=\TauTest_i\Tau_i$ in \cref{ass:k-nonlinear}, we obtain
    \begin{multline*}
        \iprod{[\kgrad{\thisx}-\kgrad{\realoptx}]^*\realopty}{\nextx-\realoptx}_{\TauTest_i\Tau_i}
        \\
        \ge
        \norm{\nextx-\realoptx}^2_{\TauTest_i\Tau_i\Gamma_K}
        +
        \theta_{\TauTest_i\Tau_i}\norm{K(\realoptx)-K(\thisx)-\kgrad{\thisx}(\realoptx-\thisx)}^p-\frac{L_3}{2}\norm{\nextx-\thisx}_{\TauTest_i\Tau_i}^2.
    \end{multline*}
    Therefore, recalling the definition of $D_k^K$ in \eqref{eq:def-dik} and using \eqref{eq:iterate-conditionality},
    \begin{equation}
        \label{eq:dk-expectation-oim2}
        \begin{aligned}[t]
        \E_{i-1}[D_i^K]&\ge
        \E_{i-1}[\theta_{\TauTest_i\Tau_i}]\norm{K(\realoptx)-K(\thisx)-\kgrad{\thisx}(\realoptx-\thisx)}^p
        -\frac{L_3}{2}\E_{i-1}\bigl[\norm{\nextx-\thisx}_{\TauTest_i\Tau_i}^2\bigr]
        \\
        \MoveEqLeft[-1]
        +\iprod{K(\realoptx)-K(\thisx)-\kgrad{\thisx}(\realoptx-\thisx)}{\E_{i-1}[\Sigma_{i+1}^*\SigmaTest_{i+1}^*(\nexty-\realopty)]}.
        \end{aligned}
    \end{equation}
    By Young's inequality and \eqref{eq:y-locality-prob-1} as in \cite[(3.16) and (3.17)]{tuomov-nlpdhgm-redo}, for any $\zeta_\ell>0$,
    \begin{equation*}
        \begin{split}
        \iprod{K(\realoptx)-K(\thisx)&-\kgrad{\thisx}(\realoptx-\thisx)}{\Sigma_{i+1}^*\SigmaTest_{i+1}^*(\nexty-\realopty)}
        \\
        &
        \ge
        -\sum_{\ell \in V(i+1)} \sigmaTest_\ell^{i+1}\sigma_\ell^{i+1}\norm{\nexty_\ell-\realopty_\ell}_{\Pnl} \cdot \norm{K(\realoptx)-K(\thisx)-\kgrad{\thisx}(\realoptx-\thisx)}
        \\
        &
        \ge
        -\sum_{\ell \in V(i+1)} \sigmaTest_\ell^{i+1}\sigma_\ell^{i+1}
           (p-1)\zeta_\ell\norm{y_\ell^{i+1}-\realopty_\ell}_{\Pnl}^2
           \\ \MoveEqLeft[-1]
            -\sum_{i=1}^n \frac{\chi_{V(i+1)}(\ell)\sigmaTest_\ell^{i+1}\sigma_\ell^{i+1}\norm{y_\ell^{i+1}-\realopty_\ell}_{\Pnl}^{2-p}}{p^p\zeta_\ell^{p-1}}
              \cdot \norm{K(\realoptx)-K(\thisx)-\kgrad{\thisx}(\realoptx-\thisx)}^p.
        \end{split}
    \end{equation*}
    Taking the expectation $\E_{i-1}$, applying the assumption $\sum_{\ell=1}^n\nu^{i+1}_\ell\sigmaTest_\ell^{i+1}\sigma_\ell^{i+1}\zeta_\ell^{1-p}\rho_\ell^{2-p}\le p^p\E_{i-1}[\theta_{\TauTest_i\Tau_i}]$, and inserting the result in \eqref{eq:dk-expectation-oim2}, we obtain the claim \eqref{eq:nonneg-penalty-dk}.
\end{proof}

\begin{lemma}
	\label{lemma:nonneg-penalty-dl}
	Suppose \cref{ass:k-lipschitz} and \eqref{eq:conditionality-assumptions} are satisfied for some $L\ge 0$, and the nesting conditions \eqref{eq:chilo-definition} hold for any $j$ and $\ell$ on both iterations $i$ and $i+1$.
	For some $\eta^{i+1}>0$ assume
    \begin{subequations}%
    \label{eq:etamu-update-lemma}%
	\begin{align}%
		\iset{\pi}^{i+1}_j\tauTest^{i+1}_j\iset\tau^{i+1}_j&=
		\eta^{i+1}-\chi_{S(i)\setminus \iset S(i)}(j)\tauTest^i_j\dset\tau^i_j,
        \\
		\iset\nu^{i+2}_\ell\sigmaTest_\ell^{i+2}\iset\sigma_\ell^{i+2}&=
		\eta^{i+1}-\chi_{V(i+1)\setminus \iset V(i+1)}(\ell)\sigmaTest_\ell^{i+1}\dset\sigma_\ell^{i+1}.
	\end{align}%
    \end{subequations}%
	Then $D^\Lambda_i$ defined in \eqref{eq:def-dilambda} satisfies for any given $\alpha_x,\alpha_y>0$ the lower bound
	\begin{equation}
        \label{eq:nonneg-penalty-dl}
        \begin{aligned}[t]
		\E_i[D^\Lambda_i]+\frac{\nexxt{d}}{2}\norm{\nextx-\thisx}^2
		&
		\ge
		-\alpha_x\sum_{j=1}^{m}\chi_{S(i)\setminus \iset S(i)}(j)\tauTest^i_j\dset\tau^i_j\norm{\nextx_j-\realoptx_j}^2
        \\ \MoveEqLeft[-1]
		-\alpha_y\sum_{\ell=1}^{n}\chi_{V(i+1)\setminus \iset V(i+1)}(\ell)\sigmaTest_\ell^{i+1}\dset\sigma_\ell^{i+1}\norm{\nexty_\ell-\realopty_\ell}_{\Pnl}^2,
        \end{aligned}
	\end{equation}
	where
	\begin{equation*}
		\nexxt{d}\defeq \frac{L^2}{2\alpha_x}
		\left(\sum_{j\in S(i)\setminus \iset S(i)}\tauTest^i_j\dset\tau^i_j\right)\norm{\nexty-\realopty}_{\Pnl}^2
		+\frac{L^2}{2\alpha_y}\left(\sum_{\ell\in V(i+1)\setminus \iset V(i+1)}\sigmaTest_\ell^{i+1}\dset\sigma_\ell^{i+1}\right)\norm{\nextx-\realoptx}^2.
	\end{equation*}

	Moreover, if
    \begin{gather}
        \label{eq:nonneg-penalty-dl-2-prob-ass}
        \P[\norm{\nextx-\realoptx}\le\rho_x,\, \norm{Q_\ell(\nexty-\realopty)}_{\Pnl}\le\rho_\ell,(\ell=1,\ldots,n) \mid \SAlg_{i-1}]=1,
        \\
        \shortintertext{then}
        \label{eq:nonneg-penalty-dl-2}
        \E_{i-1}[\nexxt{d}\norm{\nextx-\thisx}^2]\le\E_{i-1}[{\this c_*}\norm{\nextx-\thisx}^2]
    \shortintertext{for}
        \label{eq:lambda-lambda}
        \begin{aligned}[t]
		{\this c_*} \defeq \frac{L^2}{2\alpha_x\alpha_y}
		\Bigl(&\alpha_y\sum_{\ell=1}^{n}\rho_\ell^2 \#(S(i) \setminus \iset S(i))\max_{j=1,\ldots,m}\tauTest^i_j\dset\tau^i_j
        \\
        &
		+\alpha_x\rho_x^2 \#(V(i+1)\setminus\iset V(i+1))\max_{\ell=1,\ldots,n}\nexxt\sigmaTest_\ell\nexxt{\dset\sigma}_\ell\Bigr).
        \end{aligned}
	\end{gather}
\end{lemma}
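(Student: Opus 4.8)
The plan is to put $\Lambda_i$ and $\Lambda_{i+1}$ into closed form first, so that the off–diagonal coupling in $D_i^\Lambda$ collapses, then take the conditional expectation, and finally absorb the remaining nonlinearity error by Young's inequality. Since the nesting conditions \eqref{eq:chilo-definition} hold on iterations $i$ and $i+1$, the computation preceding \eqref{eq:alg-ordered} — applied also with $i$ replaced by $i+1$ — gives $\Lambda_i=\kgrad{\thisx}\Tau_i^*\TauTest_i^*\iset P_i-\iset Q_{i+1}\SigmaTest_{i+1}\Sigma_{i+1}\kgrad{\thisx}$ and $\Lambda_{i+1}=\kgrad{\nextx}\Tau_{i+1}^*\TauTest_{i+1}^*\iset P_{i+1}-\iset Q_{i+2}\SigmaTest_{i+2}\Sigma_{i+2}\kgrad{\nextx}$. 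Substituting the expression for $\Lambda_i$ into \eqref{eq:def-dilambda} and combining it with the weighted cross term $\iprod{\kgradconj{\thisx}(\nexty-\realopty)}{\nextx-\realoptx}_{\TauTest_i\Tau_i-\Sigma_{i+1}^*\SigmaTest_{i+1}^*}$ (cf.\ \eqref{eq:penalty-general-d0}), the ``independently updated'' portions cancel — using orthogonality and self-adjointness of the $P_j,Q_\ell$ — so that, with $\dset\Tau_i\defeq\dset P_i\Tau_i$ and $\dset\Sigma_{i+1}\defeq\dset Q_{i+1}\Sigma_{i+1}$, only the ``dependently updated'' portions survive:
\[
D_i^\Lambda=\iprod{\Lambda_{i+1}(\nextx-\realoptx)}{\nexty-\realopty}+\iprod{\kgrad{\thisx}\TauTest_i\dset\Tau_i(\nextx-\realoptx)}{\nexty-\realopty}-\iprod{\kgrad{\thisx}(\nextx-\realoptx)}{\SigmaTest_{i+1}\dset\Sigma_{i+1}(\nexty-\realopty)}.
\]

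Next I would take $\E_i[\freevar]$. By \eqref{eq:conditionality-assumptions} and \eqref{eq:iterate-conditionality}, every factor above is $\SAlg_i$-measurable except the index sets $\iset S(i+1),\iset V(i+2)$ inside $\Lambda_{i+1}$, so the only nontrivial term is $\E_i[\iprod{\Lambda_{i+1}(\nextx-\realoptx)}{\nexty-\realopty}]$. Using the split step lengths, $\E_i[\Tau_{i+1}^*\TauTest_{i+1}^*\iset P_{i+1}]=\sum_j\P[j\in\iset S(i+1)\mid\SAlg_i]\,\tauTest_j^{i+1}\iset\tau_j^{i+1}P_j$, which by \eqref{eq:etamu-update-lemma} equals $\sum_j(\eta^{i+1}-\chi_{S(i)\setminus\iset S(i)}(j)\tauTest_j^i\dset\tau_j^i)P_j=\eta^{i+1}I-\TauTest_i\dset\Tau_i$, and likewise $\E_i[\iset Q_{i+2}\SigmaTest_{i+2}\Sigma_{i+2}]=\eta^{i+1}I-\SigmaTest_{i+1}\dset\Sigma_{i+1}$. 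Pulling $\kgrad{\nextx}$, $\nextx-\realoptx$, $\nexty-\realopty$ out of $\E_i$, the $\eta^{i+1}$-proportional primal and dual contributions are equal and cancel, while the $\kgrad{\nextx}$-weighted dependent-block terms cancel those produced in the reduction step, leaving only
\[
\E_i[D_i^\Lambda]=\iprod{(\kgrad{\thisx}-\kgrad{\nextx})\TauTest_i\dset\Tau_i(\nextx-\realoptx)}{\nexty-\realopty}-\iprod{(\kgrad{\thisx}-\kgrad{\nextx})(\nextx-\realoptx)}{\SigmaTest_{i+1}\dset\Sigma_{i+1}(\nexty-\realopty)}.
\]
Since $x\mapsto(I-\Pnl)K(x)$ is linear, $\kgrad{\thisx}-\kgrad{\nextx}$ has range in $\Ynl$, so in both inner products the dual variable enters only through $\Pnl$, and $\norm{(\kgrad{\thisx}-\kgrad{\nextx})v}\le L\norm{\nextx-\thisx}\norm{v}$ by \cref{ass:k-lipschitz}. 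Expanding $\TauTest_i\dset\Tau_i(\nextx-\realoptx)=\sum_{j\in S(i)\setminus\iset S(i)}\tauTest_j^i\dset\tau_j^i(\nextx_j-\realoptx_j)$ and $\SigmaTest_{i+1}\dset\Sigma_{i+1}(\nexty-\realopty)=\sum_{\ell\in V(i+1)\setminus\iset V(i+1)}\sigmaTest_\ell^{i+1}\dset\sigma_\ell^{i+1}(\nexty_\ell-\realopty_\ell)$ and applying Young's inequality block by block with the parameters $\alpha_x,\alpha_y$ splits each of the two terms into a blockwise piece $-\alpha_x\tauTest_j^i\dset\tau_j^i\norm{\nextx_j-\realoptx_j}^2$, resp.\ $-\alpha_y\sigmaTest_\ell^{i+1}\dset\sigma_\ell^{i+1}\norm{\nexty_\ell-\realopty_\ell}_{\Pnl}^2$, and a piece proportional to $\norm{\nextx-\thisx}^2$; the latter pieces sum to exactly $-\tfrac12\nexxt d\norm{\nextx-\thisx}^2$, and rearranging gives \eqref{eq:nonneg-penalty-dl}. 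For the final claim, \eqref{eq:nonneg-penalty-dl-2-prob-ass} gives $\P$-a.s. $\norm{\nextx-\realoptx}^2\le\rho_x^2$ and $\norm{\nexty-\realopty}_{\Pnl}^2=\sum_\ell\norm{Q_\ell(\nexty-\realopty)}_{\Pnl}^2\le\sum_\ell\rho_\ell^2$; bounding $\sum_{j\in S(i)\setminus\iset S(i)}\tauTest_j^i\dset\tau_j^i\le\#(S(i)\setminus\iset S(i))\max_j\tauTest_j^i\dset\tau_j^i$ and the dual sum likewise shows $\nexxt d\le\this c_*$ pointwise a.s., hence $\nexxt d\norm{\nextx-\thisx}^2\le\this c_*\norm{\nextx-\thisx}^2$ and \eqref{eq:nonneg-penalty-dl-2} follows by monotonicity of $\E_{i-1}$.

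The delicate part is the first two steps. One must keep careful track of which factors are $\SAlg_i$- versus $\SAlg_{i+1}$-measurable in order to legitimately take $\kgrad{\nextx}$, $\nextx-\realoptx$, $\nexty-\realopty$ out of $\E_i$, and then verify that inserting \eqref{eq:etamu-update-lemma} makes \emph{both} the $\eta^{i+1}$-terms \emph{and} the $\kgrad{\nextx}$-weighted dependent-block terms cancel, leaving only the Lipschitz-controllable difference $\kgrad{\thisx}-\kgrad{\nextx}$ on the blocks $S(i)\setminus\iset S(i)$ and $V(i+1)\setminus\iset V(i+1)$. The nesting conditions on iterations $i$ and $i+1$ are exactly what makes the closed forms of $\Lambda_i$ and $\Lambda_{i+1}$ — and hence this cancellation — available; everything after that is Young's inequality and monotonicity of conditional expectation.
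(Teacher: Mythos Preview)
Your proof is correct and follows essentially the same approach as the paper: reduce $\E_i[D_i^\Lambda]$ via the nesting and coupling conditions to terms involving only $(\kgrad{\thisx}-\kgrad{\nextx})$ on the dependently-updated blocks, then apply \cref{ass:k-lipschitz} and Young's inequality. The only difference is presentational: you work in operator form using the closed-form expression $\Lambda_i=\kgrad{\thisx}\Tau_i^*\TauTest_i^*\iset P_i-\iset Q_{i+1}\SigmaTest_{i+1}\Sigma_{i+1}\kgrad{\thisx}$ (derived before \eqref{eq:alg-ordered}) from the outset, whereas the paper carries out the same cancellation in coordinates, writing $k_{\ell,j}=\iprod{\kgradconj{\thisx}(\nexty_\ell-\realopty_\ell)}{\nextx_j-\realoptx_j}$ and $k_{\ell,j}^+$ and reapplying \eqref{eq:chilo-definition} term by term before invoking \eqref{eq:etamu-update-lemma}; your intermediate expression for $\E_i[D_i^\Lambda]$ is exactly the paper's $\sum_{\ell,j}(\chi_{S(i)\setminus\iset S(i)}(j)\tauTest_j^i\dset\tau_j^i-\chi_{V(i+1)\setminus\iset V(i+1)}(\ell)\sigmaTest_\ell^{i+1}\dset\sigma_\ell^{i+1})(k_{\ell,j}-k_{\ell,j}^+)$ rewritten as an operator identity.
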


\begin{proof}
    We recall from \eqref{eq:def-dilambda} that
    \[\begin{split}
	    D^\Lambda_i&\defeq
	    \iprod{\kgradconj{\thisx}(\nexty-\realopty)}{\nextx-\realoptx}_{\TauTest_{i}\Tau_{i}-\Sigma_{i+1}^*\SigmaTest_{i+1}^*}
	    \\
	    &\quad
	    +\adaptiprod{\biggl[\sum_{\ell \in V(i+2)}Q_\ell\kgrad{\nextx}\Tau_{i+1}^*\TauTest_{i+1}^*\iset{P}_{i+1}
	    	-\SigmaTest_{i+2}\Sigma_{i+2}\kgrad{\nextx}\dset{P}_{i+1}\biggr](\nextx-\realoptx)}{\nexty-\realopty}
	    \\
	    &\quad
	    -\adaptiprod{\biggl[\sum_{\ell \in V(i+1)}Q_\ell\kgrad{\thisx}\Tau_i^*\TauTest_i^*\iset{P}_i
	    	-\SigmaTest_{i+1}\Sigma_{i+1}\kgrad{\thisx}\dset{P}_i\biggr](\nextx-\realoptx)}{\nexty-\realopty}.
    \end{split}\]
	Defining for brevity
    \[
        k_{\ell,j}\defeq \iprod{\kgradconj{\thisx}(\nexty_\ell-\realopty_\ell)}{\nextx_j-\realoptx_j}
        \quad\text{and}\quad
        k_{\ell,j}^+\defeq \iprod{\kgradconj{\nextx}(\nexty_\ell-\realopty_\ell)}{\nextx_j-\realoptx_j},
    \]
    and using \eqref{eq:conditionality-assumptions}, which implies $\this\tauTest_j\this\tau_j,\nexxt\sigmaTest_\ell\nexxt\sigma_\ell \in \Random(\SAlg_i; (0, \infty))$, we expand
	\[
        \begin{split}
		\E_i[D^\Lambda_i]
        &
        =
		\sum_{\ell=1}^{n}\sum_{j=1}^{m}\Bigl[
            (\chi_{S(i)}(j)\tauTest^i_j\tau^i_j-\chi_{V(i+1)}(\ell)\sigmaTest_\ell^{i+1}\sigma_\ell^{i+1})k_{\ell,j}
    		\\
    		\MoveEqLeft[-4]
    		+\E_i[\chi_{V(i+2)}(\ell)(\chi_{\iset S(i+1)}(j)\tauTest_j^{i+1}\iset\tau_j^{i+1}
    		-\chi_{S(i+1)\setminus \iset S(i+1)}(j)\sigmaTest_\ell^{i+2}\sigma_\ell^{i+2})k_{\ell,j}^+]
            \\
            \MoveEqLeft[-4]
    		-\chi_{V(i+1)}(\ell)(\chi_{\iset S(i)}(j)\tauTest_j^i\iset\tau_j^i-\chi_{S(i)\setminus \iset S(i)}(j)\sigmaTest_\ell^{i+1}\sigma_\ell^{i+1})k_{\ell,j}
            \Bigr].
		\end{split}
    \]
    Writing in the first term $\chi_{S(i)}(j)\tauTest^i_j\tau^i_j=\chi_{\iset S(i)}(j)\tauTest_j^i\iset\tau_j^i+\chi_{S(i)\setminus \iset S(i)}(j)\tauTest^i_j\dset\tau^i_j$, this rearranges as
    \begin{equation*}
        \begin{aligned}[t]
        \E_i[D^\Lambda_i]
		&
        =
		\sum_{\ell=1}^{n}\sum_{j=1}^{m}\biggl(
		\Bigl[\chi_{S(i)\setminus \iset S(i)}(j)\tauTest^i_j\dset\tau^i_j
		+(1-\chi_{V(i+1)}(\ell))\chi_{\iset S(i)}(j)\tauTest^i_j\iset\tau^i_j
        \\
        \MoveEqLeft[-5]
		+\chi_{V(i+1)}(\ell)(\chi_{S(i)\setminus \iset S(i)}(j)-1)\sigmaTest_\ell^{i+1}\sigma_\ell^{i+1}\Bigr]k_{\ell,j}
		\\
		\MoveEqLeft[-4]
		+\E_i\bigl[\chi_{V(i+2)}(\ell)\chi_{\iset S(i+1)}(j)\tauTest^{i+1}_j\iset\tau^{i+1}_j
        \\
        \MoveEqLeft[-7]
		-\chi_{V(i+2)}(\ell)\chi_{S(i+1)\setminus \iset S(i+1)}(j)\sigmaTest_\ell^{i+2}\sigma_\ell^{i+2}\bigr]k_{\ell,j}^+\biggr).
    	\end{aligned}
    \end{equation*}
	Using \eqref{eq:chilo-definition}, we continue
	\[\begin{split}
		\E_i[D^\Lambda_i]&=
		\sum_{\ell=1}^{n}\sum_{j=1}^{m}\biggl(
    		[\chi_{S(i)\setminus \iset S(i)}(j)\tauTest^i_j\dset\tau^i_j
    		-\chi_{V(i+1)\setminus \iset V(i+1)}(\ell)\sigmaTest_\ell^{i+1}\dset\sigma_\ell^{i+1}]k_{\ell,j}
    		\\
    		\MoveEqLeft[-4]
    		+\E_i[\chi_{\iset S(i+1)}(j)\tauTest^{i+1}_j\iset\tau^{i+1}_j
    		-\chi_{\iset V(i+2)}(\ell)\sigmaTest_\ell^{i+2}\iset\sigma_\ell^{i+2}]k_{\ell,j}^+
        \biggr),
    \end{split}
    \]
    after which a use of \eqref{eq:etamu-update-lemma} rearranges this as
    \[\begin{split}
        \E_i[D^\Lambda_i]&=
		\sum_{\ell=1}^{n}\sum_{j=1}^{m}
		(\iset{\pi}^{i+1}_j\tauTest^{i+1}_j\iset\tau^{i+1}_j
		-\iset\nu^{i+2}_\ell\sigmaTest_\ell^{i+2}\iset\sigma_\ell^{i+2})(k_{\ell,j}^+-k_{\ell,j})
		\\
		&=\sum_{\ell=1}^{n}\sum_{j=1}^{m}
		(\chi_{S(i)\setminus \iset S(i)}(j)\tauTest^i_j\dset\tau^i_j
		-\chi_{V(i+1)\setminus \iset V(i+1)}(\ell)\sigmaTest_\ell^{i+1}\dset\sigma_\ell^{i+1})(k_{\ell,j}-k_{\ell,j}^+).
	\end{split}\]
	Expanding $k_{\ell,j}-k_{\ell,j}^+$, using \cref{ass:k-lipschitz}, and continuing with Young's inequality, for any $\alpha_x,\alpha_y>0$,
	\[\begin{split}
		\E_i[D^\Lambda_i]
		&=
		\sum_{\ell=1}^{n}\sum_{j=1}^{m}\bigl[
    		(\chi_{S(i)\setminus \iset S(i)}(j)\tauTest^i_j\dset\tau^i_j
    		-\chi_{V(i+1)\setminus \iset V(i+1)}(\ell)\sigmaTest_\ell^{i+1}\dset\sigma_\ell^{i+1})
            \\
            \MoveEqLeft[-4]
            \cdot\iprod{\nexty_\ell-\realopty_\ell}{[\kgrad{\thisx}-\kgrad{\nextx}](\nextx_j-\realoptx_j)}
        \bigr]
		\\
		&\ge
		-\sum_{j=1}^{m}\chi_{S(i)\setminus \iset S(i)}(j)\tauTest^i_j\dset\tau^i_j
		\cdot \norm{\nexty-\realopty}_{\Pnl}L\norm{\nextx-\thisx}\norm{\nextx_j-\realoptx_j}
		\\
		\MoveEqLeft[-1]
		-\sum_{\ell=1}^{n}\chi_{V(i+1)\setminus \iset V(i+1)}(\ell)\sigmaTest_\ell^{i+1}\dset\sigma_\ell^{i+1}
        \cdot \norm{\nexty_\ell-\realopty_\ell}_{\Pnl}L\norm{\nextx-\thisx}\norm{\nextx-\realoptx}
		\\
		&\ge
		-\sum_{j=1}^{m}\chi_{S(i)\setminus \iset S(i)}(j)\tauTest^i_j\dset\tau^i_j\left(\alpha_x\norm{\nextx_j-\realoptx_j}^2
		+\frac{L^2}{4\alpha_x}\norm{\nexty-\realopty}_{\Pnl}^2\norm{\nextx-\thisx}^2\right)
		\\
		\MoveEqLeft[-1]
		-\sum_{\ell=1}^{n}\chi_{V(i+1)\setminus \iset V(i+1)}(\ell)\sigmaTest_\ell^{i+1}\dset\sigma_\ell^{i+1}\left(\alpha_y\norm{\nexty_\ell-\realopty_\ell}_{\Pnl}^2
		+\frac{L^2}{4\alpha_y}\norm{\nextx-\thisx}^2\norm{\nextx-\realoptx}^2\right).
	\end{split}\]
	This rearranges as \eqref{eq:nonneg-penalty-dl}.
    By \eqref{eq:nonneg-penalty-dl-2-prob-ass}, $\P[\nexxt{d}\le {\this c_*} \mid \SAlg_{i-1}]=1$. Hence \eqref{eq:nonneg-penalty-dl-2} follows.
\end{proof}

\begin{remark}
    For slightly stronger results, it would in \eqref{eq:nonneg-penalty-dl-2-prob-ass} and throughout the rest of the manuscript, be possible to take $\rho_x=\nexxt\rho_x$ and $\rho_\ell=\nexxt\rho_\ell$ dependent on the iteration.
\end{remark}

%%%%%%%%%%%%%%%%%%%%%%%%%%%%%%%%%%%%%%%%%%%%%%%%
\subsection{Putting it all together}
\label{sec:penalty2}
%%%%%%%%%%%%%%%%%%%%%%%%%%%%%%%%%%%%%%%%%%%%%%%%

We are now ready to state our main generic result providing the tool to estimate convergence rates based on growth rates of $\tauTest_j^i$ and $\sigmaTest_\ell^{i+1}$. 
%If they are not deterministic, we can further use Hölder's inequality to show that the convergence rates are given by $\E[1/\tauTest_j^i]$ and $\E[1/\sigmaTest_\ell^{i+1}]$, respectively.

\begin{theorem}
	\label{thm:test-det}
	Suppose \cref{ass:k-lipschitz,ass:k-nonlinear,ass:gf} hold for some $0<\delta\le\kappa<1$, $\gamma_{G,j},\gamma_{F^*,\ell}\ge0$, $\gamma_{K,j}\in\R$ ($j=1,\ldots,m$, $\ell=1,\ldots,n$), $L,L_3\ge0$, $p\in[1,2]$, $\theta_A\ge 0$ together with the nesting conditions \eqref{eq:chilo-definition}, the lower bound \eqref{eq:sigmatest-result} on the local metric, and the conditionality assumptions \eqref{eq:conditionality-assumptions} for all $i\le N-1$.
	For some sequence of $\eta^{i+1}>0$ assume the \term{coupling conditions}
	\begin{subequations}%
		\label{eq:etamu-update-deter}%
		\begin{align}
		\iset{\pi}^{i+1}_j\tauTest^{i+1}_j\iset\tau^{i+1}_j
		+\chi_{S(i)\setminus \iset S(i)}(j)\tauTest^i_j\dset\tau^i_j
		&=\nexxt\eta
		\quad (j=1,\ldots,m)
		\quad\text{and}
		\\
		\iset\nu^{i+2}_\ell\sigmaTest_\ell^{i+2}\iset\sigma_\ell^{i+2}
		+\chi_{V(i+1)\setminus \iset V(i+1)}(\ell)\sigmaTest_\ell^{i+1}\dset\sigma_\ell^{i+1}
		&=\nexxt\eta
		\quad (\ell=1,\ldots,n).
		\end{align}%
	\end{subequations}%
	Also assume for some $\rho_x,\rho_\ell\ge0$ and $\zeta_\ell\ge0$,
	\begin{subequations}%
		\label{eq:rules-det}%
		\begin{align}%
		\label{eq:bounded-dual-deter}
		&1=\P[\norm{\nextx-\realoptx}\le\rho_x, \norm{Q_\ell(\nexty-\realopty)}_{\Pnl}\le\rho_\ell, (\ell=1,\ldots,n) \mid \SAlg_{i-1}]
		\quad\text{and}
		\\
		\label{eq:zeta-rule-deter}
		&\E_{i-1}[\theta_{\TauTest_i\Tau_i}]\ge p^{-p}\textstyle\sum_{\ell=1}^n\nu^{i+1}_\ell\nexxt\sigmaTest_\ell\nexxt\sigma_\ell\zeta_\ell^{1-p}\rho_\ell^{2-p}
		\quad (\ell=1,\ldots,n).
		\end{align}%
	\end{subequations}%
	Finally, for $\this c_*$ defined in \eqref{eq:lambda-lambda} for some $\alpha_x,\alpha_y>0$ let
	\begin{align}
		\label{eq:lij}
		L^i_j
		& \defeq
		L_3+(L\norm{\Omega^i+I}^2\textstyle\sum_{\ell=1}^{m}\sigmaTest^{i+1}_\ell\sigma^{i+1}_\ell\rho_\ell+\this c_*)/\tauTest^i_j\tau^i_j,
		%\allowdisplaybreaks
		\\
        \label{eq:bar-gamma-g}
		\bar\gamma_{GK,j}^i&\defeq
		\gamma_{G,j}+\gamma_{K,j}-\chi_{S(i) \setminus \iset S(i)}(j)\alpha_x,
		%\allowdisplaybreaks
		\\
		\label{eq:bar-gamma-f}
		\bar\gamma_{F^*,\ell}^{i+1}&\defeq
		\begin{cases}
		\gamma_{F^*,\ell},& Q_\ell\Pnl=0,\\
		\gamma_{F^*,\ell}-(p-1)\zeta_\ell-\chi_{V(i+1)\setminus \iset V(i+1)}(\ell)\alpha_y, & Q_\ell\Pnl\ne 0.
		\end{cases}
	\end{align}
    Then
    \begin{multline}
	    \label{eq:convergence-estimate-blockwise}	    \delta\sum_{j=1}^{m}\E\left[\tauTest_j^i \norm{P_j(x^N-\realoptx)}^2\right]
	    +\frac{\kappa-\delta}{1-\delta}\sum_{\ell=1}^{n}\E\left[ \sigmaTest_\ell^{i+1} \norm{Q_\ell(y^N-\realopty)}^2\right]
	    \\	    \le \E[\norm{u^N-\realoptu}^2_{\Test_{N+1}\Precond_{N+1}}]
	    \le \E[\norm{u^0-\realoptu}^2_{\Test_{1}\Precond_{1}}]
    \end{multline}
	holds provided for every $i\le N-1$ both \cref{item:thm-test:primal} and \cref{item:thm-test:dual} are true:
	\begin{enumerate}[label=(\roman*)]
		\item\label{item:thm-test:primal}
		Either of the \term{primal test update conditions} holds for every $j=1,\ldots,m$:
		\begin{enumerate}[label=(\alph*)]
			\item\label{item:thm-test:primal:a}
			both $\tauTest^{i+1}_j
                \le(1+2\chi_{S(i)}(j)\tau_j^{i}\bar\gamma_{GK,j}^i)\tauTest^i_j$
                and $\delta \ge
                \chi_{S(i)}(j)L^i_j\tau^i_j$; or
			\item\label{item:thm-test:primal:b}
			for some $\tilde\gamma_{G,j}^i\in\Random(\SAlg_{i-1},\R)$, $\tilde\tau^i_j\defeq(\iset\pi^i_j\iset\tau^i_j+(\pi^i_j-\iset\pi^i_j)\dset\tau^i_j)/\pi^i_j$,
			\begin{subequations}
				\label{eq:tautest-update-tilde}
				\begin{align}
				\label{eq:tautest-update-deter}
                &
				\tauTest^{i+1}_j=
				(1+2\tilde\tau_j^{i}\tilde\gamma_{G,j}^i)\tauTest^i_j,
				\quad
				\tilde\tau^i_j\tilde\gamma_{G,j}^i<
				\E_{i-1}[\chi_{S(i)}(j)\tau^i_j\bar\gamma_{GK,j}^i],
				\quad\text{and}
				\\
				\label{eq:tau-bound-deter}
                &
				\delta\ge
				\chi_{S(i)}(j)\left(
				L^i_j\tau_j^{i}+
				\frac{2(\tau^i_j\bar\gamma_{GK,j}^i-\E_{i-1}[\chi_{S(i)}(j)\tau^i_j\bar\gamma_{GK,j}^i])(\tau^i_j\bar\gamma_{GK,j}^i-\tilde\tau^i_j\tilde\gamma_{G,j}^i)}
				{\E_{i-1}[\chi_{S(i)}(j)\tau^i_j\bar\gamma_{GK,j}^i]-\tilde\tau^i_j\tilde\gamma_{G,j}^i}
				\right).
				\end{align}
			\end{subequations}
		\end{enumerate}

		\item\label{item:thm-test:dual}
		Either of the \term{dual test update conditions} holds for every $\ell=1,\ldots,n$:
		\begin{enumerate}[label=(\alph*)]
			\item\label{item:thm-test:dual:a}
            $\nexxt\sigmaTest_\ell \le(1+2\chi_{V(i+1)}(\ell)\sigma_\ell^{i+1}\bar\gamma_{F^*,\ell}^{i+1})\sigmaTest^{i+1}_\ell$; or
			\item\label{item:thm-test:dual:b}
			for some $\tilde\gamma_{F^*,\ell}^{i+1}\in\Random(\SAlg_{i-1},\R)$, $\tilde\sigma^{i+1}_\ell\defeq(\iset\nu^{i+1}_\ell\iset\sigma^{i+1}_\ell+(\nu^{i+1}_\ell-\iset\nu^{i+1}_\ell)\dset\sigma^{i+1}_\ell)/\nu^{i+1}_\ell$:
            \begin{subequations}
			\begin{align}
                \label{eq:sigmatest-update-tilde}
                &
    			\sigmaTest^{i+2}_\ell=(1+2\tilde\sigma_\ell^{i+1}\tilde\gamma_{F^*,\ell}^{i+1})\sigmaTest^{i+1}_\ell,
    			\quad
    			\tilde\sigma^{i+1}_\ell\tilde\gamma_{F^*,\ell}^{i+1}< \E_{i-1}[\chi_{V(i+1)}(\ell)\sigma^{i+1}_\ell\bar\gamma_{F^*,\ell}^{i+1}],
    			%\quad\text{and}
    			\\
    			\label{eq:sigma-bound-deter}
                &\begin{aligned}[t]
    			\frac{\kappa-\delta}{1-\delta} & \ge
    			2(\sigma_\ell^{i+1}\bar\gamma_{F^*,\ell}^{i+1}-\E_{i-1}[\chi_{V(i+1)}(\ell)\sigma_\ell^{i+1}\bar\gamma_{F^*,\ell}^{i+1}])
                \\
                \MoveEqLeft[-8]
                \cdot
    			\frac{\chi_{V(i+1)}(\ell)(\sigma_\ell^{i+1}\bar\gamma_{F^*,\ell}^{i+1}-\tilde\sigma_\ell^{i+1}\tilde\gamma_{F^*,\ell}^{i+1})}
    			{\E_{i-1}[\chi_{V(i+1)}(\ell)\sigma_\ell^{i+1}\bar\gamma_{F^*,\ell}^{i+1}]-\tilde\sigma_\ell^{i+1}\tilde\gamma_{F^*,\ell}^{i+1}}.
                \end{aligned}
			\end{align}
            \end{subequations}
		\end{enumerate}
	\end{enumerate}
\end{theorem}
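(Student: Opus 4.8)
The plan is to deduce the two inequalities of \eqref{eq:convergence-estimate-blockwise} from, respectively, the abstract descent estimate \eqref{eq:convergence-result-main-h-stoch} and the lower bound \eqref{eq:sigmatest-result} on the local metric. Since $\Test_{i+1}\Precond_{i+1}$ is self-adjoint by \eqref{eq:steptestprecond} and \eqref{eq:ppext} is solvable by assumption, \cref{thm:convergence-result-main-h-stoch} applies once the expected fundamental condition \eqref{eq:convergence-fundamental-condition-iter-h-stoch} is checked for each $i\le N-1$. Using that $\Step_{i+1}$ and $\Test_{i+1}$ are self-adjoint and commute and rearranging the norm terms, \eqref{eq:convergence-fundamental-condition-iter-h-stoch} is equivalent to the assertion that the expectation of the left-hand side of \eqref{eq:basic-deterministic-estimate} is nonnegative, and this is what I would prove.

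First I would apply \cref{lemma:penalty-general} — whose hypotheses reduce here to \cref{ass:k-lipschitz,ass:gf} and \eqref{eq:sigmatest-result} — with its free parameter $\Gamma_K$ chosen as the operator of \cref{ass:k-nonlinear}, obtaining a lower bound by $\tfrac12\norm{\nextx-\thisx}_{R_x}^2+\tfrac12\tfrac{\kappa-\delta}{1-\delta}\norm{\nexty-\thisy}_{\SigmaTest_{i+1}}^2+\tfrac12\norm{\nextu-\realoptu}_{R'}^2+D_i^K+D_i^\Lambda$. Then take $\E_{i-1}$ and insert the bounds of \cref{lemma:nonneg-penalty-dk} (applicable because \eqref{eq:zeta-rule-deter} is its $\zeta$-hypothesis and \eqref{eq:bounded-dual-deter} its locality hypothesis \eqref{eq:y-locality-prob-1}) and of \cref{lemma:nonneg-penalty-dl} (applicable because \eqref{eq:etamu-update-deter} is the rearranged \eqref{eq:etamu-update-lemma}, the nesting conditions \eqref{eq:chilo-definition} hold on iterations $i$ and $i+1$, and \eqref{eq:bounded-dual-deter} is \eqref{eq:nonneg-penalty-dl-2-prob-ass}); the tower rule then lets one replace $\nexxt d$ by ${\this c_*}$ inside the expectation. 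Regrouping blockwise and bounding $\norm{\SigmaTest_{i+1}^*\Sigma_{i+1}^*(\nexty-\realopty)}_{\Pnl}\le\sum_\ell\sigmaTest_\ell^{i+1}\sigma_\ell^{i+1}\rho_\ell$ and $\norm{\nextx-\realoptx}\le\rho_x$ via \eqref{eq:bounded-dual-deter}, and bounding the $\Pnl$-seminorms in the $\zeta_\ell$- and $\alpha_y$-penalties by full norms where $Q_\ell\Pnl\ne0$, the coefficient of $\norm{\nextx_j-\thisx_j}^2$ collapses by definition \eqref{eq:lij} to $\tfrac12\tauTest_j^i(\delta-\tau_j^iL_j^i)$, the coefficient of $\norm{\nextx_j-\realoptx_j}^2$ to $\tfrac12(\tauTest_j^i-\tauTest_j^{i+1})+\chi_{S(i)}(j)\tauTest_j^i\tau_j^i\bar\gamma_{GK,j}^i$ with $\bar\gamma_{GK,j}^i$ from \eqref{eq:bar-gamma-g} (here using $\dset\tau_j^i=\tau_j^i$ on $S(i)\setminus\iset S(i)$), and, symmetrically, the coefficient of $\norm{Q_\ell(\nexty-\realopty)}^2$ to $\tfrac12(\sigmaTest_\ell^{i+1}-\sigmaTest_\ell^{i+2})+\chi_{V(i+1)}(\ell)\sigmaTest_\ell^{i+1}\sigma_\ell^{i+1}\bar\gamma_{F^*,\ell}^{i+1}$ with $\bar\gamma_{F^*,\ell}^{i+1}$ from \eqref{eq:bar-gamma-f}. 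Under the primal alternative \cref{item:thm-test:primal:a} and the dual alternative \cref{item:thm-test:dual:a} all three collapsed coefficients are nonnegative on every random event — the first by the step bound, the others by the test-update inequalities — so the corresponding squared norms may be dropped, and since $\tfrac12\tfrac{\kappa-\delta}{1-\delta}\norm{\nexty-\thisy}_{\SigmaTest_{i+1}}^2\ge0$ the whole bound is $\ge0$; this settles the case where \ref{item:thm-test:primal:a} and \ref{item:thm-test:dual:a} hold for all blocks.

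The principal obstacle is the ``predictable-test'' alternatives \cref{item:thm-test:primal:b} and \cref{item:thm-test:dual:b}, where $\tauTest_j^{i+1}=(1+2\tilde\tau_j^i\tilde\gamma_{G,j}^i)\tauTest_j^i$ is $\SAlg_{i-1}$-measurable but the accelerating quantity $a\defeq\tau_j^i\bar\gamma_{GK,j}^i$ and the indicator $\chi_{S(i)}(j)$ are genuinely random, so the coefficient $\tauTest_j^i(\chi_{S(i)}(j)a-\tilde a)$ of $\norm{\nextx_j-\realoptx_j}^2$, with $\tilde a\defeq\tilde\tau_j^i\tilde\gamma_{G,j}^i$, need not be nonnegative on every event. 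I would expand $\norm{\nextx_j-\realoptx_j}^2=\norm{\nextx_j-\thisx_j}^2+2\iprod{\nextx_j-\thisx_j}{\thisx_j-\realoptx_j}+\norm{\thisx_j-\realoptx_j}^2$, observe that $\tauTest_j^i$ and $\thisx_j-\realoptx_j$ are $\SAlg_{i-1}$-measurable so that $\E_{i-1}$ of the coefficient equals $\tauTest_j^i$ times the strictly positive deficit $\bar a-\tilde a$ (with $\bar a\defeq\E_{i-1}[\chi_{S(i)}(j)a]$, strict by \eqref{eq:tautest-update-deter}), and absorb the cross term by Young's inequality against the reserved forward-step budget $\tfrac12\tauTest_j^i(\delta-\tau_j^iL_j^i)\norm{\nextx_j-\thisx_j}^2$, which by \eqref{eq:tau-bound-deter} exceeds $\tauTest_j^i(a-\bar a)(a-\tilde a)/(\bar a-\tilde a)\norm{\nextx_j-\thisx_j}^2$ precisely by the margin needed. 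After the absorption only $\tauTest_j^i(1-\pi_j^i)(\bar a-\tilde a)\norm{\thisx_j-\realoptx_j}^2\ge0$ survives in conditional expectation. The dual alternative \ref{item:thm-test:dual:b} is identical, with $\tfrac12\tfrac{\kappa-\delta}{1-\delta}\norm{\nexty_\ell-\thisy_\ell}^2_{\SigmaTest_{i+1}}$ in the role of the forward-step budget and \eqref{eq:sigma-bound-deter} in the role of \eqref{eq:tau-bound-deter}. Because the primal and dual budgets are disjoint and each is drawn on only when the corresponding \ref{item:thm-test:primal:b}/\ref{item:thm-test:dual:b} is invoked, mixed (a)/(b) choices create no double counting.

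Combining all of the above, the expectation of the left-hand side of \eqref{eq:basic-deterministic-estimate} is nonnegative for every $i\le N-1$, so \cref{thm:convergence-result-main-h-stoch} yields $\E\norm{u^N-\realoptu}^2_{\Test_{N+1}\Precond_{N+1}}\le\E\norm{u^0-\realoptu}^2_{\Test_1\Precond_1}$, the rightmost inequality of \eqref{eq:convergence-estimate-blockwise}; applying \eqref{eq:sigmatest-result} with $i=N$ to $\Test_{N+1}\Precond_{N+1}$ and taking expectations gives the leftmost inequality. I expect the bulk of the work — and the only genuinely delicate point — to be the Young/Jensen absorption for the predictable-test alternatives described above; everything else is careful bookkeeping of which term cancels which.
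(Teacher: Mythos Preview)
Your proposal is correct and follows essentially the same route as the paper's own proof: apply \cref{lemma:penalty-general}, absorb the $D_i^K$ and $D_i^\Lambda$ contributions via \cref{lemma:nonneg-penalty-dk,lemma:nonneg-penalty-dl}, show the residual block-diagonal quadratic form in $\nextu-\realoptu$ and $\nextu-\thisu$ is nonnegative in conditional expectation under \ref{item:thm-test:primal}/\ref{item:thm-test:dual}, then invoke \cref{thm:convergence-result-main-h-stoch} and \eqref{eq:sigmatest-result}. The paper organises the bookkeeping slightly differently---it first packages the penalty subtractions into a single operator $R''$ and works with the scalars $q_j^i,h_\ell^{i+1}$, and in case~(b) it applies Young's inequality with the explicit factor $\alpha=\abs{q_j^i}/\E_{i-1}[q_j^i]$ rather than phrasing it as absorbing a cross term into the forward-step budget---but the underlying computation is the one you describe, and the role of \eqref{eq:tau-bound-deter}/\eqref{eq:sigma-bound-deter} is exactly to make that absorption succeed.
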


\begin{proof}
	We first apply \cref{lemma:penalty-general}.
	Recalling $R'$ from \eqref{eq:penalty-general-vars-rprime}, let us set
	\begin{equation}
		\label{eq:rprimeprime}
		\begin{aligned}[t]
		R'' &   \defeq R' -2
		\begin{psmallmatrix}
		\sum_{j=1}^{m}\tau_j^{i}\tauTest^i_j\chi_{S(i)\setminus \iset S(i)}(j)\alpha_x P_j & 0
		\\
		0 & \sum_{\ell=1}^{n}\sigma_\ell^{i+1}\sigmaTest^{i+1}_\ell
		(\chi_{V(i+1)}(\ell)(p-1)\zeta_\ell+\chi_{V(i+1)\setminus \iset V(i+1)}(\ell)\alpha_y)Q_\ell\Pnl
		\end{psmallmatrix}
		\\
		&
		=
		\begin{psmallmatrix}
		\TauTest_i-\TauTest_{i+1}+2\sum_{j\in S(i)}\tauTest_j^{i} \tau_j^{i}\bar\gamma_{GK,j}P_j & 0
		\\
		0 & \SigmaTest_{i+1}-\SigmaTest_{i+2}+2\sum_{\ell \in V(i+1)}\sigmaTest_\ell^{i+1} \sigma_\ell^{i+1}\bar \gamma_{F^*,\ell}Q_\ell
		\end{psmallmatrix}
		=
		\begin{psmallmatrix}
		\sum_{j=1}^m q_j^i P_j & 0 \\
		0 & \sum_{\ell=1}^n h_\ell^{i+1} Q_\ell
		\end{psmallmatrix}
		\end{aligned}
	\end{equation}
	for
	\begin{align*}
		q_j^i
		\defeq
		(1+2\chi_{S(i)}(j) \tau_j^{i}\bar\gamma_{GK,j}^i)\tauTest^i_j-\tauTest^{i+1}_j
		\quad\text{and}\quad
		h_\ell^{i+1}
		\defeq
		(1+2\chi_{V(i+1)}(\ell) \sigma_\ell^{i+1}\bar\gamma_{F^*,\ell}^{i+1})\sigmaTest^{i+1}_\ell-\sigmaTest^{i+2}_\ell.
	\end{align*}
	Thus
	\begin{equation}
		\label{eq:rprimeprime-expansion1}
		\E_{i-1}[\norm{\nextu-\realoptu}^2_{R''}]=
		\sum_{j=1}^{m}\E_{i-1}[q_j^i\norm{P_j(\nextx-\realoptx)}^2]
		+\sum_{\ell=1}^{n}\E_{i-1}[h_\ell^{i+1}\norm{Q_\ell(\nexty-\realopty)}^2].
	\end{equation}

	\paragraph{Estimation of $q_j^i$}
	Suppose $j \in \{1,\ldots,m\}$ satisfies \cref{item:thm-test:primal}\ref{item:thm-test:primal:a}.
	Then $q_j^i\ge0$ and $\delta\ge \chi_{S(i)}(j)L^i_j\tau^i_j$, so we immediately estimate
	\begin{equation}
		\label{eq:qi-estimate}
		\E_{i-1}[q_j^i\norm{P_j(\nextx-\realoptx)}^2]
		\ge
		-\E_{i-1}[\chi_{S(i)}(j)(\delta\tauTest^i_j-L^i_j\tauTest^i_j\tau^i_j)\norm{P_j(\nextx-\thisx)}^2].
	\end{equation}
	Otherwise, if $j \in \{1,\ldots,m\}$ satisfies \cref{item:thm-test:primal}\ref{item:thm-test:primal:b}, using \eqref{eq:iterate-conditionality} and that $q_j^i=\E_i[q_j^i]$ due to \eqref{eq:conditionality-assumptions} and \eqref{eq:bar-gamma-g}, we decompose
	\[
		\begin{split}
		\E_{i-1}[q_j^i\norm{P_j(\nextx-\realoptx)}^2]=\E_{i-1}\bigl[&
		q_j^i\norm{P_j(\nextx-\thisx)}^2
		+\E_{i-1}[q_j^i]\norm{P_j(\thisx-\realoptx)}^2
		\\
		&
		+2q_j^i\iprod{P_j(\nextx-\thisx)}{\thisx-\realoptx}\bigr].
		\end{split}
	\]
	Using $(1-\chi_{S(i)}(j))P_j(\nextx-\thisx)=0$ and Young's inequality with the factor $\alpha>0$, we obtain
	\begin{equation}
		\label{eq:qji-expectation-est2}
		\begin{aligned}[t]
		\E_{i-1}\bigl[q_j^i\norm{P_j(\nextx-\realoptx)}^2]\ge\E_{i-1}[
		&\chi_{S(i)}(j)(q_j^i-\alpha\abs{q_j^i})\norm{P_j(\nextx-\thisx)}^2
		\\
		&
		+(\E_{i-1}[q_j^i]-\chi_{S(i)}(j)\alpha^{-1}\abs{q_j^i})\norm{P_j(\thisx-\realoptx)}^2\bigr].
		\end{aligned}
	\end{equation}
	Since $\tauTest^{i+1}_j=(1+2\tilde\tau_j^{i}\tilde\gamma_{G,j}^i)\tauTest^i_j$ with $\tilde\gamma_{G,j}^i\in \Random(\SAlg_{i-1};\R)$, we have from \eqref{eq:tautest-update-deter}
	\begin{align*}
		\E_{i-1}[q_j^i]& =
		(1+2\E_{i-1}[\chi_{S(i)}(j)\tau_j^{i}\bar\gamma_{GK,j}^i])
		\tauTest^i_j-\E_{i-1}[\tauTest^{i+1}_j]
		=
		2\tauTest^i_j(\E[\chi_{S(i)}(j)\tau_j^{i}\bar\gamma_{GK,j}^i]-\tilde\tau_j^{i}\tilde\gamma_{G,j}^i)
		> 0,
		\\
		\shortintertext{and rearranging \eqref{eq:tau-bound-deter} for $j \in S(i)$:}
		q_j^i &=
		2\tauTest^i_j(\chi_{S(i)}(j)\tau_j^{i}\bar\gamma_{GK,j}^i-\tilde\tau_j^{i}\tilde\gamma_{G,j}^i) \ge (\E_{i-1}[q_j^i])^{-1}\abs{q_j^i}^2-\delta\tauTest^i_j+L^i_j\tauTest^i_j\tau^i_j.
	\end{align*}
	Therefore, taking $\alpha \defeq (\E_{i-1}[q_j^i])^{-1}\abs{q_j^i}$ for $j\in S(i)$ in \eqref{eq:qji-expectation-est2}, we verify \eqref{eq:qi-estimate} for the case \cref{item:thm-test:primal}\ref{item:thm-test:primal:b} as well.

	\paragraph{Estimation of $h_\ell^{i+1}$}
	Similarly, if $\ell \in \{1,\ldots,n\}$ satisfies \cref{item:thm-test:dual}\ref{item:thm-test:dual:a}, we have $h_\ell^{i+1}\ge0$, hence
	\begin{equation}
		\label{eq:hell-estimate}
		\E_{i-1}[h_\ell^{i+1}\norm{Q_\ell(\nexty-\realopty)}^2]\ge
		-\E_{i-1}\Bigl[\chi_{V(i+1)}(\ell)\frac{\kappa-\delta}{1-\delta}\sigmaTest^{i+1}_\ell\norm{Q_\ell(\nexty-\thisy)}^2\Bigr].
	\end{equation}
	Otherwise, when $\ell \in \{1,\ldots,n\}$ satisfies \cref{item:thm-test:dual}\ref{item:thm-test:dual:b}, using \eqref{eq:iterate-conditionality} and that $h_\ell^{i+1}=\E_i[h_\ell^{i+1}]$ due to \eqref{eq:conditionality-assumptions} and \eqref{eq:bar-gamma-f}, we estimate for arbitrary $\alpha>0$ that
	\begin{equation}
		\label{eq:helli-expectation-est2}
		\begin{aligned}[t]
		\E_{i-1}[h_\ell^{i+1} \norm{Q_\ell(\nexty-\realopty)}^2]
		\ge\E_{i-1}[&
		\chi_{V(i+1)}(\ell) (h_\ell^{i+1} -\alpha\abs{h_\ell^{i+1} })\norm{Q_\ell(\nexty-\thisy)}^2
		\\
		&+(\E_{i-1}[h_\ell^{i+1}]-\chi_{V(i+1)}(\ell)\alpha^{-1}\abs{h_\ell^{i+1}})\norm{Q_\ell(\thisy-\realopty)}^2].
		\end{aligned}
	\end{equation}
	Since $\sigmaTest^{i+2}_\ell=(1+2\tilde\sigma_\ell^{i+1}\tilde\gamma_{F^*,\ell}^{i+1})\sigmaTest^{i+1}_\ell$ with $\tilde\gamma_{F^*,\ell}^{i+1}\in \Random(\SAlg_{i-1};\R)$, from \eqref{eq:sigmatest-update-tilde} we have
	\begin{align*}
		\E_{i-1}[h_\ell^{i+1}]&=
		(1+2\E_{i-1}[\chi_{V(i+1)}(\ell)\sigma_\ell^{i+1}\bar\gamma_{F^*,\ell}^{i+1}])
		\sigmaTest^{i+1}_\ell-\E_{i-1}[\sigmaTest^{i+2}_\ell]>0
		\\
		\shortintertext{and rearranging \eqref{eq:sigma-bound-deter} for $\ell \in V(i+1)$:}
		h_\ell^{i+1} &\ge
		(\E_{i-1}[h_\ell^{i+1}])^{-1}\abs{h_\ell^{i+1}}^2-\frac{\kappa-\delta}{1-\delta}\sigmaTest^{i+1}_\ell.
	\end{align*}
	Consequently, taking $\alpha \defeq (\E_{i-1}[h_\ell^{i+1}])^{-1}\abs{h_\ell^{i+1}}$ for $\ell\in V(i+1)$  in \eqref{eq:helli-expectation-est2}, we obtain \eqref{eq:hell-estimate} for the case \cref{item:thm-test:dual}\ref{item:thm-test:dual:b} as well.

	\paragraph{Combining the estimates}
	Since \eqref{eq:qi-estimate} and \eqref{eq:hell-estimate} hold for all $j=1,\ldots,m$ and $\ell=1,\ldots,n$, respectively, continuing from \eqref{eq:rprimeprime-expansion1}, we get
	\[
		\begin{split}
		\E_{i-1}[\norm{\nextu-\realoptu}^2_{R''}]
		&\ge-\E_{i-1}\Bigl[
		\sum_{j=1}^{m}(\chi_{S(i)}(j)(\delta\tauTest^i_j-L^i_j\tauTest^i_j\tau^i_j)\norm{P_j(\nextx-\thisx)}^2
		\\
		\MoveEqLeft[-3]
		+\sum_{\ell=1}^{n}\Bigl(\chi_{V(i+1)}(\ell)\frac{\kappa-\delta}{1-\delta}\sigmaTest^{i+1}_\ell\norm{Q_\ell(\nexty-\thisy)}^2\Bigr)\Bigr].
		\end{split}
    \]
    Plugging $L^i_j$ from \eqref{eq:lij}, thus
    \begin{multline*}
        \E_{i-1}[\norm{\nextu-\realoptu}^2_{R''}]
		\ge
	    -\E_{i-1}\Biggl[
    	\sum_{j=1}^{m}\chi_{S(i)}(j)\left(\delta\tauTest^i_j-L\norm{\Omega^i+I}^2\sum_{\ell=1}^{m}\sigmaTest^{i+1}_\ell\sigma^{i+1}_\ell\rho_\ell\right)\norm{P_j(\nextx-\thisx)}^2
        \\
        +\sum_{\ell=1}^{n}\Bigl(\chi_{V(i+1)}(\ell)\frac{\kappa-\delta}{1-\delta}\sigmaTest^{i+1}_\ell\norm{Q_\ell(\nexty-\thisy)}^2\Bigr)
        -\sum_{j=1}^{m}(\chi_{S(i)}(j)(L_3\tauTest^i_j\tau^i_j+\this c_*)\norm{P_j(\nextx-\thisx)}^2\Biggr].
	\end{multline*}
	By the definitions of $R_x$ in \eqref{eq:penalty-general-vars} and $\rho_\ell$ in \eqref{eq:bounded-dual-deter}, we continue
	\begin{equation}
		\label{eq:test-det-est1}
		\begin{aligned}[t]
		\E_{i-1}[\norm{\nextu-\realoptu}^2_{R''}]
		& \ge
		-\E_{i-1}\Bigl[\norm{\nextx-\thisx}_{R_x}^2+\frac{\kappa-\delta}{1-\delta}\norm{\nexty-\thisy}_{\SigmaTest_{i+1}}^2
		\\
		\MoveEqLeft[-4]
		-\sum_{j=1}^{m}\chi_{S(i)}(j)(L_3\tauTest^i_j\tau^i_j+\this c_*)\norm{P_j(\nextx-\thisx)}^2\Bigr].
		\end{aligned}
	\end{equation}

	On the other hand, by the definition of $R''$ in \eqref{eq:rprimeprime},
	\begin{multline*}
		\E_{i-1}[\norm{\nextu-\realoptu}^2_{R''}]=
		\E_{i-1}\Bigl[\norm{\nextu-\realoptu}^2_{R'}
		-2\alpha_x\sum_{j=1}^{m}\tau_j^{i}\tauTest^i_j\chi_{S(i)\setminus \iset S(i)}(j)\norm{P_j(\nextx-\realoptx)}^2
		\\
		-2\sum_{\ell=1}^{n}(\chi_{V(i+1)}(\ell)(p-1)\zeta_\ell+\chi_{V(i+1)\setminus \iset V(i+1)}(\ell)\alpha_y)\sigma_\ell^{i+1}\sigmaTest^{i+1}_\ell\norm{Q_\ell(\nexty-\realopty)}^2_{\Pnl}\Bigr].
	\end{multline*}
	Combining with \eqref{eq:test-det-est1} and rearranging the terms, we therefore have
	\begin{equation}
		\label{eq:test-dest-est3}
		\E_{i-1}[\norm{\nextu-\realoptu}^2_{R'}
		+\norm{\nextx-\thisx}_{R_x}^2
		+\frac{\kappa-\delta}{1-\delta}\norm{\nexty-\thisy}_{\SigmaTest_{i+1}}^2]
		\ge
		\E_{i-1}[b_1 + b_2]
	\end{equation}
	for
	\begin{align*}
		b_1&\defeq
		\sum_{j=1}^n\chi_{S(i)}(j)
		L_3\tauTest^i_j\tau^i_j \norm{P_j(\nextx-\thisx)}^2
		+2\sum_{\ell=1}^{n}
		\sigma_\ell^{i+1}\sigmaTest^{i+1}_\ell \chi_{V(i+1)}(\ell)(p-1)\zeta_\ell   \norm{Q_\ell(\nexty-\realopty)}^2_{\Pnl},
		\\
		\shortintertext{and}
		b_2&\defeq
		2\alpha_x\sum_{j=1}^{m}
		\tau_j^{i}\tauTest^i_j\chi_{S(i)\setminus \iset S(i)}(j)
		\norm{P_j(\nextx-\realoptx)}^2
		\\\MoveEqLeft[-1]
		+2\alpha_y\sum_{\ell=1}^{n}
		\sigma_\ell^{i+1}\sigmaTest^{i+1}_\ell\chi_{V(i+1)\setminus \iset V(i+1)}(\ell)
		\norm{Q_\ell(\nexty-\realopty)}^2_{\Pnl}
		+\sum_{j=1}^n\chi_{S(i)}(j){\this c_*}\norm{P_j(\nextx-\thisx)}^2.
	\end{align*}
	Our conditions \eqref{eq:rules-det} and $\delta\ge \chi_{S(i)} (j)L^i_j\tau^i_j$ ensure the conditions of \cref{lemma:nonneg-penalty-dk,lemma:nonneg-penalty-dl}.
	By \cref{lemma:nonneg-penalty-dk} thus $\E_{i-1}[b_1+2D_i^K] \ge 0$ while using both \eqref{eq:nonneg-penalty-dl} and \eqref{eq:nonneg-penalty-dl-2} of \cref{lemma:nonneg-penalty-dl} establishes $\E_{i-1}[b_2+2D^\Lambda_i] = \E_{i-1}[b_2+2\E_i[D^\Lambda_i]] \ge 0$.
	Consequently \eqref{eq:test-dest-est3} yields
	\[
    	\E_{i-1}[\norm{\nextu-\realoptu}_{R'}^2+\norm{\nextx-\thisx}_{R_x}^2+
    	\frac{\kappa-\delta}{1-\delta}\norm{\nexty-\thisy}_{\SigmaTest_{i+1}}^2+2D^\Lambda_i + 2D^K_i]
    	\ge 0.
	\]
	We now use \cref{lemma:penalty-general} to verify \eqref{eq:convergence-fundamental-condition-iter-h-stoch}.
	Minding that each $\Test_{i+1}\Precond_{i+1}$ is self-adjoint by \cref{lemma:sigmatest}, a referral to \cref{thm:convergence-result-main-h-stoch} establishes \eqref{eq:convergence-result-main-h-stoch}.
	Using \eqref{eq:sigmatest-result} as well as $\tauTest_j^N,\sigmaTest_\ell^{N+1} \in \Random(\SAlg_{N-1}; (0, \infty))$ and $u^N \in \Random(\SAlg_{N-1}; X \times Y)$ that follow from \eqref{eq:conditionality-assumptions}, we estimate
	\begin{multline*}
		\E[\norm{u^N-\realoptu}^2_{\Test_{N+1}\Precond_{N+1}} \mid \SAlg_{N-1}]
		=
		\norm{u^N-\realoptu}^2_{\E[\Test_{N+1}\Precond_{N+1} \mid \SAlg_{N-1}]}
		\\
		\ge
		\delta\sum_{j=1}^{m} \tauTest_j^N \norm{P_j(x^N-\realoptx)}^2
		+\frac{\kappa-\delta}{1-\delta}\sum_{\ell=1}^{n} \sigmaTest_\ell^{N+1} \norm{Q_\ell(y^N-\realopty)}^2.
	\end{multline*}
	Taking the full expectation and using \eqref{eq:convergence-result-main-h-stoch}
	 establishes the claim.
\end{proof}

\begin{remark}
	The conditions \cref{item:thm-test:primal}\ref{item:thm-test:primal:a} and \cref{item:thm-test:dual}\ref{item:thm-test:dual:a} differ from \cref{item:thm-test:primal}\ref{item:thm-test:primal:b} and \cref{item:thm-test:dual}\ref{item:thm-test:dual:b} by larger $\bar\gamma_{GK,j}^i$ and $\bar\gamma_{F^*,\ell}^i$, and updating $\tauTest^{i+1}_j$ and $\sigmaTest^{i+2}_\ell\in\Random(\SAlg_i;\R)$ potentially non-deterministically.

	In \cref{sec:fulldual} we have $\iset\pi^i_j=\pi^i_j$, $\tau^i_j=\iset\tau^i_j$, $\iset\nu^{i+1}_\ell=0$, and $\sigma^{i+1}_\ell=\dset\sigma^{i+1}_\ell$. In \cref{sec:fullprimal} we take $\iset\pi^i_j=0$, $\tau^i_j=\dset\tau^i_j$, $\iset\nu^{i+1}_\ell=\nu^{i+1}_\ell$, and $\sigma^{i+1}_\ell=\iset\sigma^{i+1}_\ell$.
    Also \cref{item:thm-test:primal}\ref{item:thm-test:primal:b} and \cref{item:thm-test:dual}\ref{item:thm-test:dual:b} then simplify
	for $\tilde\gamma_{G,j}^i<\pi^i_j\bar\gamma_{GK,j}^i$ to
	\begin{subequations}
		\begin{align}
			\label{eq:tautest-update-tilde-eps}
			\tauTest^{i+1}_j&=
			(1+2\tau_j^i\tilde\gamma_{G,j}^i)\tauTest^i_j,
			&
			\delta&\ge
			\chi_{S(i)}(j)\tau_j^{i}\left(L^i_j+2(1-\pi_j^i)\bar\gamma_{GK,j}^i\frac{\bar\gamma_{GK,j}^i-\tilde\gamma_{G,j}^i}{\pi^i_j\bar\gamma_{GK,j}^i-\tilde\gamma_{G,j}^i}\right)
			\\
            \shortintertext{and, respectively, for $\tilde\gamma_{F^*,\ell}^{i+1}<\nu^{i+1}_\ell\bar\gamma_{F^*,\ell}^{i+1}$ to}
			\label{eq:sigmatest-update-tilde-eps}
			\sigmaTest^{i+2}_\ell&=(1+2\sigma_\ell^{i+1}\tilde\gamma_{F^*,\ell}^{i+1})\sigmaTest^{i+1}_\ell,
			&
			\frac{\kappa-\delta}{1-\delta}&\ge
			2\chi_{V(i+1)}(\ell)(1-\nu^{i+1}_\ell)\sigma_\ell^{i+1}\bar\gamma_{F^*,\ell}^{i+1}
			\frac{\bar\gamma_{F^*,\ell}^{i+1}-\tilde\gamma_{F^*,\ell}^{i+1}}
			{\nu^{i+1}_\ell\bar\gamma_{F^*,\ell}^{i+1}-\tilde\gamma_{F^*,\ell}^{i+1}}.
		\end{align}
	\end{subequations}
\end{remark}

\begin{remark}
	\label{rem:boundedness-assumptions}
	Another quite restrictive requirement that we will need in the next sections is the almost sure boundedness of the iterates in \eqref{eq:bounded-dual-deter}. We already had this requirement in the deterministic single-block algorithm in \cite[Section 4.3]{tuomov-nlpdhgm-redo} and \cite[Section 5]{tuomov-nlpdhgm-general}. We verified in \cite[Proposition 4.8.]{tuomov-nlpdhgm-redo} that this requirement can be restated in terms of the sufficiently close initialisation of iterations to the critical point, which is often required in non-convex optimisation.
	
	In this work, the rates for convergence are in expectation, hence, the required boundedness is in the almost sure terms. Moreover, in order to be able to update only some primal blocks on each iteration, \eqref{eq:bounded-dual-deter} now also requires the primal variable to be bounded. Through the simplified algorithms of \cref{sec:fulldual,sec:fullprimal}, treating respective non-randomised dual updates and non-randomised primal updates, we will somewhat relax these restrictions:
	\begin{itemize}[label={--},nosep]
		\item \Cref{alg:acc-full-dual-no-omega} of \cref{sec:fulldual} will not require the dual variable to be bounded if \cref{ass:k-nonlinear} holds with $p=2$; see \cref{col:acc-no-ry-full-dual,col:lin-no-ry-full-dual}.
		\item In \cref{sec:fullprimal}, we will not require any bound on the primal variable.
	\end{itemize}
	In some cases, boundedness can, moreover, be checked analytically based on the explicit formula for $F$. For example, for $F(x)=\abs{\iprod{a}{x}}$ or $F(x)=\norm{ax}$ the support of $F^*(y)$ is bounded by $\norm{a}$. Hence the range of the corresponding proximal operator is also bounded.
	In particular, if the $F$ is of such a form, the boundedness assumptions of  \cref{sec:fullprimal} are automatically satisfied.
\end{remark}

%%%%%%%%%%%%%%%%%%%%%%%%%%%%%%%%%%%%%%%%%
\section{Methods with full dual updates}
\label{sec:fulldual}
%%%%%%%%%%%%%%%%%%%%%%%%%%%%%%%%%%%%%%%%%

We now develop more specific methods based on \eqref{eq:alg-ordered} and study their convergence based on \cref{thm:test-det}.
In this section we take $\iset V(i+1) = \emptyset$, $V(i+1)=\{1,\ldots,n\}$, and $\iset S(i) = S(i)$ for all iterations $i$.
The nesting conditions \eqref{eq:chilo-definition} of \cref{thm:test-det} then hold, and the coupling conditions \eqref{eq:etamu-update-deter} become
\begin{equation}
	\label{eq:etamu-full-dual}
	\iset\pi^{i+1}_j\tauTest^{i+1}_j\iset\tau^{i+1}_j=\eta^{i+1}=\sigmaTest_\ell^{i+1}\dset\sigma_\ell^{i+1}.
\end{equation}
The dual update of \eqref{eq:alg-ordered} involves $\SigmaTest_{i+1}^{-1}[\kgrad{\thisx}\Tau_i^*\TauTest_i^*-\SigmaTest_{i+1}\Sigma_{i+1}\kgrad{\thisx}\Omega_i]$, in scalar form
\begin{equation}
    \label{eq:baromega}
	\frac{\tauTest^i_j\iset\tau^i_j-\omega_j^i\dset\sigma_\ell^{i+1}\sigmaTest_\ell^{i+1}}{\sigmaTest_\ell^{i+1}}
	=\dset\sigma_\ell^{i+1}\left(\frac{\eta^i}{\iset\pi^i_j\eta^{i+1}}-\omega_j^i\right)
	=\dset\sigma_\ell^{i+1}\left(\frac{\bar\omega^i}{\iset\pi^i_j}-\omega_j^i\right)
    \quad\text{for}\quad
    \bar\omega^i\defeq\frac{\eta^i}{\eta^{i+1}}.
\end{equation}
Therefore, with $\omega_j^i = \frac{\bar\omega^i}{\iset\pi^i_j}$, the updates \eqref{eq:alg-ordered} simplify to those of \cref{alg:acc-full-dual-omega}.
Moreover, \eqref{eq:def-lambdacomponent} reduces to $\lambda^i_{j,\ell}=\tauTest^i_j\tau^i_j\chi_{\iset S(i)}(j)$.
We thus verify \eqref{eq:sigmatest-result} via:

\begin{lemma}
    \label{lemma:sigmatest-fulldual}
    Suppose $\iset V(i+1) = \emptyset$, $V(i+1)=\{1,\ldots,n\}$, $\iset S(i) = S(i)$ for $i \in \N$;  the coupling condition \eqref{eq:etamu-full-dual} holds; $\bar\omega^i\le1$; as well as, for all $\ell=1,\ldots,n$ and $j=1,\ldots,m$,
	\begin{equation}
		\label{eq:sigmatest-fulldual}
		\bar\omega^i\dset\sigma^{i+1}_\ell\iset\tau_j^i\le\dset\sigma^0_\ell\iset\tau_j^0
		\quad\text{and}\quad
		1-\kappa
		\ge
		\BiggNorm{
			\sum_{j \in \iset S(i)} \sqrt{\frac{w_{j,\ell}^i\dset\sigma_\ell^0\iset\tau^0_j}{\iset\pi_j^i}}
			Q_\ell \grad K(\thisx) P_j
		}^2
	\end{equation}
	for some $0\le\kappa\le1$ and $w_{j,\ell,k}=1/w_{j,k,\ell}>0$ such that
	\begin{subequations}%
	\label{eq:fulldual:w-r}%
	\begin{gather}
		\label{eq:fulldual:w-r:w}
		w_{j,\ell}^i \defeq \chi_{\this\Neigh_j}(\ell) \sum_{k \in \this{\bar\Neigh_j}(\ell)} w_{j,\ell,k}
	\shortintertext{with}
		\label{eq:fulldual:w-r:v}
		{\bar\Neigh_j}^i(\ell) = \{k \in \{1,\ldots,n\} \mid Q_\ell \kgrad{\thisx} P_j \kgradconj{\thisx} Q_k \ne 0,\, j \in \iset S(i) \}.
	\end{gather}%
	\end{subequations}%
	Then the lower bound \eqref{eq:sigmatest-result} holds.
\end{lemma}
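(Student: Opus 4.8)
The plan is to derive this from \cref{lemma:sigmatest} by checking that its abstract step-length condition \eqref{eq:sigmatest} holds in the present full-dual regime (with the iteration-independent factors $w_{j,\ell,k}^i\defeq w_{j,\ell,k}$), and then invoking that lemma. First I would record the simplifications forced by $\iset S(i)=S(i)$: there are no dependently updated primal blocks, so $\tau_j^i=\iset\tau_j^i$ on $S(i)$ and \eqref{eq:def-lambdacomponent} collapses to $\lambda_{\ell,j}^i=\tauTest_j^i\iset\tau_j^i\chi_{S(i)}(j)\ge0$; in particular $\lambda_{k,j}^i\neq0$ precisely when $j\in S(i)=\iset S(i)$, so ${\bar\Neigh_j}^i(\ell)$ of \eqref{eq:fulldual:w-r:v} coincides with the set in \eqref{eq:def-sim-connected-blocks} and $w_{j,\ell}^i$ of \eqref{eq:fulldual:w-r:w} coincides with the $w_{j,\ell}^i$ of \eqref{eq:psi-bounding-bounds}. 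Hence $\abs{\lambda_{\ell,j}^i}\sqrt{w_{j,\ell}^i/\tauTest_j^i}=\sqrt{\tauTest_j^i(\iset\tau_j^i)^2w_{j,\ell}^i}\,\chi_{S(i)}(j)$. Next I would use the coupling condition \eqref{eq:etamu-full-dual} — with index $i$ it gives $\iset\pi_j^i\tauTest_j^i\iset\tau_j^i=\eta^i$, and with index $i+1$ it gives $\nexxt\sigmaTest_\ell\dset\sigma_\ell^{i+1}=\eta^{i+1}$ — to write $\tauTest_j^i=\eta^i/(\iset\pi_j^i\iset\tau_j^i)$ and $\nexxt\sigmaTest_\ell=\eta^{i+1}/\dset\sigma_\ell^{i+1}$, so that $\tauTest_j^i(\iset\tau_j^i)^2=\eta^i\iset\tau_j^i/\iset\pi_j^i$. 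Dividing \eqref{eq:sigmatest} through by $\eta^{i+1}>0$, recalling $\bar\omega^i=\eta^i/\eta^{i+1}$, and absorbing the nonnegative scalar $\bar\omega^i\dset\sigma_\ell^{i+1}$ into the squared norm, \eqref{eq:sigmatest} becomes the requirement
\[
1-\kappa\;\ge\;\BiggNorm{\,\sum_{j\in S(i)}\sqrt{\frac{w_{j,\ell}^i\,\bar\omega^i\,\dset\sigma_\ell^{i+1}\,\iset\tau_j^i}{\iset\pi_j^i}}\;Q_\ell\kgrad{\thisx}P_j\,}^2\qquad(\ell=1,\dots,n).
\]

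To pass from the hypotheses \eqref{eq:sigmatest-fulldual} to this requirement, I would use the monotonicity of the map $\{a_j\}_j\mapsto\BiggNorm{\sum_j\sqrt{a_j}\,Q_\ell\kgrad{\thisx}P_j}^2$ in the nonnegative coefficients $a_j$. This is not visible on a single test vector, since images under $\kgrad{\thisx}$ can cancel, so I would argue at the operator-norm level: writing the operator as $Q_\ell\kgrad{\thisx}D$ with the block-diagonal $D=\sum_j\sqrt{a_j}P_j$ and $M\defeq\kgradconj{\thisx}Q_\ell\kgrad{\thisx}\ge0$, and using $Q_\ell^2=Q_\ell$ together with the mutual orthogonality of the $P_j$, one gets $\norm{Q_\ell\kgrad{\thisx}D}^2=\norm{DMD}=\norm{M^{1/2}D^2M^{1/2}}=\bignorm{\sum_j a_jM^{1/2}P_jM^{1/2}}=\sup_{\norm{z}\le1}\sum_j a_j\norm{P_jM^{1/2}z}^2$, which is nondecreasing in each $a_j$. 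Since the first inequality of \eqref{eq:sigmatest-fulldual} gives $\bar\omega^i\dset\sigma_\ell^{i+1}\iset\tau_j^i\le\dset\sigma_\ell^0\iset\tau_j^0$ and $w_{j,\ell}^i/\iset\pi_j^i\ge0$, this monotonicity bounds the right-hand side of the displayed requirement by $\BiggNorm{\sum_{j\in\iset S(i)}\sqrt{w_{j,\ell}^i\dset\sigma_\ell^0\iset\tau_j^0/\iset\pi_j^i}\,Q_\ell\kgrad{\thisx}P_j}^2$, and the second inequality of \eqref{eq:sigmatest-fulldual} bounds this in turn by $1-\kappa$. Thus \eqref{eq:sigmatest} holds for every $\ell$; since moreover $w_{j,\ell,k}^i=w_{j,\ell,k}=1/w_{j,k,\ell}=1/w_{j,k,\ell}^i$, \cref{lemma:sigmatest} applies and yields \eqref{eq:sigmatest-result}.

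I expect the monotonicity step to be the only genuine obstacle: it has to be run through $\norm{T}^2=\norm{T^*T}$ and the block-diagonal structure rather than pointwise, and one must keep the index bookkeeping straight when applying the two halves of \eqref{eq:etamu-full-dual} at consecutive iterations to eliminate $\eta^i,\eta^{i+1}$ in favour of the testing parameters. The hypothesis $\bar\omega^i\le1$ is not used in this estimate (it is needed for the over-relaxation step elsewhere), and the remaining work is routine substitution.
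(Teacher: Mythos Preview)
Your proposal is correct and follows essentially the same route as the paper: reduce to \cref{lemma:sigmatest} by verifying \eqref{eq:sigmatest}, using the coupling condition \eqref{eq:etamu-full-dual} to rewrite $\abs{\lambda_{\ell,j}^i}^2/\tauTest_j^i$ and $\sigmaTest_\ell^{i+1}$ in terms of $\eta^i,\eta^{i+1}$, and then the first half of \eqref{eq:sigmatest-fulldual} together with a monotonicity argument to bound by the second half. The only minor difference is in how you justify the monotonicity of $\{a_j\}\mapsto\bignorm{\sum_j\sqrt{a_j}\,Q_\ell\kgrad{\thisx}P_j}^2$: the paper just appeals to ``the orthogonality of the projections $P_j$'' (as in the proof of \cref{lemma:sigmatest}), which amounts to passing to the adjoint and observing $\bignorm{\sum_j\sqrt{a_j}\,P_j\kgradconj{\thisx}Q_\ell z}^2=\sum_j a_j\norm{P_j\kgradconj{\thisx}Q_\ell z}^2$ directly; your $\norm{DMD}=\norm{M^{1/2}D^2M^{1/2}}$ detour reaches the same conclusion but is a little more work than necessary. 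Your observation that $\bar\omega^i\le1$ is not used here is also correct.
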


\begin{proof}
	By the first part of \eqref{eq:sigmatest-fulldual}, \eqref{eq:etamu-full-dual}, and $\lambda^i_{j,\ell}=\tauTest^i_j\tau^i_j\chi_{\iset S(i)}(j)$, we have
	\[
		\dset\sigma^0_\ell\iset\tau_j^0 \ge \frac{\this\eta\dset\sigma^{i+1}_\ell\iset\tau_j^i}{\nexxt\eta}
		=
		\frac{\iset\pi^{i}_j\tauTest^{i}_j(\iset\tau_j^i)^2}{\sigmaTest_\ell^{i+1}}
		=
		\frac{\iset\pi^{i}_j(\this\lambda_{j,\ell})^2}{\sigmaTest_\ell^{i+1}\tauTest^{i}_j}
		\quad (j \in \iset S(i)).
	\]
	By the orthogonality of the projections $P_j$, we may insert this estimation into the second part of \eqref{eq:sigmatest-fulldual}, obtaining \eqref{eq:sigmatest}; compare the proof of \cref{lemma:sigmatest}.
    The definition of ${\bar\Neigh_j}^i(\ell)$ in \eqref{eq:def-sim-connected-blocks} also reduces to that in \eqref{eq:fulldual:w-r:v}, while the definition of $w_{j,\ell}^i$ in \eqref{eq:fulldual:w-r:w} is exactly that in \eqref{eq:psi-bounding-bounds}.
    We finish by applying \cref{lemma:sigmatest} to verify \eqref{eq:sigmatest-result}.
\end{proof}

\begin{remark}
    \label{rem:sigmatest-fulldual-cond}
    The first part of \eqref{eq:sigmatest-fulldual} relaxes the property $\tau^i\sigma^i = \tau^0\sigma^0$ of the basic PDPS \cite{chambolle2010first}.
\end{remark}

\begin{remark}
	With deterministic updates ($\this{\iset\pi_j} \equiv 1$), \eqref{eq:etamu-full-dual} couples $\this{\iset \tau_j}\this\tauTest_j=\this{\dset\sigma_\ell}\this\sigmaTest_\ell$. With $\this\sigmaTest_\ell \equiv \sigmaTest_\ell^0$, \eqref{eq:sigmatest-fulldual} therefore becomes a block-coupled variant of the condition $\tau_i \sigma_i \norm{K}^2 < 1$ from \cite{chambolle2010first}.
\end{remark}

Finally, we also remind that \eqref{eq:bar-gamma-g} and \eqref{eq:bar-gamma-f} for this section simplify to
\begin{equation}
	\label{eq:bargamma-fulldual}
			\bar\gamma_{GK,j}^i\equiv\bar\gamma_{GK,j}\defeq
		\gamma_{G,j}+\gamma_{K,j},
		\quad\text{and}\quad
		\bar\gamma_{F^*,\ell}^{i+1}\equiv\bar\gamma_{F^*,\ell}\defeq
		\begin{cases}
			\gamma_{F^*,\ell},& Q_\ell\Pnl=0,\\
			\gamma_{F^*,\ell}-(p-1)\zeta_\ell-\alpha_y, & Q_\ell\Pnl\ne 0.
		\end{cases}
\end{equation}

\begin{Algorithm}
    \caption{Full dual updates \#1}
    \label{alg:acc-full-dual-omega}
	Assume the problem structure \eqref{eq:main-problem-primalonly}, equivalently \eqref{eq:main-problem}.
     For each iteration $i \in \N$, choose a sampling pattern for generating the random set of updated primal blocks $S(i) \in \Random(\SAlg_i; \powerset\{1,\ldots,m\})$ with corresponding blockwise probabilities $\iset\pi_j^i\defeq \P[j\in S(i)\mid\SAlg_{i-1}]>0$. Also choose a rule for the iteration and block-dependent step length parameters $\iset\tau_j^i,\dset\sigma_\ell^i,\bar\omega^i>0$ from one of \cref{thm:acc-full-dual}, \ref{thm:acc2-full-dual}, or \ref{thm:lin-full-dual}.
	Pick an initial iterate $(x^0,y^0)$ and on each iteration $i \in \N$ update all blocks $\nextx_j=P_j\nextx$, ($j=1,\ldots,m$), and  $\nexty_\ell=Q_\ell\nexty$, ($\ell=1,\ldots,n$), of $\nextx$ and $\nexty$ as:
    \begin{align*}
        \nextx_j& \defeq \begin{cases}
            (I+\iset\tau^i_j P_j\subdiff G_j P_j)^{-1}(\thisx_j-\iset\tau^i_jP_j\kgradconj{\thisx}\thisy),
            &
            j\in S(i),
            \\
            \thisx_j,
            &
            j\notin S(i),
        \end{cases}
        \\
        \overnextx_j& \defeq \begin{cases}
            \nextx_j+\bar\omega^i(\nextx_j-\thisx_j)/\iset\pi^i_j,
            &
            j\in S(i),
            \\
            \thisx_j,
            &
            j\notin S(i),
        \end{cases}
        \\
        \nexty_\ell& \defeq
        (I+\dset\sigma^{i+1}_\ell Q_\ell\subdiff F^*_\ell Q_\ell)^{-1}(\thisy_\ell+\dset\sigma^{i+1}_\ell Q_\ell K(\overnextx)).
    \end{align*}
\end{Algorithm}

%%%%%%%%%%%%%%%%%%%%%%%%%%%%%%
\subsection{Accelerated rates}
%%%%%%%%%%%%%%%%%%%%%%%%%%%%%%

We start with simple step length rules for $O(1/N)$ rates on the blocks admitting second-order growth ($\gamma_{G,j}+\gamma_{K,j}>0$ for primal blocks $j$ or $\gamma_{F^*,\ell}>0$ for dual blocks $\ell$).
Throughout, for simplicity, we assume iteration-independent probabilities, $\iset\pi_j^i = \pi_j^i \equiv \iset\pi_j$ for all $i \in \N$.

\begin{theorem}
	\label{thm:acc2-full-dual}
	Suppose \cref{ass:k-lipschitz,ass:k-nonlinear,ass:gf} hold with $L, L_3\ge0$; $p\in[1,2]$;
	$\gamma_{G,j}+\gamma_{K,j}\ge0$, ($j=1,\ldots,m$), and $\bar\gamma_{F^*,\ell}\ge0$, ($\ell=1,\ldots,n$), for some $\alpha_y,\zeta_\ell\ge0$ as defined in \eqref{eq:bargamma-fulldual}.
    Let the iterates $\{\thisu=(\thisx, \thisy)\}_{i \in \N}$ be generated by \cref{alg:acc-full-dual-omega} with iteration-independent probabilities $\iset\pi^i_j\equiv\iset\pi_j$ and step length parameters
	\begin{subequations}
		\label{eq:step-rules-acc2-full-dual}
		\begin{align}
		\dset\sigma^{i+1}_\ell& \defeq \frac{\dset\sigma^i_\ell}{1+2\dset\sigma^i_\ell\bar\gamma_{F^*,\ell}},
		\quad
		\bar\omega^i \equiv 1,
		\quad\text{and}\quad
		\iset\tau^{i+1}_j \defeq \frac{\iset\tau^i_j}{1+2\iset\tau^i_j\tilde\gamma_{G,j}},
		\end{align}
	\end{subequations}
	with
	either $0\le\tilde\gamma_{G,j}<\iset\pi_j(\gamma_{G,j}+\gamma_{K,j})$ or $\tilde\gamma_{G,j}=\gamma_{G,j}+\gamma_{K,j}=0$ for each $j=1,\ldots,m$;
	and initial $\iset\tau_j^0, \dset\sigma_\ell^0>0$ satisfying for some $0<\delta<\kappa<1$, $\rho_x, \rho_\ell\ge0$, ($\ell=1,\ldots,n$), and $w_{j,\ell}^i$ as in \eqref{eq:fulldual:w-r} the bounds ($i \in \N;\,j=1,\ldots,m$)
	\begin{subequations}
		\label{eq:initialisation-acc2-full-dual}
		\begin{align}
		\label{eq:initialisation-acc2-full-dual-sigmatest}
		1-\kappa
		&\ge
		\adaptNorm{
			\sum_{j \in \iset S(i)} \sqrt{\frac{w_{j,\ell}^i\dset\sigma_\ell^0\iset\tau^0_j}{\iset\pi_j}}
			Q_\ell \grad K(\thisx) P_j
          }^2
		\shortintertext{and}
        \label{eq:initialisation-acc2-full-dual-tau}
		\delta&\ge
		\iset\tau^0_j\bar L+\iset\tau^0_j\cdot
		\begin{cases}
				2(1-\iset\pi_j)(\gamma_{G,j}+\gamma_{K,j})
					\frac{\gamma_{G,j}+\gamma_{K,j}-\tilde\gamma_{G,j}}{\iset\pi_j(\gamma_{G,j}+\gamma_{K,j})-\tilde\gamma_{G,j}}
				& \gamma_{G,j}+\gamma_{K,j}>0\\
				0& \gamma_{G,j}+\gamma_{K,j}=0\\
		\end{cases}
		\shortintertext{with}
		\label{eq:acc2-full-dual-barl-omega}
		\bar L&\defeq
		L_3+L\Bigl(\max_{j=1\ldots m}\Bigl(\frac{1}{\iset\pi_j}+1\Bigr)^2\textstyle\sum_{\ell=1}^{n}\rho_\ell
		+\frac{nL}{2\alpha_y}\rho_x^2\Bigr).
		\end{align}
	\end{subequations}
	Assume for $A\defeq\textstyle\sum_{j\in S(i)}(\iset\pi_j)^{-1}P_j$ that
	\begin{subequations}
		\label{eq:locality-acc2-full-dual}
		\begin{align}
		\label{eq:acc2-full-dual-rhol-theta}
		\E_{i-1}[\theta_{A}]&\ge
		p^{-p}\textstyle\sum_{\ell=1}^n\zeta_\ell^{1-p}\rho_\ell^{2-p}
        \quad\text{and}
        \\
        \label{eq:acc2-full-dual-rhol-omega}
        1&=
        \P[\norm{\nextx-\realoptx}\le\rho_x, \norm{Q_\ell(\nexty-\realopty)}_{\Pnl}\le\rho_\ell, (\ell=1,\ldots,n) \mid \SAlg_{i-1}].
		\end{align}
	\end{subequations}
	Then $\E[\norm{P_j(x^N-\realoptx)}^2] \to 0$ at the rate $O(1/N)$ for all $j$ such that $\tilde\gamma_{G,j}>0$ and $\E[\norm{Q_\ell(y^N-\realopty)}^2] \to 0$ at the rate $O(1/N)$ for all $\ell$ such that $\bar\gamma_{F^*,\ell}>0$.
\end{theorem}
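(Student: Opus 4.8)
The plan is to prove the theorem by checking that the step length rule \eqref{eq:step-rules-acc2-full-dual}, the initialisation \eqref{eq:initialisation-acc2-full-dual} and the locality bounds \eqref{eq:locality-acc2-full-dual} make all hypotheses of \cref{thm:test-det} hold for this section's block choice $\iset V(i+1)=\emptyset$, $V(i+1)=\{1,\dots,n\}$, $\iset S(i)=S(i)$, and then to read off the rates from \eqref{eq:convergence-estimate-blockwise}. First I would fix any constant $\eta>0$ and introduce the (deterministic) testing operators via $\tauTest^i_j\defeq\eta/(\iset\pi_j\iset\tau^i_j)$ and $\sigmaTest^{i+1}_\ell\defeq\eta/\dset\sigma^{i+1}_\ell$. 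Then the coupling conditions \eqref{eq:etamu-update-deter}, which reduce to \eqref{eq:etamu-full-dual} for this block choice, hold by construction with $\eta^{i+1}\equiv\eta$, so that $\bar\omega^i=\eta^i/\eta^{i+1}=1$ matches \eqref{eq:step-rules-acc2-full-dual}. Rewriting the step rules as $1/\iset\tau^{i+1}_j=1/\iset\tau^i_j+2\tilde\gamma_{G,j}$ and $1/\dset\sigma^{i+1}_\ell=1/\dset\sigma^i_\ell+2\bar\gamma_{F^*,\ell}$ gives $\tauTest^{i+1}_j=(1+2\iset\tau^i_j\tilde\gamma_{G,j})\tauTest^i_j$ and $\sigmaTest^{i+2}_\ell=(1+2\dset\sigma^{i+1}_\ell\bar\gamma_{F^*,\ell})\sigmaTest^{i+1}_\ell$; in particular $\iset\tau^i_j,\dset\sigma^i_\ell$ are nonincreasing in $i$ while $\tauTest^N_j=(\eta/\iset\pi_j)(1/\iset\tau^0_j+2N\tilde\gamma_{G,j})$ and $\sigmaTest^{N+1}_\ell=\eta(1/\dset\sigma^0_\ell+2(N+1)\bar\gamma_{F^*,\ell})$ grow linearly in $N$ whenever $\tilde\gamma_{G,j}>0$, resp.\ $\bar\gamma_{F^*,\ell}>0$.

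Next I would dispatch the routine hypotheses of \cref{thm:test-det}: the nesting conditions \eqref{eq:chilo-definition} hold trivially for this block choice; the conditionality assumptions \eqref{eq:conditionality-assumptions} hold since the step lengths are deterministic and $S(i)$ is $\SAlg_i$-measurable; \eqref{eq:bounded-dual-deter} is exactly \eqref{eq:acc2-full-dual-rhol-omega}; and the lower bound \eqref{eq:sigmatest-result} follows from \cref{lemma:sigmatest-fulldual}, whose first inequality in \eqref{eq:sigmatest-fulldual} is automatic from the monotonicity of $\iset\tau^i_j,\dset\sigma^i_\ell$ and $\bar\omega^i=1$, and whose second inequality is \eqref{eq:initialisation-acc2-full-dual-sigmatest}. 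For the $\zeta$-rule \eqref{eq:zeta-rule-deter} I would note that $\TauTest_i\Tau_i=\sum_{j\in S(i)}\tauTest^i_j\iset\tau^i_j P_j=\eta A$ with $A$ the operator of \eqref{eq:locality-acc2-full-dual}; scaling the inequality \eqref{eq:ass-k-nonlinear} by $c>0$ shows one may take $\theta_{cA}=c\theta_A$, so that, using $\nu^{i+1}_\ell=1$ and $\sigmaTest^{i+1}_\ell\dset\sigma^{i+1}_\ell=\eta$, condition \eqref{eq:zeta-rule-deter} becomes \eqref{eq:acc2-full-dual-rhol-theta} after dividing through by $\eta$.

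The bulk of the work is checking the primal and dual test-update conditions \cref{item:thm-test:primal,item:thm-test:dual} with the simplifications from the remark following \cref{thm:test-det}. For the dual I would use \cref{item:thm-test:dual}\ref{item:thm-test:dual:a}: with $\chi_{V(i+1)}(\ell)=1$, $\sigma^{i+1}_\ell=\dset\sigma^{i+1}_\ell$, and $\bar\gamma^{i+1}_{F^*,\ell}=\bar\gamma_{F^*,\ell}$ from \eqref{eq:bargamma-fulldual}, the required inequality $\sigmaTest^{i+2}_\ell\le(1+2\dset\sigma^{i+1}_\ell\bar\gamma_{F^*,\ell})\sigmaTest^{i+1}_\ell$ holds with equality. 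For the primal I would first bound $L^i_j$ of \eqref{eq:lij}: since $S(i)\setminus\iset S(i)=\emptyset$, the constant $\this c_*$ of \eqref{eq:lambda-lambda} equals $\frac{nL^2}{2\alpha_y}\rho_x^2\eta$; the algorithm's choice $\omega^i_j=1/\iset\pi_j$ gives $\norm{\Omega^i+I}^2\le\max_{j=1,\dots,m}(1/\iset\pi_j+1)^2$; and $\tauTest^i_j\tau^i_j=\eta/\iset\pi_j$ together with $\sigmaTest^{i+1}_\ell\sigma^{i+1}_\ell=\eta$ and $\iset\pi_j\le 1$ yields $L^i_j\le\bar L$ with $\bar L$ as in \eqref{eq:acc2-full-dual-barl-omega}, hence $\chi_{S(i)}(j)L^i_j\tau^i_j\le\bar L\iset\tau^0_j$. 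For $j$ with $\gamma_{G,j}+\gamma_{K,j}=0$ I use \cref{item:thm-test:primal}\ref{item:thm-test:primal:a} (then $\tilde\gamma_{G,j}=0$, $\tauTest^{i+1}_j=\tauTest^i_j$, and $\delta\ge\iset\tau^0_j\bar L$ is the relevant branch of \eqref{eq:initialisation-acc2-full-dual-tau}); for $j$ with $\gamma_{G,j}+\gamma_{K,j}>0$ I use \cref{item:thm-test:primal}\ref{item:thm-test:primal:b} in its simplified form \eqref{eq:tautest-update-tilde-eps} with $\tilde\gamma^i_{G,j}=\tilde\gamma_{G,j}$, $\tau^i_j=\iset\tau^i_j$, $\pi^i_j=\iset\pi_j$, $\bar\gamma^i_{GK,j}=\gamma_{G,j}+\gamma_{K,j}$: its first equality is the $\tauTest$-update above, the strict inequality is $\tilde\gamma_{G,j}<\iset\pi_j(\gamma_{G,j}+\gamma_{K,j})$, and its $\delta$-bound, after using $L^i_j\le\bar L$ and $\iset\tau^i_j\le\iset\tau^0_j$, is implied by \eqref{eq:initialisation-acc2-full-dual-tau}.

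With every hypothesis verified, \cref{thm:test-det} furnishes, with the finite constant $C\defeq\E[\norm{u^0-\realoptu}^2_{\Test_1\Precond_1}]$, the bounds $\delta\tauTest^N_j\E[\norm{P_j(x^N-\realoptx)}^2]\le C$ and $\frac{\kappa-\delta}{1-\delta}\sigmaTest^{N+1}_\ell\E[\norm{Q_\ell(y^N-\realopty)}^2]\le C$ for all $j,\ell$ (the testing parameters being deterministic and all summands in \eqref{eq:convergence-estimate-blockwise} nonnegative). Since $\tauTest^N_j\ge(2\eta\tilde\gamma_{G,j}/\iset\pi_j)N$ when $\tilde\gamma_{G,j}>0$ and $\sigmaTest^{N+1}_\ell\ge 2\eta\bar\gamma_{F^*,\ell}(N+1)$ when $\bar\gamma_{F^*,\ell}>0$, the claimed $O(1/N)$ rates follow. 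I expect the main obstacle to be the third step: tracking the definitions \eqref{eq:lij}, \eqref{eq:lambda-lambda} and \eqref{eq:bargamma-fulldual} through the simplifications of this section to recognise that the clean conditions \eqref{eq:initialisation-acc2-full-dual} are precisely what \cref{thm:test-det} requires, together with the small but necessary positive-homogeneity observation on $\theta_A$.
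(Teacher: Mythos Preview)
Your proposal is correct and follows essentially the same approach as the paper: you verify the hypotheses of \cref{thm:test-det} one by one (nesting, coupling via $\eta^{i+1}\equiv\eta$, conditionality, boundedness, \cref{lemma:sigmatest-fulldual} for the metric lower bound, the $\zeta$-rule, then \cref{item:thm-test:dual}\ref{item:thm-test:dual:a} and the two-case split for \cref{item:thm-test:primal}) and read off the $O(1/N)$ rates from the linear growth of $\tauTest^N_j$ and $\sigmaTest^{N+1}_\ell$. The only cosmetic differences are that you define the testing parameters directly by the closed formula $\tauTest^i_j=\eta/(\iset\pi_j\iset\tau^i_j)$ rather than via initial value plus update rule, and you make the positive-homogeneity observation $\theta_{cA}=c\theta_A$ explicit where the paper uses it tacitly.
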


\begin{proof}
	We use \cref{thm:test-det} whose conditions we need to verify.
	We have already verified the nesting condition \eqref{eq:chilo-definition} for $\iset V(i+1) = \emptyset$, $V(i+1)=\{1,\ldots,n\}$, and $\iset S(i) = S(i)$ in \cref{alg:acc-full-dual-omega}.
	The coupling condition \eqref{eq:etamu-update-deter} we have reduced to \eqref{eq:etamu-full-dual}, which we now verify.
	For some $\eta^0>0$ we set $\eta^i\equiv \eta^0$, $\tauTest^0_j \defeq \eta^0(\iset\pi_j\iset\tau_j^0)^{-1}$, and $\sigmaTest_\ell^0 \defeq \eta^0/\dset\sigma_\ell^0$. Then we update
	\begin{equation}
		\label{eq:acc2-fulldual-tautest-tildegammag}
		\tauTest^{i+1}_j=(1+2\iset\tau_j^i\tilde\gamma_{G,j})\tauTest^i_j,
		\quad
		\sigmaTest_\ell^{i+2}=(1+2\dset\sigma^{i+1}_\ell\bar\gamma_{F^*,\ell})\sigmaTest_\ell^{i+1}.
	\end{equation}
	By \eqref{eq:step-rules-acc2-full-dual}, consequently, $\dset\sigma_\ell^{i+1}\sigmaTest_\ell^{i+1}=\nexxt\eta=\iset\pi_j\tauTest^{i+1}_j\iset\tau^{i+1}_j$ for all $\ell$ and $j$. Consequently \eqref{eq:etamu-full-dual} holds. Clearly so does \eqref{eq:conditionality-assumptions} due the deterministic step length and testing parameter updates.
	The conditions \eqref{eq:rules-det} follow from \eqref{eq:locality-acc2-full-dual} given that $\theta_{\TauTest_i\Tau_i}=\eta^i\theta_{A}=\eta^{i+1}\theta_{A}=\dset\sigma_\ell^{i+1}\sigmaTest_\ell^{i+1}\theta_{A}$.

    The step length parameters $\iset\tau^i_j$ and $\dset\sigma_\ell^{i+1}$ are non-increasing in $i$ by the defining \eqref{eq:step-rules-acc2-full-dual}.
	Also using \eqref{eq:initialisation-acc2-full-dual-sigmatest}, we thus verify \eqref{eq:sigmatest-fulldual}. Now \cref{lemma:sigmatest-fulldual} verifies \eqref{eq:sigmatest-result}.

	We still need to verify \cref{thm:test-det}\,\cref{item:thm-test:primal} and \cref{item:thm-test:dual}.
	Regarding the latter, $\sigmaTest^{i+2}_\ell\le(1+2\dset\sigma_\ell^{i+1}\bar\gamma_{F^*,\ell}^{i+1})\sigmaTest_\ell^{i+1}$ trivially as long as $\bar\gamma_{F^*,\ell}^{i+1}\ge0$, which follows from the assumptions on $\gamma_{F^*,\ell}$. Therefore \cref{thm:test-det}\,\ref{item:thm-test:dual} option \ref{item:thm-test:dual:a} holds.
	Regarding \cref{thm:test-det}\,\cref{item:thm-test:primal}, we first of all observe that \eqref{eq:lambda-lambda} reduces to $\this c_*=nL^2\eta^{i+1} \rho_x^2/(2\alpha_y)$.
    Moreover, in \cref{alg:acc-full-dual-omega} we took $\omega^i_j\defeq\bar\omega^i/\iset\pi_j=1/\iset\pi_j$ by \eqref{eq:step-rules-acc2-full-dual}.
	Consequently \eqref{eq:lij} becomes
	\begin{equation}
		\begin{split}
		\label{eq:acc2-omega-full-dual-barl-bound}
		L^i_j &\defeq
		L_3+\Bigl(L\max_{j\in S(i)}(\omega^i_j+1)^2\textstyle\sum_{\ell=1}^{m}\sigmaTest^{i+1}_\ell\dset\sigma^{i+1}_\ell\rho_\ell+\frac{nL^2\eta^{i+1} \rho_x^2}{2\alpha_y}\Bigr)\frac{1}{\tauTest^i_j\iset\tau^i_j}
		\\
		&=
		L_3+L\iset\pi_j\Bigl(\max_{j\in S(i)}(1/\iset\pi_j+1)^2\textstyle\sum_{\ell=1}^{n}\rho_\ell+\frac{nL}{2\alpha_y}\rho_x^2\Bigr)\frac{\eta^{i+1}}{\eta^i}
		\le \bar L.
	\end{split}
	\end{equation}
	We now consider two cases for the satisfaction of \cref{thm:test-det}\,\ref{item:thm-test:primal} option \ref{item:thm-test:primal:a} or \ref{item:thm-test:primal:b}:
	\begin{enumerate}[label=(\Alph*)]
		\item If $\gamma_{G,j}+\gamma_{K,j}=0$, then $\tilde\gamma_{G,j}=0$ and $\tauTest^{i+1}_j=\tauTest^i_j$ by \eqref{eq:acc2-fulldual-tautest-tildegammag}, so option \ref{item:thm-test:primal:a} holds.

		\item If $\gamma_{G,j}+\gamma_{K,j}>0$, then  \eqref{eq:initialisation-acc2-full-dual-tau}, \eqref{eq:acc2-omega-full-dual-barl-bound}, and $\iset\tau_j^i\le\iset\tau_j^0$ show \eqref{eq:tautest-update-tilde-eps}, hence \ref{item:thm-test:primal:b}.
	\end{enumerate}
    
	We can now apply \cref{thm:test-det} to obtain \eqref{eq:convergence-estimate-blockwise}.
	From \eqref{eq:acc2-fulldual-tautest-tildegammag} we have
	\begin{align*}
		\tauTest^{i+1}_j&=\tauTest^i_j+2\tilde\gamma_{G,j}\eta^i/\iset\pi_j=\tauTest^i_j+2\tilde\gamma_{G,j}\eta^1/\iset\pi_j=\ldots=\tauTest^1_j+2i\tilde\gamma_{G,j}\eta^1/\iset\pi_j
        \quad\text{and}
		\\
		\sigmaTest_\ell^{i+2}&=\sigmaTest_\ell^{i+1}+2\bar\gamma_{F^*,\ell}\eta^{i+1}=\sigmaTest_\ell^{i+1}+2\bar\gamma_{F^*,\ell}\eta^1=\ldots=\sigmaTest_\ell^1+2(i+1)\bar\gamma_{F^*,\ell}\eta^1.
	\end{align*}
	Therefore, for any $j$ such that $\tilde\gamma_{G,j}>0$ and $\ell$ such that $\bar\gamma_{F^*,\ell}>0$, $\tauTest^N_j$ and $\sigmaTest_\ell^{N+1}$ grow as $\Omega(N)$.
	This together with \eqref{eq:convergence-estimate-blockwise} gives the claim.
\end{proof}

We can improve the convergence to $O(1/N^2)$ in the primal variable if all the primal blocks exhibit second-order growth. This is achieved by making the dual step lengths grow as in the basic single-block convex case of \cite{chambolle2010first}.

\begin{theorem}
	\label{thm:acc-full-dual}
	Suppose \cref{ass:k-lipschitz,ass:k-nonlinear,ass:gf} hold with $L, L_3\ge0$; $p\in[1,2]$;
	$\gamma_{G,j}+\gamma_{K,j}>0$, ($j=1,\ldots,m$), and $\bar\gamma_{F^*,\ell}\ge0$, ($\ell=1,\ldots,n$), for some $\alpha_y,\zeta_\ell\ge0$ as defined in \eqref{eq:bargamma-fulldual}.
	Let the iterates $\{\thisu=(\thisx, \thisy)\}_{i \in \N}$ be generated by \cref{alg:acc-full-dual-omega} with iteration-independent probabilities $\iset\pi^i_j\equiv\iset\pi_j$ and step length parameters
    \begin{align}
 	    \label{eq:step-rules-acc-full-dual}
		\dset\sigma^{i+1}_\ell& \defeq \frac{\dset\sigma^i_\ell}{\bar\omega^i},
		\quad
		\iset\tau^{i+1}_j \defeq \frac{1}{1+2\iset\tau^i_j\tilde\gamma_{G,j}}\frac{\iset\tau^i_j}{\bar\omega^i},
		\quad\text{and}\quad
		\bar\omega^i
		\defeq\max_{j=1,\ldots,m}\frac{1}{\sqrt{1+2\iset\tau_j^i\tilde\gamma_{G,j}}}
	\end{align}
    with
    $0<\tilde\gamma_{G,j}<\iset\pi_j(\gamma_{G,j}+\gamma_{K,j})$;
     and initial $\iset\tau_j^0, \dset\sigma_\ell^0>0$ satisfying for some $0<\delta\le\kappa<1$, $\rho_x, \rho_\ell\ge0$, ($\ell=1,\ldots,n$), and $w_{j,\ell}^i$ as in \eqref{eq:fulldual:w-r} the bounds
	\begin{subequations}
		\label{eq:initialisation-acc-full-dual}
		\begin{align}
        \label{eq:initialisation-acc-full-dual-sigmatest}
		1-\kappa
		&\ge
		\adaptNorm{
			\sum_{j \in \iset S(i)} \sqrt{\frac{w_{j,\ell}^i\dset\sigma_\ell^0\iset\tau^0_j}{\iset\pi_j}}
			Q_\ell \grad K(\thisx) P_j
        }^2
        \quad (i \in \N)
        \quad \text{and}
		\\
        \label{eq:initialisation-acc-full-dual-tau}
		\delta&\ge
		\iset\tau^0_j\left(\bar L+2(1-\iset\pi_j)(\gamma_{G,j}+\gamma_{K,j})
		\frac{\gamma_{G,j}+\gamma_{K,j}-\tilde\gamma_{G,j}}{\iset\pi_j(\gamma_{G,j}+\gamma_{K,j})-\tilde\gamma_{G,j}}
		\right)
        \quad\text{with}
		\\
		\label{eq:full-dual-barl-omega}
		\bar L&\defeq
		L_3+\frac{L}{\bar\omega^0}\Bigl(\max_{j=1\ldots m}\Bigl(\frac{1}{\iset\pi_j}+1\Bigr)^2\textstyle\sum_{\ell=1}^{n}\rho_\ell
		+\frac{nL}{2\alpha_y}\rho_x^2\Bigr).
		\end{align}
	\end{subequations}
	Assume for $A\defeq\textstyle\sum_{j\in S(i)}(\iset\pi_j)^{-1}P_j$ that
	\begin{subequations}
		\label{eq:locality-acc-full-dual}
		\begin{align}
        \label{eq:full-dual-rhol-theta}
        \E_{i-1}[\theta_{A}]&\ge
        p^{-p}\textstyle\sum_{\ell=1}^n\zeta_\ell^{1-p}\rho_\ell^{2-p}/\bar\omega^0
        \quad\text{and}
        \\
		\label{eq:full-dual-rhol-omega}
		1&=
		\P[\norm{\nextx-\realoptx}\le\rho_x, \norm{Q_\ell(\nexty-\realopty)}_{\Pnl}\le\rho_\ell, (\ell=1,\ldots,n) \mid \SAlg_{i-1}].
		\end{align}
	\end{subequations}
	Then $\E[\norm{P_j(x^N-\realoptx)}^2] \to 0$ at the rate $O(1/N^2)$ for all $j$.
\end{theorem}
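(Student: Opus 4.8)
The plan is to derive \cref{thm:acc-full-dual} from \cref{thm:test-det}, following closely the proof of \cref{thm:acc2-full-dual} but with the classical Chambolle--Pock ``Algorithm~2'' accelerated step-length dynamics replacing the $O(1/N)$ rules. For an arbitrary $\eta^0>0$ I set $\eta^{i+1}\defeq\eta^i/\bar\omega^i$ and take the testing parameters $\tauTest^0_j\defeq\eta^0/(\iset\pi_j\iset\tau^0_j)$, $\sigmaTest^i_\ell\equiv\sigmaTest^0_\ell\defeq\eta^0/\dset\sigma^0_\ell$, and $\tauTest^{i+1}_j\defeq(1+2\iset\tau^i_j\tilde\gamma^i_{G,j})\tauTest^i_j$ with $\tilde\gamma^i_{G,j}\equiv\tilde\gamma_{G,j}$. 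The structural difference from \cref{thm:acc2-full-dual} is that the dual test $\sigmaTest_\ell$ is kept \emph{constant} while the dual step $\dset\sigma^i_\ell$ is allowed to grow (at the rate dictated by the slowest primal block via $\bar\omega^i$); this is exactly what upgrades the primal rate from $O(1/N)$ to $O(1/N^2)$.

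First I verify the hypotheses of \cref{thm:test-det}. The nesting conditions \eqref{eq:chilo-definition} hold by the choice $\iset V(i+1)=\emptyset$, $V(i+1)=\{1,\dots,n\}$, $\iset S(i)=S(i)$ made in \cref{alg:acc-full-dual-omega}. A direct computation from \eqref{eq:step-rules-acc-full-dual} shows $\iset\pi_j\tauTest^{i+1}_j\iset\tau^{i+1}_j=\eta^{i+1}=\sigmaTest^{i+1}_\ell\dset\sigma^{i+1}_\ell$, which is the coupling condition \eqref{eq:etamu-full-dual}; the conditionality assumptions \eqref{eq:conditionality-assumptions} hold because all step-length and testing updates are deterministic. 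Since $\min_k\iset\tau^i_k\tilde\gamma_{G,k}>0$ we have $\bar\omega^i\le 1$, and $\bar\omega^i(1+2\iset\tau^i_j\tilde\gamma_{G,j})\ge 1$ (using $\min_k\iset\tau^i_k\tilde\gamma_{G,k}\le\iset\tau^i_j\tilde\gamma_{G,j}$), so each $\iset\tau^i_j$ is non-increasing and so is each product $\dset\sigma^i_\ell\iset\tau^i_j$; combined with \eqref{eq:initialisation-acc-full-dual-sigmatest} this gives \eqref{eq:sigmatest-fulldual}, whence \cref{lemma:sigmatest-fulldual} yields \eqref{eq:sigmatest-result}. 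The rules \eqref{eq:rules-det} follow from \eqref{eq:locality-acc-full-dual} exactly as in \cref{thm:acc2-full-dual}, using $\theta_{\TauTest_i\Tau_i}=\eta^{i+1}\theta_A$ and the reduction of \eqref{eq:lambda-lambda} to ${\this c_*}=nL^2\eta^{i+1}\rho_x^2/(2\alpha_y)$. For the test-update conditions, \cref{item:thm-test:dual}\ref{item:thm-test:dual:a} holds trivially because $\bar\gamma_{F^*,\ell}\ge 0$ and $\sigmaTest_\ell$ is constant; for \cref{item:thm-test:primal}\ref{item:thm-test:primal:b} I bound $L^i_j\le\bar L$ by noting that $\bar\omega^i\ge\bar\omega^0$ (since $\min_k\iset\tau^i_k\tilde\gamma_{G,k}$ is non-increasing) and $\norm{\Omega^i+I}\le\max_j(1/\iset\pi_j+1)$, and then \eqref{eq:initialisation-acc-full-dual-tau} puts the condition into the simplified form \eqref{eq:tautest-update-tilde-eps}. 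Applying \cref{thm:test-det} yields \eqref{eq:convergence-estimate-blockwise}.

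The remaining, and principal, step is to show that $\tauTest^N_j$ grows quadratically in $N$ for every $j$, since then \eqref{eq:convergence-estimate-blockwise} gives the claimed $O(1/N^2)$ rate. From $1+2\iset\tau^i_j\tilde\gamma_{G,j}\ge 1/(\bar\omega^i)^2$ one obtains the clean estimate $\tauTest^N_j\ge\tauTest^0_j(\eta^N/\eta^0)^2$, reducing the problem to proving $\eta^N=\Omega(N)$. Here the Chambolle--Pock accelerated analysis enters: writing $t^i_j\defeq 1/\iset\tau^i_j$, the recursion \eqref{eq:step-rules-acc-full-dual} gives $(t^{i+1}_j)^2\ge(t^i_j)^2+2\tilde\gamma_{G,j}t^i_j$ and $t^{i+1}_j\le t^i_j+2\tilde\gamma_{G,j}$, from which each $t^i_j=\Theta(i)$ and hence $2\iset\tau^i_j\tilde\gamma_{G,j}=2/i+o(1/i)$, so that $1/\bar\omega^i=1+1/i+o(1/i)$ and therefore $\eta^N=\prod_{i<N}(1/\bar\omega^i)=\Theta(N)$. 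I expect this last estimate to be the real obstacle: unlike the single-block case of \cite{chambolle2010first}, the over-relaxation parameter $\bar\omega^i=\max_j(1+2\iset\tau^i_j\tilde\gamma_{G,j})^{-1/2}$ is \emph{shared} by all primal blocks, so the $m$ step-length recursions are coupled through it; one must track them jointly and show that the ``bottleneck'' block determining $\bar\omega^i$ always receives the ideal single-block update and hence still drives $\iset\tau^i$ to decay with the precise $1/i$ asymptotics (not merely $\Theta(1/i)$ with a worse constant) needed for linear growth of $\eta^i$, while the remaining blocks decay at least as fast and thus also attain $\tauTest^N_j=\Omega(N^2)$.
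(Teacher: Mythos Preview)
Your verification of the hypotheses of \cref{thm:test-det} is essentially the paper's argument: same testing parameters, same use of \cref{lemma:sigmatest-fulldual}, same handling of \cref{item:thm-test:primal}\ref{item:thm-test:primal:b} and \cref{item:thm-test:dual}\ref{item:thm-test:dual:a}. (One harmless slip: $\theta_{\TauTest_i\Tau_i}=\eta^i\theta_A$, not $\eta^{i+1}\theta_A$; the extra factor $1/\bar\omega^i$ is precisely why \eqref{eq:full-dual-rhol-theta} carries $1/\bar\omega^0$.) Your reduction $\tauTest^N_j\ge\tauTest^0_j(\eta^N/\eta^0)^2$ is also exactly what the paper uses, since $(\eta^N/\eta^0)^2=\prod_{i<N}(1+\min_k 2\iset\tau^i_k\tilde\gamma_{G,k})$.

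The gap is in the growth estimate. Your two inequalities $(t^{i+1}_j)^2\ge (t^i_j)^2+2\tilde\gamma_{G,j}t^i_j$ and $t^{i+1}_j\le t^i_j+2\tilde\gamma_{G,j}$ give only $t^i_j\le t^0_j+2i\tilde\gamma_{G,j}$, hence $\min_j 2\iset\tau^i_j\tilde\gamma_{G,j}\ge 1/(i+\bar z_0)$ with $\bar z_0=\max_j(2\iset\tau^0_j\tilde\gamma_{G,j})^{-1}$. This yields $(\eta^N/\eta^0)^2\ge\prod_{i<N}(1+1/(i+\bar z_0))=(N+\bar z_0)/\bar z_0=\Theta(N)$, so $\eta^N=\Theta(\sqrt{N})$ and only $\tauTest^N_j=\Omega(N)$ --- the $O(1/N)$ rate, not $O(1/N^2)$. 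The jump ``$t^i_j=\Theta(i)$ hence $2\iset\tau^i_j\tilde\gamma_{G,j}=2/i+o(1/i)$'' is unjustified: $\Theta(i)$ does not pin down the leading constant, and the constant is what decides $\sqrt{N}$ versus $N$ for $\eta^N$.

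The paper closes this gap with \cref{lemma:max-for-n2}. Writing $z^i_j\defeq(2\iset\tau^i_j\tilde\gamma_{G,j})^{-1}$ and $\bar z_i\defeq\max_j z^i_j$, the recursion \eqref{eq:step-rules-acc-full-dual} becomes $z^{i+1}_j=(1+z^i_j)/\sqrt{1+\bar z_i^{-1}}$. Since this is increasing in $z^i_j$, taking the maximum over $j$ gives an \emph{equality} $\bar z_{i+1}=(1+\bar z_i)/\sqrt{1+\bar z_i^{-1}}=\sqrt{\bar z_i^2+\bar z_i}$, from which $\bar z_{i+1}-\bar z_i=\bar z_i/(\sqrt{\bar z_i^2+\bar z_i}+\bar z_i)\le 1/2$. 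Thus $\bar z_N\le\bar z_0+N/2$, i.e.\ $\min_j 2\iset\tau^i_j\tilde\gamma_{G,j}\ge 2/(2\bar z_0+i)$, and the telescoping product $\prod_{i<N}(1+2/(2\bar z_0+i))=(2\bar z_0+N)(2\bar z_0+N+1)/[(2\bar z_0)(2\bar z_0+1)]$ gives $\tauTest^N_j=\Omega(N^2)$. This is precisely your ``bottleneck block receives the ideal single-block update'' intuition made rigorous: the monotonicity of $z\mapsto(1+z)/\sqrt{1+\bar z_i^{-1}}$ means the bottleneck index never changes, so $\bar z_i$ follows the single-block Chambolle--Pock dynamics exactly, with increment $\le 1/2$ rather than the $\le 1$ you obtained.
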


\begin{proof}
	We use \cref{thm:test-det} whose conditions we need to verify.
    We have already verified the nesting conditions \eqref{eq:chilo-definition} for the choices $\iset V(i+1) = \emptyset$, $V(i+1)=\{1,\ldots,n\}$, and $\iset S(i) = S(i)$ in \cref{alg:acc-full-dual-omega}.
    The coupling condition \eqref{eq:etamu-update-deter} we have reduced to \eqref{eq:etamu-full-dual}.
    To verify \eqref{eq:etamu-full-dual}, we initialise $\tauTest^0_j\defeq\eta^0(\iset\pi_j^0\iset\tau_j^0)^{-1}$ and $\sigmaTest_\ell^0 \defeq \eta^0/\dset\sigma_\ell^0$ for some $\eta^0>0$, and update
    \begin{equation}
	    \label{eq:acc-fulldual-tautest-tildegammag}
	    \tauTest^{i+1}_j \defeq (1+2\iset\tau_j^i\tilde\gamma_{G,j})\tauTest^i_j,
        \quad
        \nexxt\sigmaTest_\ell \defeq \this\sigmaTest_\ell,
        \quad\text{and}\quad
        \eta^{i+1} \defeq \eta^i/\bar\omega^i.
    \end{equation}
    Then from \eqref{eq:step-rules-acc-full-dual}, $\sigmaTest_\ell^{i+1}\dset\sigma_\ell^{i+1}=\sigmaTest_\ell^{i}\dset\sigma_\ell^{i}/\bar\omega^{i}$
    and $\tauTest^{i+1}_j\iset\tau^{i+1}_j=\tauTest^i_j\iset\tau^i_j/\bar\omega^i$.
    Therefore, \eqref{eq:etamu-full-dual} holds by induction.
    Clearly also \eqref{eq:conditionality-assumptions} holds due to the step length and testing parameters being updated deterministically.
    The conditions \eqref{eq:rules-det} follow from \eqref{eq:locality-acc-full-dual} and \eqref{eq:etamu-full-dual} given that $\iset\tau^i_j$ decreases so $\bar\omega^i\ge\bar\omega_0$ and $\theta_{\TauTest_i\Tau_i}=\eta^i\theta_{A}=\eta^{i+1}\bar\omega^i\theta_{A}$.

    We now verify \eqref{eq:sigmatest-result}.
    By \eqref{eq:step-rules-acc-full-dual} and \eqref{eq:acc-fulldual-tautest-tildegammag}, we get $\tauTest^{i+1}_j(\iset\tau_j^{i+1})^2\le\tauTest^i_j(\iset\tau_j^i)^2$. This and \eqref{eq:etamu-full-dual} yield
    \[
    	\bar\omega^i\dset\sigma^{i+1}_\ell\iset\tau_j^i
    	=\frac{\eta^i\iset\tau_j^i}{\sigmaTest^{i+1}_\ell}
    	=\frac{\tauTest^i_j(\iset\tau_j^i)^2}{\sigmaTest^{i+1}_\ell\iset\pi_j}
    	\le
    	\frac{\tauTest^0_j(\iset\tau_j^0)^2}{\sigmaTest^{i+1}_\ell\iset\pi_j}
    	=\frac{\eta^0\iset\tau_j^0}{\sigmaTest^0_\ell}
    	=\dset\sigma^0_\ell\iset\tau_j^0.
    \]
    Combining this estimate with \eqref{eq:initialisation-acc-full-dual-sigmatest} we verify \eqref{eq:sigmatest-fulldual}. Thus \cref{lemma:sigmatest-fulldual} establishes \eqref{eq:sigmatest-result}.

    We still need to verify \cref{thm:test-det}\,\cref{item:thm-test:primal} and \cref{item:thm-test:dual}.
    Regarding the dual test, $\sigmaTest^{i+2}_\ell=\sigmaTest^{i+1}_\ell\le(1+2\dset\sigma_\ell^{i+1}\bar\gamma_{F^*,\ell}^{i+1})\sigmaTest^{i+1}_\ell$ trivially as long as $\bar\gamma_{F^*,\ell}^{i+1}\ge0$, which follows from the assumptions on $\gamma_{F^*,\ell}$. Therefore \cref{thm:test-det}\,\ref{item:thm-test:dual} option \ref{item:thm-test:dual:a} holds.
	As far as \cref{thm:test-det}\,\cref{item:thm-test:primal} is concerned, we observe that \eqref{eq:lambda-lambda} reduces to $\this c_*=nL^2\eta^{i+1} \rho_x^2/(2\alpha_y)$.
    Consequently \eqref{eq:lij} becomes
    \begin{equation}
    \label{eq:acc-omega-full-dual-barl-bound}
        L^i_j \defeq
        L_3+L\iset\pi_j(\max_{j\in S(i)}(\omega^i_j+1)^2\textstyle\sum_{\ell=1}^{n}\rho_\ell+\frac{nL}{2\alpha_y}\rho_x^2)\eta^{i+1}/\eta^i
        \le \bar L
    \end{equation}
    thanks to $\omega^i_j\defeq \bar\omega^i/\iset\pi_j\le 1/\iset\pi_j$ and $\bar\omega^i\ge \bar\omega^0$.
    Also,  with $\tilde\gamma_{G,j}^i<(\iset\pi_j\gamma_{G,j}+\gamma_{K,j})$, \eqref{eq:initialisation-acc-full-dual-tau}, \eqref{eq:acc-omega-full-dual-barl-bound}, and $\iset\tau_j^i\le\iset\tau_j^0$ show \eqref{eq:tautest-update-tilde-eps}, hence, \eqref{eq:tautest-update-tilde}. Therefore,  \cref{thm:test-det}\ref{item:thm-test:primal} option \ref{item:thm-test:dual:b} holds for every $j=1,\ldots,m$.

	We can thus apply \cref{thm:test-det} to obtain \eqref{eq:convergence-estimate-blockwise}.
    Multiplying the $\tau$ update of \eqref{eq:step-rules-acc-full-dual} by $2\tilde\gamma_{G,j}$, plugging in $\bar\omega^i$, and taking the inverse,
    we have
    \[
    	(2\iset\tau^{i+1}_j\tilde\gamma_{G,j})^{-1} =
    	\frac{1+2\iset\tau^i_j\tilde\gamma_{G,j}}{2\iset\tau^i_j\tilde\gamma_{G,j}\sqrt{1+\min_{j=1\ldots m}(2\iset\tau^i_j\tilde\gamma_{G,j})}}
    	=\frac{1+(2\iset\tau^i_j\tilde\gamma_{G,j})^{-1}}{\sqrt{1+(\max_{j=1\ldots m}(2\iset\tau^i_j\tilde\gamma_{G,j})^{-1})^{-1}}}
    \]
    We now apply \cref{lemma:max-for-n2} with $z_j^i=(2\iset\tau^i_j\tilde\gamma_{G,j})^{-1}$ to get $\max_{j=1\ldots m}(2\iset\tau^N_j\tilde\gamma_{G,j})^{-1}\le \bar z_0 +N/2$ with $\bar z_0>0$.
    Then from \eqref{eq:acc-fulldual-tautest-tildegammag}, we have
    \[
    	\begin{split}
    	\tauTest^{N+1}_j&\ge
    	(1+\min_{j=1\ldots m}(2\iset\tau_j^i\tilde\gamma_{G,j}))\tauTest^N_j
    	\ge \Bigl(1+\frac{1}{\bar z_0+N/2}\Bigr)\tauTest^N_j
    	=\frac{2\bar z_0+N+2}{2\bar z_0+N}\tauTest^N_j
    	\\
    	&=
    	\frac{2\bar z_0+N+2}{2\bar z_0+N}\frac{2\bar z_0+N+1}{2\bar z_0 +N-1}\tauTest^{N-1}_j
    	=\ldots
    	=\frac{(2\bar z_0+N+2)(2\bar z_0+N+1)}{2\bar z_0(2\bar z_0+1)}\tauTest^0_j.
    	\end{split}
    \]
    Therefore, $\tauTest^{N}_j$ grows as $\Omega(N^2)$, and we obtain the claimed convergence rates from \eqref{eq:convergence-estimate-blockwise}.
\end{proof}

In \cref{alg:acc-full-dual-omega}, we chose $\omega^i_j$ to eliminate the $\kgrad{\thisx}$ term from the dual step. Selecting $\omega^i_j=-1$ keeps this term, but eliminates the necessity to have a finite $\rho_\ell$ as long as $p=2$ as \eqref{eq:lij} and \eqref{eq:zeta-rule-deter} will no longer depend on it. This yields \cref{alg:acc-full-dual-no-omega} and the following:

\begin{Algorithm}
    \caption{Full dual updates \#2}
	\label{alg:acc-full-dual-no-omega}
	Assume the problem structure \eqref{eq:main-problem-primalonly}, equivalently \eqref{eq:main-problem}.
	. For each iteration $i \in \N$, choose a sampling pattern for generating the random set of updated dual blocks $V(i+1) \in \Random(\SAlg_i; \powerset\{1,\ldots,n\})$ with corresponding blockwise probabilities $\iset\pi_j^i\defeq \P[j\in S(i)\mid\SAlg_{i-1}]>0$. Choose a rule for the iteration and block-dependent step length parameters $\iset\tau^i_j,\dset\sigma^{i+1}_\ell,\bar\omega^i>0$ based on one of \cref{thm:acc-full-dual}, \ref{thm:acc2-full-dual}, or \ref{thm:lin-full-dual}.
	Pick an initial iterate $(x^0,y^0)$ and on each iteration $i \in \N$ update all blocks $\nextx_j=P_j\nextx$, ($j=1,\ldots,m$), and  $\nexty_\ell=Q_\ell\nexty$, ($\ell=1,\ldots,n$), of $\nextx$ and $\nexty$ as:
	\begin{align*}
	\nextx_j& \defeq \begin{cases}
		(I+\iset\tau^i_j P_j\subdiff G_j P_j)^{-1}(\thisx_j-\iset\tau^i_jP_j\kgradconj{\thisx}\thisy),
		&
		j\in S(i),
		\\
		\thisx_j,
		&
		j\notin S(i),
	\end{cases}
	\\
	\nexty_\ell& \defeq
	(I+\dset\sigma^{i+1}_\ell Q_\ell\subdiff F^*_\ell Q_\ell)^{-1}\biggl(
        \thisy_\ell+\dset\sigma^{i+1}_\ell Q_\ell K(\thisx)
    	+\dset\sigma_\ell^{i+1}\sum_{j \in S(i)}\biggl(\frac{\bar\omega^i}{\iset\pi^i_j}+1\biggr)Q_\ell\kgrad{\thisx}(\nextx_j-\thisx_j)
    \biggr).
	\end{align*}
\end{Algorithm}

\begin{corollary}
	\label{col:acc-no-ry-full-dual}
	\Cref{thm:acc2-full-dual,thm:acc-full-dual} apply to \cref{alg:acc-full-dual-no-omega} if \cref{ass:k-nonlinear} holds with $p=2$, and instead of \eqref{eq:acc2-full-dual-barl-omega}, \eqref{eq:full-dual-barl-omega}, \eqref{eq:acc2-full-dual-rhol-omega}, and \eqref{eq:full-dual-rhol-omega}, we assume 
	\[
		\bar L \defeq L_3+nL^2\rho_x^2/(2\alpha_y)
		\quad\text{and}\quad
		\P[\norm{\nextx-\realoptx}\le\rho_x \mid \SAlg_{i-1}]=1.
	\]
\end{corollary}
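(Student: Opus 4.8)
The plan is to reduce the claim to \cref{thm:test-det} by repeating the proofs of \cref{thm:acc2-full-dual,thm:acc-full-dual} essentially verbatim; the only new inputs are the choice $\omega^i_j\equiv-1$ in place of $\omega^i_j=\bar\omega^i/\iset\pi^i_j$, and the restriction $p=2$. First I would verify that, with $\iset S(i)=S(i)$, $\iset V(i+1)=\emptyset$, $V(i+1)=\{1,\ldots,n\}$ and $\Omega_i=-\iset P_i$, the abstract iteration \eqref{eq:alg-ordered} collapses to \cref{alg:acc-full-dual-no-omega}: since $\nextx-\thisx$ is supported on $\iset S(i)$, the over-relaxation line of \eqref{eq:alg-ordered} gives $\overnextx=\thisx$; inserting $\Omega_i=-\iset P_i$ and $\dset Q_{i+1}=I$ into the dual line, and using \eqref{eq:etamu-full-dual} together with $\lambda^i_{\ell,j}=\tauTest^i_j\tau^i_j\chi_{\iset S(i)}(j)$ to rewrite $\tauTest^i_j\iset\tau^i_j/\sigmaTest^{i+1}_\ell=\bar\omega^i\dset\sigma^{i+1}_\ell/\iset\pi^i_j$ (cf.\ \eqref{eq:baromega}), the coefficient of $Q_\ell\kgrad{\thisx}(\nextx_j-\thisx_j)$ becomes $\dset\sigma^{i+1}_\ell(\bar\omega^i/\iset\pi^i_j+1)$, which is precisely the dual update of \cref{alg:acc-full-dual-no-omega}.

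Next I would track how $\omega^i_j\equiv-1$ changes the constants entering \cref{thm:test-det}. Because $(\Omega^i+I)(\nextx-\thisx)=0$, that is $\nextx+\Omega^i(\nextx-\thisx)=\thisx$, the quantity $D_i^\Omega$ of \eqref{eq:diomega} vanishes \emph{identically}, without invoking \cref{ass:k-lipschitz}; hence $R_x=\delta\TauTest_i$ in \eqref{eq:def-rx}, and the summand $L\norm{\Omega^i+I}^2\sum_\ell\sigmaTest^{i+1}_\ell\sigma^{i+1}_\ell\rho_\ell$ disappears from \eqref{eq:lij}. In the present full-dual setting $S(i)\setminus\iset S(i)=\emptyset$, so the $\rho_\ell^2$-term of \eqref{eq:lambda-lambda} also drops and $\this c_*=nL^2\rho_x^2\max_\ell\sigmaTest^{i+1}_\ell\dset\sigma^{i+1}_\ell/(2\alpha_y)=nL^2\rho_x^2\eta^{i+1}/(2\alpha_y)$ by \eqref{eq:etamu-full-dual}; with $\tauTest^i_j\tau^i_j=\eta^i/\iset\pi_j$ and $\eta^{i+1}/\eta^i=1/\bar\omega^i\le1/\bar\omega^0$ ($\bar\omega^i\equiv1$ in the setting of \cref{thm:acc2-full-dual}), \eqref{eq:lij} reduces to $L^i_j=L_3+\this c_*/(\tauTest^i_j\tau^i_j)\le\bar L$ for the new $\bar L=L_3+nL^2\rho_x^2/(2\alpha_y)$.

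The restriction $p=2$ makes $\rho_\ell^{2-p}=1$, so the right-hand side of \eqref{eq:zeta-rule-deter} --- hence of \eqref{eq:acc2-full-dual-rhol-theta} resp.\ \eqref{eq:full-dual-rhol-theta} --- no longer involves $\rho_\ell$, and the hypothesis \eqref{eq:y-locality-prob-1} of \cref{lemma:nonneg-penalty-dk} holds trivially for $\rho_\ell=+\infty$. Checking $\nexxt d$ and \eqref{eq:nonneg-penalty-dl-2-prob-ass} of \cref{lemma:nonneg-penalty-dl} --- where, once more, $S(i)\setminus\iset S(i)=\emptyset$ removes the $\norm{\nexty-\realopty}_{\Pnl}^2$ contribution --- together with the new $\this c_*$, the only almost-sure bound still needed is $\P[\norm{\nextx-\realoptx}\le\rho_x\mid\SAlg_{i-1}]=1$, which replaces \eqref{eq:bounded-dual-deter} (equivalently \eqref{eq:acc2-full-dual-rhol-omega} resp.\ \eqref{eq:full-dual-rhol-omega}).

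Finally, with $R_x=\delta\TauTest_i$, $L^i_j\le\bar L$ and this relaxed boundedness in hand, all the remaining ingredients --- the coupling \eqref{eq:etamu-full-dual}, the deterministic initialisation and updates of $\tauTest^i_j,\sigmaTest^{i+1}_\ell$, the verification of \eqref{eq:sigmatest-result} through \cref{lemma:sigmatest-fulldual}, the satisfaction of conditions \cref{item:thm-test:primal} and \cref{item:thm-test:dual} of \cref{thm:test-det}, and the $\Omega(N)$ resp.\ $\Omega(N^2)$ growth of $\tauTest^N_j$ and $\sigmaTest^{N+1}_\ell$ --- carry over unchanged from the proofs of \cref{thm:acc2-full-dual,thm:acc-full-dual}, so the claimed rates follow from \eqref{eq:convergence-estimate-blockwise}. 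The main thing to get right is step one, together with the check that letting $\rho_\ell\to+\infty$ is genuinely harmless throughout the whole estimate chain \cref{lemma:penalty-general}--\cref{lemma:nonneg-penalty-dl} when $p=2$, rather than only at the final bookkeeping stage.
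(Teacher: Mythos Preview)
Your proposal is correct and follows precisely the same approach as the paper's proof, which is much terser: the paper simply observes that with $\omega^i_j=-1$ the bounds \eqref{eq:acc2-omega-full-dual-barl-bound}, \eqref{eq:acc-omega-full-dual-barl-bound}, \eqref{eq:acc2-full-dual-rhol-theta}, and \eqref{eq:full-dual-rhol-theta} lose their dependence on $\rho_\ell$, so $\rho_\ell$ may be taken to $+\infty$. Your expanded tracking of exactly where $\rho_\ell$ enters the estimate chain (vanishing of $D_i^\Omega$, the $p=2$ effect on $\rho_\ell^{2-p}$, and $S(i)\setminus\iset S(i)=\emptyset$ in $\this c_*$) is a faithful unpacking of that one-line argument.
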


\begin{proof}
	The proof remains exactly the same those of \cref{thm:acc2-full-dual,thm:acc-full-dual}.
	Inserting $\omega^i_j=-1$, \eqref{eq:acc2-omega-full-dual-barl-bound} and \eqref{eq:acc-omega-full-dual-barl-bound} as well as \eqref{eq:acc2-full-dual-rhol-theta} and \eqref{eq:full-dual-rhol-theta} lose their dependency on $\rho_\ell$. Hence $\rho_\ell$ can be taken infinitely large.
\end{proof}

%%%%%%%%%%%%%%%%%%%%%%%%%%%%%%%
\subsection{Linear convergence}
%%%%%%%%%%%%%%%%%%%%%%%%%%%%%%%

If all the primal and dual blocks exhibit second-order growth, i.e., $\bar\gamma_{F^*,\ell}>0$ and $\gamma_{G,j}+\gamma_{K,j}>0$, we obtain linear convergence:

\begin{theorem}
	\label{thm:lin-full-dual}
	Suppose \cref{ass:k-lipschitz,ass:k-nonlinear,ass:gf} hold with $L, L_3\ge0$; $p\in[1,2]$;
	$\gamma_{G,j}+\gamma_{K,j}>0$, ($j=1,\ldots,m$), and $\bar\gamma_{F^*,\ell}>0$, ($\ell=1,\ldots,n$), for some $\alpha_y,\zeta_\ell\ge0$ as defined in \eqref{eq:bargamma-fulldual}.
    Let the iterates $\{\thisu=(\thisx, \thisy)\}_{i \in \N}$ be generated by \cref{alg:acc-full-dual-omega} with iteration-independent probabilities $\iset\pi^i_j\equiv \iset\pi_j$ and step length parameters
	\begin{subequations}
		\label{eq:step-rules-lin-full-dual}
        \begin{align}
        \label{eq:step-rules-lin-full-dual:tausigma}
		\iset\tau^{i+1}_j& \defeq
		\frac{\iset\tau^{i}_j}{(1+2\iset\tau^{i}_j\tilde\gamma_{G,j})\bar\omega},
		\quad
		\dset\sigma^{i+1}_\ell \defeq
		\frac{\dset\sigma^{i}_\ell}{(1+2\dset\sigma^{i}_\ell\bar\gamma_{F^*,\ell})\bar\omega},
		\quad\text{and}\quad
        \\
        \label{eq:step-rules-lin-full-dual:omega}
		\bar\omega^i&\equiv\bar\omega\defeq\max\biggl\{\max_{j=1\ldots m}\frac{1}{1+2\iset\tau^{0}_j\tilde\gamma_{G,j}},
		\max_{\ell=1\ldots n}\frac{1}{1+2\dset\sigma^0_\ell\bar\gamma_{F^*,\ell}}\biggr\}
	\end{align}
	\end{subequations}
	with
	$0<\tilde\gamma_{G,j}<\iset\pi_j(\gamma_{G,j}+\gamma_{K,j})$;
	and initial $\iset\tau^0_j,\dset\sigma^0_\ell>0$ satisfying for some $0<\delta<\kappa<1$, $\rho_x,\rho_\ell\ge0$, ($\ell=1,\ldots,n$), and $w_{j,\ell}^i$ as in \eqref{eq:fulldual:w-r} the bounds
	\begin{subequations}
		\label{eq:initialisation-lin-full-dual}
		\begin{align}
        \label{eq:initialisation-lin-full-dual-r}
		1-\kappa
		&\ge
		\adaptNorm{
			\sum_{j \in \iset S(i)} \sqrt{\frac{w_{j,\ell}^i\dset\sigma_\ell^0\iset\tau^0_j}{\iset\pi_j}}
			Q_\ell \grad K(\thisx) P_j
        }^2
        \quad (i \in \N)
        \quad\text{and}
		\\
        \label{eq:initialisation-lin-full-dual-tau}
		\delta&\ge
		\iset\tau^0_j
		\left(\bar L+2(1-\iset\pi_j)(\gamma_{G,j}+\gamma_{K,j})
		\frac{\gamma_{G,j}+\gamma_{K,j}-\tilde\gamma_{G,j}}{\iset\pi_j(\gamma_{G,j}+\gamma_{K,j})-\tilde\gamma_{G,j}}
			\right)
		\quad
		(j\in S(i)),
        \quad\text{with}
		\\
		\label{eq:lin-full-dual-barl-omega}
		\bar L&\defeq
		L_3+\frac{L}{\bar\omega}\left(
		\max_{j=1\ldots m}\left(\frac{\bar\omega}{\iset\pi_j}+1\right)^2\textstyle\sum_{\ell=1}^{n}\rho_\ell
		+\frac{nL}{2\alpha_y}\rho_x^2\right).
 		\end{align}
	\end{subequations}
	Further assume for $A\defeq\textstyle\sum_{j\in S(i)}(\iset\pi_j)^{-1}P_j$ that
	\begin{subequations}
		\label{eq:locality-lin-full-dual}
		\begin{align}
		\label{eq:lin-full-dual-rhol-theta}
		\E_{i-1}[\theta_{A}]&\ge
		p^{-p}\textstyle\sum_{\ell=1}^n\zeta_\ell^{1-p}\rho_\ell^{2-p}/\bar\omega
        \quad\text{and}
        \\
        \label{eq:lin-full-dual-rhol-omega}
        1&=
        \P[\norm{\nextx-\realoptx}\le\rho_x,\norm{Q_\ell(\nexty-\realopty)}_{\Pnl}\le\rho_\ell, (\ell=1,\ldots,n) \mid \SAlg_{i-1}].
		\end{align}
	\end{subequations}
	Then $\E[\norm{P_j(x^N-\realoptx)}^2]$ and $\E[\norm{Q_\ell(y^N-\realopty)}^2]$ converge to zero at the linear rate $O((1/\bar \omega)^N)$ for all $j \in \{1,\ldots,m\}$ and $\ell \in \{1,\ldots,n\}$.
\end{theorem}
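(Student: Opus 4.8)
\emph{Approach.} The plan is to reduce everything to \cref{thm:test-det}, just as in the proofs of \cref{thm:acc2-full-dual,thm:acc-full-dual}. For the full-dual choice $\iset V(i+1)=\emptyset$, $V(i+1)=\{1,\dots,n\}$, $\iset S(i)=S(i)$, the nesting conditions \eqref{eq:chilo-definition} hold automatically and the coupling conditions \eqref{eq:etamu-update-deter} collapse to \eqref{eq:etamu-full-dual}. First I would fix the testing operators: set $\tauTest^0_j\defeq\eta^0/(\iset\pi_j\iset\tau^0_j)$, $\sigmaTest^0_\ell\defeq\eta^0/\dset\sigma^0_\ell$, $\eta^{i+1}\defeq\eta^i/\bar\omega$, and update $\tauTest^{i+1}_j\defeq(1+2\iset\tau^i_j\tilde\gamma_{G,j})\tauTest^i_j$, $\sigmaTest^{i+2}_\ell\defeq(1+2\dset\sigma^{i+1}_\ell\bar\gamma_{F^*,\ell})\sigmaTest^{i+1}_\ell$. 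The rules \eqref{eq:step-rules-lin-full-dual} are tailored so that $\iset\pi_j\tauTest^{i+1}_j\iset\tau^{i+1}_j=\eta^{i+1}=\sigmaTest^{i+1}_\ell\dset\sigma^{i+1}_\ell$ follows by induction, which is \eqref{eq:etamu-full-dual}; \eqref{eq:conditionality-assumptions} is immediate because every step length and test is deterministic; and because $\bar\gamma_{F^*,\ell}>0$, this update makes \cref{thm:test-det}\,\ref{item:thm-test:dual}, option \ref{item:thm-test:dual:a}, hold with equality.

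\emph{Monotonicity of the step lengths.} The one genuinely new ingredient relative to \cref{thm:acc-full-dual} (where $\bar\omega^i$ decreases) is that the \emph{constant} $\bar\omega$ of \eqref{eq:step-rules-lin-full-dual:omega} keeps the step lengths in a fixed compact interval bounded away from $0$. I would note that $t\mapsto t/((1+2\gamma t)\bar\omega)$ is increasing with unique positive fixed point $t^\star=(1/\bar\omega-1)/(2\gamma)$, and that $\bar\omega\ge1/(1+2\iset\tau^0_j\tilde\gamma_{G,j})$ (resp. $\bar\omega\ge1/(1+2\dset\sigma^0_\ell\bar\gamma_{F^*,\ell})$) says precisely that the initial value dominates this fixed point; hence $\iset\tau^i_j$ decreases to $\iset\tau^\star_j>0$ within $[\iset\tau^\star_j,\iset\tau^0_j]$, $\dset\sigma^i_\ell$ decreases within $[\dset\sigma^\star_\ell,\dset\sigma^0_\ell]$, and in particular $(1+2\iset\tau^i_j\tilde\gamma_{G,j})\bar\omega\ge1$, $(1+2\dset\sigma^i_\ell\bar\gamma_{F^*,\ell})\bar\omega\ge1$ (also $\bar\omega<1$). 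Monotonicity together with $\bar\omega\le1$ then gives $\bar\omega\dset\sigma^{i+1}_\ell\iset\tau^i_j\le\dset\sigma^0_\ell\iset\tau^0_j$, which with \eqref{eq:initialisation-lin-full-dual-r} is exactly hypothesis \eqref{eq:sigmatest-fulldual} of \cref{lemma:sigmatest-fulldual}, yielding the metric lower bound \eqref{eq:sigmatest-result}. The conditions \eqref{eq:rules-det} would then follow from \eqref{eq:locality-lin-full-dual} after noting $\TauTest_i\Tau_i=\eta^i A$ for $A=\sum_{j\in S(i)}\iset\pi_j^{-1}P_j$, so $\theta_{\TauTest_i\Tau_i}=\eta^i\theta_A$, while $\nu^{i+1}_\ell\sigmaTest^{i+1}_\ell\sigma^{i+1}_\ell=\eta^{i+1}=\eta^i/\bar\omega$.

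\emph{Per-block test conditions and the rate.} On the primal side $0<\tilde\gamma_{G,j}<\iset\pi_j\bar\gamma_{GK,j}$, so I would invoke option \ref{item:thm-test:primal:b} in the reduced form \eqref{eq:tautest-update-tilde-eps}: here $\this c_*$ from \eqref{eq:lambda-lambda} collapses to $nL^2\rho_x^2\eta^{i+1}/(2\alpha_y)$ (the $S(i)\setminus\iset S(i)$ term is empty) and $\norm{\Omega^i+I}^2\le\max_j(\bar\omega/\iset\pi_j+1)^2$ since $\omega^i_j=\bar\omega/\iset\pi_j$; substituting $\tauTest^i_j\iset\tau^i_j=\eta^i/\iset\pi_j$ and $\eta^{i+1}/\eta^i=1/\bar\omega$ into \eqref{eq:lij} gives $L^i_j\le\bar L$, and then $\iset\tau^i_j\le\iset\tau^0_j$ combined with \eqref{eq:initialisation-lin-full-dual-tau} delivers the required bound on $\delta$. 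Applying \cref{thm:test-det} yields \eqref{eq:convergence-estimate-blockwise}. For the rate, $\iset\tau^i_j\ge\iset\tau^\star_j$ and $\dset\sigma^i_\ell\ge\dset\sigma^\star_\ell$ give $1+2\iset\tau^i_j\tilde\gamma_{G,j}\ge1/\bar\omega$ and $1+2\dset\sigma^i_\ell\bar\gamma_{F^*,\ell}\ge1/\bar\omega$, so $\tauTest^N_j\ge(1/\bar\omega)^N\tauTest^0_j$ and $\sigmaTest^{N+1}_\ell\ge(1/\bar\omega)^{N+1}\sigmaTest^0_\ell$; dividing \eqref{eq:convergence-estimate-blockwise} through by these shows $\E[\norm{P_j(x^N-\realoptx)}^2]$ and $\E[\norm{Q_\ell(y^N-\realopty)}^2]$ are $O(\bar\omega^N)$, which is the claimed linear rate $O((1/\bar\omega)^N)$.

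\emph{Main obstacle.} Most of this is bookkeeping against \cref{thm:test-det}; the only step needing a real argument is the fixed-point analysis of the constant-$\bar\omega$ recursions, which is what forces the step lengths to stay bounded away from $0$ and hence $\tauTest^i_j$, $\sigmaTest^{i+1}_\ell$ to grow \emph{geometrically} rather than merely polynomially. Dropping the lower bounds $\iset\tau^i_j\ge\iset\tau^\star_j$, $\dset\sigma^i_\ell\ge\dset\sigma^\star_\ell$ would only recover the sub-linear rates of \cref{thm:acc2-full-dual,thm:acc-full-dual}.
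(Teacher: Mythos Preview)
Your proposal is correct and follows essentially the same route as the paper: reduce to \cref{thm:test-det} with the same testing operators $\tauTest^{i+1}_j=(1+2\iset\tau^i_j\tilde\gamma_{G,j})\tauTest^i_j$, $\sigmaTest^{i+1}_\ell=(1+2\dset\sigma^i_\ell\bar\gamma_{F^*,\ell})\sigmaTest^i_\ell$, $\eta^{i+1}=\eta^i/\bar\omega$, verify \eqref{eq:etamu-full-dual}, \cref{lemma:sigmatest-fulldual}, option~\ref{item:thm-test:dual:a} for the dual test and option~\ref{item:thm-test:primal:b} for the primal test via $L^i_j\le\bar L$, and read off the rate from $\tauTest^N_j,\sigmaTest^{N+1}_\ell\ge(1/\bar\omega)^N\cdot\text{const}$. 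The only cosmetic difference is in the ``new ingredient'': the paper establishes directly by induction that the \emph{minimum} $\min\{\min_j 2\iset\tau^i_j\tilde\gamma_{G,j},\min_\ell 2\dset\sigma^i_\ell\bar\gamma_{F^*,\ell}\}$ is invariant and equals $1/\bar\omega-1$ (so $\bar\omega$ could have been defined at any iteration), whereas you argue via the fixed point $t^\star=(1/\bar\omega-1)/(2\gamma)$ of the monotone map $t\mapsto t/((1+2\gamma t)\bar\omega)$ that each step-length sequence individually decreases and stays $\ge t^\star$; both yield $(1+2\iset\tau^i_j\tilde\gamma_{G,j})\ge1/\bar\omega$ and $(1+2\dset\sigma^i_\ell\bar\gamma_{F^*,\ell})\ge1/\bar\omega$, which is exactly what is needed.
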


\begin{proof}
    We use \cref{thm:test-det} whose conditions we need to verify.
    We have already verified the nesting condition \eqref{eq:chilo-definition} for the choices $\iset V(i+1) = \emptyset$, $V(i+1)=\{1,\ldots,n\}$, and $\iset S(i) = S(i)$ in \cref{alg:acc-full-dual-omega}.
    The coupling condition \eqref{eq:etamu-update-deter} we have reduced to \eqref{eq:etamu-full-dual}.
    To verify \eqref{eq:etamu-full-dual}, we initialise $\tauTest^0_j\defeq\eta^0(\iset\pi_j^0\iset\tau_j^0)^{-1}$ and $\sigmaTest_\ell^0 \defeq \eta^0/\dset\sigma_\ell^0$ for some $\eta^0>0$, and update
    \begin{equation}
	    \label{eq:lin-fulldual-tautest-tildegammag}
	    \tauTest^{i+1}_j \defeq (1+2\iset\tau_j^i\tilde\gamma_{G,j})\tauTest^i_j,
	    \quad
	    \nexxt\sigmaTest_\ell \defeq (1+2\dset\sigma^i_\ell\bar\gamma_{F^*,\ell})\sigmaTest_\ell^i,
	    \quad\text{and}\quad
	    \eta^{i+1} \defeq \eta^i/\bar\omega.
    \end{equation}
    Then from \eqref{eq:step-rules-lin-full-dual}, $\sigmaTest_\ell^{i+1}\dset\sigma_\ell^{i+1}=\sigmaTest_\ell^{i}\dset\sigma_\ell^{i}/\bar\omega$
    and $\tauTest^{i+1}_j\iset\tau^{i+1}_j=\tauTest^i_j\iset\tau^i_j/\bar\omega$.
    Therefore, \eqref{eq:etamu-full-dual} holds by induction.
    Clearly also \eqref{eq:conditionality-assumptions} holds as the step length and testing parameters are updated deterministically.
    The conditions \eqref{eq:rules-det} follow from \eqref{eq:locality-lin-full-dual} given that $\theta_{\TauTest_i\Tau_i}=\eta^i\theta_{A}=\bar\omega\eta^{i+1}\theta_{A}$.

	We now prove \eqref{eq:sigmatest-result}. We start by proving by induction that
	\begin{equation}
		\label{eq:omega-linear-full-dual}
		\bar\omega=
		\max\biggl\{\max_{j=1\ldots m}\frac{1}{1+2\iset\tau^i_j\tilde\gamma_{G,j}},
		\max_{\ell=1\ldots n}\frac{1}{1+2\dset\sigma^i_\ell\bar\gamma_{F^*,\ell}}\biggr\},
    \end{equation}
    in other words
    \[
        \bar\omega^{-1}=1+\min\Bigl\{\min_{j=1\ldots m} 2\iset\tau^i_j\tilde\gamma_{G,j}, \min_{\ell=1\ldots n}2\dset\sigma^i_\ell\bar\gamma_{F^*,\ell}\Bigr\}.
    \]
	The inductive base for $i=0$ is clear from \eqref{eq:step-rules-lin-full-dual:omega}. Using \eqref{eq:step-rules-lin-full-dual:tausigma}, we obtain
	\begin{multline*}
		\min\biggl\{
			\min_{j=1\ldots m} 2\iset\tau^{i+1}_j\tilde\gamma_{G,j},
			\min_{\ell=1\ldots n}2\dset\sigma^{i+1}_\ell\bar\gamma_{F^*,\ell}
		\biggr\}
		=
		\frac{1}{\bar\omega}
		\min\biggl\{
			\min_{j=1\ldots m} \frac{1}{1+(2\iset\tau^i_j\tilde\gamma_{G,j})^{-1}},
			\min_{\ell=1\ldots n} \frac{1}{1+(2\dset\sigma^i_\ell\bar\gamma_{F^*,\ell})^{-1}}
		\biggr\}
		\\
		=
		\frac{1}{\bar\omega}
		\frac{1}{1+\min^{-1}\Bigl\{
				\min_{j=1\ldots m} 2\iset\tau^i_j\tilde\gamma_{G,j},
				\min_{\ell=1\ldots n}2\dset\sigma^i_\ell\bar\gamma_{F^*,\ell}
			\Bigr\}}
		=
		\min\biggl\{\min_{j=1\ldots m} 2\iset\tau^i_j\tilde\gamma_{G,j},
		\min_{\ell=1\ldots n}2\dset\sigma^i_\ell\bar\gamma_{F^*,\ell}\biggr\},
    \end{multline*}
    This establishes the inductive step, hence \eqref{eq:omega-linear-full-dual}.
    By \eqref{eq:omega-linear-full-dual} and \eqref{eq:step-rules-lin-full-dual:tausigma}, $\iset\tau^{i+1}_j$ and $\dset\sigma^{i+1}_\ell$ are non-increasing in $i$. Also using \eqref{eq:initialisation-lin-full-dual-r}, this verifies \eqref{eq:sigmatest-fulldual}. Thus \cref{lemma:sigmatest-fulldual} verifies \eqref{eq:sigmatest-result}.

    We need to verify \cref{thm:test-det}\,\cref{item:thm-test:primal} and \cref{item:thm-test:dual}. Option \ref{item:thm-test:dual:a} of the latter is trivially satisfied for every $\ell=1,\ldots,n$ based on \eqref{eq:lin-fulldual-tautest-tildegammag}.
    Regarding \cref{thm:test-det}\,\cref{item:thm-test:primal}, we first of all observe that \eqref{eq:lambda-lambda} reduces to $\this c_*=nL^2\eta^{i+1} \rho_x^2/(2\alpha_y)$.
    Consequently \eqref{eq:lij} becomes
    \begin{equation}
	    \label{eq:lin-omega-full-dual-barl-bound}
	    L^i_j \defeq
	    L_3+L\iset\pi_j(\max_{j\in S(i)}(\omega^i_j+1)^2\textstyle\sum_{\ell=1}^{n}\rho_\ell+\frac{nL}{2\alpha_y}\rho_x^2)\eta^{i+1}/\eta^i
	    \le \bar L
    \end{equation}
    for $\omega^i_j\defeq\bar\omega^i/\iset\pi_j$ as in \cref{alg:acc-full-dual-omega}.
    And with $\tilde\gamma_{G,j}<\iset\pi_j(\gamma_{G,j}+\gamma_{K,j})$, \eqref{eq:initialisation-lin-full-dual-tau}, \eqref{eq:lin-omega-full-dual-barl-bound}, and $\iset\tau^{i+1}_j\le \iset\tau^0_j$ show \eqref{eq:tautest-update-tilde-eps}.
    Therefore, \cref{thm:test-det}\ref{item:thm-test:primal} option \ref{item:thm-test:dual:b} holds for every $j=1,\ldots,m$.

    We can now apply \cref{thm:test-det} to obtain \eqref{eq:convergence-estimate-blockwise}. By \eqref{eq:lin-fulldual-tautest-tildegammag} and \eqref{eq:omega-linear-full-dual} we have
    \begin{align*}
    	\tauTest_j^{N+1}&=(1+2\iset\tau^N_j\tilde\gamma_{G,j})\tauTest_j^N
    	\ge
    	\tauTest_j^N/\bar\omega
    	\ge\ldots\ge
        \tauTest_j^0/\bar\omega^{N+1}
        \quad\text{and}\\
    	\sigmaTest_\ell^{N+1}&=(1+2\dset\sigma^N_\ell\bar\gamma_{F^*,\ell})\sigmaTest_\ell^N
    	\ge
    	\sigmaTest_\ell^N/\bar\omega
		\ge\ldots\ge
		\sigmaTest_\ell^0/\bar\omega^{N+1}.
    \end{align*}
    Applying these estimates in \eqref{eq:convergence-estimate-blockwise} establishes the claimed linear convergence rates.
\end{proof}

Similarly to \cref{alg:acc-full-dual-no-omega}, we could in the derivation of \cref{alg:acc-full-dual-omega} set $\omega^i_j=-1$ to remove any dependencies on $\rho_\ell$ from \eqref{eq:lin-full-dual-barl-omega} and \eqref{eq:lin-full-dual-rhol-theta}. This yields \cref{alg:acc-full-dual-no-omega} and:

\begin{corollary}
	\label{col:lin-no-ry-full-dual}
	\Cref{thm:lin-full-dual} applies to \cref{alg:acc-full-dual-no-omega} if \cref{ass:k-nonlinear} holds with $p=2$, and \eqref{eq:lin-full-dual-barl-omega} and \eqref{eq:lin-full-dual-rhol-omega} are replaced with
	\[
		\bar L \ge L_3+nL^2\rho_x^2/(2\alpha_y\bar\omega)
		\quad\text{and}\quad
		\P[\norm{\nextx-\realoptx}\le\rho_x \mid \SAlg_{i-1}]=1.
	\]
\end{corollary}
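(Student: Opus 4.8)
The plan is to rerun the proof of \cref{thm:lin-full-dual} verbatim---following the template of the proof of \cref{col:acc-no-ry-full-dual}---and to track how the two modifications, taking $\omega^i_j\defeq-1$ in the derivation of \cref{alg:acc-full-dual-omega} and assuming \cref{ass:k-nonlinear} with $p=2$, knock out every appearance of $\rho_\ell$. First I would note that, exactly as in the scalar computation \eqref{eq:baromega} that turns \cref{alg:acc-full-dual-omega} into \cref{alg:acc-full-dual-no-omega} and that is used for \cref{col:acc-no-ry-full-dual}, substituting $\omega^i_j=-1$ into \eqref{eq:alg-ordered} leaves the primal step unchanged and produces precisely the dual step of \cref{alg:acc-full-dual-no-omega}, with coefficient $\bar\omega^i/\iset\pi^i_j+1$ in front of $Q_\ell\kgrad{\thisx}(\nextx_j-\thisx_j)$; so the two algorithms are the same scheme with different $\omega^i_j$, and all the machinery feeding \cref{thm:lin-full-dual} applies.

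The crux is that in \cref{sec:fulldual} we have $\iset S(i)=S(i)$, so with $\omega^i_j=-1$ we get $\Omega^i=-\iset P_i$, whence $\nextx+\Omega^i(\nextx-\thisx)=\thisx$ and $(\Omega^i+I)(\nextx-\thisx)=0$; therefore $D_i^\Omega=0$ identically in \eqref{eq:diomega}, so $R_x=\delta\TauTest_i$ in \eqref{eq:def-rx} carries no $\rho_\ell$, and the term $L\norm{\Omega^i+I}^2\sum_\ell\sigmaTest^{i+1}_\ell\sigma^{i+1}_\ell\rho_\ell$ vanishes from \eqref{eq:lij}. Using, exactly as in the proof of \cref{thm:lin-full-dual}, that $\this c_*=nL^2\eta^{i+1}\rho_x^2/(2\alpha_y)$, $\tauTest^i_j\iset\tau^i_j=\eta^i/\iset\pi_j$, $\eta^{i+1}/\eta^i=1/\bar\omega$, and $\iset\pi_j\le1$, the bound \eqref{eq:lij} then collapses to $L^i_j=L_3+\this c_*/(\tauTest^i_j\iset\tau^i_j)=L_3+nL^2\rho_x^2\iset\pi_j/(2\alpha_y\bar\omega)\le L_3+nL^2\rho_x^2/(2\alpha_y\bar\omega)\le\bar L$, which is the replacement for \eqref{eq:lin-full-dual-barl-omega} asserted in the corollary; hence \eqref{eq:lin-omega-full-dual-barl-bound} still holds.

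Next, for $p=2$ one has $\rho_\ell^{2-p}=1$, so the right-hand side of \eqref{eq:zeta-rule-deter} (and of the retained hypothesis \eqref{eq:lin-full-dual-rhol-theta}) no longer involves $\rho_\ell$, and the Young step in the proof of \cref{lemma:nonneg-penalty-dk} never has to bound $\norm{\nexty_\ell-\realopty_\ell}_{\Pnl}^{2-p}$, so \eqref{eq:y-locality-prob-1} is never invoked; since also $S(i)\setminus\iset S(i)=\emptyset$ leaves $\this c_*$ dependent only on $\rho_x$ in \eqref{eq:lambda-lambda}, the sole boundedness requirement surviving among \eqref{eq:nonneg-penalty-dl-2-prob-ass}, \eqref{eq:y-locality-prob-1}, \eqref{eq:bounded-dual-deter}, and $R_x$ is $\P[\norm{\nextx-\realoptx}\le\rho_x\mid\SAlg_{i-1}]=1$, i.e. the $\rho_\ell$ in \eqref{eq:lin-full-dual-rhol-omega} may be sent to $+\infty$. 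With these two points in hand, everything else---the coupling condition \eqref{eq:etamu-full-dual}, the lower bound \eqref{eq:sigmatest-result} via \cref{lemma:sigmatest-fulldual}, the verification of \cref{thm:test-det}\,\cref{item:thm-test:primal,item:thm-test:dual}, and the growth of $\tauTest^N_j$ and $\sigmaTest^{N+1}_\ell$ as $\bar\omega^{-N}$ in \eqref{eq:convergence-estimate-blockwise}---is word for word the proof of \cref{thm:lin-full-dual}. The only step needing genuine attention is confirming that \cref{lemma:nonneg-penalty-dk}'s locality hypothesis \eqref{eq:y-locality-prob-1} is truly unused once $p=2$; the rest is bookkeeping on the constants, as for \cref{col:acc-no-ry-full-dual}.
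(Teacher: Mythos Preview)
Your proposal is correct and follows essentially the same approach as the paper. The paper's proof is a two-line remark that inserting $\omega^i_j=-1$ into \eqref{eq:lin-omega-full-dual-barl-bound} and taking $p=2$ in \eqref{eq:lin-full-dual-rhol-theta} removes all $\rho_\ell$-dependence, whence $\rho_\ell$ may be sent to $+\infty$; you simply unpack this in greater detail, in particular by observing that $(\Omega^i+I)(\nextx-\thisx)=0$ forces $D_i^\Omega=0$ outright rather than merely plugging $(\omega^i_j+1)^2=0$ into the bound.
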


\begin{proof}
	The proof remains exactly the same as \cref{thm:lin-full-dual} given all $\omega^i_j=-1$ in \eqref{eq:lin-omega-full-dual-barl-bound} and \eqref{eq:lin-full-dual-rhol-theta} no longer depend on $\rho_\ell$, hence $\rho_\ell$ can be taken infinitely large.
\end{proof}

\begin{remark}[Stochastic block-coordinate forward--backward splitting]
	\label{remark:full-dual-fb-splitting}
    Let $F(z) \defeq z$ for $z \in \R$ and $K \in C^1(X)$.
    Then $F^*(y)=\delta_{\{1\}}(y)$. Taking $n=1$ and $Q_1=I$ results in $(I+\dset\sigma^{i+1}_1 Q_1\subdiff F^* Q_1)^{-1} \equiv 1$.
    Consequently $\thisy \equiv 1$ on all iterations, so that the updates of \cref{alg:acc-full-dual-omega,alg:acc-full-dual-no-omega} reduce to
    \begin{equation}
        \label{eq:block-fb}
        \nextx_j \defeq \begin{cases}
        (I+\iset\tau^i_j P_j\subdiff G_j P_j)^{-1}(\thisx_j-\iset\tau^i_jP_j\grad K(x)),
        &
        j\in S(i),
        \\
        \thisx_j,
        &
        j\notin S(i),
        \end{cases}
    \end{equation}
    In the step length conditions of \cref{thm:acc2-full-dual,thm:acc-full-dual,thm:lin-full-dual}, we can moreover take $\rho_1=0$ and let $\gamma_{F^*,1} \upto \infty$, consequently $a_y \upto \infty$. In particular, in all the theorems, $\bar L=L_3$, so that when $\iset\pi_j=1$, the upper bounds on the primal step lengths reduce to $\delta \ge \iset\tau_j^0 L_3$ for some $\delta \in (0, 1)$ similarly to the standard condition in forward--backward splitting type methods.
    Moreover, by \eqref{lemma:three-point-k-explained}, $\gamma_{K,1}$ is simply a (reduced) factor of strong monotonicity of $K$ at $\realoptx$ as defined in \cref{ass:gf}.
    Finally, since we can take $\dset \sigma_1^0>0$ arbitrarily small without affecting the updates \eqref{eq:block-fb}, the conditions in the theorems corresponding to \eqref{eq:sigmatest} become irrelevant.
\end{remark}

%%%%%%%%%%%%%%%%%%%%%%%%%%%%%%%%%%%%%%%%%
\section{Methods with full primal updates}
\label{sec:fullprimal}
%%%%%%%%%%%%%%%%%%%%%%%%%%%%%%%%%%%%%%%%%

We continue with developing more specific methods and their convergence results based on the updates of \eqref{eq:alg-ordered} and the conditions of \cref{thm:test-det}.
We now take $\iset S(i)=\emptyset$, $S(i)=\{1,\ldots,m\}$, and $\iset V(i+1)=V(i+1)$ for all iterations $i$.
Then the nesting condition \eqref{eq:chilo-definition} of \cref{thm:test-det} holds and the coupling condition \eqref{eq:etamu-update-deter} becomes
\begin{equation}
	\label{eq:etamu-full-primal}
	\tauTest^i_j\dset\tau^i_j=\eta^{i+1}=\iset\nu_\ell^{i+2}\sigmaTest_\ell^{i+2}\iset\sigma_\ell^{i+2}.
\end{equation}
Taking $\Omega_i=-I$, the updates of \eqref{eq:alg-ordered} simplify to those of \cref{alg:acc-full-primal} since for the last two terms in the primal update
\[
	\dset\tau^i_j\nexty_\ell+\frac{\sigmaTest^{i+1}_\ell\sigma^{i+1}_\ell}{\tauTest^i_j}(\nexty_\ell-\thisy_\ell)
	=
	\dset\tau^i_j\left(\nexty_\ell+\frac{\bar\omega^i}{\iset\nu_\ell^{i+1}}(\nexty_\ell-\thisy_\ell)\right)
	\quad\text{for}\quad
	\bar\omega^i\defeq\frac{\eta^i}{\eta^{i+1}}.
\]
Moreover, \eqref{eq:def-lambdacomponent} reduces to $\lambda^i_{j,\ell}=-\sigma^{i+1}_\ell\sigmaTest^{i+1}_\ell$. We thus verify \eqref{eq:sigmatest-result} via:

\begin{lemma}
    \label{lemma:sigmatest-fullprimal}
    Suppose $\iset S(i)=\emptyset$, $S(i)=\{1,\ldots,m\}$, and $\iset V(i+1)=V(i+1)$ for $i \in \N$;  the coupling condition \eqref{eq:etamu-full-primal} holds; $\bar\omega^i\le1$; as well as, for all $\ell=1,\ldots,n;\,j=1,\ldots,m$,
	\begin{equation}
		\label{eq:sigmatest-fullprimal}
		\iset\sigma^{i+1}_\ell\dset\tau_j^i\le\iset\sigma^1_\ell\dset\tau_j^0,
		\quad\text{and}\quad
		1-\kappa
		\ge
		\adaptNorm{
			\sum_{j=1}^m \sqrt{\frac{w_{j,\ell}^i\iset\sigma_\ell^1\dset\tau^0_j}{\iset\nu_\ell^{i+1}}}
			Q_\ell \grad K(\thisx) P_j
		}^2
    \end{equation}
    for some $0\le\kappa\le1$ and $w_{j,\ell,k}=1/w_{j,k,\ell}>0$ such that
	\begin{subequations}%
		\label{eq:fullprimal:w-r}%
		\begin{gather}
		\label{eq:fullprimal:w-r:w}
		w_{j,\ell}^i \defeq \chi_{\this\Neigh_j}(\ell) \sum_{k \in \this{\bar\Neigh_j}(\ell)} w_{j,\ell,k}
		\shortintertext{with}
		\label{eq:fullprimal:w-r:v}
		{\bar\Neigh_j}^i(\ell) = \{k \in \{1,\ldots,n\} \mid Q_\ell \kgrad{\thisx} P_j \kgradconj{\thisx} Q_k \ne 0,\, \ell \in\iset V(i+1) \}.
		\end{gather}%
	\end{subequations}
	Then the lower bound \eqref{eq:sigmatest-result} holds.
\end{lemma}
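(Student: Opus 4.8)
The plan is to transcribe, with the obvious changes, the proof of \cref{lemma:sigmatest-fulldual}: under the full-primal choices $\iset S(i)=\emptyset$, $S(i)=\{1,\ldots,m\}$, $\iset V(i+1)=V(i+1)$, everything is arranged so that the hypotheses of \cref{lemma:sigmatest} hold, whence \eqref{eq:sigmatest-result} follows. First I would record the simplifications: in \eqref{eq:def-lambdacomponent} only the middle branch survives, so $\lambda^i_{j,\ell}=-\sigma^{i+1}_\ell\sigmaTest^{i+1}_\ell$ for $\ell\in V(i+1)$ and $\lambda^i_{j,\ell}=0$ otherwise, with $\sigma^{i+1}_\ell=\iset\sigma^{i+1}_\ell$ and $\tau^i_j=\dset\tau^i_j$; and the coupling condition \eqref{eq:etamu-update-deter} collapses to \eqref{eq:etamu-full-primal}, whose index-shifted instance gives $\iset\nu^{i+1}_\ell\sigmaTest^{i+1}_\ell\iset\sigma^{i+1}_\ell=\eta^i$ alongside $\tauTest^i_j\dset\tau^i_j=\eta^{i+1}$. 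Finally $\bar\omega^i=\eta^i/\eta^{i+1}\le1$ gives $\eta^i\le\eta^{i+1}$.

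The core estimate, replacing the displayed chain in the proof of \cref{lemma:sigmatest-fulldual}, is that for every $j$ and every $\ell\in V(i+1)$,
\[
	\frac{(\lambda^i_{j,\ell})^2}{\tauTest^i_j\,\sigmaTest^{i+1}_\ell}
	=\frac{(\iset\sigma^{i+1}_\ell)^2\sigmaTest^{i+1}_\ell}{\tauTest^i_j}
	=\frac{\iset\sigma^{i+1}_\ell\,\eta^i}{\iset\nu^{i+1}_\ell\,\tauTest^i_j}
	\le\frac{\iset\sigma^{i+1}_\ell\,\eta^{i+1}}{\iset\nu^{i+1}_\ell\,\tauTest^i_j}
	=\frac{\iset\sigma^{i+1}_\ell\,\dset\tau^i_j}{\iset\nu^{i+1}_\ell}
	\le\frac{\iset\sigma^1_\ell\,\dset\tau^0_j}{\iset\nu^{i+1}_\ell},
\]
where the equalities use $|\lambda^i_{j,\ell}|=\iset\sigma^{i+1}_\ell\sigmaTest^{i+1}_\ell$ and the two couplings, the first inequality uses $\eta^i\le\eta^{i+1}$, and the last is the first part of \eqref{eq:sigmatest-fullprimal}. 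Multiplying through by $w^i_{j,\ell}$, dividing \eqref{eq:sigmatest} by $\sigmaTest^{i+1}_\ell$, and invoking the orthogonality of the projections $\{P_j\}_{j=1}^m$ exactly as in the proof of \cref{lemma:sigmatest} (so that the relevant operator norm is monotone in the squared per-block coefficients), this bound reduces \eqref{eq:sigmatest} to the second part of \eqref{eq:sigmatest-fullprimal}. Since \eqref{eq:fullprimal:w-r:w}--\eqref{eq:fullprimal:w-r:v} are the present instances of \eqref{eq:psi-bounding-bounds} and \eqref{eq:def-sim-connected-blocks}, an application of \cref{lemma:sigmatest} then yields \eqref{eq:sigmatest-result}.

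The routine-but-delicate part is the index bookkeeping. Unlike \eqref{eq:etamu-full-dual}, the coupling \eqref{eq:etamu-full-primal} ties $\tauTest^i_j\dset\tau^i_j$ to the dual data \emph{two} iterations later, so one must check that the monotone-decay hypothesis $\iset\sigma^{i+1}_\ell\dset\tau^i_j\le\iset\sigma^1_\ell\dset\tau^0_j$ is exactly the quantity telescoping against the coupling and that $\bar\omega^i\le1$ enters in the right place; this also accounts for the shift of index (superscript $1$ on the $\sigma$-side, and the absence of the $\bar\omega^i$ factor) in the first part of \eqref{eq:sigmatest-fullprimal} relative to \eqref{eq:sigmatest-fulldual}. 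A secondary point is that in the full-primal case the filtering condition $\lambda^i_{k,j}\ne0$ in \eqref{eq:def-sim-connected-blocks} pins down the dual block (via $\iset V(i+1)=V(i+1)$), the mirror image of the role of the primal block in \eqref{eq:fulldual:w-r:v}; I would make sure the symmetry $w^i_{j,\ell,k}w^i_{j,k,\ell}=1$ still pairs the terms correctly in the Young step inherited from \cref{lemma:sigmatest}. Everything else is a verbatim copy of \cref{lemma:sigmatest-fulldual}.
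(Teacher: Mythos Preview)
Your proof is correct and follows essentially the same approach as the paper's: both derive the key bound $(\lambda^i_{j,\ell})^2/(\tauTest^i_j\sigmaTest^{i+1}_\ell)\le \iset\sigma^1_\ell\dset\tau^0_j/\iset\nu^{i+1}_\ell$ from the coupling \eqref{eq:etamu-full-primal} and the first part of \eqref{eq:sigmatest-fullprimal}, then appeal to \cref{lemma:sigmatest}. In fact your chain is cleaner: you make explicit the step $\eta^i\le\eta^{i+1}$ coming from $\bar\omega^i\le 1$, whereas the paper's displayed chain writes an equality at the corresponding place that, strictly, should be an inequality.
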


\begin{proof}
	By the first part of \eqref{eq:sigmatest-fullprimal}, \eqref{eq:etamu-full-primal}, and $\lambda^i_{j,\ell}=-\sigma^{i+1}_\ell\sigmaTest^{i+1}_\ell=-\iset\sigma^{i+1}_\ell\sigmaTest^{i+1}_\ell$, we have
	\[
        \iset\sigma^1_\ell\dset\tau_j^0
        \ge
        \iset\sigma^{i+1}_\ell\dset\tau_j^i
        =
        \frac{(\iset\sigma^{i+1}_\ell\nexxt\sigmaTest_\ell)^2\dset\tau_j^i}{\iset\sigma^{i+1}_\ell(\nexxt\sigmaTest_\ell)^2}
        =
        \frac{(\this\lambda_{j,\ell})^2\iset\nu_\ell^{i+1}}{\nexxt\sigmaTest_\ell\this\tauTest_j}
		\quad (j=1,\ldots,m).
	\]
	By the orthogonality of the projections $P_j$, we may insert this estimation into the second part of \eqref{eq:sigmatest-fullprimal}, obtaining \eqref{eq:sigmatest}; compare the proof of \cref{lemma:sigmatest}.
    The definition of ${\bar\Neigh_j}^i(\ell)$ in \eqref{eq:def-sim-connected-blocks} also reduces to that in \eqref{eq:fullprimal:w-r:v}, while the definition of $w_{j,\ell}^i$ in \eqref{eq:fullprimal:w-r:w} is exactly that in \eqref{eq:psi-bounding-bounds}.
    We finish by applying \cref{lemma:sigmatest} to verify \eqref{eq:sigmatest-result}.
\end{proof}

\begin{remark}
    The first part of \eqref{eq:sigmatest-fullprimal} is a relaxation of the property $\tau^i\sigma^{i+1} = \tau^0\sigma^1$ that would be satisfied by a dual-first variant of the basic PDPS; compare \cref{rem:sigmatest-fulldual-cond}.
\end{remark}

Finally, we also remind that \eqref{eq:bar-gamma-g} and \eqref{eq:bar-gamma-f} for this section simplify to
\begin{equation}
	\label{eq:bargamma-fullprimal}
			\bar\gamma_{GK,j}^i\defeq
		\gamma_{G,j}+\gamma_{K,j}-\alpha_x,
		\quad\text{and}\quad
		\bar\gamma_{F^*,\ell}^{i+1}\equiv\bar\gamma_{F^*,\ell}\defeq
		\begin{cases}
		\gamma_{F^*,\ell},& Q_\ell\Pnl=0,\\
		\gamma_{F^*,\ell}-(p-1)\zeta_\ell, & Q_\ell\Pnl\ne 0.
		\end{cases}
\end{equation}

\begin{Algorithm}
    \caption{Full primal updates}
    \label{alg:acc-full-primal}
	Assume the problem structure \eqref{eq:main-problem-primalonly}, equivalently \eqref{eq:main-problem}.
    For each iteration $i \in \N$, choose a sampling pattern for generating the random set of updated dual blocks $V(i+1) \in \Random(\SAlg_i; \powerset\{1,\ldots,n\})$ with corresponding blockwise probabilities $\iset\nu_\ell^{i+1}\defeq\P[\ell\in V(i+1)\mid\SAlg_{i-1}]>0$. Also choose a rule for the iteration and block-dependent step length parameters $\iset\sigma^{i+1}_\ell,\dset\tau^i_j, \bar\omega^i>0$ from one of \cref{thm:acc2-full-primal}, \ref{thm:acc-full-primal} or \ref{thm:lin-full-primal}.
	Pick an initial iterate $(x^0,y^0)$ and on each iteration $i \in \N$ update all blocks $\nextx_j=P_j\nextx$, ($j=1,\ldots,m$), and  $\nexty_\ell=Q_\ell\nexty$, ($\ell=1,\ldots,n$), of $\nextx$ and $\nexty$ as:
    \begin{align*}
        \nexty_\ell& \defeq \begin{cases}
            (I+\iset\sigma^{i+1}_\ell Q_\ell\subdiff F^*_\ell Q_\ell)^{-1}(\thisy_\ell+\iset\sigma^{i+1}_\ell Q_\ell K(\thisx)),
            &
            \ell\in V(i+1),
            \\
            \thisy_\ell,
            &
            \ell\notin V(i+1),
            \end{cases}
        \\
        \nextx_j& \defeq (I+\dset\tau^i_j P_j\subdiff G_j P_j)^{-1}P_j
        \biggl(\thisx_j-\dset\tau_j^i\kgradconj{\thisx}\sum_{\ell \in V(i+1)}
        \biggl(\nexty_\ell+\frac{\bar\omega^i}{\iset\nu_\ell^{i+1}}(\nexty_\ell-\thisy_\ell)\biggr)\biggr).
    \end{align*}
\end{Algorithm}

%%%%%%%%%%%%%%%%%%%%%%%%%%%%%%
\subsection{Accelerated rates}
%%%%%%%%%%%%%%%%%%%%%%%%%%%%%%

As in \cref{sec:fulldual}, we start with simple step length rules that yield $O(1/N)$ convergence rates for those blocks that exhibit second-order growth.

\begin{theorem}
	\label{thm:acc2-full-primal}
	Suppose \cref{ass:k-lipschitz,ass:k-nonlinear,ass:gf} hold with $L, L_3\ge0$; $p\in[1,2]$;
	$\gamma_{G,j}+\gamma_{K,j}>0$
	($j=1,\ldots,m$);  and $\gamma_{F^*,\ell}\ge(p-1)\zeta_\ell$ for some $\zeta_\ell\ge0$ when $Q_\ell\Pnl\ne 0$, ($\ell=1,\ldots,n$).
	Let the iterates $\{\thisu=(\thisx, \thisy)\}_{i \in \N}$ be generated by \cref{alg:acc-full-primal} with iteration-independent probabilities $\iset\nu_\ell^i\equiv \iset\nu_\ell$ and step lengths
	\begin{equation}
		\label{eq:step-rules-acc2-full-primal}
		\iset\sigma^{i+1}_\ell \defeq \frac{\iset\sigma^i_\ell}{1+2\iset\sigma^i_\ell\tilde\gamma_{F^*,\ell}},
		\quad\bar\omega^i\equiv 1,\quad
		\quad\text{and}\quad
		\dset\tau^{i+1}_j \defeq \frac{\dset\tau^i_j}{1+2\dset\tau^i_j\tilde\gamma_{G,j}}%\bar\gamma_{G,j}}
	\end{equation}
	with
	$0\le\tilde\gamma_{G,j}<\gamma_{G,j}+\gamma_{K,j}$, ($j=1,\ldots,m$), and
	either $0\le\tilde\gamma_{F^*,\ell}<\iset\nu_\ell\bar\gamma_{F^*,\ell}$ or $\tilde\gamma_{F^*,\ell}=\bar\gamma_{F^*,\ell}=0$ for each $\ell=1,\ldots,n$, $\bar\gamma_{F^*,\ell}$ defined in \eqref{eq:bargamma-fullprimal};
	and initial $\dset\tau_j^0, \iset\sigma_\ell^1>0$ satisfying for some $\rho_\ell\ge0$, ($\ell=1,\ldots,n$), $0<\delta<\kappa<1$, and $w_{j,\ell}^i$ as in \eqref{eq:fullprimal:w-r} the bounds
	\begin{subequations}
		\label{eq:initialisation-acc2-full-primal}
		\begin{align}
		\label{eq:initialisation-acc2-full-primal-sigmatest}
		1-\kappa
		&\ge
		\adaptNorm{
			\sum_{j=1}^m \sqrt{\frac{w_{j,\ell}^i\iset\sigma_\ell^1\dset\tau^0_j}{\iset\nu_\ell}}
			Q_\ell \grad K(\thisx) P_j
        }^2
		\\
		\label{eq:initialisation-acc2-full-primal:taubound}
		\delta&\ge
		\dset\tau^0_j\left(L_3+\frac{mL^2}{2\min_{j=1\ldots m}(\gamma_{G,j}+\gamma_{K,j}-\tilde\gamma_{G,j})}\sum_{\ell=1}^{n}\rho_\ell^2\right),
        \quad\text{and}
		\\
		\label{eq:initialisation-acc2-full-primal:kappa-delta}
		\frac{\kappa-\delta}{1-\delta}&\ge
		2\chi_{V(i+1)}(\ell)(1-\iset\nu_\ell)\bar\gamma_{F^*,\ell}\iset\sigma_\ell^1
		\frac{\bar\gamma_{F^*,\ell}-\tilde\gamma_{F^*,\ell}}{\iset\nu_\ell\bar\gamma_{F^*,\ell}-\tilde\gamma_{F^*,\ell}}
		\quad  (i \in \N;\, j=1,\ldots,m).
		\end{align}
	\end{subequations}
	Assume that
	\begin{subequations}
		\label{eq:locality-acc2-full-primal}
		\begin{align}
		\label{eq:acc2-full-primal-rhol-theta}
		\theta_{I}&\ge
		p^{-p}\textstyle\sum_{\ell=1}^n(\iset\nu_\ell)^2\zeta_\ell^{1-p}\rho_\ell^{2-p}
        \quad\text{and}
        \\
        \label{eq:acc2-full-primal-rhol-omega}
        1&=
        \P[\norm{Q_\ell(\nexty-\realopty)}_{\Pnl}\le\rho_\ell, (\ell=1,\ldots,n) \mid \SAlg_{i-1}].
		\end{align}
	\end{subequations}
	Then $\E[\norm{P_j(x^N-\realoptx)}^2] \to 0$ at the rate $O(1/N)$ for all $j$ such that $\tilde\gamma_{G,j}>0$ and  $\E[\norm{Q_\ell(y^N-\realopty)}^2] \to 0$ at the rate $O(1/N)$ for all $\ell$ such that $\tilde\gamma_{F^*,\ell}>0$.
\end{theorem}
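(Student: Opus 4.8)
The plan is to obtain the result as a specialisation of \cref{thm:test-det}, following the same template as the proof of \cref{thm:acc2-full-dual} but with the roles of the primal and dual blocks interchanged. First I would record that with $\iset S(i)=\emptyset$, $S(i)=\{1,\ldots,m\}$, $\iset V(i+1)=V(i+1)$ the nesting conditions \eqref{eq:chilo-definition} are automatic and the coupling conditions \eqref{eq:etamu-update-deter} collapse to the single scalar identity \eqref{eq:etamu-full-primal}. To realise it I would fix $\eta^i\equiv\eta^0>0$, initialise $\tauTest^0_j\defeq\eta^0(\dset\tau^0_j)^{-1}$ and $\sigmaTest^1_\ell\defeq\eta^0(\iset\nu_\ell\iset\sigma^1_\ell)^{-1}$, and propagate the testing parameters by $\tauTest^{i+1}_j\defeq(1+2\dset\tau^i_j\tilde\gamma_{G,j})\tauTest^i_j$ and $\sigmaTest^{i+2}_\ell\defeq(1+2\iset\sigma^{i+1}_\ell\tilde\gamma_{F^*,\ell})\sigmaTest^{i+1}_\ell$. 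The step rules \eqref{eq:step-rules-acc2-full-primal} are exactly the reciprocals of these, so $\tauTest^i_j\dset\tau^i_j$ and $\sigmaTest^{i+1}_\ell\iset\sigma^{i+1}_\ell$ stay constant, \eqref{eq:etamu-full-primal} holds by induction, and the conditionality assumptions \eqref{eq:conditionality-assumptions} hold because all step and testing parameters are updated deterministically.

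Next I would verify the remaining hypotheses of \cref{thm:test-det}. The lower bound \eqref{eq:sigmatest-result} follows from \cref{lemma:sigmatest-fullprimal}: since \eqref{eq:step-rules-acc2-full-primal} makes $\dset\tau^i_j$ and $\iset\sigma^{i+1}_\ell$ non-increasing, the first inequality in \eqref{eq:sigmatest-fullprimal} is immediate and the second is \eqref{eq:initialisation-acc2-full-primal-sigmatest}. The locality conditions \eqref{eq:rules-det} reduce to \eqref{eq:locality-acc2-full-primal}, using $\TauTest_i\Tau_i=\eta^0\sum_{j}P_j$ and the constancy of $\nu^{i+1}_\ell\sigmaTest^{i+1}_\ell\sigma^{i+1}_\ell$. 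Because here $\Omega^i\equiv-I$, the term $\norm{\Omega^i+I}^2$ in \eqref{eq:lij} vanishes, and since $V(i+1)\setminus\iset V(i+1)=\emptyset$ the quantity $\this c_*$ of \eqref{eq:lambda-lambda} loses its $\rho_x$-part and reduces to $\tfrac{mL^2}{2\alpha_x}\bigl(\textstyle\sum_\ell\rho_\ell^2\bigr)\eta^0$; consequently \eqref{eq:lij} gives $L^i_j\le\bar L\defeq L_3+\tfrac{mL^2}{2\alpha_x}\sum_\ell\rho_\ell^2$, and the choice $\alpha_x\defeq\min_{j}(\gamma_{G,j}+\gamma_{K,j}-\tilde\gamma_{G,j})>0$ matches this with the bracket in \eqref{eq:initialisation-acc2-full-primal:taubound}. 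Note in particular that $\rho_x$ never enters any condition, which is exactly why \eqref{eq:acc2-full-primal-rhol-omega} imposes no bound on the primal iterate; formally one takes $\rho_x=\infty$ in \eqref{eq:bounded-dual-deter}.

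It then remains to check the two test-update dichotomies \cref{item:thm-test:primal} and \cref{item:thm-test:dual}. On the primal side $\pi^i_j\equiv 1$, so $\bar\gamma_{GK,j}=\gamma_{G,j}+\gamma_{K,j}-\alpha_x\ge\tilde\gamma_{G,j}\ge0$; hence the testing update satisfies $\tauTest^{i+1}_j\le(1+2\dset\tau^i_j\bar\gamma_{GK,j})\tauTest^i_j$, and since $\dset\tau^i_j\le\dset\tau^0_j$ and $L^i_j\le\bar L$, \eqref{eq:initialisation-acc2-full-primal:taubound} gives $\delta\ge L^i_j\dset\tau^i_j$, so \cref{thm:test-det}\,\cref{item:thm-test:primal} option \ref{item:thm-test:primal:a} holds for every $j$. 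On the dual side, if $\tilde\gamma_{F^*,\ell}=0$ (in particular if $\bar\gamma_{F^*,\ell}=0$) then $\sigmaTest^{i+2}_\ell=\sigmaTest^{i+1}_\ell$ and option \ref{item:thm-test:dual:a} holds trivially; otherwise $0<\tilde\gamma_{F^*,\ell}<\iset\nu_\ell\bar\gamma_{F^*,\ell}$ and I would invoke the simplified option \ref{item:thm-test:dual:b}, i.e.\ \eqref{eq:sigmatest-update-tilde-eps}, whose step-length requirement follows from \eqref{eq:initialisation-acc2-full-primal:kappa-delta} after using $\iset\sigma^{i+1}_\ell\le\iset\sigma^1_\ell$ together with monotonicity of $x\mapsto x(\bar\gamma_{F^*,\ell}-\tilde\gamma_{F^*,\ell})/(\iset\nu_\ell\bar\gamma_{F^*,\ell}-\tilde\gamma_{F^*,\ell})$.

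Finally, \cref{thm:test-det} yields \eqref{eq:convergence-estimate-blockwise}; unrolling the testing recursions gives $\tauTest^N_j=\tauTest^0_j+2N\tilde\gamma_{G,j}\eta^0$ and $\sigmaTest^{N+1}_\ell=\sigmaTest^1_\ell+2N\tilde\gamma_{F^*,\ell}\eta^0/\iset\nu_\ell$, both of order $\Omega(N)$ precisely when the corresponding $\tilde\gamma$ is positive, so dividing the constant right-hand side of \eqref{eq:convergence-estimate-blockwise} by these weights produces the claimed $O(1/N)$ rates. I expect the main obstacle to be purely one of bookkeeping: placing the probability weights $\iset\nu_\ell$ correctly when reducing \eqref{eq:rules-det} and the dual dichotomy to the stated hypotheses (the dual updates being the randomised ones here), and keeping straight which products of step and testing parameters remain constant and which grow; once that is done the convergence-rate conclusion is immediate.
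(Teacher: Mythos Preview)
Your proposal is correct and follows essentially the same route as the paper's proof: reduce to \cref{thm:test-det} via constant $\eta^i$, the testing recursions $\tauTest^{i+1}_j=(1+2\dset\tau^i_j\tilde\gamma_{G,j})\tauTest^i_j$ and $\sigmaTest^{i+2}_\ell=(1+2\iset\sigma^{i+1}_\ell\tilde\gamma_{F^*,\ell})\sigmaTest^{i+1}_\ell$, \cref{lemma:sigmatest-fullprimal} for \eqref{eq:sigmatest-result}, the choice $\alpha_x=\min_j(\gamma_{G,j}+\gamma_{K,j}-\tilde\gamma_{G,j})$ to collapse $L^i_j$, option \ref{item:thm-test:primal:a} for the primal test, option \ref{item:thm-test:dual:b} for the dual test, and the linear unrolling of the tests. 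Your explicit case split on $\tilde\gamma_{F^*,\ell}=0$ (using option \ref{item:thm-test:dual:a} there) is in fact slightly cleaner than the paper, which invokes option \ref{item:thm-test:dual:b} uniformly even though its strict inequality $\tilde\gamma_{F^*,\ell}<\iset\nu_\ell\bar\gamma_{F^*,\ell}$ degenerates when $\bar\gamma_{F^*,\ell}=0$.
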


\begin{proof}
	We will use \cref{thm:test-det}, whose conditions we need to verify. With the choice of $\iset S(i)=\emptyset$, $S(i)=\{1,\ldots,m\}$, and $\iset V(i+1)=V(i+1)$ in \cref{alg:acc-full-primal}, we have already verified the nesting conditions \eqref{eq:chilo-definition} and reduced the coupling conditions \eqref{eq:etamu-update-deter} to \eqref{eq:etamu-full-primal}.
	To verify \eqref{eq:etamu-full-primal}, we set $\tauTest^0_j=\eta^1/\dset\tau_j^0$, $\sigmaTest_\ell^2=\eta^1/(\iset\sigma_\ell^2\iset\nu_\ell)$ for some $\eta^1>0$, and update
	\begin{equation}
		\label{eq:acc2-fullprimal-tautest-tildegammag}
		\tauTest^{i+1}_j=(1+2\dset\tau_j^i\tilde\gamma_{G,j})\tauTest^i_j,
		\quad
		\sigmaTest_\ell^{i+2}=(1+2\iset\sigma^{i+1}_\ell\tilde\gamma_{F^*,\ell})\sigmaTest_\ell^{i+1},
        \quad\text{and}\quad
        \nexxt\eta \defeq \this\eta.
	\end{equation}
	Then $\iset\nu_\ell\iset\sigma_\ell^{i+2}\sigmaTest_\ell^{i+2}=\nexxt\eta=\tauTest^{i}_j\dset\tau^{i}_j$ due to \eqref{eq:step-rules-acc2-full-primal} for all $\ell$ and $j$, and \eqref{eq:etamu-full-primal} follows.
    Clearly also \eqref{eq:conditionality-assumptions} holds because the step length and testing parameters are updated deterministically.
	The conditions \eqref{eq:rules-det} follow from \eqref{eq:locality-acc2-full-primal} given that in \cref{ass:k-nonlinear} we can take $\theta_{\TauTest_i\Tau_i}=\eta^{i+1}\theta_{I}=\eta^i\theta_{I}=\sigmaTest_\ell^{i+1}\dset\sigma^{i+1}_\ell\theta_{I}/\iset\nu_\ell$, and $\rho_x$ can be taken infinitely large.

    The step length parameters $\iset\sigma^{i+1}$ and $\dset\tau^i_j$ are non-increasing in $i$ by the defining \eqref{eq:step-rules-acc2-full-primal}.
	Also using \eqref{eq:initialisation-acc2-full-primal-sigmatest}, we thus verify \eqref{eq:sigmatest-fullprimal}. Hence \cref{lemma:sigmatest-fullprimal} establishes \eqref{eq:sigmatest-result}.

	We still need to verify \cref{thm:test-det}\,\cref{item:thm-test:primal} and \cref{item:thm-test:dual}.
	As far as the former is concerned,  $\tauTest^{i+1}_j\le(1+2\dset\tau_j^i\tilde\gamma_{G,j})\tauTest^i_j$ from \eqref{eq:acc2-fullprimal-tautest-tildegammag}. Moreover, after applying \eqref{eq:etamu-full-primal}, \eqref{eq:lambda-lambda} and \eqref{eq:lij} reduce to
	\[
		{\this c_*}=\frac{mL^2\eta^{i+1}}{2\alpha_x}\sum_{\ell=1}^{n}\rho_\ell^2
		\quad\text{and}\quad
		L^i_j=L_3+\frac{mL^2}{2\alpha_x}\sum_{\ell=1}^{n}\rho_\ell^2,
	\]
	which
    Thus, setting $\alpha_x=\min_{j=1\ldots m}(\gamma_{G,j}+\gamma_{K,j}-\tilde\gamma_{G,j})>0$, \cref{thm:test-det}\,\ref{item:thm-test:primal} option \ref{item:thm-test:primal:a} follows for every $j$ from \eqref{eq:initialisation-acc2-full-primal:taubound} and $\dset\tau^{i+1}_j$ being non-increasing.
	Regarding the dual test, we have $\sigmaTest^{i+2}_\ell\le(1+2\iset\sigma_\ell^{i+1}\tilde\gamma_{F^*,\ell}^{i+1})\sigmaTest_\ell^{i+1}$ which together with \eqref{eq:initialisation-acc2-full-primal:kappa-delta} leads to \eqref{eq:sigmatest-update-tilde-eps}. Therefore, \cref{thm:test-det}\,\cref{item:thm-test:dual} option \ref{item:thm-test:dual:b} holds for every $\ell$.

    We can now apply \cref{thm:test-det} to obtain \eqref{eq:convergence-estimate-blockwise}.
	From \eqref{eq:acc2-fullprimal-tautest-tildegammag} we have
	\begin{align*}
		\tauTest^{i+1}_j&=\tauTest^i_j+2\tilde\gamma_{G,j}\eta^{i+1}=\tauTest^i_j+2\tilde\gamma_{G,j}\eta^1
		=\ldots=\tauTest^0_j+2i\tilde\gamma_{G,j}\eta^1
        \quad\text{and}
		\\
		\sigmaTest_\ell^{i+2}&=\sigmaTest_\ell^{i+1}+2\tilde\gamma_{F^*,\ell}\eta^i/\iset\nu_\ell
		=\sigmaTest_\ell^{i+1}+2\tilde\gamma_{F^*,\ell}\eta^1/\iset\nu_\ell
		=\ldots=\sigmaTest_\ell^1+2(i+1)\tilde\gamma_{F^*,\ell}\eta^1/\iset\nu_\ell.
	\end{align*}
	Therefore, for any primal block $j$ with $\tilde\gamma_{G,j}>0$ and dual block $\ell$ with $\tilde\gamma_{F^*,\ell}>0$, $\tauTest^N_j$ and $\sigmaTest_\ell^{N+1}$ grow as $\Omega(N)$, respectively.
    This together with \eqref{eq:convergence-estimate-blockwise} gives the claim.
\end{proof}

We get improved $O(1/N^2)$ rates if all primal blocks exhibit second-order growth:

\begin{theorem}
	\label{thm:acc-full-primal}
	Suppose \cref{ass:k-lipschitz,ass:k-nonlinear,ass:gf} hold with $L, L_3\ge0$; $p\in[1,2]$;
	$\gamma_{G,j}+\gamma_{K,j}>0$, ($j=1,\ldots,m$);
	 and $\gamma_{F^*,\ell}\ge(p-1)\zeta_\ell$ for some $\zeta_\ell$ when $Q_\ell\Pnl\ne 0$, ($\ell=1,\ldots,n$).
	Let the iterates $\{\thisu=(\thisx, \thisy)\}_{i \in \N}$ be generated by \cref{alg:acc-full-primal} with iteration-independent probabilities $\iset\nu_\ell^i\equiv \iset\nu_\ell$ and step length parameters
	\begin{equation}
		\label{eq:step-rules-acc-full-primal}
		\iset\sigma^{i+2}_\ell=\frac{\iset\sigma^{i+1}_\ell}{\bar\omega^i},
		\quad
		\dset\tau^{i+1}_j=\frac{1}{1+2\dset\tau^i_j\tilde\gamma_{G,j}}\frac{\dset\tau^i_j}{\bar\omega^{i+1}},
		\quad\text{and}\quad
		\bar\omega^{i+1}\defeq\max_{j=1\ldots m}\frac{1}{\sqrt{1+2\dset\tau^i_j\tilde\gamma_{G,j}}};
	\end{equation}
	with $0<\tilde\gamma_{G,j}<\gamma_{G,j}+\gamma_{K,j}$, ($j=1,\ldots,m$), and the initial $\bar\omega^0=1$, $\dset\tau_j^0$ and $\iset\sigma_\ell^1$ satisfying for some $\rho_\ell\ge0$, ($\ell=1,\ldots,n$), $0<\delta\le\kappa<1$, and $w_{j,\ell}^i$ as in \eqref{eq:fullprimal:w-r} the bounds $(i \in \N; j=1,\ldots,m)$
	\begin{equation}
        \label{eq:initialisation-acc-full-primal}
		1-\kappa
        \ge
        \adaptNorm{
        	\sum_{j=1}^m \sqrt{\frac{w_{j,\ell}^i\iset\sigma_\ell^1\dset\tau^0_j}{\iset\nu_\ell}}
        	Q_\ell \grad K(\thisx) P_j
        }^2
        \quad
		\delta\ge\breve\tau_j^0\left(L_3+\frac{mL^2}{2\min_{j=1\ldots m}(\gamma_{G,j}+\gamma_{K,j}-\tilde\gamma_{G,j})}\sum_{\ell=1}^{n}\rho_\ell^2\right).
	\end{equation}
	Also assume
	\begin{subequations}
		\label{eq:locality-acc-full-primal}
		\begin{align}
        \theta_{I}&\ge
        p^{-p}\textstyle\sum_{\ell=1}^n(\iset\nu_\ell)^2\zeta_\ell^{1-p}\rho_\ell^{2-p}
        \quad\text{and}
		\\
        1&=
        \P[\norm{Q_\ell(\nexty-\realopty)}_{\Pnl}\le\rho_\ell, (\ell=1,\ldots,n) \mid \SAlg_{i-1}].
		\end{align}
	\end{subequations}
	Then  $\E[\norm{P_j(x^N-\realoptx)}^2] \to 0$ at the rate $O(1/N^2)$ for all $j$.
\end{theorem}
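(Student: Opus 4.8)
The plan is to verify the hypotheses of \cref{thm:test-det}, following the proof of \cref{thm:acc2-full-primal} but with the dual testing parameters held constant and $\eta$ made to grow geometrically, exactly the device used for the accelerated full-dual rate in \cref{thm:acc-full-dual}. First I recall that the choices $\iset S(i)=\emptyset$, $S(i)=\{1,\ldots,m\}$, $\iset V(i+1)=V(i+1)$ of \cref{alg:acc-full-primal} already make the nesting conditions \eqref{eq:chilo-definition} hold and reduce the coupling conditions \eqref{eq:etamu-update-deter} to \eqref{eq:etamu-full-primal}. To satisfy \eqref{eq:etamu-full-primal} I would initialise $\tauTest_j^0\defeq\eta^1/\dset\tau_j^0$ and $\sigmaTest_\ell^2\defeq\eta^1/(\iset\sigma_\ell^2\iset\nu_\ell)$ for some $\eta^1>0$ and propagate
\[
  \tauTest_j^{i+1}\defeq(1+2\dset\tau_j^i\tilde\gamma_{G,j})\tauTest_j^i,
  \qquad
  \sigmaTest_\ell^{i+2}\defeq\sigmaTest_\ell^{i+1},
  \qquad
  \eta^{i+1}\defeq\eta^i/\bar\omega^i,
\]
so that by \eqref{eq:step-rules-acc-full-primal} one gets $\tauTest_j^{i+1}\dset\tau_j^{i+1}=\tauTest_j^i\dset\tau_j^i/\bar\omega^{i+1}$ and the matching dual identity, and \eqref{eq:etamu-full-primal} follows by induction; \eqref{eq:conditionality-assumptions} is immediate since all step-length and testing updates are deterministic. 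The locality conditions \eqref{eq:rules-det} follow from \eqref{eq:locality-acc-full-primal} together with $\theta_{\TauTest_i\Tau_i}=\eta^i\theta_I$ and $\bar\omega^i\ge\bar\omega^0$ (once $\dset\tau_j^i$ is seen to be non-increasing), with $\rho_x$ taken arbitrarily large since nothing here constrains the primal iterate.

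Next I would establish the lower bound \eqref{eq:sigmatest-result} through \cref{lemma:sigmatest-fullprimal}. The key monotonicity is $\tauTest_j^{i+1}(\dset\tau_j^{i+1})^2\le\tauTest_j^i(\dset\tau_j^i)^2$, which follows from the $\dset\tau$-rule of \eqref{eq:step-rules-acc-full-primal} and the $\tauTest$-update above once one notes $\bar\omega^{i+1}\ge 1/\sqrt{1+2\dset\tau_j^i\tilde\gamma_{G,j}}$; combined with \eqref{eq:etamu-full-primal} this gives the first part of \eqref{eq:sigmatest-fullprimal}, $\iset\sigma_\ell^{i+1}\dset\tau_j^i\le\iset\sigma_\ell^1\dset\tau_j^0$, exactly as in the corresponding step of the proof of \cref{thm:acc-full-dual}, while the second part is the assumed bound in \eqref{eq:initialisation-acc-full-primal}. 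Thus \cref{lemma:sigmatest-fullprimal} yields \eqref{eq:sigmatest-result}. I then verify the two update conditions of \cref{thm:test-det}. Since $\Omega_i=-I$, the term $L\norm{\Omega^i+I}^2$ in \eqref{eq:lij} vanishes, and after \eqref{eq:etamu-full-primal} the quantities \eqref{eq:lambda-lambda} and \eqref{eq:lij} collapse to $\this c_*=\frac{mL^2\eta^{i+1}}{2\alpha_x}\sum_{\ell=1}^n\rho_\ell^2$ and $L_j^i=L_3+\frac{mL^2}{2\alpha_x}\sum_{\ell=1}^n\rho_\ell^2$. Setting $\alpha_x\defeq\min_j(\gamma_{G,j}+\gamma_{K,j}-\tilde\gamma_{G,j})>0$, so that $\tilde\gamma_{G,j}\le\bar\gamma_{GK,j}=\gamma_{G,j}+\gamma_{K,j}-\alpha_x$ for every $j$ by \eqref{eq:bargamma-fullprimal}, option \ref{item:thm-test:primal:a} of \cref{item:thm-test:primal} holds for each $j$ from the $\delta$-bound of \eqref{eq:initialisation-acc-full-primal} and $\dset\tau_j^i\le\dset\tau_j^0$. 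On the dual side $\sigmaTest_\ell^{i+2}=\sigmaTest_\ell^{i+1}\le(1+2\iset\sigma_\ell^{i+1}\bar\gamma_{F^*,\ell})\sigmaTest_\ell^{i+1}$ whenever $\bar\gamma_{F^*,\ell}\ge0$, i.e. when $\gamma_{F^*,\ell}\ge(p-1)\zeta_\ell$, so option \ref{item:thm-test:dual:a} of \cref{item:thm-test:dual} holds for each $\ell$.

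Finally I would extract the rate. Applying \cref{thm:test-det} gives \eqref{eq:convergence-estimate-blockwise}, so it remains to prove $\tauTest_j^N=\Omega(N^2)$. Multiplying the $\dset\tau$-rule of \eqref{eq:step-rules-acc-full-primal} by $2\tilde\gamma_{G,j}$, substituting the definition of $\bar\omega^{i+1}$, and inverting, one finds that $z_j^i\defeq(2\dset\tau_j^i\tilde\gamma_{G,j})^{-1}$ satisfies $z_j^{i+1}=(1+z_j^i)/\sqrt{1+(\max_j z_j^i)^{-1}}$, which is precisely the recursion treated by \cref{lemma:max-for-n2}; that lemma gives $\max_j z_j^N\le\bar z_0+N/2$ for some $\bar z_0>0$, whence $1+2\dset\tau_j^N\tilde\gamma_{G,j}\ge 1+1/(\bar z_0+N/2)$ and telescoping the $\tauTest$-update yields
\[
  \tauTest_j^{N+1}\ge\frac{(2\bar z_0+N+2)(2\bar z_0+N+1)}{2\bar z_0(2\bar z_0+1)}\,\tauTest_j^0,
\]
exactly as in the proof of \cref{thm:acc-full-dual}. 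Substituting $\tauTest_j^N=\Omega(N^2)$ into \eqref{eq:convergence-estimate-blockwise} gives the claimed $O(1/N^2)$ rate for every primal block. The main obstacle, as in \cref{thm:acc-full-dual}, is purely bookkeeping: keeping the index offsets between the primal rule (at index $i$) and the dual rule (at index $i+1$) consistent while propagating the testing parameters, and verifying the self-consistency of the $\bar\omega^{i+1}$ rule under the coupled $(\dset\tau,\iset\sigma)$ updates — here lighter than in the linear-rate case because $\bar\omega^{i+1}$ depends only on the primal step lengths, so a short induction across the primal blocks suffices before the $\Omega(N^2)$ telescoping becomes routine.
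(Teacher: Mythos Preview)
Your proposal is correct and follows essentially the same route as the paper's proof. Two minor bookkeeping slips to clean up: the coupling \eqref{eq:etamu-full-primal} gives $\tauTest_j^i\dset\tau_j^i=\eta^{i+1}$, so $\theta_{\TauTest_i\Tau_i}=\eta^{i+1}\theta_I$ (not $\eta^i\theta_I$); and you need $\bar\omega^i\le 1=\bar\omega^0$, not $\ge$, for \eqref{eq:rules-det}. The only stylistic difference is in verifying the first part of \eqref{eq:sigmatest-fullprimal}: you import the $\tauTest_j(\dset\tau_j)^2$-monotonicity argument from the proof of \cref{thm:acc-full-dual}, whereas the paper carries out a direct telescoping chain on $\iset\sigma_\ell^{i+2}\dset\tau_j^{i+1}$ using the explicit $\bar\omega$-rule---the two arguments are equivalent.
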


\begin{proof}
	We will use \cref{thm:test-det} whose conditions we need to verify. With the choice of $\iset S(i)=\emptyset$, $S(i)=\{1,\ldots,m\}$, and $\iset V(i+1)=V(i+1)$ in \cref{alg:acc-full-primal}, we have already verified the nesting conditions \eqref{eq:chilo-definition} and reduced the coupling conditions \eqref{eq:etamu-update-deter} to \eqref{eq:etamu-full-primal}.
	To verify \eqref{eq:etamu-full-primal}, we set $\tauTest^0_j=\eta^1/\dset\tau^0_j$ and $\sigmaTest_\ell^2 \defeq \eta^1/(\iset\nu_\ell\iset\sigma^2_\ell)$ for some $\eta^1>0$, and update
	\begin{equation}
		\label{eq:acc-fullprimal-tautest-tildegammag}
		\tauTest^{i+1}_j \defeq (1+2\dset\tau_j^i\tilde\gamma_{G,j})\tauTest^i_j,
        \quad
        \nexxt\sigmaTest_\ell \defeq \this\sigmaTest_\ell,
        \quad\text{and}\quad
        \eta^{i+1}=\eta^i/\bar\omega^i.
	\end{equation}
	Then from \eqref{eq:step-rules-acc-full-primal}, we inductively get $\iset\nu_\ell\sigmaTest_\ell^{i+2}\iset\sigma_\ell^{i+2}=\iset\nu_\ell\sigmaTest_\ell^{i+1}\iset\sigma_\ell^{i+1}/\bar\omega^i=\nexxt\eta$ for all $\ell$.
	From \eqref{eq:step-rules-acc-full-primal}, we also have inductively for all $j$,
	$\tauTest^{i+1}_j\dset\tau^{i+1}_j=\tauTest^i_j\dset\tau^i_j/\bar\omega^{i+1}=\eta^{i+2}$.
	Therefore \eqref{eq:etamu-full-primal} holds.
	Then, the conditions \eqref{eq:rules-det} follow from \eqref{eq:locality-acc-full-primal} given that $\bar\omega^i\le 1$ and in \cref{ass:k-nonlinear} we can take
	$\theta_{\TauTest_i\Tau_i}=\eta^{i+1}\theta_{I}=
	\eta^i\theta_{I}/\bar\omega^i=\sigmaTest_\ell^{i+1}\sigma^{i+1}_\ell\theta_{I}/(\iset\nu_\ell\bar\omega^i)$, and $\rho_x$ can be taken infinitely large.
    Clearly also \eqref{eq:conditionality-assumptions} holds because the step length and testing parameters are updated deterministically.

    We now verify \eqref{eq:sigmatest-result}.
	From \eqref{eq:step-rules-acc-full-primal} we obtain
	\[
		\begin{split}
		\iset\sigma^{i+2}_\ell\dset\tau^{i+1}_j
		&=
        \frac{\iset\sigma^{i+1}_\ell\dset\tau^i_j}{\bar\omega^i\bar\omega^{i+1}(1+2\tilde\gamma_{G,j}\dset\tau^i_j)}
        %\\
        %&
		\le
		\iset\sigma^{i+1}_\ell\dset\tau^i_j
		\sqrt{\frac{1+2\tilde\gamma_{G,j}\dset\tau^{i-1}_j}{1+2\tilde\gamma_{G,j}\dset\tau^i_j}}
		\le\ldots\le
		\iset\sigma^2_\ell\dset\tau^1_j
		\sqrt{\frac{1+2\tilde\gamma_{G,j}\dset\tau^0_j}{1+2\tilde\gamma_{G,j}\dset\tau^i_j}}
		\\
		&=
		\iset\sigma^1_\ell\dset\tau^0_j
		\frac{1}{\bar\omega^1\sqrt{1+2\tilde\gamma_{G,j}\dset\tau^0_j}}\frac{1}{1+2\tilde\gamma_{G,j}\dset\tau^i_j}
		\le
		\iset\sigma^1_\ell\dset\tau^0_j.
		\end{split}
	\]
	This and \eqref{eq:initialisation-acc-full-primal} verify \eqref{eq:sigmatest-fullprimal}. Thus \cref{lemma:sigmatest-fullprimal} establishes \eqref{eq:sigmatest-result}.
    
    We still need to verify \cref{thm:test-det}\,\cref{item:thm-test:primal} and \cref{item:thm-test:dual}.
    Regarding the former,  $\tauTest^{i+1}_j\le(1+2\dset\tau_j^i\tilde\gamma_{G,j})\tauTest^i_j$ from \eqref{eq:acc-fullprimal-tautest-tildegammag}. Moreover, after applying \eqref{eq:etamu-full-primal}, equalities \eqref{eq:lambda-lambda} and \eqref{eq:lij} reduce to
    \[
	    {\this c_*}=\frac{mL^2\eta^{i+1}}{2\alpha_x}\sum_{\ell=1}^{n}\rho_\ell^2
	    \quad\text{and}\quad
	    L^i_j=L_3+\frac{mL^2}{2\alpha_x}\sum_{\ell=1}^{n}\rho_\ell^2.
    \]
    Thus, setting $\alpha_x=\min_{j=1\ldots m}(\gamma_{G,j}+\gamma_{K,j}-\tilde\gamma_{G,j})>0$, \cref{thm:test-det}\,\ref{item:thm-test:primal} option \ref{item:thm-test:primal:a} follows for every $j$ from the second inequality in \eqref{eq:initialisation-acc-full-primal} and $\dset\tau^{i+1}_j$ being decreasing.
	As for \cref{thm:test-det}\,\cref{item:thm-test:dual},
	$\sigmaTest_\ell^{i+1}=\sigmaTest^{i+2}_\ell\le(1+2\chi_{V(i+1)}(\ell)\sigma_\ell^{i+1}\bar\gamma_{F^*,\ell}^{i+1})\sigmaTest_\ell^{i+1}$
	trivially as we have assumed $\bar\gamma_{F^*,\ell}^{i+1}\ge0$.
	Thus \cref{thm:test-det}\,\ref{item:thm-test:dual} option \ref{item:thm-test:dual:a} holds for every $\ell$.

    We can now use \cref{thm:test-det} to verify \eqref{eq:convergence-estimate-blockwise}.	Multiplying the $\tau$ update of \eqref{eq:step-rules-acc-full-primal} by $2\tilde\gamma_{G,j}$, plugging in $\bar\omega^{i+1}$, and taking the inverse, we get
	\[
		(2\dset\tau^{i+1}_j\tilde\gamma_{G,j})^{-1} =
		\frac{1+2\dset\tau^i_j\tilde\gamma_{G,j}}{2\dset\tau^i_j\tilde\gamma_{G,j}\sqrt{1+\min_{j=1\ldots m}(2\dset\tau^i_j\tilde\gamma_{G,j})}}
		=\frac{1+(2\dset\tau^i_j\tilde\gamma_{G,j})^{-1}}{\sqrt{1+(\max_{j=1\ldots m}(2\dset\tau^i_j\tilde\gamma_{G,j})^{-1})^{-1}}}.
	\]
	We then apply \cref{lemma:max-for-n2} with $z_j^i=(2\dset\tau^i_j\tilde\gamma_{G,j})^{-1}$ to obtain $\max_{j=1\ldots m}(2\dset\tau^N_j\tilde\gamma_{G,j})^{-1}\le \bar z_0 +N/2$ with $\bar z_0>0$.
	Then from \eqref{eq:acc-fullprimal-tautest-tildegammag}, we have
	\[
		\begin{split}
		\tauTest^{N+1}_j&\ge
		(1+\min_{j=1\ldots m}(2\dset\tau_j^i\tilde\gamma_{G,j}))\tauTest^N_j
		\ge \Bigl(1+\frac{1}{\bar z_0+N/2}\Bigr)\tauTest^N_j
		=\frac{2\bar z_0+N+2}{2\bar z_0+N}\tauTest^N_j
		\\
		&=
		\frac{2\bar z_0+N+2}{2\bar z_0+N}\frac{2\bar z_0+N+1}{2\bar z_0 +N-1}\tauTest^{N-1}_j
		=\ldots
		=\frac{(2\bar z_0+N+2)(2\bar z_0+N+1)}{2\bar z_0(2\bar z_0+1)}\tauTest^0_j.
	\end{split}
	\]
	Therefore, $\tauTest^{N}_j$ grows as $\Omega(N^2)$, and we obtain the claimed convergence rates from \eqref{eq:convergence-estimate-blockwise}.
\end{proof}

%%%%%%%%%%%%%%%%%%%%%%%%%%%%%%%
\subsection{Linear convergence}
%%%%%%%%%%%%%%%%%%%%%%%%%%%%%%%

If all the primal and dual blocks exhibit second-order growth, i.e., $\bar\gamma_{F^*,\ell}>0$ and $\gamma_{G,j}+\gamma_{K,j}>0$, we obtain linear convergence:

\begin{theorem}
	\label{thm:lin-full-primal}
	Suppose \cref{ass:k-lipschitz,ass:k-nonlinear,ass:gf} hold with $L, L_3\ge0$; $p\in[1,2]$;
	$\gamma_{G,j}+\gamma_{K,j}>0$, ($j=1,\ldots,m$).
    Let the iterates $\{\thisu=(\thisx, \thisy)\}_{i \in \N}$ be generated by \cref{alg:acc-full-primal} with iteration-independent $\iset\nu_\ell^i\equiv \iset\nu_\ell$ and step lengths
    \begin{subequations}
    	\label{eq:step-rules-lin-full-primal}
        \begin{align}
        \label{eq:step-rules-lin-full-primal:tausigma}
    	\dset\tau^{i+1}_j& \defeq
    	\frac{\dset\tau^i_j}{(1+2\dset\tau^i_j\tilde\gamma_{G,j})\bar\omega},
    	\quad
    	\iset\sigma^{i+2}_\ell \defeq \frac{\iset\sigma^{i+1}_\ell}{(1+2\iset\sigma^{i+1}_\ell\tilde\gamma_{F^*,\ell})\bar\omega},
    	\quad\text{and}\quad
        \\
        \label{eq:step-rules-lin-full-primal:omega}
    	\bar\omega^i&\equiv\bar\omega\defeq\max\biggl\{\max_{j=1\ldots m}\frac{1}{1+2\dset\tau^{0}_j\tilde\gamma_{G,j}},
    	\max_{\ell=1\ldots n}\frac{1}{1+2\iset\sigma^1_\ell\tilde\gamma_{F^*,\ell}}\biggr\}
    	\end{align}
    \end{subequations}
	with
	$0<\tilde\gamma_{G,j}<\gamma_{G,j}+\gamma_{K,j}$, ($j=1,\ldots,m$), and
	$0<\tilde\gamma_{F^*,\ell}<\iset\nu_\ell\bar\gamma_{F^*,\ell}$, $(\ell=1,\ldots,n)$, $\bar\gamma_{F^*,\ell}$ defined in \eqref{eq:bargamma-fullprimal};
	and initial $\dset\tau^0_j,\iset\sigma^1_\ell>0$ satisfying for some $0<\delta<\kappa<1$, $\rho_\ell\ge 0$ ($\ell=1,\ldots,n$), with $w_{j,\ell}^i$ as in \eqref{eq:fullprimal:w-r} the bounds
	\begin{subequations}
		\label{eq:initialisation-lin-full-primal}
		\begin{align}
		\label{eq:initialisation-lin-full-primal:sigmabound}
		1-\kappa
		&\ge
		\adaptNorm{
			\sum_{j=1}^m \sqrt{\frac{w_{j,\ell}^i\iset\sigma_\ell^1\dset\tau^0_j}{\iset\nu_\ell}}
			Q_\ell \grad K(\thisx) P_j
        }^2,
		\\
		\delta & \ge \dset\tau^0_j\left(L_3+\frac{mL^2}{2\min_{j=1\ldots m}(\gamma_{G,j}+\gamma_{K,j}-\tilde\gamma_{G,j})}
			\sum_{\ell=1}^{n}\rho_\ell^2\right),
		\quad\text{and}
		\\
		\label{eq:initialisation-lin-full-primal:kappa-delta}
		\frac{\kappa-\delta}{1-\delta}&\ge
		2(1-\iset\nu_\ell)\bar\gamma_{F^*,\ell}\iset\sigma^1_\ell
		\frac{\bar\gamma_{F^*,\ell}-\tilde\gamma_{F^*,\ell}}{\iset\nu_\ell\bar\gamma_{F^*,\ell}-\tilde\gamma_{F^*,\ell}}
		\qquad
		(\ell\in V(i+1);\, j=1,\ldots,m;\, i \in \N).
		\end{align}
	\end{subequations}
    Further assume that
	\begin{subequations}
		\label{eq:locality-lin-full-primal}
		\begin{align}
        \theta_{I}&\ge
        p^{-p}\bar\omega\textstyle\sum_{\ell=1}^n(\iset\nu_\ell)^2\zeta_\ell^{1-p}\rho_\ell^{2-p}
        \quad\text{and}
        \\
		1&=
        \P[\norm{Q_\ell(\nexty-\realopty)}_{\Pnl}\le\rho_\ell, (\ell=1,\ldots,n) \mid \SAlg_{i-1}].
		\end{align}
	\end{subequations}
	Then $\E[\norm{P_j(x^N-\realoptx)}^2] \to 0$ and $\E[\norm{Q_\ell(y^N-\realopty)}^2] \to 0$ at the linear rate $O((1/\bar\omega)^N)$ for all $j \in \{1, \ldots, m\}$ and $\ell \in \{1, \ldots, n\}$.
\end{theorem}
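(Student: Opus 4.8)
The plan is to mirror the proof of \cref{thm:lin-full-dual}, with the roles of primal and dual blocks interchanged according to the full-primal choice $\iset S(i)=\emptyset$, $S(i)=\{1,\ldots,m\}$, $\iset V(i+1)=V(i+1)$, and to verify every hypothesis of \cref{thm:test-det}. For these choices the nesting conditions \eqref{eq:chilo-definition} hold and the coupling conditions \eqref{eq:etamu-update-deter} have already been reduced to \eqref{eq:etamu-full-primal}. I would initialise the testing parameters by $\tauTest^0_j\defeq\eta^1/\dset\tau^0_j$ and $\sigmaTest_\ell^2\defeq\eta^1/(\iset\nu_\ell\iset\sigma^2_\ell)$ for some $\eta^1>0$, and propagate them deterministically by $\tauTest^{i+1}_j\defeq(1+2\dset\tau^i_j\tilde\gamma_{G,j})\tauTest^i_j$, $\sigmaTest^{i+1}_\ell\defeq(1+2\iset\sigma^i_\ell\tilde\gamma_{F^*,\ell})\sigmaTest^i_\ell$, and $\eta^{i+1}\defeq\eta^i/\bar\omega$; an induction using the step-length rules \eqref{eq:step-rules-lin-full-primal} then gives $\tauTest^{i+1}_j\dset\tau^{i+1}_j=\eta^{i+2}=\iset\nu_\ell\sigmaTest_\ell^{i+2}\iset\sigma_\ell^{i+2}$, which is \eqref{eq:etamu-full-primal}. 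Since all step lengths and tests are updated deterministically, \eqref{eq:conditionality-assumptions} is immediate.

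The next step is the key invariant, proved by induction on $i$ exactly as \eqref{eq:omega-linear-full-dual} was in the full-dual case:
\[
\bar\omega^{-1}=1+\min\Bigl\{\min_{j=1\ldots m}2\dset\tau^i_j\tilde\gamma_{G,j},\ \min_{\ell=1\ldots n}2\iset\sigma^{i+1}_\ell\tilde\gamma_{F^*,\ell}\Bigr\},
\]
with the base case \eqref{eq:step-rules-lin-full-primal:omega} and the inductive step following from \eqref{eq:step-rules-lin-full-primal:tausigma}. This invariant shows $\dset\tau^i_j$ and $\iset\sigma^{i+1}_\ell$ are non-increasing in $i$; combined with the telescoped bound $\iset\sigma^{i+1}_\ell\dset\tau^i_j\le\iset\sigma^1_\ell\dset\tau^0_j$ (obtained as in the proof of \cref{thm:acc-full-primal}) and \eqref{eq:initialisation-lin-full-primal:sigmabound}, it verifies \eqref{eq:sigmatest-fullprimal}, so that \cref{lemma:sigmatest-fullprimal} supplies the local-metric lower bound \eqref{eq:sigmatest-result}. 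The localisation conditions \eqref{eq:rules-det} follow from \eqref{eq:locality-lin-full-primal} on taking $\theta_{\TauTest_i\Tau_i}=\eta^{i+1}\theta_I$ and noting that in the full-primal setting $\rho_x$ may be taken arbitrarily large.

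It then remains to check conditions \cref{item:thm-test:primal} and \cref{item:thm-test:dual} of \cref{thm:test-det}. Inserting \eqref{eq:etamu-full-primal} collapses \eqref{eq:lambda-lambda} and \eqref{eq:lij} to ${\this c_*}=\frac{mL^2\eta^{i+1}}{2\alpha_x}\sum_{\ell}\rho_\ell^2$ and $L^i_j=L_3+\frac{mL^2}{2\alpha_x}\sum_\ell\rho_\ell^2$; choosing $\alpha_x\defeq\min_j(\gamma_{G,j}+\gamma_{K,j}-\tilde\gamma_{G,j})>0$ makes $\bar\gamma_{GK,j}^i\ge\tilde\gamma_{G,j}$, hence $\tauTest^{i+1}_j\le(1+2\dset\tau^i_j\bar\gamma_{GK,j}^i)\tauTest^i_j$, while the second bound in \eqref{eq:initialisation-lin-full-primal} together with $\dset\tau^i_j$ non-increasing gives $\delta\ge L^i_j\dset\tau^i_j$; so \cref{thm:test-det}\,\cref{item:thm-test:primal} holds via option~\ref{item:thm-test:primal:a}. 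For the dual blocks, the update $\sigmaTest^{i+2}_\ell=(1+2\iset\sigma^{i+1}_\ell\tilde\gamma_{F^*,\ell})\sigmaTest^{i+1}_\ell$, the strict inequality $\tilde\gamma_{F^*,\ell}<\iset\nu_\ell\bar\gamma_{F^*,\ell}$ (with $\bar\gamma_{F^*,\ell}$ as in \eqref{eq:bargamma-fullprimal}), the bound \eqref{eq:initialisation-lin-full-primal:kappa-delta}, and $\iset\sigma^{i+1}_\ell\le\iset\sigma^1_\ell$ yield the simplified form \eqref{eq:sigmatest-update-tilde-eps}, i.e. option~\ref{item:thm-test:dual:b}. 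Applying \cref{thm:test-det} then gives \eqref{eq:convergence-estimate-blockwise}, and the propagation rules with the invariant give $\tauTest^{N+1}_j=(1+2\dset\tau^N_j\tilde\gamma_{G,j})\tauTest^N_j\ge\tauTest^N_j/\bar\omega\ge\cdots\ge\tauTest^0_j/\bar\omega^{N+1}$ and likewise $\sigmaTest^{N+1}_\ell\ge\sigmaTest^0_\ell/\bar\omega^{N+1}$; substituting into \eqref{eq:convergence-estimate-blockwise} yields the claimed linear rate $O((1/\bar\omega)^N)$.

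The main obstacle I anticipate is the index bookkeeping peculiar to the full-primal scheme: because the dual block is updated first and $\Omega_i=-I$, the dual test $\sigmaTest_\ell$ is carried with a one-step lag relative to $\tauTest_j$ and to $\eta^i$, so one must be careful that the invariant and the coupling \eqref{eq:etamu-full-primal} line up across the correct superscripts, and that the telescoping chain for $\iset\sigma^{i+1}_\ell\dset\tau^i_j$ picks up the extra $\bar\omega^1\sqrt{1+2\tilde\gamma_{G,j}\dset\tau^0_j}$ factor correctly as in \cref{thm:acc-full-primal}. Beyond that, every estimate is a routine transcription of the corresponding steps in the proofs of \cref{thm:lin-full-dual} and \cref{thm:acc-full-primal}.
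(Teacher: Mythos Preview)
Your proposal is correct and follows essentially the same approach as the paper's proof: the same testing-parameter initialisation and updates, the same inductive invariant for $\bar\omega$, the same verification of \cref{thm:test-det}\,\ref{item:thm-test:primal}\ref{item:thm-test:primal:a} and \ref{item:thm-test:dual}\ref{item:thm-test:dual:b}, and the same final growth estimate. One minor simplification: you invoke the telescoping chain from the proof of \cref{thm:acc-full-primal} to get $\iset\sigma^{i+1}_\ell\dset\tau^i_j\le\iset\sigma^1_\ell\dset\tau^0_j$, but here that is immediate since your invariant already shows both $\dset\tau^i_j$ and $\iset\sigma^{i+1}_\ell$ are non-increasing in $i$, which is exactly how the paper argues.
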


\begin{proof}
    We will use \cref{thm:test-det}, whose conditions we need to verify. With the choice of $\iset S(i)=\emptyset$, $S(i)=\{1,\ldots,m\}$, and $\iset V(i+1)=V(i+1)$ in \cref{alg:acc-full-primal}, we have already verified the nesting conditions \eqref{eq:chilo-definition} and reduced the coupling conditions \eqref{eq:etamu-update-deter} to \eqref{eq:etamu-full-primal}.
    To verify \eqref{eq:etamu-full-primal}, we set $\tauTest^0_j=\eta^1/\dset\tau^0_j$ and $\sigmaTest_\ell^2 \defeq \eta^1/(\iset\nu_\ell\iset\sigma^2_\ell)$ for some $\eta^1>0$, and update
    \begin{equation}
	    \label{eq:lin-fullprimal-tautest-tildegammag}
	    \tauTest^{i+1}_j \defeq (1+2\dset\tau^i_j\tilde\gamma_{G,j})\tauTest^i_j,
	    \quad
	    \nexxt\sigmaTest_\ell \defeq (1+2\iset\sigma^i_\ell\tilde\gamma_{F^*,\ell})\sigmaTest_\ell^i,
	    \quad\text{and}\quad
	    \eta^{i+1}=\eta^i/\bar\omega.
    \end{equation}
    Then from \eqref{eq:step-rules-lin-full-primal}, we inductively get $\iset\nu_\ell\sigmaTest_\ell^{i+2}\iset\sigma_\ell^{i+2}=\iset\nu_\ell\sigmaTest_\ell^{i+1}\iset\sigma_\ell^{i+1}/\bar\omega=\nexxt\eta$ for all $\ell$ and $\tauTest^{i+1}_j\dset\tau^{i+1}_j=\tauTest^i_j\dset\tau^i_j/\bar\omega=\eta^{i+2}$ for all $j$, therefore, \eqref{eq:etamu-full-primal} holds.
    Then, the conditions \eqref{eq:rules-det} follow from \eqref{eq:locality-lin-full-primal} given that in \cref{ass:k-nonlinear} we can take
    $\theta_{\TauTest_i\Tau_i}=\eta^{i+1}\theta_{I}=
    \eta^i\theta_{I}/\bar\omega=\sigmaTest_\ell^{i+1}\sigma^{i+1}_\ell\theta_{I}/(\iset\nu_\ell\bar\omega)$, and $\rho_x$ can be taken infinitely large.
    Clearly also \eqref{eq:conditionality-assumptions} holds because the step length and testing parameters are updated deterministically.

    We now verify \eqref{eq:sigmatest-result}. We start by proving by induction that
    \begin{equation}
	    \label{eq:omega-linear-full-primal}
	    \bar\omega=
	    \max\biggl\{\max_{j=1\ldots m}\frac{1}{1+2\dset\tau^i_j\tilde\gamma_{G,j}},
	    \max_{\ell=1\ldots n}\frac{1}{1+2\iset\sigma^{i+1}_\ell\tilde\gamma_{F^*,\ell}}\biggr\},
    \end{equation}
    in other words
    \[
        \bar\omega^{-1}=1+\min\left\{\min_{j=1\ldots m} 2\dset\tau^i_j\tilde\gamma_{G,j}, \min_{\ell=1\ldots n}2\iset\sigma^{i+1}_\ell\tilde\gamma_{F^*,\ell}\right\}.
    \]
    The inductive base for $i=0$ holds by \eqref{eq:step-rules-lin-full-primal:omega}.
    Using \eqref{eq:step-rules-lin-full-primal:tausigma},
    \begin{multline*}
    	\min\biggl\{
    		\min_{j=1\ldots m} 2\dset\tau^{i+1}_j\tilde\gamma_{G,j},
    		\min_{\ell=1\ldots n} 2\iset\sigma^{i+2}_\ell\tilde\gamma_{F^*,\ell}
    	\biggr\}
    	=
    	\frac{1}{\bar\omega}
    	\min\biggl\{
    		\min_{j=1\ldots m} \frac{1}{1+(2\dset\tau^i_j\tilde\gamma_{G,j})^{-1}},
    		\min_{\ell=1\ldots n} \frac{1}{1+(2\iset\sigma^{i+1}_\ell\tilde\gamma_{F^*,\ell})^{-1}}
    	\biggr\}
    	\\
    	=
    	\frac{1}{\bar\omega}
    	\frac{1}{1+\min^{-1}\Bigl\{\min_{j=1\ldots m} 2\dset\tau^i_j\tilde\gamma_{G,j},
    		\min_{\ell=1\ldots n}2\iset\sigma^{i+1}_\ell\tilde\gamma_{F^*,\ell}\Bigr\}}
    	=
    	\min\left\{\min_{j=1\ldots m} 2\dset\tau^i_j\tilde\gamma_{G,j},
    	\min_{\ell=1\ldots n}2\iset\sigma^{i+1}_\ell\tilde\gamma_{F^*,\ell}\right\}.
    \end{multline*}
    This establishes the inductive step, hence \eqref{eq:omega-linear-full-primal}, which in turn shows that $\dset\tau^i_j$ and $\iset\sigma^{i+1}_\ell$ as updated according to \eqref{eq:step-rules-lin-full-primal:tausigma} are non-increasing in $i$. Also using \eqref{eq:initialisation-lin-full-primal}, this proves \eqref{eq:sigmatest-fullprimal}. Thus \cref{lemma:sigmatest-fullprimal} verifies \eqref{eq:sigmatest-result}.

	We need to verify \cref{thm:test-det}\,\cref{item:thm-test:primal} and \cref{item:thm-test:dual}.
	As for the former, \eqref{eq:lambda-lambda} and \eqref{eq:lij} reduce to
    \[
	    {\this c_*}=\frac{mL^2}{2\alpha_x}\sum_{\ell=1}^{n}\rho_\ell^2\eta^{i+1}
	    \quad\text{and}\quad
	    L^i_j=L_3+\frac{mL^2}{2\alpha_x}\sum_{\ell=1}^{n}\rho_\ell^2,
    \]
    so \eqref{eq:initialisation-lin-full-primal}, together with non-increasing $\dset\tau^i_j$ and the update rule for $\tauTest^{i+1}_j$ in \eqref{eq:lin-fullprimal-tautest-tildegammag}, verify \cref{thm:test-det}\,\ref{item:thm-test:primal} option \ref{item:thm-test:primal:a} for every $j$ and $\alpha_x=\min_{j=1\ldots m}(\gamma_{G,j}+\gamma_{K,j}-\tilde\gamma_{G,j})$.
    Regarding the latter, since we take $\tilde\gamma_{F^*,\ell}<\iset\nu_\ell\bar\gamma_{F^*,\ell}$, we obtain \eqref{eq:sigmatest-update-tilde-eps} using the last inequality of \eqref{eq:initialisation-lin-full-primal} and that $\iset\sigma^{i+1}_\ell$ is non-increasing by definition in \eqref{eq:step-rules-lin-full-primal}. Hence \cref{thm:test-det}\,\ref{item:thm-test:dual} option \ref{item:thm-test:primal:b} holds for every $\ell$.
    
    Therefore, we can apply \cref{thm:test-det} to obtain \eqref{eq:convergence-estimate-blockwise}. By \eqref{eq:lin-fullprimal-tautest-tildegammag} and \eqref{eq:omega-linear-full-primal},
    \begin{align*}
	    \tauTest_j^{N+1}&=(1+2\dset\tau^N_j\tilde\gamma_{G,j})\tauTest_j^N
	    \ge
	    \tauTest_j^N/\bar\omega
	    \ge\ldots\ge
        \tauTest_j^0/\bar\omega^{N+1}
        \quad\text{and}
        \\
	    \sigmaTest_\ell^{N+1}&=(1+2\iset\sigma^N_\ell\tilde\gamma_{F^*,\ell})\sigmaTest_\ell^N
	    \ge
	    \sigmaTest_\ell^N/\bar\omega
	    \ge\ldots\ge
	    \sigmaTest_\ell^1/\bar\omega^{N}.
    \end{align*}
    Applying these estimates in \eqref{eq:convergence-estimate-blockwise} establishes the  claimed linear convergence rates.
\end{proof}

\begin{remark}[Stochastic sum-sampling forward--backward splitting]
	\label{remark:full-primal-fb-splitting}
	Consider the problem \eqref{eq:fb-problem} with $F^*(y)=\delta_{\{\mathbb{1}\}}$ for $\mathbb{1} \defeq (1,\ldots,1) \in \R^n$ and $\kgradconj{x} y=\sum_{\ell=1}^n \grad J_\ell(x) y_{(\ell)}$ with $y=(y_{(1)},\ldots, y_{(n)})$.
	Taking $Q_\ell y \defeq (0,\ldots,0,y_{(\ell)},0,\ldots,0)$, it follows that $(I+\dset\sigma^{i+1}_\ell Q_\ell\subdiff F^*_\ell Q_\ell)^{-1} \equiv (0,\ldots,0,1,0,\ldots,0)$.
	Consequently $\thisy \equiv \mathbb{1}$ on all iterations, so that with just a single primal block with corresponding step length $\dset\tau^i=\dset\tau^i_1$, \cref{alg:acc-full-primal} reduces to
	\[
		\nextx \defeq
		(I+\dset\tau^i \subdiff G)^{-1}\biggl(\thisx-\dset\tau^i\sum_{\ell \in V(i+1)}\grad J_\ell(\thisx)\biggr).
	\]
	With random $V(i+1)$, this is a forward--backward splitting method that stochastically samples $\sum_\ell J_\ell$ in \eqref{eq:fb-problem}.
	We can take any $\gamma_{F^*,\ell} \in (0, \infty)$, which in \cref{thm:acc2-full-primal,thm:acc-full-primal,thm:lin-full-primal} also allows us to take $\zeta_\ell$ arbitrarily large and $\iset\sigma_\ell^i>0$ arbitrarily small. Consequently, the systems of step length bounds \eqref{eq:initialisation-acc2-full-primal} and  \eqref{eq:initialisation-lin-full-primal} reduce to their second part (with first and third part unnecessary), and  \eqref{eq:initialisation-acc-full-primal} reduces to its second part. In other words, we only need to choose $\dset\tau^0$ sufficiently small.
\end{remark}

%%%%%%%%%%%%%%%%%%%%%%%%%%%%%%%%%%%%%%%%%
\section{Numerical experience}
\label{sec:numerical}
%%%%%%%%%%%%%%%%%%%%%%%%%%%%%%%%%%%%%%%%%

We will now study the performance of our proposed methods on two application problems: diffusion tensor imaging (DTI), which is a form of magnetic resonance imaging (MRI), and electrical impedance tomography (EIT).

%%%%%%%%%%%%%%%%%%%%%%%%%%%%%%%%%%%%%%%%%
\subsection{Diffusion tensor imaging}
\label{sec:dti}
%%%%%%%%%%%%%%%%%%%%%%%%%%%%%%%%%%%%%%%%%

Diffusion tensor imaging is covered by the Stejskal--Tanner equation: given a tensor field $x: \Omega \to \Sym^2(\R^3)$, associating each point on the domain $\Omega \subset \R^3$ with a of symmetric $2$-tensor (presentable as a symmetric $3 \times 3$ matrix), and a non-diffusion-weighted image $s_0: \Omega \to \R$, the diffusion-weighted image $s_k: \Omega \to \R$ corresponding to a diffusion-sensitising gradient $b_k \in \R^3$ is given by
\begin{equation}
    \label{eq:sj}
    s_k(\xi)=s_0(\xi)e^{-\iprod{x(\xi)b_k}{b_k}}
    \quad (\xi \in \Omega).
\end{equation}
At each spatial point $\xi$, the tensor $x(\xi)$ models the covariance of a Gaussian probability distribution for the spatial directions of the diffusion of water at that point. Models more advanced than DTI, such as HARDI, consider composite probability distributions at each $\xi$. For our purposes a simplified DTI model will be sufficient.
One can measure $s_k$ and $s_0$ by suitable MRI pulse sequences, inversion of a Fourier transform, and taking the absolute value of a complex number; for details we refer to \cite{basser2002diffusion,kingsley2006introduction}, among others. We recommend \cite{nishimura1996principles} as an introduction to MRI.

\begin{figure}%
	\centering%
	\begin{subfigure}[t]{0.333\textwidth}%
		\ifprint%
			\includegraphics[width=\textwidth,trim=30 0 0 0]{helix.pdf}%
		\else%
			\includegraphics[width=\textwidth,trim=30 0 0 0]{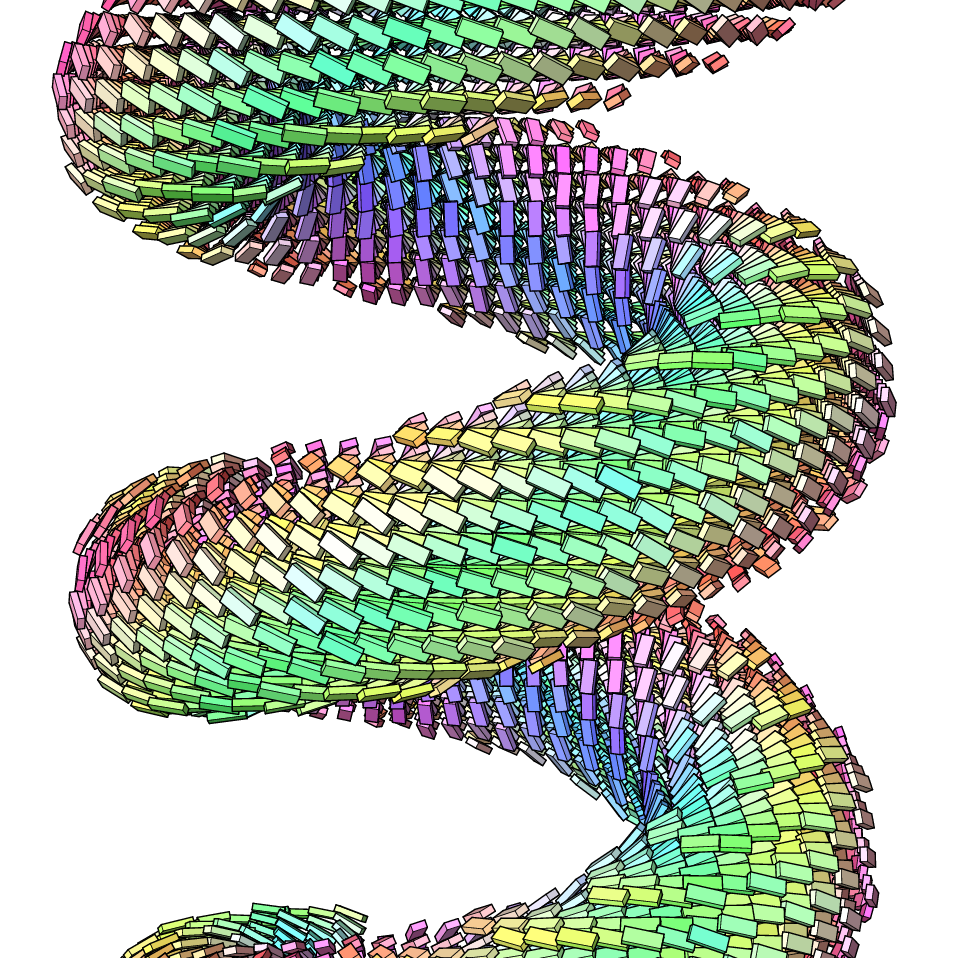}%
		\fi%
		\caption{Original helix}%
		\label{fig:helix-orig}%
	\end{subfigure}%
	\begin{subfigure}[t]{0.333\textwidth}%
		\ifprint%
			\includegraphics[width=\textwidth,trim=30 0 0 0]{dti_lsq.pdf}%
		\else%
			\includegraphics[width=\textwidth,trim=30 0 0 0]{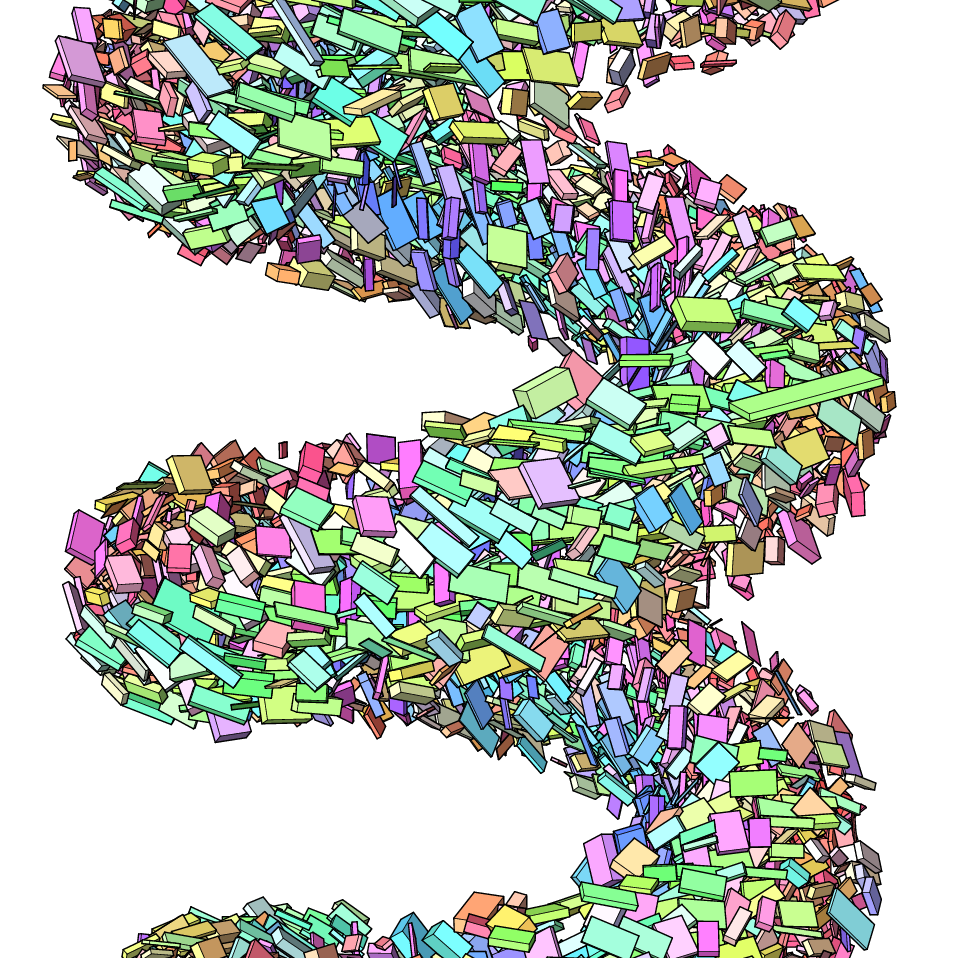}%
		\fi%
		\caption{Least squares reconstruction}%
		\label{fig:helix-lsq}%
	\end{subfigure}%
	\begin{subfigure}[t]{0.333\textwidth}%
		\ifprint%
			\includegraphics[width=\textwidth,trim=30 0 0 0]{dti_two1_wy.pdf}%
		\else%
			\includegraphics[width=\textwidth,trim=30 0 0 0]{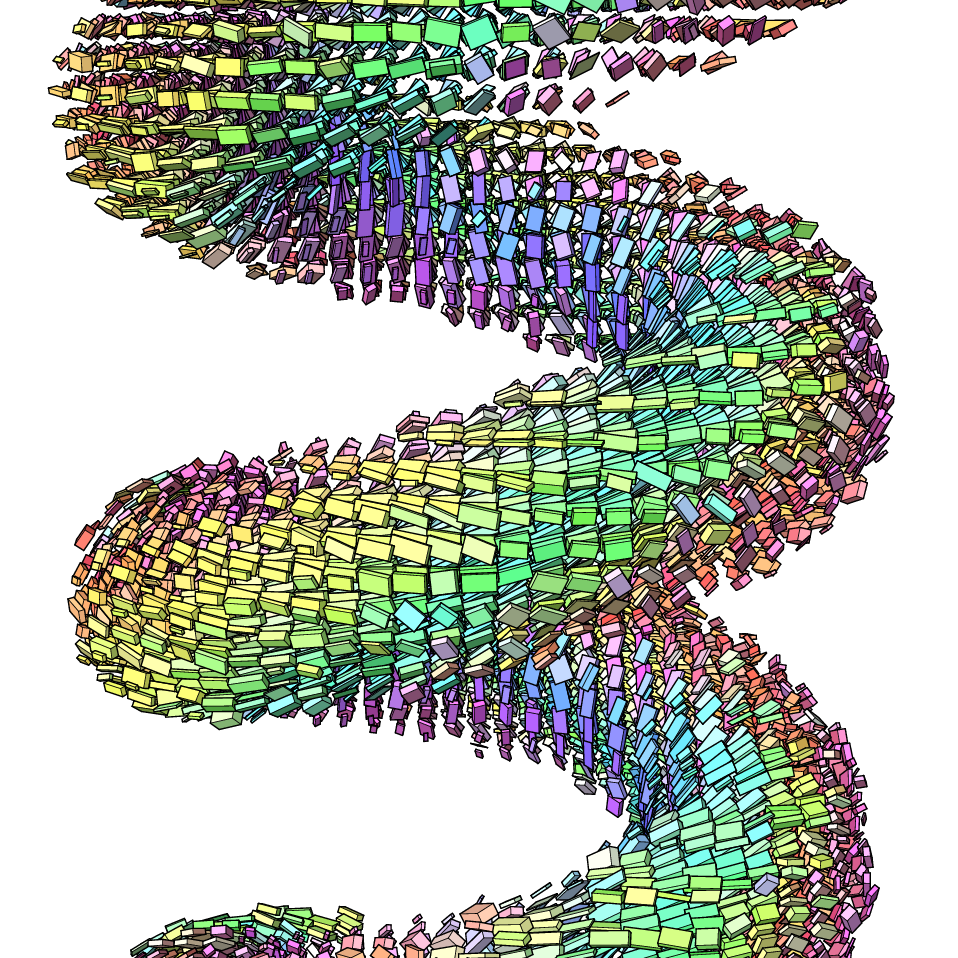}%
		\fi%
		\caption{Regularised reconstruction}%
		\label{fig:helix-recon}%
	\end{subfigure}%
	\caption{Visualisation of original helix data (\subref{fig:helix-orig}) and the reconstruction from noisy diffusion-weighted measurements. The reference least squares reconstruction in (\subref{fig:helix-lsq}) is based on linearising \eqref{eq:sj} with respect to $x$ by taking the logarithm. The regularised reconstruction (\subref{fig:helix-recon}) is the numerical solution of \eqref{eq:dti-model} for $\alpha=0.005$ with the variant \ref{item:dti:twoblock} of our method after 10000 iterations.
	The visualisation, generated with Teem \cite{teem}, displays the tensor at each voxel of the 3D volume as a cuboid oriented along the eigenvectors of the tensor, size of each side proportional to the corresponding eigenvalue. The cuboids are also colour-coded based on the principal eigenvector. Tensors with too small eigenvalues are suppressed; in essence this suppresses the background outside the helix, letting the latter to be inspected unobstructedly.}
	\label{fig:helix}
 \end{figure}
 
We want to determine $x$ from noisy measurements of $s_0$ and $s_k$, ($k=1,\ldots,N$). Clearly, \eqref{eq:sj} can be converted into an invertible system of linear equations with respect to $x$ if $N \ge 6$ and the tensors $b_k \otimes b_k$ are linearly independent. With noise involved, to get a good-quality image, we want to obtain a regularised solution. We therefore consider a problem of the form \eqref{eq:generic-problem} where $G$ is a data term modelling \eqref{eq:sj} along with any noise, and $F \circ K$ is the regulariser. Ideally, our data term would model the Rician noise distribution, which is the distribution of the absolute value of a complex number when the latter has Gaussian noise distribution. However, the numerical treatment of the Rician distribution is quite involved -- we refer to \cite{martin2013tgvdti,getreuer2011rician} for some variational approaches -- and instead of modelling it directly, a more fruitful approach may be to work with complex data directly, even incorporating the Fourier transform into our model. For the purposes of the present work, since we only use synthetic data, we will therefore assume that the noise in $s_k$ is Gaussian.
We note that \eqref{eq:dti-model} in infinite dimensions requires the use of the Banach space of functions of bounded deformation, so, since our algorithms require Hilbert spaces, only discretised versions of the model can be considered. Consequently, taking the discretised domain $\Omega_d \defeq \{1, \ldots, n_1\} \times \{1, \ldots, n_2\} \times \{1,\ldots n_3\}$ and incorporating total deformation regularisation with parameter $\alpha>0$, we seek to solve
\begin{equation}
    \label{eq:dti-model}
    \min_{x: \Omega_d \to \Sym^2(\R^3)}~ \frac{1}{2}\norm{T(x)}^2 + \alpha\norm{\symD_d x}_{F,1},
    \quad
    [T(x)]_k \defeq s_k(\xi)-s_0(\xi)e^{-\iprod{x(\xi)b_k}{b_k}}
    \quad
    (k=1,\ldots,N).
\end{equation}
Here $[\symD_d x](\xi) \in \Sym^3(\R^3)$ is forward--differences discretisation of the symmetrised gradient, a symmetric third-order tensor. The $F,1$-norm is based on taking pointwise the Frobenius norm of $[\symD_d x](\xi)$ and integration of the space ($1$-norm).
This model is sightly simplified from our previous work in \cite{tuomov-dtireg,escoproc,ipmsproc}, where second-order total generalised variation regularisation was considered and we included a positivity semi-definiteness constraint on $x(\xi)$.

To write \eqref{eq:dti-model} in the form \eqref{eq:main-problem}, we take with $y=(\mu,\lambda)$ the functions
\[
    G(x) \defeq 0,
    \ \,%
    K(x) \defeq (\symD_d x, T(x)),
    \ \,%
    F^*(y) \defeq F_{\mu}^*(\mu)+F_{\lambda}^*(\lambda),
    \ \,%
    F_{\mu}^*(\mu) \defeq \delta_{\alpha \B}(\mu),
    \ \,%
    F_{\lambda}^*(\lambda) \defeq \frac{1}{2}\norm{\lambda}^2.
\]
Here $\B$ is the product of the voxelwise unit balls of $\Sym^3(\R^3)$ over $\Omega_d$. To better satisfy the conditions of our convergence theorems, we replace $F_{\mu}^*$ by $F_{\mu,\gamma}^*(\mu) \defeq \delta_{\alpha \B}(\mu) + \gamma\inv\alpha\norm{\mu}^2$ with $\gamma=10^{-9}$.
This is the same as applying Moreau--Yosida regularisation to $\norm{\freevar}_{F,1}$ in \eqref{eq:dti-model}.

We generated our test data, a simple helix depicted in \cref{fig:helix}, with the Teem toolkit \cite{teem}. The dimensions are $n_1 \times n_2 \times n_3 = 38 \times 39 \times 40$. In the background, outside the helix, the tensors are fully isotropic with the eigenvalues of 10\% of the maximal eigenvalue of the tensors within the helix. The exact generation details can be deciphered from our codes \cite{nlpdhgm-block-code} written in Julia \cite{bezanson2017julia}.
After generating the helix data, we took $s_0(\xi)=\norm{x(\xi)}_F$.
Then we generated $s_k$, ($k=1,\ldots, 6$), from the Stejskal--Tanner equation \eqref{eq:sj} with the diffusion-sensitising gradients $b_1=(1, 0, 0)$, $b_2=(0, 1, 0)$, $b_3=(0, 0, 1)$, $b_4=(\sqrt 2, \sqrt 2, 0)$, $b_5=(\sqrt 2, 0, \sqrt 2)$, and $b_6=(0, \sqrt 2, \sqrt 2)$. To these diffusion-weighted images we added synthetic Gaussian noise of standard deviation 30\% of the mean magnitude of $s_0$.
As the regularisation parameter in the model \eqref{eq:dti-model} we took $\alpha=0.005$.

\begin{figure}
    \centering
    \begin{subfigure}[t]{0.495\textwidth}
        \begin{tikzpicture}
    \begin{axis}[%
        width=\linewidth,
        xmode=log,
        xmin=1,xmax=10000,
        scaled x ticks=false,
        tick label style={/pgf/number format/fixed, /pgf/number format/set thousands separator={\,}},
        xminorticks=true,
        minor x tick num=1,
        ymode=log,
        yminorticks=true,
        ytick={40, 50, 70, 100, 140, 200},
        log ticks with fixed point,
        minor y tick num=1,
        axis x line*=bottom,
        axis y line*=left,
        legend style={legend pos=south west,inner sep=0pt,outer sep=10pt,legend cell align=left,align=left,draw=none,fill=none,font=\scriptsize}
        ]

        \addplot [color=Set2-E, line width=1pt]
            table[x=iter,y=function_value]{dti_non1_tau0.1.txt};
        \addlegendentry{$\tau=0.1/R$};

        \addplot [color=Set2-F, line width=1pt]
            table[x=iter,y=function_value]{dti_non1_tau0.5.txt};
        \addlegendentry{$\tau=0.5/R$};

        \addplot [color=Set2-A, line width=1pt]
            table[x=iter,y=function_value]{dti_non1_tau1.0.txt};
        \addlegendentry{$\tau=1/R$};

        \addplot [color=Set2-G, line width=1pt]
            table[x=iter,y=function_value]{dti_non1_tau5.0.txt};
        \addlegendentry{$\tau=5/R$};
    \end{axis}

\end{tikzpicture}
        \caption{Multiple step length parametrisations of the non-block-adapted reference algorithm \ref{item:dti:reference} to justify the choice $\tau=1/R$.}
        \label{fig:dtitau}
    \end{subfigure}
    \begin{subfigure}[t]{0.495\textwidth}
        \def\DTIVALmult{1}\def\DTIVALx{iter}
        \def\DTIVALiter{iter}
\def\DTIVALmultten{10}
\ifx\DTIVALx\DTIVALiter
    \def\DTIVALxlimit{xmin=1,xmax=1e4}
\else
    \def\DTIVALxlimit{xmin=0.1}
\fi
\ifx\DTIVALmult\DTIVALmultten
    \def\DTIVALysetup{ytick={4000, 5000, 7000, 10000, 15000,20000}}
\else
    \def\DTIVALysetup{ytick={40,50, 70, 100, 140, 200}}
\fi
\begin{tikzpicture}
    \begin{axis}[%
        width=\linewidth,
        xmode=log,
        \DTIVALxlimit,
        scaled x ticks=false,
        tick label style={/pgf/number format/fixed, /pgf/number format/set thousands separator={\,}},
        xminorticks=true,
        minor x tick num=1,
        ymode=log,
        yminorticks=true,
        \DTIVALysetup,
        log ticks with fixed point,
        minor y tick num=1,
        axis x line*=bottom,
        axis y line*=left,
        legend style={legend pos=north east,inner sep=0pt,outer sep=5pt,legend cell align=left,align=left,draw=none,fill=none,font=\scriptsize}
        ]

        \addplot [color=Set2-A, line width=1pt]
            table[x=\DTIVALx,y=function_value]{dti_non\DTIVALmult.txt};
        \addlegendentry{\ref{item:dti:reference}};

        % \addplot [color=Set2-A, line width=2pt, dotted, forget plot]
        %   table[x=\DTIVALx,y=function_value]{dti_non\DTIVALmult_accbasic0.5.txt};

        \addplot [color=Set2-B, line width=2pt, loosely dashed]
            table[x=\DTIVALx,y=function_value]{dti_two\DTIVALmult_wy.txt};
        \addlegendentry{\ref{item:dti:twoblock}};

        % \addplot [color=Set2-B, line width=2pt, dotted, forget plot]
        %    table[x=\DTIVALx,y=function_value]{dti_two\DTIVALmult_accbasic0.5_wy.txt};

        \addplot [color=Set2-C, line width=1pt]
            table[x=\DTIVALx,y=function_value]{dti_vxd\DTIVALmult_wy2.txt};
        \addlegendentry{\ref{item:dti:voxelwise-dual}};

        \addplot [color=Set2-C, line width=1pt, dotted, forget plot]
            table[x=\DTIVALx,y=function_value]{dti_vxd\DTIVALmult_accbasic0.5_wy2.txt};

        \addplot [color=Set2-D, line width=1pt]
            table[x=\DTIVALx,y=function_value]{dti_vxp\DTIVALmult_vartau.txt};
        \addlegendentry{\ref{item:dti:voxelwise-primaldual}};

        \addplot [color=Set2-D, line width=1pt, dotted, forget plot]
            table[x=\DTIVALx,y=function_value]{dti_vxp\DTIVALmult_accbasic0.5_vartau.txt};
    \end{axis}

\end{tikzpicture}
        \caption{Comparison of the algorithm variants \ref{item:dti:reference}--\ref{item:dti:voxelwise-primaldual}. The dotted lines show the effect of accelerating the dual blocks in \ref{item:dti:voxelwise-dual} and \ref{item:dti:voxelwise-primaldual} following \cref{thm:acc2-full-dual}.}
        \label{fig:dtiperf}
    \end{subfigure}
    \caption{Reference algorithm step length justification (\subref{fig:dtitau}) and algorithm performance (\subref{fig:dtiperf}) on the DTI problem. Function values are on the vertical axis, and iteration counts are on the horizontal axis. Based on (\subref{fig:dtitau}), we take $\tau=1/R$ in (\subref{fig:dtiperf}): $\tau=5/R$ appears to have convergence issues and $\tau=0.5/R$ yields slower convergence.}
    \label{fig:dtiresults}
\end{figure}
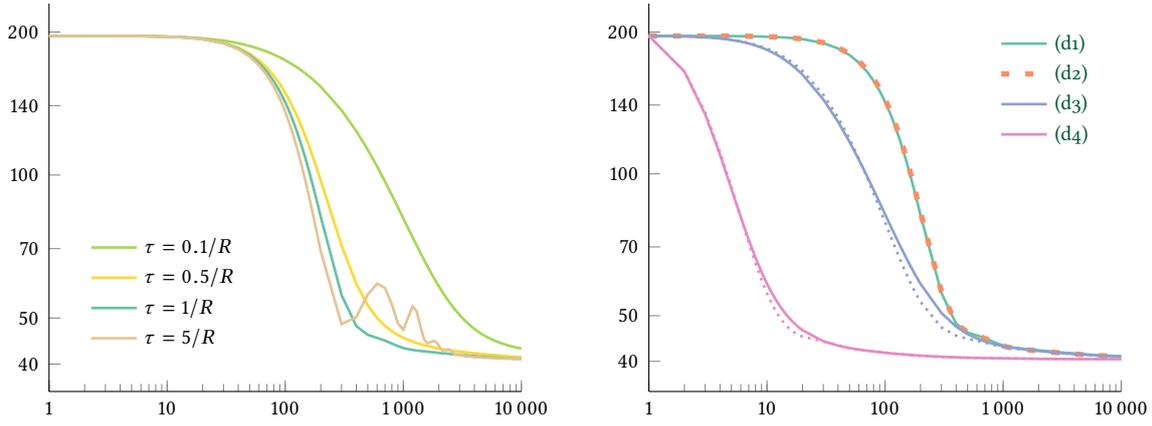

We only consider deterministic updates. We develop step length rules for \cref{alg:acc-full-dual-omega} based on \cref{thm:acc2-full-dual}, however, although $F_{\lambda}^*$ is strongly convex, and the Moreau--Yosida regularisation makes also $F_{\mu,\gamma}^*$ strongly convex, we generally do \emph{not} employ acceleration and instead keep the step length parameters fixed throughout the iterations. Therefore the theorem does not generally provide any convergence claims.

For convenience, we will identify the linear primal indices $j$ and dual indices $\ell$ (used for arbitrary blocks) with symbolic indices corresponding to the different variables $x$, $\mu$, $\lambda$ and their sub-blocks (used for specific blocks).
The primal variable will be just a single block ``$x$'', or be divided into voxelwise blocks ``$x_\xi$'' for $\xi \in \Omega_d$.
The dual variable will consist of just a single block ``$y$'', the two blocks corresponding to the variables ``$\mu$'' and ``$\lambda$'', or ``$\mu$'' and the sub-blocks ``$\lambda_{k,\xi}$'' over $k=1,\ldots,N$ and $\xi \in \Omega_d$.

Of the conditions of \cref{thm:acc2-full-dual}, we will not seek to satisfy the boundedness \eqref{eq:locality-acc2-full-dual}; following \cref{rem:boundedness-assumptions} this seems likely to hold if we initialise close enough to a solution and take the primal step length parameters $\iset \tau^0_j$ small enough.
However, we do not know, how small and how close would be theoretically required.
Likewise, \eqref{eq:initialisation-acc2-full-dual-tau}, which with deterministic updates simplifies to $\delta \ge \iset \tau_j^0\bar L$, is satisfied by taking $\iset \tau_j^0$ small enough. To do this exactly, we would need to calculate the constant $L$ that satisfies the Lipschitz requirement of \cref{ass:k-lipschitz}.
\Cref{ass:gf} readily holds (with Moreau--Yosida regularisation, as discussed above) with $\gamma_{G,x}=0$ and any $0 \le \gamma_{F^*,\mu} \le \gamma\inv\alpha$ and $0 \le \gamma_{F^*,\lambda} \le 1$. We take the latter as well as $\alpha_y$ and $\zeta_\ell$ such that \eqref{eq:bargamma-fulldual} yields $\bar\gamma_{F^*,\ell} \equiv 0$ for all $\ell$.
\Cref{ass:k-nonlinear} we do not hope to verify in the confines of the present manuscript. With \eqref{eq:initialisation-acc2-full-dual-tau} out of the way, for the calculation of the step lengths, it would only be needed for the constants $\gamma_{K,j}$. We simply make the reasonable assumption that we start close enough to a local minimiser satisfying the “second-order necessary condition” $\gamma_{G,j}+\gamma_{K,j}\ge 0$, i.e., $\gamma_{K,j} \ge 0$.
Then we may simply assume $\gamma_{K,j}=0$ and are justified in taking $\tilde\gamma_{G,j}=0$.

It remains to satisfy the relationship \eqref{eq:initialisation-acc2-full-dual-sigmatest} between the primal and dual step lengths.
Taking the weights $w_{j,\ell,k}=w_{j,\ell,k}^i$ and the set of connections $\this{\bar\Neigh_j}(\ell)={\bar\Neigh_j}(\ell)$ given in \eqref{eq:fulldual:w-r:v} independent of the iteration and inserting $w_{j,k}$ from \eqref{eq:fulldual:w-r:w} into \eqref{eq:initialisation-acc2-full-dual-sigmatest}, the latter holds if
\begin{equation}
	\label{eq:dti:sigmarule0}
	1-\kappa
		\ge
		\adaptNorm{
			\sum_{j=1}^m \sqrt{\textstyle \dset\sigma_\ell^0\iset\tau^0_j \chi_{\this\Neigh_j}(\ell) \sum_{\ell' \in \this{\bar\Neigh_j}(\ell)} w_{j,\ell,\ell'} }
			Q_\ell \grad K(\thisx) P_j
		  }^2.
\end{equation}

In particular, with just a single primal block $x$, we then satisfy \eqref{eq:dti:sigmarule0} by taking
\begin{equation}
	\label{eq:dti:sigmarule}
	\dset\sigma^0_{\ell}
	=\frac{1-\kappa}{\iset\tau_x^0 \sum_{\ell' \in {\bar\Neigh_j}(\ell)} w_{x,\ell,\ell'} R_{\ell}^2}
	\quad\text{where we need the estimate}\quad
	R_{\ell}
	\ge \norm{Q_\ell \grad K(\thisx)}.
\end{equation}
Similarly to \cite{chambolle2004algorithm} we estimate
$\norm{\symD_d} \le R_\symD \defeq \sqrt{12}$.
Assuming that each $\this x(\xi)$ for $\xi \in \Omega_d$ is positive semi-definite, we also estimate with $r_{k,\xi} \defeq \abs{s_0(\xi)}\norm{b_k}_2^2$ that
\[
    \norm{\grad T(x^i)} \le R_T \defeq \sqrt{\sum_{k=1}^N \sum_{\xi \in \Omega_d} r_{k,\xi}^2}
    \quad\text{and}\quad
    \norm{\grad K(x^i)} \le R \defeq \sqrt{R_\symD^2 + R_T^2}.
\]
We obtain $R_\ell$ for \eqref{eq:dti:sigmarule} from the same constituents $r_{k,\xi}$ and $R_\symD$, depending on the exact block structure.

It then remains to choose the primal step lengths and the weights $w_{j,k,\ell}$.
We consider the following four block structures and choices of weights:

\begin{enumerate}[label=(d\arabic*)]
    \item\label{item:dti:reference}
    As our reference case, corresponding to earlier non-block-adapted works \cite{tuomov-nlpdhgm,tuomov-nlpdhgm-redo}, a single primal block $x$ ($m=1$) and a single dual block $y$ ($n=1$). Based on the rough optimisation of the step length parameters illustrated in \cref{fig:dtitau}, for a range of $\tau=\iset\tau^0_x$ with $\dset\sigma_y^0 = \sigma \defeq (1-\kappa)/(\tau R^2)$ with $\kappa=0.05$,
    we take $\tau \defeq 1/R$.

    \item\label{item:dti:twoblock}
    A single primal block $x$ ($m=1$) and the two dual blocks $\mu$ and $\lambda$ ($n=2$).
    We take $\tau=\iset\tau^0_1$ as in \ref{item:dti:reference} and with $w_{x,\lambda,\mu} \defeq R_\symD/(R-R_\symD)$ calculate from \eqref{eq:dti:sigmarule} the dual step length parameters as $\dset\sigma_\mu^0 = (1-\kappa)/(\tau(1+\inv w_{x,\lambda,\mu})R_\symD^2)$ and $\dset\sigma_\lambda^0 = (1-\kappa)/(\tau(1+w_{x,\lambda,\mu})R_T^2)$. Thus $\dset\sigma_\mu^0 R_\symD$ equals $\sigma R$ of \ref{item:dti:reference}.

    \item\label{item:dti:voxelwise-dual}
	A single primal block $x$ ($m=1$) and in addition to the dual block $\mu$, we split $\lambda$ into voxelwise and $b_k$-wise blocks $\lambda_{k,\xi}$ ($n=1+N n_1n_2n_3$) indexed by $k=1,\ldots,N$ and $\xi \in \Omega_d$. We still take $\tau=\iset\tau^0_1$ as in \ref{item:dti:reference} and with $w_{x,\lambda_{(k,\xi)},\mu} \defeq \sum_{k',\xi'} r_{k',\xi'} R_\symD/((R-R_\symD)r_{k,\xi})$ and $w_{x,\lambda_{(k,\xi)},\lambda_{(k',\xi')}} \equiv 1$ calculate from \eqref{eq:dti:sigmarule} the dual step length parameters as $\dset\sigma_\mu^0 \defeq (1-\kappa)/(\tau(1 + \sum_{k,\xi} \inv w_{x,\lambda_{(k,\xi)},\mu} )R_\symD^2)$ and $\dset\sigma_{\lambda_{k,\xi}}^0 \defeq (1-\kappa)/(\tau(N + w_{x,\lambda_{(k,\xi)},\mu})r_{k,\xi}^2)$.
    This also keeps $\dset\sigma_\mu^0 R_\symD$ equal to $\sigma R$ of \ref{item:dti:reference}.

    \item\label{item:dti:voxelwise-primaldual}
    Voxelwise primal blocks $x_\xi$ for $\xi \in \Omega$ ($n=n_1n_2n_3$) in addition to dual blocks as in \ref{item:dti:voxelwise-dual}.
	We take the blockwise primal step length parameters $\iset\tau_\xi^0 = \tau_\xi \defeq R\tau/(1 + N \max_{k=1,\ldots,N} r_{k,\xi})$ for $\xi \in \Omega_d$, where $\tau$ is as in \ref{item:dti:reference}. Then we take $w_{x_\xi,\lambda_{(k,\xi)},\mu} \defeq r_{k,\xi}$ and $w_{x_\xi,\lambda_{(k,\xi)},\lambda_{(k',\xi')}}=1$. Observe that according to the definition of the connection set $\bar\Neigh_j(\ell)$ in \eqref{eq:fulldual:w-r:v} that the dual block $(k,\xi)$ is not connected by $K$ to $(k',\xi')$ for $\xi' \ne \xi$.
	%Moreover, $w_{x_\xi, \lambda_{k',\xi'}}=0$ for $\xi'\ne \xi$.
	Therefore, we satisfy \eqref{eq:dti:sigmarule0} by taking $\dset\sigma_\mu^0 = (1-\kappa)/(\max_{\xi \in \Omega_d} \tau_\xi(1+\sum_{k=1}^N r_{k, \xi}) R_\symD^2)$ and $\dset\sigma_{\lambda_{k,\xi}}^0 = (1-\kappa)/(\tau_{\xi}(N + \inv r_{k,\xi})r_{k,\xi}^2)$.
	The maximum comes from estimating the norm in  \eqref{eq:dti:sigmarule0}.
\end{enumerate}

We report in \cref{fig:dtiperf} for the first 10000 iterations the function value achieved by each algorithm variant. For \ref{item:dti:voxelwise-dual} and \ref{item:dti:voxelwise-primaldual} we also display the effect of the $O(1/N)$ acceleration of \cref{thm:acc2-full-dual}; on \ref{item:dti:reference} and \ref{item:dti:twoblock} this has no notable effect.

On a mid-2014 MacBook Pro with a 2.8GHz Intel Core i5 processor and 16GB RAM running Julia 1.1.0, each iteration of \ref{item:dti:reference}--\ref{item:dti:voxelwise-dual} takes roughly 0.048 seconds. For \ref{item:dti:voxelwise-primaldual} this is roughly 0.062 seconds due to a more complicated primal update.\footnote{In the Julia code \cite{nlpdhgm-block-code}, we update $x^{i+1}(\xi) \defeq x^i(\xi) - \tau_\xi \dir x^i(\xi)$ and $\lambda^{i+1}(k, \xi) \defeq (\lambda^i(k, \xi) + \sigma_{k,\xi} \dir \lambda^i(k, \xi))/(1+\sigma_{k,\xi})$ for some temporary $\dir x^i$ and $\dir \lambda^i$ and all $\xi \in \Omega_d$ and $k=1,\ldots,N$.
The latter does not appear to cause a notable performance penalty compared to a spatially constant $\sigma$ while the former does. However, each $x^{i+1}(\xi)$ is a tensor consisting of multiple floating point numbers while $\lambda^{i+1}(k, \xi)$ is a single floating point number. Our guess is that, due to uneven memory indexing when $\tau$ is spatially varying, the tensor update cannot make as good use of processor SIMD instructions.}
However, in terms of computational times, \ref{item:dti:voxelwise-primaldual} is clearly much faster than the other variants:
0.77s against 14.7--19.2s for \ref{item:dti:reference} and 13.6--18.1s for \ref{item:dti:twoblock} and \ref{item:dti:voxelwise-dual} to reach function value 50. The time ranges account for us sampling the function values only every 100 iterations after the first 100.
The visual character of the approximate solution provided by \ref{item:dti:voxelwise-primaldual} is on closer inspection slightly smoothed out compared to the other variants. This may be due to non-optimal $\alpha$ in the model \eqref{eq:dti-model} or due to a different local solution.

%In these experiments, with $\tau$ optimised for the reference algorithm, the two-dual-block variant \ref{item:dti:twoblock} appears to offer no signicant benefits, however, in experiments with non-optimised $\tau$ and larger differences between $R_\symD$ and $R_T$ (we took $s_0(\xi)=10\norm{x(\xi)}_F$ instead of $s_0(\xi)=\norm{x(\xi)}_F$), we have observed improved convergence.

%%%%%%%%%%%%%%%%%%%%%%%%%%%%%%%%%%%%%%%%%
\subsection{Electrical impedance tomography}
\label{sec:eit}
%%%%%%%%%%%%%%%%%%%%%%%%%%%%%%%%%%%%%%%%%

In this problem, we want to solve
\begin{equation}
    \label{eq:eit:objective}
    \min_{x \in V} \sum_{k=1}^N \frac{1}{2}\norm{A_k(x)}^2 + \alpha\norm{\grad x}_{2,1}
\end{equation}
on a finite-dimensional subspace $V \subset L^2(\Omega)$ with $\Omega \subset \R^2$ and each $A_k: V \to \R^{N}$ a non-linear operator corresponding to the fit of the solution of a partial differential equation controlled by $x$ to measured data. We specifically use the complete electrode model of EIT \cite{vauhkonen1999threedimensional}. Our implementation of the model will be described in detail in \cite{jauhiainen2019gaussnewton}.
The rough idea is that $N$ \term{electrodes} are placed on the boundary of the domain $\Omega$ inside which we want to reconstruct an unknown conductivity $x$; see \cref{fig:eitimages}, which presents a synthetic 2D slice model of an object in a cylindrical water tank. As our data, we only have $N$ boundary measurements corresponding to exciting in turn each of the electrodes $k=1,\ldots,N$ with a positive electric potential. In each of these excitations, the remaining electrodes are grounded, and the electric current generated by these excitations is measured at each electrode, yielding $N$ measurements. The operators $A_k$ correspond to each such excitation setup.
In the example of \cref{fig:eitimages}, the number of electrodes $N=16$.

We can again write this problem in the form \eqref{eq:main-problem} with
\[
    G(x) \defeq 0,\quad
    K(x) \defeq (\grad x, A_1(x), \ldots, A_N(x)),
    \quad\text{and}\quad
    F^*(y)=\delta_{\alpha \B}(\mu) + \sum_{k=1}^N \norm{\lambda_k}_2^2,
\]
where $y=(\mu, \lambda_1, \ldots, \lambda_N)$ and $\B$ is the product of the pointwise Euclidean unit balls of $\R^2$ over $\Omega$.

\begin{figure}
    \centering
    \begin{subfigure}[t]{0.33\textwidth}%
        \includegraphics[width=0.95\textwidth]{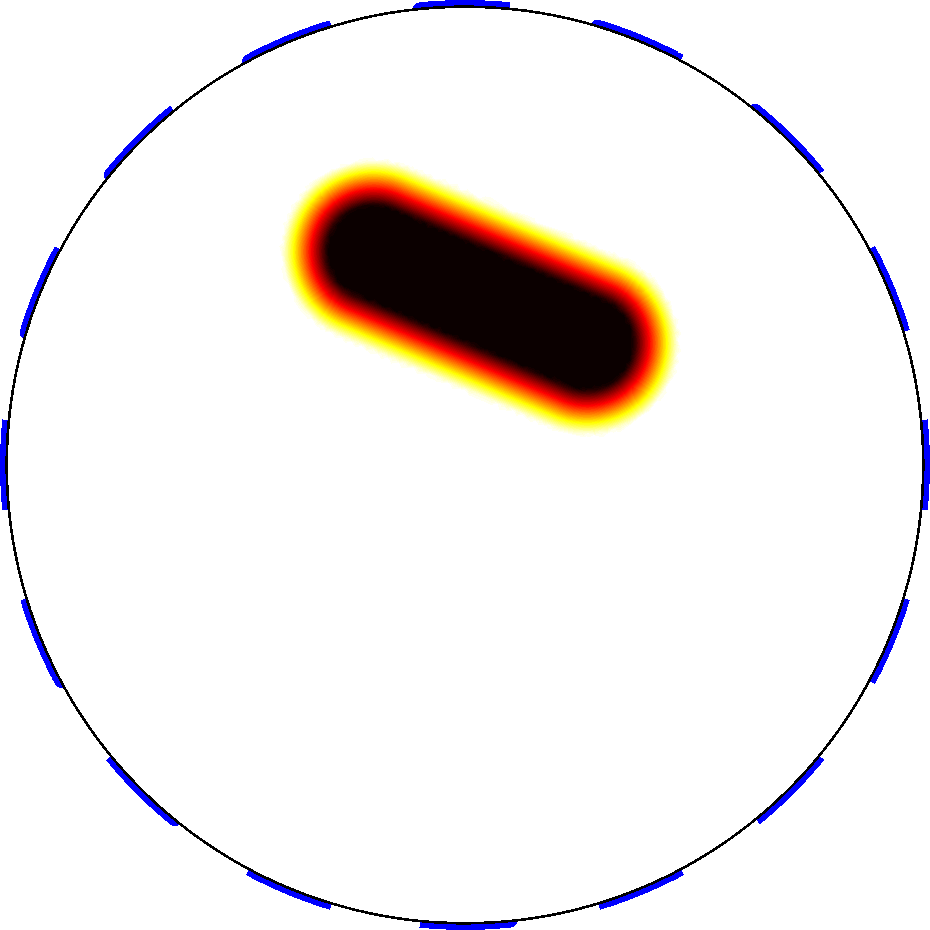}%
        \caption{Synthetic conductivity}%
        \label{fig:eithsynth}
    \end{subfigure}%
    \begin{subfigure}[t]{0.33\textwidth}%
        \includegraphics[width=0.95\textwidth]{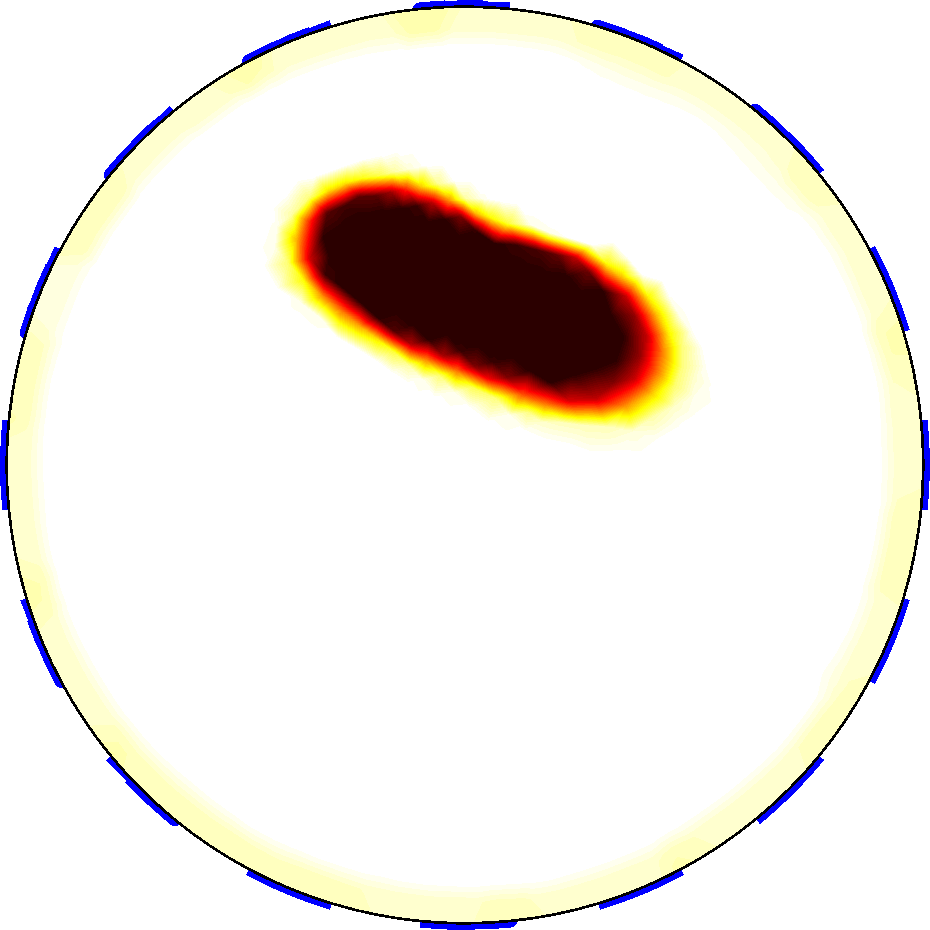}%
        \caption{Reconstructed conductivity}%
        \label{fig:eitreco}
    \end{subfigure}%
    \begin{subfigure}[t]{0.33\textwidth}%
		\ifprint%
        	\includegraphics[width=0.95\textwidth]{img/nlpdhgm_block/eit_InversionMesh.pdf}%
		\else%
	        \includegraphics[width=0.95\textwidth]{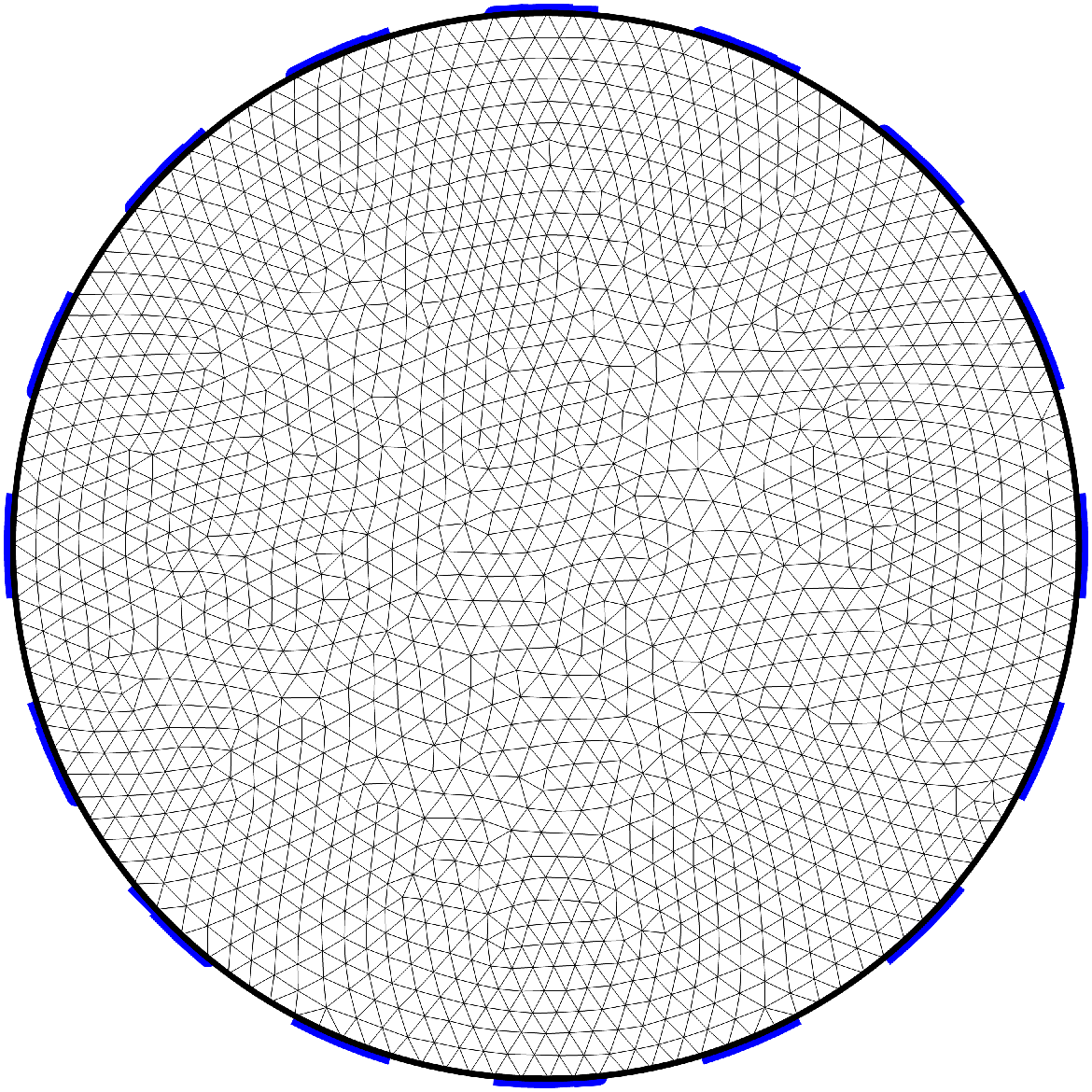}%
		\fi%
        \caption{Finite element mesh}%
        \label{fig:eitmesh}%
    \end{subfigure}%
    \caption{Synthetic true conductivity and reconstructed conductivity for the EIT example. The reconstruction is the one obtained with the block structure and dual step length setup of \ref{item:eit:pdewise-dual-meas} with $\tau=500/R$ after 15000 iterations. The blue patches on the boundary of the domain indicate the electrodes. We display in (\subref{fig:eitmesh}) the finite element mesh used to represent the conductivity.}
    \label{fig:eitimages}
\end{figure}

As a first case of the dual blocks, we take $y_0$ corresponding to the total variation term, and the full measurement vectors $y_k$ corresponding to each excitation $k=1,\ldots,N$.
We estimate $\norm{\grad} \le R_\grad$ for $R_\grad$ being the largest singular value of $\grad$ on $V$.
We do not have exact estimates on the norm of $\grad A_k(\thisx)$.
Therefore, we take a dynamic norm estimate $r_k=r_k(i)$ over the last 100 iterations,
\[
    \norm{\grad A_k(x^i)} \le r_k \defeq 1.05 \max_{\max\{i-99, 0\} \le \iota \le i} \norm{\grad A_k(x^\iota)}
    \quad (k=1,\ldots,N).
\]
We may then estimate $\norm{\grad K(x^i)} \le R \defeq \sqrt{R_\grad^2 + r_1^2 + \cdots + r_N^2}$.
As a second case, we further split each $y_k$ into sub-blocks $y_{k,j} \in \R$ corresponding to each individual electrode $j=1,\ldots,N$ being measured.
We then take norm estimates $r_{k,j}=r_{k,j}(i)$ over the last 100 iterations,
\[
    \abs{[\grad A_k(x^i)]_j} \le r_{k,j} \defeq 1.05 \max_{\max\{i-99, 0\} \le \iota \le i} \abs{[\grad A_k(x^\iota)]_j}
    \quad (k,j=1,\ldots,N).
\]

We work in the setting of \cref{sec:fullprimal}.
Note that unlike \cref{alg:acc-full-dual-omega} in the DTI experiments of \cref{sec:dti}, \cref{alg:acc-full-primal} allows partial calculation of $K$ in both the primal and dual updates, which should in principle be beneficial in stochastic methods.
We develop step length rules for \cref{alg:acc-full-primal} based on \cref{thm:acc2-full-primal}. Similarly to \eqref{eq:dti:sigmarule}, with $w_{j,\ell,k}=w_{j,\ell,k}^i$ and $\this{\bar\Neigh_j}(\ell)={\bar\Neigh_j}(\ell)$ independent of the iteration, for non-stochastic methods with a single primal block $x$, \eqref{eq:initialisation-acc2-full-primal-sigmatest} in particular holds by taking
\begin{equation}
	\label{eq:eit:sigmarule}
	\iset\sigma^1_\ell
	=\frac{1-\kappa}{\dset\tau_x^0 \sum_{\ell' \in {\bar\Neigh_j}(\ell)} w_{x,\ell,\ell'} R_{\ell}^2}
	\quad\text{where we estimate}\quad
	R_{\ell} \ge \norm{Q_\ell \grad K(\thisx)}.
\end{equation}
Again, for convenience, we identify the linear primal indices $j$ and dual indices $\ell$ and $\ell'$ with symbolic indices $x$, $\mu$, and $\lambda_k$.
It then remains to choose $\dset\tau^0_x$ and the weights $w_{x,\ell,\ell'}$. For this we consider four different block and weight setups:

\begin{enumerate}[label=(e\arabic*)]
    \item\label{item:eit:reference}
    Again, as our reference case, corresponding to earlier non-block-adapted works \cite{tuomov-nlpdhgm,tuomov-nlpdhgm-redo}, a single primal block $x$ ($m=1$) and a single dual block $y$ ($n=1$). Based on rough optimisation of the step length parameters, illustrated in \cref{fig:eitbasetau} for a range of $\tau=\dset \tau_x^0$ with $\iset \sigma_y^1 =(1-\kappa)/(\tau R^2)$ with $\kappa=0.05$, we take $\tau \defeq 5/R$ for $R$ computed using just the initial iterate $x^0$ as explained above.

    \item\label{item:eit:pdewise-dual}
	A single primal block $x$ ($m=1$) and the dual blocks $\mu, \lambda_1, \ldots, \lambda_N$.
	We take $\tau=\dset \tau_x^0$ as in \ref{item:eit:reference} and with $w_{x,\lambda_p,\mu} \defeq \sum_{k} r_{k} R_\grad/((R-R_\grad)r_{p})$ and $w_{x,\lambda_p,\lambda_k} \defeq 1$ for $p,k=1,\ldots,N$, solve from \eqref{eq:eit:sigmarule} that $\iset\sigma_\mu^1 \defeq (1-\kappa)/(\tau(1 + \sum_{k} \inv w_{x,\lambda_k,\mu} )R_\grad^2)$ and $\iset\sigma_{\lambda_p}^1 \defeq (1-\kappa)/(\tau(N + w_{x,\lambda_p,\mu})r_{p}^2)$ for $p=1,\ldots,N$.
	This case and the step length rules are analogous to \ref{item:dti:voxelwise-dual} for DTI.

	\item\label{item:eit:pdewise-dual-meas}
    As \ref{item:eit:pdewise-dual} but split each $\lambda_p$ into further measurement-wise dual blocks $y_{p,j}$ ($p,j=1,\ldots,N$), replacing in the expressions of \ref{item:eit:pdewise-dual} the indices $p$ and $k$ by $(p, j)$ and $(k, j')$ with $j, j' \in \{1,\ldots,N\}$. Thus $r_k$ becomes $r_{k,j'}$, etc.

	\item\label{item:eit:pdewise-dual-meas-simple}
	Measurement-wise dual blocks as in \cref{item:eit:pdewise-dual-meas} but $w_{x,\lambda_{(p, j)},\mu} \defeq \inv r_{p,j}$.
\end{enumerate}

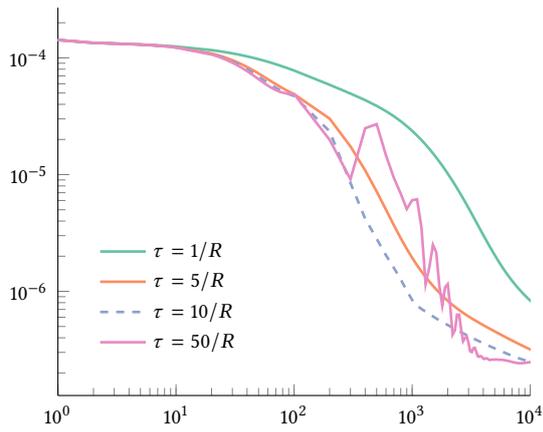
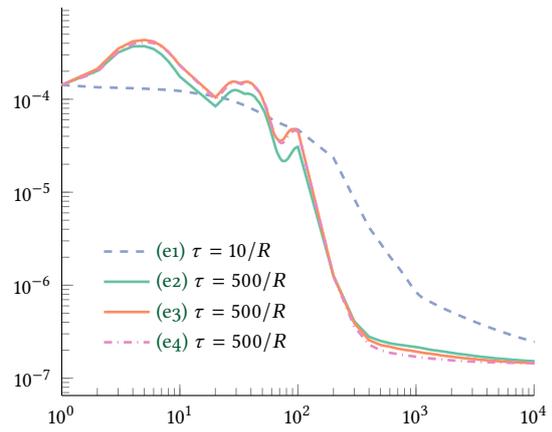
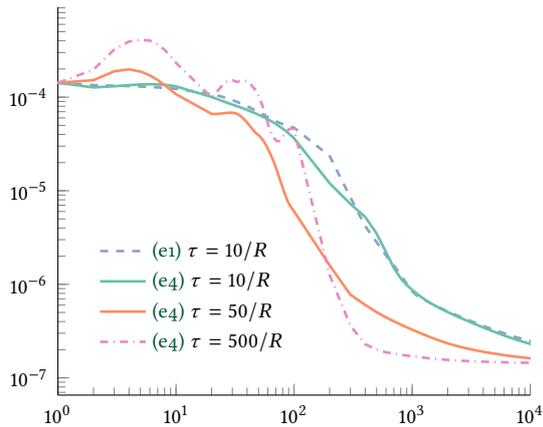
\begin{figure}[t!]
    \centering
    \begin{subfigure}[t]{0.495\textwidth}
        \begin{tikzpicture}
    \begin{axis}[%
        width=\linewidth,
        xmode=log,
        xmin=1,xmax=10000,
        scaled x ticks=false,
        tick label style={/pgf/number format/fixed, /pgf/number format/set thousands separator={\,}},
        xminorticks=true,
        minor x tick num=1,
        ymode=log,
        yminorticks=true,
        %ytick={40, 50, 70, 100, 140, 200},
        %log ticks with fixed point,
        minor y tick num=1,
        axis x line*=bottom,
        axis y line*=left,
        legend style={legend pos=south west,inner sep=0pt,outer sep=10pt,legend cell align=left,align=left,draw=none,fill=none,font=\scriptsize}
        ]

        \addplot [color=Set2-A, line width=1pt]
            table[x=iter,y=type1]{Capsule_NLPDPS_wktest_tau1_filt.txt};
        \addlegendentry{$\tau=1/R$};

        \addplot [color=Set2-B, line width=1pt]
            table[x=iter,y=type1]{Capsule_NLPDPS_wktest_tau5_filt.txt};
        \addlegendentry{$\tau=5/R$};

        \addplot [color=Set2-C, dashed, line width=1pt]
            table[x=iter,y=type1]{Capsule_NLPDPS_wktest_tau10_filt.txt};
        \addlegendentry{$\tau=10/R$};

        \addplot [color=Set2-D, line width=1pt]
            table[x=iter,y=type1]{Capsule_NLPDPS_wktest_tau50_filt.txt};
        \addlegendentry{$\tau=50/R$};

    \end{axis}
\end{tikzpicture}
        \caption{Reference algorithm \ref{item:eit:reference}, multiple step lengths}
        \label{fig:eitbasetau}
    \end{subfigure}
    \begin{subfigure}[t]{0.495\textwidth}
        \begin{tikzpicture}
    \begin{axis}[%
        width=\linewidth,
        xmode=log,
        xmin=1,xmax=10000,
        scaled x ticks=false,
        tick label style={/pgf/number format/fixed, /pgf/number format/set thousands separator={\,}},
        xminorticks=true,
        minor x tick num=1,
        ymode=log,
        yminorticks=true,
        %ytick={40, 50, 70, 100, 140, 200},
        %log ticks with fixed point,
        minor y tick num=1,
        axis x line*=bottom,
        axis y line*=left,
        legend style={legend pos=south west,inner sep=0pt,outer sep=10pt,legend cell align=left,align=left,draw=none,fill=none,font=\scriptsize}
        ]

        \addplot [color=Set2-C, dashed, line width=1pt]
            table[x=iter,y=type1]{Capsule_NLPDPS_wktest_tau10_filt.txt};
        \addlegendentry{\ref{item:eit:reference} $\tau=10/R$};

        \addplot [color=Set2-A, line width=1pt]
            table[x=iter,y=type2mult1]{Capsule_NLPDPS_wktest_tau250_filt.txt};
        \addlegendentry{\ref{item:eit:pdewise-dual} $\tau=500/R$};  

        \addplot [color=Set2-B, line width=1pt]
            table[x=iter,y=type4mult1]{Capsule_NLPDPS_wktest_tau500_filt.txt};
        \addlegendentry{\ref{item:eit:pdewise-dual-meas} $\tau=500/R$};  

        \addplot [color=Set2-D, dashdotted, line width=1pt]
            table[x=iter,y=type5mult1]{Capsule_NLPDPS_wktest_tau500_filt.txt};
        \addlegendentry{\ref{item:eit:pdewise-dual-meas-simple} $\tau=500/R$};  

    \end{axis}

\end{tikzpicture}
        \caption{Comparison of algorithm variants \ref{item:eit:reference}--\ref{item:eit:pdewise-dual-meas-simple}}
        \label{fig:eitwtype}
	\end{subfigure}
	
	\medskip

    \begin{subfigure}[b]{0.495\textwidth}
        \begin{tikzpicture}
    \begin{axis}[%
        width=\linewidth,
        xmode=log,
        xmin=1,xmax=10000,
        scaled x ticks=false,
        tick label style={/pgf/number format/fixed, /pgf/number format/set thousands separator={\,}},
        xminorticks=true,
        minor x tick num=1,
        ymode=log,
        yminorticks=true,
        %ytick={40, 50, 70, 100, 140, 200},
        %log ticks with fixed point,
        minor y tick num=1,
        axis x line*=bottom,
        axis y line*=left,
        legend style={legend pos=south west,inner sep=0pt,outer sep=10pt,legend cell align=left,align=left,draw=none,fill=none,font=\scriptsize}
        ]

        \addplot [color=Set2-C, dashed, line width=1pt]
            table[x=iter,y=type1]{Capsule_NLPDPS_wktest_tau10_filt.txt};
        \addlegendentry{\ref{item:eit:reference} $\tau=10/R$}

        \addplot [color=Set2-A, line width=1pt]
            table[x=iter,y=type5mult1]{Capsule_NLPDPS_wktest_tau10_filt.txt};
        \addlegendentry{\ref{item:eit:pdewise-dual-meas-simple} $\tau=10/R$};  

        \addplot [color=Set2-B, line width=1pt]
            table[x=iter,y=type5mult1]{Capsule_NLPDPS_wktest_tau50_filt.txt};
        \addlegendentry{\ref{item:eit:pdewise-dual-meas-simple} $\tau=50/R$};  

        % \addplot [color=Set2-E, line width=1pt]
        %     table[x=iter,y=type5mult1]{Capsule_NLPDPS_wktest_tau1000_filt.txt};
        % \addlegendentry{\ref{item:eit:pdewise-dual-meas-simple} $\tau=1000/R$};  

        \addplot [color=Set2-D, dashdotted, line width=1pt]
            table[x=iter,y=type5mult1]{Capsule_NLPDPS_wktest_tau500_filt.txt};
        \addlegendentry{\ref{item:eit:pdewise-dual-meas-simple} $\tau=500/R$};  

    \end{axis}

\end{tikzpicture}
        \caption{Blocked algorithm \ref{item:eit:pdewise-dual-meas-simple}, multiple step lengths}
        \label{fig:eittau}
    \end{subfigure}
    \begin{minipage}[b]{0.495\textwidth}
    \caption{EIT reconstruction performance: iteration counts are on the $x$ axis and primal objective function values \eqref{eq:eit:objective} are on the $y$ axis. We start with step length justification for the non-blocked reference algorithm \ref{item:eit:reference} in (\subref{fig:eitbasetau}). Based on this we use step length $\tau=10/R$ for the reference algorithm as higher step lengths become unstable. Comparison of the different blocked algorithm variants is given in (\subref{fig:eitwtype}) for $\tau=500/R$: with lower parameters the differences are less noticeable, and with higher parameters insignificant improvement is obtained.
    Based on this, in (\subref{fig:eittau}) we represent the performance of \ref{item:eit:pdewise-dual-meas-simple} for multiple step lengths.}
    \label{fig:eitresults}
    \end{minipage}
   
\end{figure}

The performance of the algorithm variants \ref{item:eit:reference}--\ref{item:eit:pdewise-dual-meas-simple} is depicted in \cref{fig:eitresults}, and a sample reconstruction in \cref{fig:eitreco}. Observe how the block-adapted algorithms allow in practise larger $\tau$ than the reference algorithm without block-adaptation. This has significant performance benefits:
To reach and stay below objective function value in the order $10^{-7}$, \ref{item:eit:pdewise-dual-meas-simple} with $\tau=500/R$ requires 208 iterations while   \ref{item:eit:reference} with $\tau=10/R$ requires 906 iterations. (With $\tau=500/R$ the latter requires 3544 iterations, no longer converging well with high $\tau$.)
We also tested stochastic variants of the algorithms for the EIT problem, updating on each iteration only a random subset of the dual blocks. This did not, however, offer any performance benefits over the block-adapted variants, neither in terms of epoch count (iteration count scaled by the fraction of updated blocks) nor actual computational time. 

%%%%%%%%%%%%%%%%%%%%%%%%%%%%%%%%%%%%%%%%%%%%%%%%%%%%%%%%%%%%%%%%%%%%%%%
\section{Conclusion}
%%%%%%%%%%%%%%%%%%%%%%%%%%%%%%%%%%%%%%%%%%%%%%%%%%%%%%%%%%%%%%%%%%%%%%%
In this paper, we studied block-proximal primal-dual splitting methods for non-convex non-smooth optimisation.
From an abstract starting point---also able to model doubly-stochastic methods---we derived explicit algorithms and step-length bounds for two particular cases: methods with full dual updates and methods with full primal updates.
%The former case yielded an intuitive block-adapted generalisation of the nonlinear primal-dual hybrid proximal splitting (NL_PDPS), and the latter case a method resembling a linearisation of nonlinear-PDHGM and allowing some relaxation of the requirements for convergence.
For both of the cases, we derived rules ensuring local $O(1/N)$, $O(1/N^2)$ and linear rates under varying conditions and choices of the step lengths parameters.

We demonstrated the performance of the methods on practical inverse problems. Based on our experience with both the DTI and EIT examples, the block-adaptation provides significant performance benefits.
Random updates, by contrast, did not offer benefits in our sample problems.
We suspect they might be more beneficial on very large scale problems that do not share work between the blocks, yet the blocks have overlapping information, or where communication delays within a computing cluster become significant.
This may be one of the possible directions for further research on the presented methods and their application.

%%%%%%%%%%%%%%%%%%%%%%%%%%%%%%%%%%%%%%%%%%%%%%%%%%%%%%%%%%%%%%%%%%%%%%%
\section*{\texorpdfstring{\normalsize}{}A data statement for the EPSRC}
%%%%%%%%%%%%%%%%%%%%%%%%%%%%%%%%%%%%%%%%%%%%%%%%%%%%%%%%%%%%%%%%%%%%%%%

The codes and data for the DTI experiments are available at \cite{nlpdhgm-block-code}.
The codes for EIT, based on historical work of several people, cannot be made available at this point.

\appendix

%%%%%%%%%%%%%%%%%%%%%%%%%%%%%%%%%%%%%%%%%%%
\section{Satisfaction of the three-point condition}
\label{sec:kcond}
%%%%%%%%%%%%%%%%%%%%%%%%%%%%%%%%%%%%%%%%%%%

The following lemma provides simplified conditions under which \cref{ass:k-nonlinear} holds, e.g., whenever $x \mapsto \iprod{K(x)}{\realopty}$ is block-separable and strongly-convex.

\begin{lemma}
	\label{lemma:three-point-k-explained}
	Suppose \cref{ass:k-lipschitz} holds and the following is true for the given neighbourhood $\neighx_K$ of $\realoptx$, $\Gamma_K=\textstyle\sum_{j=1}^{m}\gamma_{K,j} P_j \in \linear(X; X)$, $\gamma_{K,j}\in \R$, some $\gamma_x>0$:
	\begin{subequations}
		\begin{align}
		\label{eq:three-point-lemma-gammax}
		\iprod{[\kgrad{x'}-\kgrad{\realoptx}]^*\realopty}{x'-\realoptx}&\ge \norm{x'-\realoptx}_{\Gamma_K}^2 + \gamma_x\norm{x'-\realoptx}^2,
		\\
		\label{eq:three-point-lemma-pj}
		\iprod{[P_j\kgrad{x'}-P_j\kgrad{\realoptx}]^*\realopty}{x'_j-\realoptx_j} & \ge \gamma_{K,j}\norm{x'_j-\realoptx_j}^2 \quad (j=1,\ldots,m).
		\end{align}
	\end{subequations}
    Let $\beta_1,\beta_2>0$, $A=\sum_{j=1}^m a_j P_j$, and $\MIN a\defeq \min_j a_j$.
	Then \cref{ass:k-nonlinear} holds for $p=1$ when
    \begin{align*}
        L\theta_A & \le  \MIN a(\gamma_x-\beta_1)-\beta_2\max_j(a_j-\MIN a)
        \quad\text{and}
        \\
        L_3 & \ge L^2\norm{\Pnl \realopty}(\beta_1^{-1}+(\beta_2\MIN a)^{-1}\textstyle\sum_{j=1}^m (a_j-\MIN a))/2+2L\theta_A.
    \end{align*}
\end{lemma}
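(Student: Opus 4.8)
The goal is to verify the three-point inequality \eqref{eq:ass-k-nonlinear} of \cref{ass:k-nonlinear} with $p=1$, starting from the two pointwise growth conditions \eqref{eq:three-point-lemma-gammax} and \eqref{eq:three-point-lemma-pj} together with the Lipschitz bound of \cref{ass:k-lipschitz}. The central object is the bilinear form $\iprod{[\kgrad{x}-\kgrad{\realoptx}]^*\realopty}{x'-\realoptx}_A$; the difficulty is that the growth hypotheses are stated at the point $x'$ (i.e.\ with $\kgrad{x'}$), while the assumption requires a bound with $\kgrad{x}$ for a possibly different $x$, plus the extra forward-step slack term $-\tfrac{L_3}{2}\norm{x'-x}_A^2$ and the nonnegative penalty $\theta_A\norm{K(\realoptx)-K(x)-\kgrad{x}(\realoptx-x)}^p$. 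So the first step is to write
\[
    \iprod{[\kgrad{x}-\kgrad{\realoptx}]^*\realopty}{x'-\realoptx}_A
    = \iprod{[\kgrad{x'}-\kgrad{\realoptx}]^*\realopty}{x'-\realoptx}_A
    + \iprod{[\kgrad{x}-\kgrad{x'}]^*\realopty}{x'-\realoptx}_A,
\]
and bound the second (error) term from below by $-L\norm{x-x'}\,\norm{\Pnl\realopty}\,\norm{x'-\realoptx}_A$ using \cref{ass:k-lipschitz} and the fact (as in \eqref{eq:ass-k}) that only the $\Ynl$-component of $\realopty$ contributes.

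\textbf{Splitting $A$ into a uniform part and a block remainder.} For the main term, since $A = \MIN a\, I + \sum_j (a_j - \MIN a) P_j$ with all coefficients $a_j - \MIN a \ge 0$, I would split
\[
    \iprod{[\kgrad{x'}-\kgrad{\realoptx}]^*\realopty}{x'-\realoptx}_A
    = \MIN a\,\iprod{[\kgrad{x'}-\kgrad{\realoptx}]^*\realopty}{x'-\realoptx}
    + \sum_{j=1}^m (a_j - \MIN a)\,\iprod{[P_j\kgrad{x'}-P_j\kgrad{\realoptx}]^*\realopty}{x'_j-\realoptx_j}.
\]
Then \eqref{eq:three-point-lemma-gammax} bounds the first summand below by $\MIN a(\norm{x'-\realoptx}_{\Gamma_K}^2 + \gamma_x\norm{x'-\realoptx}^2)$, and \eqref{eq:three-point-lemma-pj} bounds each term of the sum below by $(a_j-\MIN a)\gamma_{K,j}\norm{x'_j-\realoptx_j}^2$. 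Collecting the $\Gamma_K$-weighted pieces using $\norm{x'-\realoptx}_{A\Gamma_K}^2 = \sum_j a_j\gamma_{K,j}\norm{x'_j-\realoptx_j}^2 = \MIN a\norm{x'-\realoptx}_{\Gamma_K}^2 + \sum_j(a_j-\MIN a)\gamma_{K,j}\norm{x'_j-\realoptx_j}^2$ exactly reconstructs the required $\norm{x'-\realoptx}_{A\Gamma_K}^2$ term of \eqref{eq:ass-k-nonlinear}, leaving a spare positive reservoir $\MIN a\gamma_x\norm{x'-\realoptx}^2$ (note $\norm{x'-\realoptx}^2 \le \norm{x'-\realoptx}_A^2/\MIN a$ is the wrong direction, so one keeps it as $\MIN a\gamma_x\norm{x'-\realoptx}^2 \ge$ something; actually one wants $\MIN a\gamma_x\norm{x'-\realoptx}^2$ directly, and will spend it via Young against the error and penalty terms).

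\textbf{Absorbing the error term and producing the penalty.} Now I use Young's inequality on the error term $-L\norm{x-x'}\norm{\Pnl\realopty}\norm{x'-\realoptx}$, splitting it with parameters $\beta_1,\beta_2$ to route part of it against $\norm{x-x'}^2$ (feeding $-\tfrac{L_3}{2}\norm{x'-x}^2$, hence $-\tfrac{L_3}{2}\norm{x'-x}_A^2$ after accounting for the extra $a_j$ weights via $\MIN a$) and part against $\norm{x'-\realoptx}^2$ (absorbed by the reservoir $\MIN a\gamma_x\norm{x'-\realoptx}^2$, which is where the condition $L\theta_A \le \MIN a(\gamma_x-\beta_1)-\beta_2\max_j(a_j-\MIN a)$ comes in, leaving exactly $L\theta_A\norm{x'-\realoptx}^2$-worth of room; one then needs $\norm{K(\realoptx)-K(x)-\kgrad{x}(\realoptx-x)}\le \tfrac{L}{2}\norm{x-\realoptx}^2$ from \cref{ass:k-lipschitz} to trade $\theta_A$ against the penalty at the cost of the $+2L\theta_A$ contributions to $L_3$). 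The weighting bookkeeping — turning scalar $\norm{\cdot}^2$ bounds into $A$-weighted ones, and correctly distributing $a_j - \MIN a$ across the two Young splits, which is the source of the $\beta_1^{-1}+(\beta_2\MIN a)^{-1}\sum_j(a_j-\MIN a)$ factor in the $L_3$ condition — is the main obstacle; it is routine but must be done carefully to land exactly on the stated constants. Finally, collecting all terms gives \eqref{eq:ass-k-nonlinear} with $p=1$ under the stated bounds on $L\theta_A$ and $L_3$, which completes the proof.
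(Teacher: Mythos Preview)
Your plan is essentially the paper's proof. The paper also splits $A=\MIN a\,I+\sum_j(a_j-\MIN a)P_j$, replaces $\kgrad{x}$ by $\kgrad{x'}$ plus a Lipschitz-controlled error, applies \eqref{eq:three-point-lemma-gammax} to the $\MIN a$-part and \eqref{eq:three-point-lemma-pj} to each block remainder, uses Young with $\beta_1$ on the uniform part and $\beta_2$ on each block part, and then invokes \eqref{eq:ass-k} in the form $\norm{K(\realoptx)-K(x)-\kgrad{x}(\realoptx-x)}\le \tfrac{L}{2}\norm{x-\realoptx}^2\le L\norm{x'-\realoptx}^2+L\norm{x'-x}^2$ to produce the $\theta_A$-term. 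The only ordering difference is cosmetic: the paper first splits $A$ and then, \emph{inside each part}, writes $\kgrad{x}=\kgrad{x'}+(\kgrad{x}-\kgrad{x'})$, whereas you do the $x\mapsto x'$ replacement once globally and then split $A$; the computations coincide.

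One small slip: your intermediate bound ``$-L\norm{x-x'}\,\norm{\Pnl\realopty}\,\norm{x'-\realoptx}_A$'' is not what Cauchy--Schwarz gives (it would yield $\norm{A(x'-\realoptx)}$, not $\norm{x'-\realoptx}_A$). This is harmless because you never actually use that global bound---the correct route, which you describe next, is to split the error term along the $\MIN a$/$(a_j-\MIN a)$ decomposition and apply Young separately with $\beta_1$ and $\beta_2$, exactly as in the paper.
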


\begin{proof}
	We need to study \eqref{eq:ass-k-nonlinear}. We have
	\[
		\begin{split}
		R^K
		&\defeq\iprod{[\kgrad{x}-\kgrad{\realoptx}]^*\realopty}{x'-\realoptx}_A-\norm{x'-\realoptx}^2_{A\Gamma_K}
		\\
		&=\MIN a(\iprod{[\kgrad{x}-\kgrad{\realoptx}]^*\realopty}{x'-\realoptx}-\norm{x'-\realoptx}^2_{\Gamma_K})
		\\
		&\quad+
		\textstyle\sum_{j=1}^m (a_j-\MIN a)(\iprod{[\kgrad{x}-\kgrad{\realoptx}]^*\realopty}{x'_j-\realoptx_j}-\gamma_{K,j}\norm{x'_j-\realoptx_j}^2).
		\end{split}
	\]
	We now apply \eqref{eq:three-point-lemma-gammax}, Young's inequality with the factor $\beta_1>0$, and \cref{ass:k-lipschitz} to bound
	\[
		\begin{split}
		\iprod{[\kgrad{x}&-\kgrad{\realoptx}]^*\realopty}{x'-\realoptx}-\norm{x'-\realoptx}^2_{\Gamma_K}
        \\
        &
		=
		\iprod{[\kgrad{x'}-\kgrad{\realoptx}]^*\realopty}{x'-\realoptx}-\norm{x'-\realoptx}^2_{\Gamma_K}
		+\iprod{[\kgrad{x}-\kgrad{x'}]^*\realopty}{x'-\realoptx}
		\\
		&
		\ge(\gamma_x-\beta_1)\norm{x'-\realoptx}^2-L^2\norm{\Pnl \realopty}^2(4\beta_1)^{-1} \norm{x'-x}^2.
		\end{split}
	\]
	Similarly, for any $\beta_2>0$, we have
	\[
		\begin{split}
		\iprod{[\kgrad{x}&-\kgrad{\realoptx}]^*\realopty}{x'_j-\realoptx_j}
        \\
        &
		=
		\iprod{[P_j\kgrad{x'}-P_j\kgrad{\realoptx}]^*\realopty}{x'_j-\realoptx_j}
		+\iprod{[\kgrad{x}-\kgrad{x'}]^*\realopty}{x'_j-\realoptx_j}
		\\
		&\ge
		\gamma_{K,j}\norm{x'_j-\realoptx_j}^2-L^2\norm{\Pnl\realopty}^2(4\beta_2)^{-1}\norm{x'-x}^2-\beta_2\norm{x'_j-\realoptx_j}^2.
		\end{split}
	\]
	Combining the two estimates, we arrive at
	\[
		\begin{split}
		R^K&\ge
		\MIN a(\gamma_x-\beta_1)\norm{x'-\realoptx}^2-\MIN aL^2\norm{\Pnl \realopty}^2(4\beta_1)^{-1} \norm{x'-x}^2
		\\
		&\quad
		-\textstyle\sum_{j=1}^m (a_j-\MIN a)
		(\beta_2\norm{x'_j-\realoptx_j}+ L^2\norm{\Pnl\realopty}^2\norm{x'-x}^2)
		\\
		&=
		\textstyle\sum_{j=1}^m(\MIN a(\gamma_x-\beta_1)-(a_j-\MIN a)\beta_2)\norm{x'_j-\realoptx_j}^2
		\\
		&\quad
		-\MIN aL^2\norm{\Pnl \realopty}(\beta_1^{-1}+(\beta_2\MIN a)^{-1}\textstyle\sum_{j=1}^m (a_j-\MIN a)) \norm{x'-x}^2/4.
		\end{split}
	\]
	At the same time, using \cref{ass:k-lipschitz}, we get for the right-hand side of \eqref{eq:ass-k-nonlinear} the bound
	\[
		\norm{K(\realoptx)-K(x)-\kgrad{x}(\realoptx-x)}
		\le \frac{L}{2}\norm{x-\realoptx}^2
		\le L \norm{x'-\realoptx}^2+L \norm{x'-x}^2.
	\]
	So \cref{ass:k-nonlinear} holds if we take $p=1$, $L\theta_A\le \min_j \MIN a(\gamma_x-\beta_1)-(a_j-\MIN a)\beta_2$, and
	$L_3 \ge L^2\norm{\Pnl \realopty}(\beta_1^{-1}+(\beta_2\MIN a)^{-1}\textstyle\sum_{j=1}^m (a_j-\MIN a))/2+2L\theta_A$.
\end{proof}

%%%%%%%%%%%%%%%%%%%%%%%%%%%%%%%%%%%%%%%%%%%
\section{Technical lemma}
\label{sec:max}
%%%%%%%%%%%%%%%%%%%%%%%%%%%%%%%%%%%%%%%%%%%

\begin{lemma}
	\label{lemma:max-for-n2}
    We have $\bar z_N\le\bar z_0 +N/2$ whenever $z^i_j>0$, ($i=1,\ldots,N$; $j=1,\ldots,m$) satisfy
	\begin{equation}
	\label{eq:max-for-n2}
		z_j^{i+1}=\frac{1+z_j^i}{\sqrt{1+\bar z_i^{-1}}}
		\quad\text{with}\quad
		\bar z_i\defeq\max_{j=1,\ldots,m}z^i_j.
	\end{equation}
\end{lemma}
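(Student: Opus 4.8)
The plan is to reduce the vector recursion \eqref{eq:max-for-n2} to a scalar recursion for the sequence $\bar z_i$, and then to telescope a one-step estimate. First I would observe that the denominator $\sqrt{1+\bar z_i^{-1}}$ in \eqref{eq:max-for-n2} does not depend on $j$, and that $t \mapsto \frac{1+t}{\sqrt{1+\bar z_i^{-1}}}$ is increasing on $(0,\infty)$. Hence, taking the maximum over $j=1,\ldots,m$ on both sides of \eqref{eq:max-for-n2},
\[
    \bar z_{i+1} = \max_{j=1,\ldots,m} z_j^{i+1}
    = \frac{1+\max_{j=1,\ldots,m} z_j^i}{\sqrt{1+\bar z_i^{-1}}}
    = \frac{1+\bar z_i}{\sqrt{1+\bar z_i^{-1}}}.
\]

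Next I would simplify this scalar recursion. Writing $1+\bar z_i^{-1} = (\bar z_i+1)/\bar z_i$ and squaring, we get
\[
    \bar z_{i+1}^2 = \frac{(1+\bar z_i)^2 \bar z_i}{1+\bar z_i} = \bar z_i^2 + \bar z_i.
\]
Since $\bar z_i>0$, this immediately gives the crude bound
\[
    \bar z_{i+1}^2 = \bar z_i^2 + \bar z_i \le \bar z_i^2 + \bar z_i + \tfrac14 = \bigl(\bar z_i + \tfrac12\bigr)^2,
\]
and taking (positive) square roots yields $\bar z_{i+1} \le \bar z_i + \tfrac12$ for every $i$. Telescoping this inequality over $i=0,1,\ldots,N-1$ produces $\bar z_N \le \bar z_0 + N/2$, which is the claim.

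I do not expect any genuine obstacle here: the argument is a short computation once one notices that the common, $j$-independent denominator lets the recursion decouple into the single scalar recursion $\bar z_{i+1}^2 = \bar z_i^2 + \bar z_i$. The only point worth stating carefully is the monotonicity observation used to pass the maximum through \eqref{eq:max-for-n2}; everything after that is algebraic. (One in fact obtains the slightly sharper strict inequality $\bar z_N < \bar z_0 + N/2$, but the stated form suffices for its use in the proofs of \cref{thm:acc-full-dual,thm:acc-full-primal}.)
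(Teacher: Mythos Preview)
Your proof is correct and follows essentially the same approach as the paper: reduce to the scalar recursion $\bar z_{i+1}=\sqrt{\bar z_i^2+\bar z_i}$ by taking the maximum over $j$, derive the one-step bound $\bar z_{i+1}\le \bar z_i+\tfrac12$, and telescope. The only cosmetic difference is that the paper obtains the one-step bound by rationalising $\sqrt{\bar z_i^2+\bar z_i}-\bar z_i$, whereas you complete the square $\bar z_i^2+\bar z_i\le(\bar z_i+\tfrac12)^2$; both are one-line computations.
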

\begin{proof}
	Taking $\max_{j=1\ldots m}$ on both sides of the first part of \eqref{eq:max-for-n2}, we obtain
	\[
		\bar z_{i+1}=(1+\bar z_i)\sqrt{\frac{\bar z_i}{\bar z_i+1}}=\sqrt{\bar z_i^2+\bar z_i}.
	\]
	We thus obtain the claim by telescoping
	\[
		\bar z_{i+1}-\bar z_i=
		\sqrt{\bar z_i^2+\bar z_i}-\bar z_i
		=\frac{\bar z_i}{\sqrt{\bar z_i^2+\bar z_i}+\bar z_i}
		=\frac{1}{\sqrt{1+\bar z_i^{-1}}+1}
		\le \frac{1}{2}.
		\qedhere
	\]
\end{proof}

%%%%%%%%%%%%%%%%%%%%%%%%%%%%%%%%%%%%%%%%%%%%%%%%%%%%%%%%%%%%%%%%%%%%%%%
%\bibliographystyle{jnsao}
%\bibliography{bib/abbrevs,bib/bib-own,bib/bib}
 \providecommand{\eprint}[1]{\href{http://arxiv.org/abs/#1}{arXiv:#1}}
  \providecommand{\eprint}[1]{\href{http://arxiv.org/abs/#1}{arXiv:#1}}
  \providecommand{\noopsort}[1]{}

%%%%%%%%%%%%%%%%%%%%%%%%%%%%%%%%%%%%%%%%%%%%%%%%%%%%%%%%%%%%%%%%%%%%%%%

%%%%%%%%%%%%%%%%%%%%%%%%%%%%%%%%%%%%%%%%%%%%%%%%%%%%%%%%%%%%%%%%%%%%%%%
\end{document}
%%%%%%%%%%%%%%%%%%%%%%%%%%%%%%%%%%%%%%%%%%%%%%%%%%%%%%%%%%%%%%%%%%%%%%%